\newtheorem{thm}{Theorem}[section]
\newtheorem{prop}[thm]{Proposition}
\newtheorem{cor}[thm]{Corollary}
\newtheorem{lem}[thm]{Lemma}
\newtheorem{rem}[thm]{Remark}
\newtheorem{defn}[thm]{Definition}
\newtheorem{conj}[thm]{Conjecture}
\numberwithin{equation}{section}
\newcommand{\C}{\mathbb{C}}
\newcommand{\N}{\mathbb{N}}
\newcommand{\res}{\mbox{\rm Res}}
\newcommand{\wt}{\mbox{\rm wt}\ }
\newcommand{\nord}{\mbox{\scriptsize ${\circ\atop\circ}$}}
\begin{document}

\title[$A_2(V)$ for the Heisenberg and Virasoro VOAs]{The level two Zhu algebra  for the Heisenberg vertex operator algebra}

\author{Darlayne Addabbo}
\address{Department of Mathematics, University of Arizona, Tucson, AZ  85721.}\email{addabbo@math.arizona.edu}

\author{Katrina Barron}
\address{Department of Mathematics, University of Notre Dame, Notre Dame, IN 46556}
\email{kbarron@nd.edu}

\subjclass{Primary 17B68, 17B69, 17B81, 81R10, 81T40, 81T60}

\date{March 18, 2023}

\thanks{D. Addabbo was supported by AMS-Simons Foundation Travel Grant; K. Barron was supported by Simons Foundation Collaboration Grant 282095.}

\keywords{Vertex operator algebras, conformal field theory, Heisenberg algebra}

\begin{abstract}
We determine the level two Zhu algebra for the Heisenberg vertex operator algebra $V$ for any choice of conformal element.  We do this using only the following information for $V$: the internal structure of $V$; the level one Zhu algebra of $V$ already determined by the second author, along with Vander Werf and Yang; and the information the lower level Zhu algebras give regarding irreducible modules.  We are able to carry out this calculation of the level two Zhu algebra for $V$ with this minimal information by employing the general results and techniques for determining generators and relations for higher level Zhu algebras for a vertex operator algebra, as developed previously by the authors in {\it On generators and relations for higher level Zhu algebras and applications}, by Addabbo and Barron, {\it J. Algebra,} 2023.   In particular, we show that the level $n$ Zhu algebras for the Heisenberg vertex operator algebra become noncommutative at level $n=2$.  We also give a conjecture for the structure of the level $n$ Zhu algebra for the Heisenberg vertex operator algebra,  for any $n >2$.
\end{abstract}

\maketitle


\section{Introduction}

The rank one Heisenberg vertex operator algebra, also called the free boson, is arguably the most fundamental vertex operator algebra.  Zero level Zhu algebras for a vertex operator algebra $V$ are important tools in studying the irreducible modules for $V$, and the higher level Zhu algebras are important tools for studying indecomposable non simple $V$-modules, and the number theoretic properties of the category of such $V$-modules.
In this paper, we completely determine the structure of the level two Zhu algebra for the Heisenberg vertex operator algebra, denoted by $M_a(1)$ for conformal element $\omega_a = \frac{1}{2} \alpha(-1)^2 {\bf 1} + a \alpha(-2) {\bf 1}$ for $a\in \mathbb{C}$,  using only the internal structure of $M_a(1)$, the level one Zhu algebra for $M_a(1)$ determined  previously by the second author  along with Vander Werf and Yang in \cite{BVY-Heisenberg}, and information about the irreducible modules for $M_a(1)$ already determined by the lower level Zhu algebras.  

The determination of the level two Zhu algebra for $M_a(1)$ using this minimal information about $M_a(1)$ is possible due to the results and techniques developed by the authors of this paper and given in \cite{AB-general-n}  for determining generators and relations for level $n$ Zhu algebras for vertex operator algebras in general.  In fact this paper serves as an important example for the use of the results and techniques developed in \cite{AB-general-n}, which builds on the initial techniques for determining generators and relations for the Heisenberg and Virasoro vertex operator algebra for the level one Zhu algebra as developed by the second author, as well as Vander Werf and Yang in \cite{BVY-Heisenberg} and \cite{BVY-Virasoro}. 

This determination of the generators and relations for the level two Zhu algebra of the Heisenberg vertex operator algebra has applications to the construction of higher level Zhu algebras for other vertex operator algebras, especially those which contain Heisenberg subalgebras, as well as applications to understanding the Zhu algebras (including level zero Zhu algebras) for orbifold vertex operator algebras related to the Heisenberg vertex operator algebra. 
 
\subsection{Background and motivation}

Given a vertex operator algebra $V$, in \cite{Z1}, \cite{Z2} Zhu defined an associative algebra, denoted by $A(V)$, and called ``the level zero Zhu algebra of $V$".  Zhu used this algebra to prove the modular invariance of graded characters of $V$ for the case when $V$ has a certain finiteness property called ``$C_2$-cofinite" and when $V$ is ``rational", meaning $V$ has semi-simple representation theory.   In \cite{Z1} Zhu proved that there is a  bijective correspondence between the isomorphism classes of irreducible $A(V)$-modules and irreducible $V$-modules.

Since their inception, level zero Zhu algebras have proven to be extremely powerful tools in the study of vertex operator algebras and have been used extensively throughout the literature. In \cite{W}, Wang determined the structure of the Zhu algebras for Virasoro vertex operator algebras and used this to prove a conjecture of Frenkel and Zhu \cite{FZ} that a Virasoro vertex operator algebra is rational if and only if its central charge  is of the form $c=1-\frac{6(p-q)^2}{pq}$ where $p$ and $q$ are relatively prime integers greater than or equal to $2$. 

In \cite{abe}, Abe used aspects of the level zero Zhu algebra to classify the irreducible representations of the $\mathbb{Z}_2$-orbifold of the symplectic fermions, denoted $SF^+$, and showed that there are indecomposible non irreducible modules for this vertex operator algebra, giving the first example of an irrational $C_2$-cofinite vertex operator algebra.  It had previously been conjectured that if a vertex operator algebra is $C_2$-cofinite then it is also rational.

The work of Abe on $SF^+$, which is denoted $\mathcal{W}(2)$ in $\mathcal{W}$-algebra notation, was later extended by Adamovi\'c and Milas  \cite{am1},  \cite{AM} who calculated the level zero Zhu algebras for the general family of triplet vertex algebras $\mathcal{W}(p)$ of central charge  $c = 1-\frac{6(p-1)^2}{p}$, $p\ge 2$, and used these Zhu algebras  to prove that these $C_2$-cofinite vertex operator algebras are irrational. In addition, they  classified their irreducible representations. 

Other important classes of vertex operator algebras were studied by Dong and Nagatomo in \cite{DN1} and \cite{DN2}, who used level zero Zhu algebras to classify the irreducible modules of $M(1)^+$ and $V_L^+$, respectively,  where $L$ is a positive definite even lattice,  and the $V^+$ notation for $V = M(1)$ or $V = V_L$ denotes the fixed point,  or orbifold, vertex operator algebra under an involution in the automorphism group of $V$.

More recently, in \cite{L} Linshaw calculated the level zero Zhu algebra for the Weyl vertex algebra of central charge $c=2$ (also called the $\beta\gamma$-system or the free bosonic ghost vertex algebra) in the setting of a conformal vertex algebra which is not a vertex operator algebra due to having infinite dimensional graded components.  In \cite{LM}, Laber and Mason use Zhu algebras to study $\mathbb{C}$-graded conformal vertex algebras, and in \cite{betagamma}, the second named author of the current paper, along with Batistelli, Orosz Hunziker, Pedi\'c, and Yamskulna, calculate the level zero Zhu algebra for certain $\mathbb{C}$-graded conformal Weyl vertex algebras under conformal flow and determine for which conformal elements these vertex algebras have semi-simple representation theory for their admissible modules.

While the level zero Zhu algebras have been used to prove many important results in the theory of  vertex operator algebras and their modules, if one wants to study irrational vertex operator algebras and their indecomposable modules, these algebras are  usually not enough  since in general there can be indecomposable non irreducible modules not detected by the level zero Zhu algebra. For this reason, in \cite{DLM}, Dong, Li, and Mason introduced an infinite family $\{A_n(V) \; | \; n\in \mathbb{N}\}$ of associative algebras for a vertex operator algebra $V$, where  $A_0(V)$ is the original level zero Zhu algebra. For $n>0$, these algebras are known as ``higher level Zhu algebras".  Dong, Li, and Mason presented several statements that generalize the results of Frenkel and Zhu from the level zero algebras to these higher level algebras, results that mainly focused on the semi-simple setting, e.g., the case of rational vertex operator algebras.

Since the level zero Zhu algebras play an important role in Zhu's modular invariance theorem for rational vertex operator algebras, it is natural  to expect that the higher level Zhu algebras should play a similar role for  irrational  vertex operator algebras. In \cite{Miyamoto2004} Miyamoto showed that this is indeed the case, proving that the higher level Zhu algebras can be used to construct ``graded pseudo-traces" for irrational $C_2$-cofinite vertex operator algebras.  These graded pseudo-traces are generalizations of the graded characters (or graded traces) studied by Zhu \cite{Z1}, \cite{Z2} and Miyamoto showed that the graded pseudo-traces for  irrational $C_2$-cofinite vertex operator algebras  have similar modular invariance properties.

In \cite{BVY}, the second author, along with Vander Werf, and Yang further studied higher level Zhu algebras, proving several important results about the relationship between the module categories of vertex operator algebras and the module categories of higher level Zhu algebras. They described settings for when the higher level Zhu algebras detect modules not seen by the the level zero Zhu algebra,  and more generally, when the level $n$ Zhu algebra detects modules not seen by the level $n-1$ Zhu algebra.  In addition, they provided necessary modifications of some statements made in \cite{DLM}.

Although  the higher level Zhu algebras are important objects in the study of modules for irrational vertex operator algebras and related questions of modular invariance of their graded pseudo-traces, determining the  structure of the higher level Zhu algebras is quite difficult. In \cite{BVY-Heisenberg} and \cite{BVY-Virasoro}, the second author, along with Vander Werf and Yang, constructed the first examples of higher level Zhu algebras, determining the structure of the level one Zhu algebras for the Heisenberg and Virasoro vertex operator algebras, respectively. In his PhD thesis  \cite{C}, \^Ceperi\'c calculated the level one Zhu algebra for the fixed point subalgebra of symplectic fermions, $SF^+$.

In \cite{AB-general-n} the authors proved many results helpful in determining the structure of higher level Zhu algebras. In particular, in \cite{AB-general-n}, the authors provide  guidelines for finding minimal generating sets for higher level Zhu algebras and prove  relations satisfied by these generators. They outline techniques on how to use  level $n-1$ Zhu algebras 
and the information given for modules induced at that level to determine relations for the generators of the level $n$ Zhu algebra.  These results are used heavily in the present paper.  In \cite{AB-general-n}, the authors also prove the necessity of a condition that is sometimes omitted in the definition of higher level Zhu algebras, and indicate why this fact is useful in calculating higher level Zhu algebras for certain vertex operator algebras.

In this paper, we present the first example of a higher level Zhu algebra greater than level one, describing $A_2(M_a(1))$ explicitly in terms of generators and relations. We expect that the insight obtained from this construction will motivate future constructions of higher level Zhu algebras, and will be useful in determining the relationship between the level zero Zhu algebras for certain fixed point vertex operator algebras under finite order automorphisms in the orbifold setting.

The main results of this paper were previously announced in \cite{AB-general-n} without proofs and details.

\subsection{Organization}

This paper is organized as follows, in Section 2, we follow \cite{DLM} and \cite{BVY} in defining the higher level Zhu algebras $A_n(V)$ for a vertex operator algebra $V$, the functor $\Omega_n$ from the category of $\mathbb{N}$-gradable $V$-modules to the category of $A_n(V)$-modules,  and the functor $L_n$ from the category of $A_n(V)$-modules to the category of $\mathbb{N}$-gradable $V$-modules, for $n \in \mathbb{N}$.  We then  recall some properties about these algebras and their relationship to the modules for $V$ from  \cite{DLM} and \cite{BVY}. 
 
In Section 3, we recall some general facts about the Heisenberg vertex operator algebra, its family of possible conformal elements $\omega_a$, and its irreducible modules.  Here we note that although the structure of the Zhu algebra does not depend on the choice of conformal element in the case of the Heisenberg algebra, in other settings, such as the case of the Weyl vertex algebras, the choice of conformal element drastically changes the structure of the Zhu algebra, cf. \cite{betagamma}.  Thus throughout the paper, to emphasize this fact, we continue to use the notation $M_a(1)$ to recall that the results we develop do apply to the Heisenberg vertex operator algebra with {\it any} choice of conformal element.   

 In Section 4, we give some general results about the generators and relations of the level $n$ Zhu algebra for the Heisenberg vertex operator algebra that are corollaries of general results from \cite{AB-general-n}.

In Section 5, we present the results from Section 3 on some generators and relations for the Zhu algebra in the special case when $n = 2$, i.e., for the special case in which we are determining the structure of the level two Zhu algebra for the Heisenberg vertex operator algebra.  The results of \cite{AB-general-n} allow us to determine an infinite generating set for $A_2(M_a(1))$, where each generator can be described in terms of only certain modes of $M_a(1)$. We then prove many relations and recursions satisfied by the elements of this infinite generating set, and use these recursions and relations to prove that $A_2(M_a(1))$ can be generated by only five elements. 
 
 In Section 6, we determine relations for the five-element generating set obtained in Section 5 using  methods introduced by the second author along with Vander Werf and Yang in \cite{BVY-Heisenberg}, \cite{BVY-Virasoro} and further developed by the authors of the current paper in  \cite{AB-general-n}.

In Section 7, we use the relations determined in Section 6 to determine the structure of $A_2(M_a(1))$ as an algebra.  In particular, we show that 
\[A_2(M_a(1))\cong A_1(M_a(1))\oplus (\mathbb{C}[x]\otimes M_2(\mathbb{C})).\]

In Section 8, we recall a conjecture previously stated in \cite{AB-general-n} that was inspired by the results of this paper,  namely that for $n \in \mathbb{Z}_+$
\[ A_n(M_a(1)) \cong A_{n-1} (M_a(1)) \oplus \left(\mathbb{C}[x] \otimes M_{p(n)}(\mathbb{C}) \right),\]
where $p(n)$ denotes the number of unordered partitions of $n$ into nonnegative integers, and $M_{p(n)}(\mathbb{C}$ denotes the algebra of $p(n) \times p(n)$ matrices.

The Appendix includes several computations necessary for the proof of Lemma \ref{lower-order-lemY} which gives the higher order terms of the relations satisfied by the five-element generating set for $A_2(M_a(1))$, reducing the determination of relations to computations which rely on the structure of the level one Zhu algebra $A_1(M_a(1))$ and is a key step in proving all the relations for the generators of $A_2(M_a(1))$.

{\bf Acknowledgments: } The first author was supported by an AMS-Simons Foundation Travel Grant, and the second author was supported by the Simons Foundation Collaboration Grant 282095.  We thank the AMS and Simons Foundation and greatly appreciate their support. 

\section{The algebras $A_n(V)$, and the functors $\Omega_n$ and $L_n$}\label{zhu-algebra-definition-section}

In this section, we recall the definition and some properties of the algebras $A_n(V)$ for $n \in \N$, first introduced in \cite{Z1} for $n = 0$, and then generalized to $n >0$ in \cite{DLM}.   We then recall the functors $\Omega_n$ and $L_n$ defined in \cite{DLM}, and we recall some results from \cite{BVY} and \cite{AB-general-n}.

For $n \in \N$, let $O_n(V)$ be the subspace of $V$ spanned by elements of the form
\begin{equation}\label{define-circ}
u \circ_n v =
\res_x \frac{(1 + x)^{\mathrm{wt}\, u + n}Y(u, x)v}{x^{2n+2}}
\end{equation}
for all homogeneous $u \in V$ and for all $v \in V$, and by elements of the form $(L(-1) + L(0))v$ for all $v \in V$. The vector space $A_n(V)$ is defined to be the quotient space $V/O_n(V)$.

That is, if we introduce the notations 
\begin{equation}\label{define-O-subsets} 
O^L (V) = \{ (L(-1) + L(0))v \, | \, v \in V \} \quad
\mathrm{and}
\quad O_n^\circ(V) = \mathrm{span} \{ u \circ_n v \; | \; u,v \in V\} ,
\end{equation} 
then
\begin{equation}\label{define-O}  
O_n(V) =  O^L (V) + O_n^\circ(V) .
\end{equation}

\begin{rem}\label{first-remark}{\em
For $n=0$, since $v \circ_0 \mathbf{1} = v_{-2} \mathbf{1} + (\mathrm{wt} \, v) v = L(-1) v + L(0) v$, it 
follows that $(L(-1) + L(0)) v \in O^\circ_0(V)$ for all $v \in V$ and thus $O_0(V) = O_0^\circ(V)$. 
However, we show in \cite{AB-general-n} that for $n > 0$, in 
general, $O^L (V) \not\subset O_n^\circ(V)$.  Thus for $A_n(V)$ to have a well-defined action on 
modules via zero modes,  it is necessary to not just define $O_n(V)$ to consist of $O_n^\circ(V)$ as is
 sometimes done, but to also include $O^L(V)$ as part of the definition; cf. \cite{DLM}, 
 \cite{Miyamoto2004}.  This is simply due to the fact that from the $L(-1)$-derivative property, the zero
 mode of $L(0)v$ which is $(\mathrm{wt} \, v)v_{\mathrm{wt} \, v - 1}$ is equal to the negative of the zero mode of $L(-1)v$.   In \cite{AB-general-n} we show that in general $O^L (V) \not\subset O_n^\circ(V)$, by
showing that, for instance, any time $V$ contains an element $u$ of weight one, or of weight zero 
such that $u_{-2} {\bf 1} \neq 0$,  then  $(L(-1) + L(0) )u \notin O_n^\circ(V)$ if $n >0$.  For instance, if $u$ generates a 
Heisenberg vertex subalgebra in $V$, or if $V$ has a nontrivial vacuum (weight zero) space, then $u$ will have this property.   Thus it is quite common for
a vertex operator algebra $V$ to have the property that $O^L (V) \not\subset O_n^\circ(V)$.  More
importantly, the presence of these $(L(-1) + L(0)) v \in O_n(V)$ is used heavily in the constructions of the higher level Zhu algebras in \cite{BVY-Heisenberg}, \cite{BVY-Virasoro},  and in this paper.}
\end{rem}

We define the following multiplication on $V$
\begin{equation}\label{*_n-definition}
u *_n v = \sum_{m=0}^n(-1)^m\binom{m+n}{n}\res_x \frac{(1 + x)^{\mathrm{wt}\, u + n}Y(u, x)v}{x^{n+m+1}},
\end{equation}
for $v \in V$ and homogeneous $u \in V$, and for general $u \in V$, $*_n$ is
defined by linearity.   It is shown in \cite{DLM} that with this multiplication, the subspace $O_n(V)$ of $V$ is a two-sided ideal of $V$, and $A_n(V)$ is an associative algebra, called the {\it level $n$ Zhu algebra}, with multiplicative identity $\mathbf{1} + O_n(V)$.  

The lemma below was proved by Zhu \cite{Z1} for the case $n=0$ and his proof extends easily to the $n>0$ case, as is done in \cite{AB-general-n}.

From \cite{DLM}, we have the following important results which will help in determining the structure of $A_2(M_a(1))$ given the structure of $A_1(M_a(1))$.  

\begin{lem}\cite{DLM} \label{DLM-commutator-lemma} Let $V$ be a vertex operator algebra. 

(i) The map
\begin{eqnarray}\label{surjection}
A_n(V) & \longrightarrow & A_{n-1}(V) \\
v + O_n(V) & \mapsto & v + O_{n-1}(V) \nonumber
\end{eqnarray}
is a surjective algebra homomorphism.

(ii) If $\omega$ is the conformal vector of $V$, then $\omega+O_n(V)$ is central in $A_n(V)$ for all $n \in \mathbb{N}$. 

(iii)  In general 
\begin{equation}\label{dlmcommutation}
u*_ n v-v*_n u \equiv_n \res_x Y(u,x)v(1+x)^{wtu-1}.
\end{equation}
\end{lem}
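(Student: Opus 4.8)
The plan is to treat all three parts as computations with formal residues, and to obtain (ii) as a direct consequence of (iii); the substantive content is then (i) and (iii).

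The heart of the matter is (iii). I would first derive an ``opposite multiplication'' formula, i.e.\ rewrite $v*_n u$ modulo $O_n(V)$ as $\res_x$ of an explicit rational kernel applied to $Y(u,x)v$. The tools are skew-symmetry $Y(v,x)u=e^{xL(-1)}Y(u,-x)v$ followed by the formal change of variable $x\mapsto -x/(1+x)$, which converts $Y(u,-x)$ into $Y(u,x/(1+x))$; then, using the conjugation formula for $e^{xL(-1)}$ together with the fact that every shift term it produces lies in $O^L(V)\subseteq O_n(V)$, the whole expression becomes a residue against $Y(u,x)v$ with a modified kernel $T_n(x)$. Subtracting this from $u*_n v$ gives $u*_n v-v*_n u\equiv_n\res_x K_n(x)Y(u,x)v$ for an explicit rational $K_n(x)$, and the proof ends by verifying the binomial identity that $K_n(x)-(1+x)^{\mathrm{wt}\,u-1}$ equals $(1+x)^{\mathrm{wt}\,u+n}x^{-2n-2}$ times a polynomial in $x$ (modulo $O^L$-corrections), so that the residue of the difference against $Y(u,x)v$ lies in $O_n^{\circ}(V)+O^L(V)=O_n(V)$. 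Part (ii) is then immediate: taking $u=\omega$, which is homogeneous of weight $2$, yields $\omega*_n v-v*_n\omega\equiv_n\res_x Y(\omega,x)v(1+x)=(L(-1)+L(0))v\in O^L(V)\subseteq O_n(V)$, so $\omega+O_n(V)$ commutes with every $v+O_n(V)$.

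For (i) I would first establish $O_n(V)\subseteq O_{n-1}(V)$. Since $O^L(V)\subseteq O_{n-1}(V)$ trivially, it suffices to show $u\circ_n v\in O_{n-1}(V)$ for homogeneous $u$; I would do this by induction on the pole order using $\res_x\frac{d}{dx}(\cdots)=0$ and the $L(-1)$-derivative property $Y(L(-1)u,x)=\frac{d}{dx}Y(u,x)$. Concretely, expanding $\res_x\frac{d}{dx}\bigl[(1+x)^{\mathrm{wt}\,u+n}x^{-2n}Y(u,x)v\bigr]=0$ expresses $\res_x(1+x)^{\mathrm{wt}\,u+n}x^{-2n-1}Y(u,x)v$ in terms of $u\circ_{n-1}v$ and $(L(-1)u)\circ_{n-1}v$, hence it lies in $O_{n-1}(V)$; a second such step, now with $(1+x)^{\mathrm{wt}\,u+n+1}x^{-2n-1}$ and invoking the first step for both $u$ and $L(-1)u$, puts $\res_x(1+x)^{\mathrm{wt}\,u+n+1}x^{-2n-2}Y(u,x)v$ and therefore $u\circ_n v$ in $O_{n-1}(V)$. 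With the inclusion in hand, $v+O_n(V)\mapsto v+O_{n-1}(V)$ is a well-defined surjective linear map sending $\mathbf 1$ to $\mathbf 1$, and it only remains to see it respects products, i.e.\ $u*_n v\equiv u*_{n-1}v\pmod{O_{n-1}(V)}$. I would prove this by substituting $(1+x)^{\mathrm{wt}\,u+n}=(1+x)^{\mathrm{wt}\,u+n-1}+x(1+x)^{\mathrm{wt}\,u+n-1}$ in the definition of $*_n$, discarding the two boundary residues (pole orders $2n$ and $2n+1$, which lie in $O_{n-1}(V)$ by the previous computation), and observing that the surviving coefficient of each remaining residue collapses to $(-1)^m\binom{m+n-1}{n-1}$ via Pascal's rule $\binom{m+n}{n}=\binom{m+n-1}{n}+\binom{m+n-1}{n-1}$.

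I expect the main obstacle to be the opposite-multiplication step in (iii): carrying out the skew-symmetry and the M\"obius-type substitution while tracking precisely which terms produced by the $e^{xL(-1)}$ factor genuinely belong to $O^L(V)$ (hence may be dropped modulo $O_n(V)$), and then matching the resulting kernel against $(1+x)^{\mathrm{wt}\,u-1}$ through the required binomial identities. Once that bookkeeping is in place, parts (i) and (ii) are routine residue manipulations.
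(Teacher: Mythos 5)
This lemma is quoted in the paper directly from \cite{DLM} with no proof supplied, so there is no in-paper argument to compare against; what you have written is essentially a reconstruction of the standard Zhu/Dong--Li--Mason proofs, and it is correct. Your part (i) works: the residue-of-a-total-derivative computation with kernel $(1+x)^{\mathrm{wt}\,u+n}x^{-2n}$ does give $2n\,\res_x(1+x)^{\mathrm{wt}\,u+n}x^{-2n-1}Y(u,x)v=(\mathrm{wt}\,u+n)\,u\circ_{n-1}v+(L(-1)u)\circ_{n-1}v$ (note you need $n\ge 1$ to divide by $2n$, which is exactly the range of the statement), and the second step plus one application of $(1+x)^{k+1}=(1+x)^k+x(1+x)^k$ lands $u\circ_n v$ in $O_{n-1}(V)$; the Pascal-rule collapse $\binom{m+n}{n}-\binom{m+n-1}{n}=\binom{m+n-1}{n-1}$ for the homomorphism property checks out, with the two discarded boundary residues handled the same way. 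Your part (iii) is the right route; the only point worth recording is that after skew-symmetry, the conjugation $e^{xL(-1)}w\equiv(1+x)^{-\mathrm{wt}\,w}w \pmod{O^L(V)}$, and the substitution $z=-x/(1+x)$, one gets $v*_nu\equiv(-1)^n\sum_{m=0}^n\binom{m+n}{n}\res_x x^{-n-m-1}(1+x)^{\mathrm{wt}\,u+m-1}Y(u,x)v$, and the resulting kernel identity is in fact exact: $\sum_{m=0}^n(-1)^m\binom{m+n}{n}x^{-n-m-1}\bigl[(1+x)^{n+1}-(-1)^{n+m}(1+x)^{m}\bigr]=1$, so no residual $O_n^{\circ}$ correction is even needed (your hedge that the difference is a multiple of $(1+x)^{\mathrm{wt}\,u+n}x^{-2n-2}$ is harmless since that multiple is zero). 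Part (ii) then follows exactly as you say, since $\res_x(1+x)Y(\omega,x)v=(L(-1)+L(0))v\in O^L(V)\subseteq O_n(V)$, which is precisely why the paper insists (Remark \ref{first-remark}) on including $O^L(V)$ in the definition of $O_n(V)$ for $n>0$.
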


Next we recall the definitions of various $V$-module structures.  We assume the reader is familiar with the notion of weak $V$-module for a vertex operator algebra $V$ (cf. \cite{LL}).  

\begin{defn}\label{N-gradable-definition}
{\em An {\it $\mathbb{N}$-gradable weak $V$-module} (also often called an {\it admissible $V$-module} as in \cite{DLM}) $W$ for a vertex operator algebra $V$ is a weak $V$-module that is $\mathbb{N}$-gradable, $W = \coprod_{k \in \mathbb{N}} W(k)$, with $v_m W(k) \subset W(k + \mathrm{wt} v - m -1)$ for homogeneous $v \in V$, $m \in \mathbb{Z}$ and $k \in \N$, and without loss of generality, we can and do assume $W(0) \neq 0$, unless otherwise specified.  We say elements of $W(k)$ have {\it degree} $k \in \mathbb{N}$.

An {\it $\mathbb{N}$-gradable generalized weak $V$-module} $W$ is an $\mathbb{N}$-gradable weak $V$-module that admits a decomposition into generalized eigenspaces via the spectrum of $L(0) = \omega_1$ as follows: $W=\coprod_{\lambda \in{\C}}W_\lambda$ where $W_{\lambda}=\{w\in W \, | \, (L(0) - \lambda \, id_W)^j w= 0 \ \mbox{for some $j \in \mathbb{Z}_+$}\}$, and in addition, $W_{n +\lambda}=0$ for fixed $\lambda$ and for all sufficiently small integers $n$. We say elements of $W_\lambda$ have {\it weight} $\lambda \in \mathbb{C}$.

A {\it generalized $V$-module} $W$ is an $\mathbb{N}$-gradable generalized weak $V$-module where $\dim W_{\lambda}$ is finite for each $\lambda \in \mathbb{C}$.   

An {\it (ordinary) $V$-module} is a  generalized $V$-module such that  the generalized eigenspaces $W_{\lambda}$ are in fact eigenspaces, i.e., $W_{\lambda}=\{w\in W \, | \, L(0) w=\lambda w\}$.}
\end{defn}

We will often omit the term ``weak" when referring to $\mathbb{N}$-gradable weak and $\mathbb{N}$-gradable generalized weak $V$-modules.   

The term {\it logarithmic} is also often used in the literature to refer to $\mathbb{N}$-gradable weak generalized modules  or generalized modules. 

\begin{rem}{\em An $\mathbb{N}$-gradable $V$-module with $W(k)$ of finite dimension for each $k \in \mathbb{N}$ is not necessarily a generalized $V$-module since the generalized eigenspaces might not be finite dimensional. }
\end{rem}


Next, we recall the functors $\Omega_n$ and $L_n$, for $n \in \N$, defined and studied in \cite{DLM}.  Let $W$ be an $\mathbb{N}$-gradable $V$-module, and let
\begin{equation}\label{Omega}
\Omega_n(W) = \{w \in W \; | \; v_iw = 0\;\mbox{if}\; \wt v_i < -n \; 
\mbox{for $v\in V$ of homogeneous weight}\}.
\end{equation}
It was shown in \cite{DLM} that $\Omega_n(W)$ is an $A_n(V)$-module
via the action $o(v+O_n(V)) = v_{\mathrm{wt} \, v -1}$ for $v \in V$.   In particular, this action satisfies $o(u *_n v) = o(u)o(v)$ for $u,v \in A_n(V)$.

Furthermore, it was shown in \cite{DLM} and \cite{BVY} that there is a bijection between the isomorphism classes of irreducible $A_n(V)$-modules which cannot factor through $A_{n-1}(V)$ and the isomorphism classes of irreducible $\mathbb{N}$-gradable $V$-modules with nonzero degree $n$ component.  

In order to define the functor $L_n$ from the category of $A_n(V)$-modules to the category of $\mathbb{N}$-gradable $V$-modules, we need several notions, including the notion of the universal enveloping algebra of $V$, which we now define.  

Let
\begin{equation}
\hat{V} = \C[t, t^{-1}]\otimes V/D\C[t, t^{-1}]\otimes V,
\end{equation}
where $D = \frac{d}{dt}\otimes 1 + 1 \otimes L(-1)$. For $v \in V$, let $v(m) = v \otimes t^m +  D\C[t, t^{-1}]\otimes V \in \hat{V}$.  Then $\hat{V}$ can be given the structure of a $\mathbb{Z}$-graded Lie algebra as follows:  Define the degree of $v(m)$ to be $\wt v - m - 1$ for homogeneous $v \in V$, and define the Lie bracket on $\hat{V}$ by
\begin{equation}\label{bracket}
[u(j), v(k)] = \sum_{i = 0}^{\infty}\binom{j}{i}(u_iv)(j+k-i),
\end{equation}
for $u, v \in V$, $j,k \in \mathbb{Z}$.
Denote the homogeneous subspace of degree $m$ by $\hat{V}(m)$. In particular, the degree $0$ space of $\hat{V}$, denoted by $\hat{V}(0)$, is a Lie subalgebra.

Denote by $\mathcal{U}(\hat{V})$ the universal enveloping algebra of the Lie algebra $\hat{V}$.  Then $\mathcal{U}(\hat{V})$ has a natural $\mathbb{Z}$-grading induced from $\hat{V}$, and we denote by $\mathcal{U}(\hat{V})_k$ the degree $k$ space with respect to this grading, for $k \in \mathbb{Z}$.

We can regard $A_n(V)$ as a Lie algebra via the bracket $[u,v] = u *_n v - v *_n u$, and then the map $v( \mathrm{wt} \, v -1) \mapsto v + O_n(V)$ is a well-defined Lie algebra epimorphism from $\hat{V}(0)$ onto $A_n(V)$.

Let $U$ be an $A_n(V)$-module.  Since $A_n(V)$ is naturally a Lie algebra homomorphic image of $\hat{V}(0)$, we can lift $U$ to a module for the Lie algebra $\hat{V}(0)$, and then to a module for $P_n = \bigoplus_{p > n}\hat{V}(-p) \oplus \hat{V}(0) = \bigoplus_{p < -n} \hat{V}(p) \oplus \hat{V}(0)$ by letting $\hat{V}(-p)$ act trivially for $p\neq 0$.  Define
\[
M_n(U) = \mbox{Ind}_{P_n}^{\hat{V}}(U) = \mathcal{U}(\hat{V})\otimes_{\mathcal{U}(P_n)}U.
\]

We impose a grading on $M_n(U)$ with respect to $U$, $n$, and the $\mathbb{Z}$-grading on $\mathcal{U}(\hat{V})$, by letting $U$ be degree $n$, and letting $M_n(U)(i)$ be the $\mathbb{Z}$-graded subspace of $M_n(U)$ induced from $\hat{V}$, i.e., $M_n(U)(i) = \mathcal{U}(\hat{V})_{i-n}U$. 

For $v \in V$, define $Y_{M_n(U)}(v,x) \in (\mathrm{End} (M_n(U)))((x))$ by
\begin{equation}\label{define-Y_M}
Y_{M_n(U)}(v,x) = \sum_{m\in\mathbb{Z}} v(m) x^{-m-1}.
\end{equation} 

Let $W_{A}$ be the subspace of $M_n(U)$ spanned linearly by the coefficients of 
\begin{multline}\label{relations-for-M}
(x_0 + x_2)^{\mathrm{wt} \, v + n} Y_{M_n(U)}(v, x_0 + x_2) Y_{M_n(U)}(w, x_2) u \\ 
- (x_2 + x_0)^{\mathrm{wt} \, v + n} Y_{M_n(U)}(Y(v, x_0)w, x_2) u
\end{multline}
for $v,w \in V$, with $v$ homogeneous, and $u \in U$.  Set
\[ \overline{M}_n(U) = M_n(U)/\mathcal{U} (\hat{V})W_A .\]

It is shown in \cite{DLM} that if $U$ is an $A_n(V)$-module that does not factor through $A_{n-1}(V)$, then $\overline{M}_n(U) = \bigoplus_{i \in \mathbb{N}} \overline{M}_n(U) (i)$ is an $\mathbb{N}$-gradable $V$-module with $\overline{M}_n(U) (0)\neq 0$, and  as an $A_n(V)$-module,  $\overline{M}_n(U) (n) \cong U$. Note that the condition that $U$ itself does not factor though $A_{n-1}(V)$ is indeed a necessary and sufficient condition for $\overline{M}_n(U) (0)\neq 0$ to hold.  

It is also observed in \cite{DLM} that $\overline{M}_n(U)$ satisfies the following universal property:  For any weak $V$-module $M$ and any $A_n(V)$-module homomorphism $\phi: U \longrightarrow \Omega_n(M)$, there exists a unique weak $V$-module homomorphism $\Phi: \overline{M}_n(U) \longrightarrow M$, such that $\Phi \circ \iota = \phi$ where $\iota$ is the natural injection of $U$ into $\overline{M}_n(U)$. This follows from the fact that $\overline{M}_n(U)$ is generated by $U$ as a weak $V$-module, again with the possible need of a grading shift.

Let $U^* = \mbox{Hom}(U, \C)$.  As in the construction in \cite{DLM}, we can extend $U^*$ to $M_n(U)$ by first an induction to $M_n(U)(n)$ and then by letting $U^*$ annihilate $\bigoplus_{i \neq n} M_n(U)(i)$.  In particular, we have that elements of $M_n(U)(n) = \mathcal{U}(\hat{V})_0U$ are spanned by elements of the form 
\[o_{p_1}(a_1) \cdots o_{p_s}(a_s)U\]
where $s \in \mathbb{N}$, $p_1 \geq \cdots \geq p_s$, $p_1 + \cdots + p_s =0$, $p_i \neq 0$, $p_s \geq -n$, $a_i \in V$ and $o_{p_i}(a_i) = (a_i)(\mathrm{wt} \, a_i - 1 - p_i)$. Then inducting on $s$ by using Remark 3.3 in \cite{DLM} to reduce from length $s$ vectors to length $s-1$ vectors, we have a well-defined action of $U^*$ on $M_n(U)(n)$.  

Set
\[
J = \{v \in M_n(U) \, | \, \langle u', xv\rangle = 0 \;\mbox{for all}\; u' \in U^{*}, x \in \mathcal{U}(\hat{V})\}
\]
and
\[
L_n(U) = M_n(U)/J.
\]

\begin{rem}\label{L-a-V-module-remark} {\em It is shown in \cite{DLM}, Propositions 4.3, 4.6 and 4.7,  that if $U$ does not factor through $A_{n-1}(V)$, then $L_n(U)$ is a well-defined $\mathbb{N}$-gradable $V$-module with $L_n(U)(0) \neq 0$; in particular, it is shown that $\mathcal{U}(\hat{V})W_A \subset J$, for $W_A$ the subspace of $M_n(U)$ spanned by the coefficients of (\ref{relations-for-M}), i.e., giving the associativity relations for the weak vertex operators on $M_n(U)$.}
\end{rem}

We have the following theorem from \cite{BVY}:

\begin{thm}\label{mainthm}\cite{BVY}
For $n \in \N$, let $U$ be a nonzero $A_n(V)$-module such that if $n>0$, then $U$ does not factor through $A_{n-1}(V)$. Then $L_n(U)$ is an $\mathbb{N}$-gradable $V$-module with $L_n(U)(0) \neq 0$.  If we assume further that there is no nonzero submodule of $U$ that factors through $A_{n-1}(V)$, then $\Omega_n/\Omega_{n-1}(L_n(U)) \cong U$.
\end{thm}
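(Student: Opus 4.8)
The statement to prove is Theorem \ref{mainthm}: for an $A_n(V)$-module $U$ (not factoring through $A_{n-1}(V)$ when $n>0$), $L_n(U)$ is $\mathbb{N}$-gradable with $L_n(U)(0)\neq 0$, and under the additional hypothesis that $U$ has no nonzero submodule factoring through $A_{n-1}(V)$, $\Omega_n/\Omega_{n-1}(L_n(U))\cong U$. The first assertion is essentially the content of Remark \ref{L-a-V-module-remark}, which cites \cite{DLM}, Propositions 4.3, 4.6, 4.7; so the work is almost entirely in the isomorphism $\Omega_n/\Omega_{n-1}(L_n(U))\cong U$. First I would recall that $L_n(U) = M_n(U)/J$ is graded with $U$ placed in degree $n$, so $L_n(U)(n)$ is the image of $M_n(U)(n) = \mathcal{U}(\hat V)_0 U$, and since $J$ by construction pairs trivially against $U^*$ in degree $n$ while $U^*$ separates points of $U$, one gets $L_n(U)(n)\cong U$ as $A_n(V)$-modules. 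The crux is then to identify $\Omega_n(L_n(U))$ with $L_n(U)(n)\oplus(\text{stuff coming from lower degrees})$ and to show the quotient by $\Omega_{n-1}(L_n(U))$ kills exactly that lower-degree stuff.

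**Key steps, in order.** (1) Establish $L_n(U)(n)\cong U$ as $A_n(V)$-modules via the $U^*$-pairing, as above; this uses the inductive reduction (Remark 3.3 of \cite{DLM}) that lets $U^*$ act on $M_n(U)(n)=\mathcal U(\hat V)_0 U$. (2) Show $L_n(U)(n)\subset\Omega_n(L_n(U))$: for $w\in L_n(U)(n)$ and homogeneous $v\in V$ with $\wt v_i<-n$, the mode $v_i$ lowers degree below $0$, hence lands in $L_n(U)(k)$ with $k<0$, which is zero since $L_n(U)$ is $\mathbb N$-gradable — so $v_i w=0$ and $w\in\Omega_n$. (3) Show more generally $\Omega_n(L_n(U))=\bigoplus_{k\le n}L_n(U)(k)$ is false in general but $\Omega_n(L_n(U))\cap L_n(U)(k)$ for $k<n$ is exactly $\Omega_{n-1}(L_n(U))\cap L_n(U)(k)$; equivalently, any element of $\Omega_n$ of degree $<n$ already lies in $\Omega_{n-1}$. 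Here I would use that modes $v_i$ with $\wt v_i = -n$ (degree $-n-1+\wt v - i$... being exactly $-n$) act as zero on the bottom-degree part in a way controlled by the defining relations of $\Omega_{n-1}$; this is the place where the hypothesis "$U$ has no nonzero submodule factoring through $A_{n-1}(V)$" enters, ensuring the map $U\to\Omega_n/\Omega_{n-1}$ is injective and not just surjective. (4) Surjectivity of $U=L_n(U)(n)\to\Omega_n/\Omega_{n-1}(L_n(U))$: given any homogeneous element of $\Omega_n$ of degree $k<n$, show it is congruent modulo $\Omega_{n-1}$ to an element of degree $n$, by applying suitable modes $v_i$ of positive weight-shift and using that $L_n(U)$ is generated by $U$ as a weak $V$-module together with the grading. (5) Combine (1)–(4): the composite $U\cong L_n(U)(n)\hookrightarrow\Omega_n(L_n(U))\twoheadrightarrow\Omega_n/\Omega_{n-1}(L_n(U))$ is an $A_n(V)$-module isomorphism.

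**Main obstacle.** The hard part will be step (3)/(4): controlling $\Omega_n(L_n(U))$ in degrees strictly below $n$ and showing it collapses into $\Omega_{n-1}(L_n(U))$ after the quotient. This requires a careful analysis of which modes $v_i$ can act nontrivially on low-degree elements of $L_n(U)$ and of how the defining relations $\mathcal U(\hat V)W_A\subset J$ (the associativity relations from \eqref{relations-for-M}) interact with the $\Omega$-filtration. In particular one must show that an element $w$ of degree $k<n$ annihilated by all $v_i$ with $\wt v_i<-n$ is in fact annihilated by all $v_i$ with $\wt v_i<-(n-1)$ after passing to the quotient — i.e. that the "extra" relations distinguishing $\Omega_n$ from $\Omega_{n-1}$ are already built into $J$ below degree $n$. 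The injectivity half of this (no collapse within degree $n$) is exactly where the no-$A_{n-1}(V)$-submodule hypothesis is indispensable: without it a nonzero submodule of $U$ could map into $\Omega_{n-1}$ and be killed. I expect the rest (steps (1), (2), (5)) to be routine given the $\mathbb N$-gradability already supplied by Remark \ref{L-a-V-module-remark} and the universal property of $\overline M_n(U)$.
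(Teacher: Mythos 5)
First, a point of order: this paper does not prove Theorem \ref{mainthm} --- it is quoted from \cite{BVY} --- so there is no in-paper argument to compare yours against, and I can only assess your outline on its own terms. Your steps (1), (2) and (5) are fine, and you correctly locate the mechanism for injectivity: $L_n(U)(n)\cap\Omega_{n-1}(L_n(U))$ is an $A_n(V)$-submodule of $U\cong L_n(U)(n)$ on which every $o(v)$ with $v\in O_{n-1}(V)$ acts as zero (the same computation that makes $\Omega_{n-1}$ of a module into an $A_{n-1}(V)$-module), so it factors through $A_{n-1}(V)$ and must vanish by the strengthened hypothesis. That is indeed where that hypothesis is indispensable.

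The genuine gap is in steps (3)--(4): your degree bookkeeping is backwards. Since each mode $v_i$ is homogeneous, $\Omega_n(W)$ is a graded subspace of $W=\coprod_k W(k)$, and for $k\le n-1$ one has $W(k)\subseteq\Omega_{n-1}(W)$ automatically, because a mode of weight less than $-(n-1)$ sends $W(k)$ into negative degrees, which vanish by $\mathbb{N}$-gradability. So the components of $\Omega_n(L_n(U))$ in degrees $k<n$ die in the quotient with no work; they are not the obstacle, yet your ``main obstacle'' paragraph is aimed entirely at them. The real obstruction to surjectivity of $U\cong L_n(U)(n)\rightarrow\Omega_n/\Omega_{n-1}(L_n(U))$ lies in degrees $k>n$: one must show $\Omega_n(L_n(U))\cap L_n(U)(k)\subseteq\Omega_{n-1}(L_n(U))$ for every $k>n$, i.e., that a degree-$k$ vector killed by all modes of weight less than $-n$ is also killed by the modes of weight exactly $-n$. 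This does not follow from gradedness or from the associativity relations alone; it requires the specific construction $L_n(U)=M_n(U)/J$ with $J$ the radical of the pairing against $U^*$ (one tests $v_iw$ against $U^*$ through $\mathcal{U}(\hat{V})$ and rewrites elements of $\mathcal{U}(\hat{V})$ of negative degree so that sufficiently lowering modes sit on the right), and your outline never brings $J$ to bear at this point. Finally, the mechanism proposed in your step (4) --- applying modes of positive weight-shift to push a low-degree element up to degree $n$ --- produces a \emph{different} vector, not one congruent to the original modulo $\Omega_{n-1}$, so it could not establish surjectivity even for the (vacuous) case at which you aimed it.
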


One of the main reasons we are interested in Theorem \ref{mainthm} is what it implies for the question of when modules for the higher level Zhu algebras give rise to indecomposable non simple modules for $V$ not seen by the lower level Zhu algebras.

To this end, we recall the following two corollaries to Theorem \ref{mainthm} from \cite{BVY}:

\begin{cor}\label{mainthm-first-cor}\cite{BVY}  Suppose that for some fixed $n \in \mathbb{Z}_+$, $A_n(V)$ has a direct sum decomposition $A_n(V) \cong A_{n-1}(V) \oplus A'_n(V)$, for $A'_n(V)$ a direct sum complement to $A_{n-1}(V)$, and let $U$ be an $A_n(V)$-module.   If $U$ is trivial as an  $A_{n-1}(V)$-module, then   $\Omega_n/\Omega_{n-1} (L_n(U) ) \cong U$. 
\end{cor}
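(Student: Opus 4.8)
The plan is to deduce this corollary directly from Theorem \ref{mainthm}. The statement of Theorem \ref{mainthm} requires two hypotheses on the $A_n(V)$-module $U$: first, that $U$ does not factor through $A_{n-1}(V)$; and second, that $U$ has no nonzero submodule that factors through $A_{n-1}(V)$. The conclusion $\Omega_n/\Omega_{n-1}(L_n(U)) \cong U$ is then immediate. So the entire content of the corollary is to check that, under the direct sum decomposition hypothesis $A_n(V) \cong A_{n-1}(V) \oplus A_n'(V)$ together with the hypothesis that $U$ is trivial as an $A_{n-1}(V)$-module, both hypotheses of Theorem \ref{mainthm} are satisfied.

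First I would make precise what "trivial as an $A_{n-1}(V)$-module" means in this context: under the surjection $A_n(V) \to A_{n-1}(V)$ of Lemma \ref{DLM-commutator-lemma}(i), the image $A_{n-1}(V)$ acts as zero on $U$, equivalently $U$ is annihilated by the ideal of $A_n(V)$ corresponding to $A_{n-1}(V)$ under the splitting, so that the $A_n(V)$-action on $U$ factors through the complementary summand $A_n'(V)$ (more precisely through the quotient $A_n(V)/\ker \cong A_{n-1}(V)$ being trivial forces the action to be via the complement). Then I would argue: a nonzero $A_n(V)$-module that "factors through $A_{n-1}(V)$" is by definition one on which the action descends along $A_n(V) \to A_{n-1}(V)$, i.e. one on which the kernel of that surjection acts trivially. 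If $U$ were to factor through $A_{n-1}(V)$ and simultaneously be trivial as an $A_{n-1}(V)$-module, then $A_n(V)$ would act entirely trivially on $U$; but the unit $\mathbf{1} + O_n(V)$ of $A_n(V)$ must act as the identity, forcing $U = 0$, contradicting $U \neq 0$. The same argument applies verbatim to any nonzero submodule $U' \subseteq U$: it is also trivial as an $A_{n-1}(V)$-module (being a submodule), so if it factored through $A_{n-1}(V)$ it would be acted on trivially by all of $A_n(V)$, including the identity, hence $U' = 0$. Thus neither $U$ nor any nonzero submodule of $U$ factors through $A_{n-1}(V)$, and Theorem \ref{mainthm} applies to give $\Omega_n/\Omega_{n-1}(L_n(U)) \cong U$.

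The one subtlety I would want to be careful about is the precise interaction between "factors through $A_{n-1}(V)$" and the direct sum decomposition: I should confirm that the kernel of the surjection in Lemma \ref{DLM-commutator-lemma}(i) is exactly the complement $A_n'(V)$ (as an ideal), so that the notions "kernel acts trivially" and "$A_{n-1}(V)$ acts trivially" are genuinely complementary conditions whose conjunction forces trivial action of the whole algebra. Given the direct sum decomposition as algebras, $A_n'(V)$ is a two-sided ideal complementary to the subalgebra mapping isomorphically onto $A_{n-1}(V)$, and the surjection is the projection onto that subalgebra; so its kernel is $A_n'(V)$. This is the step most likely to need a careful sentence, but it is essentially bookkeeping about the splitting rather than a real obstacle — the corollary is genuinely a short formal consequence of Theorem \ref{mainthm} once the module-theoretic hypotheses are unwound.
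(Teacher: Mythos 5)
The paper does not prove this corollary --- it is quoted from \cite{BVY} without argument --- so there is no in-paper proof to compare against; your deduction from Theorem \ref{mainthm} is correct and is exactly the intended one. The only point requiring care is the one you flag: the decomposition $A_n(V) \cong A_{n-1}(V) \oplus A'_n(V)$ is to be read as a decomposition into two-sided ideals compatible with the surjection of Lemma \ref{DLM-commutator-lemma}(i), so that $A'_n(V)$ is its kernel; with that reading, ``trivial as an $A_{n-1}(V)$-module'' plus ``factors through $A_{n-1}(V)$'' would force the identity $\mathbf{1} + O_n(V)$ to act as zero on any such submodule, killing it, which is precisely the argument needed to verify both hypotheses of Theorem \ref{mainthm} (the zero module case of $U$ being vacuous).
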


An example of this setting is given by the level one Zhu algebra for the Heisenberg vertex operator algebra as constructed in \cite{BVY-Heisenberg} as well as by the level two Zhu algebra for the Heisenberg vertex operator algebra as constructed in this paper.  An example of when a higher level Zhu algebra does not decompose into a direct sum with a lower level Zhu algebra as a direct summand was constructed in \cite{BVY-Virasoro} at level one and in that paper it is shown that there are new indecomposable modules that are induced at level one that are not detected by level zero.  

\begin{cor}\label{mainthm-cor}\cite{BVY}
Let $n \in \mathbb{Z}_+$ be fixed, and let $U$ be a nonzero indecomposable $A_n(V)$-module such that there is no nonzero submodule of $U$ that can factor through $A_{n-1}(V)$. Then $L_n(U)$ is a nonzero indecomposable $\mathbb{N}$-gradable $V$-module generated by its degree $n$ subspace, $L_n(U) (n) \cong U$, and satisfying 
\[\Omega_n/\Omega_{n-1} (L_n(U)) \cong L_n(U)(n) \cong U\] 
as $A_n(V)$-modules.

Furthermore if $U$ is a simple $A_n(V)$-module, then $L_n(U)$ is a simple $V$-module as well.
\end{cor}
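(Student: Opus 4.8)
The plan is to deduce the corollary from Theorem~\ref{mainthm} together with the explicit construction of $M_n(U)$ and $L_n(U)$ recalled in Section~\ref{zhu-algebra-definition-section}. First I would observe that the hypothesis that no nonzero submodule of $U$ factors through $A_{n-1}(V)$ forces, in particular (taking the submodule to be $U$ itself), that $U$ does not factor through $A_{n-1}(V)$; since also $U \neq 0$, both conclusions of Theorem~\ref{mainthm} apply, giving that $L_n(U)$ is a nonzero $\mathbb{N}$-gradable $V$-module with $L_n(U)(0) \neq 0$ and that $\Omega_n/\Omega_{n-1}(L_n(U)) \cong U$ as $A_n(V)$-modules. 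This establishes the last isomorphism in the displayed chain; it remains to identify it with $L_n(U)(n)$, to prove indecomposability of $L_n(U)$, and to prove the simplicity statement.

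Next I would record three structural facts about $L_n(U)$, all coming directly from the construction (and from \cite{DLM}): (a) \emph{$L_n(U)$ is generated as a weak $V$-module by $L_n(U)(n)$}: indeed $M_n(U) = \mathrm{Ind}_{P_n}^{\hat V}(U) = \mathcal{U}(\hat V)\otimes_{\mathcal{U}(P_n)}U$ is generated over $\mathcal{U}(\hat V)$ --- hence, since the modes of the weak vertex operators $Y_{M_n(U)}(v,x)$ are exactly the elements of $\hat V$, generated as a weak $V$-module --- by the copy of $U$ placed in degree $n$, and $L_n(U)$ is a $\mathcal{U}(\hat V)$-quotient of $M_n(U)$; (b) \emph{$L_n(U)(n) \cong U$ as $A_n(V)$-modules}, which is part of the content of \cite{DLM}, Propositions 4.3, 4.6, 4.7 (cf.\ Remark~\ref{L-a-V-module-remark}), and which is in any case implicit in the proof of Theorem~\ref{mainthm}; and (c) \emph{$L_n(U)$ has no nonzero graded submodule whose degree-$n$ component is zero}: this is because $J$ is, by construction, the largest graded $\mathcal{U}(\hat V)$-submodule of $M_n(U)$ meeting $M_n(U)(n)$ only in the radical of the pairing with $U^{*}$, so the preimage in $M_n(U)$ of any graded submodule of $L_n(U)$ trivial in degree $n$ is again such a submodule and hence lies in $J$.

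With (a)--(c) in hand the remaining assertions are short. The isomorphism in (b) supplies the missing link $L_n(U)(n) \cong \Omega_n/\Omega_{n-1}(L_n(U))$. For indecomposability, suppose $L_n(U) = W_1 \oplus W_2$ as $\mathbb{N}$-gradable $V$-modules, so that $L_n(U)(k) = W_1(k) \oplus W_2(k)$ for all $k$; then $L_n(U)(n) = W_1(n) \oplus W_2(n)$ is, by (b), an $A_n(V)$-module decomposition of $U$, and by (a) applied to each $W_i$ (project the degree-$n$ generators of $L_n(U)$ onto $W_i$) we have $W_i = \mathcal{U}(\hat V)W_i(n)$, so $W_i(n) = 0$ implies $W_i = 0$. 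Since $U$ is indecomposable, one of $W_1(n), W_2(n)$ --- say $W_2(n)$ --- is zero, whence $W_2 = 0$; thus $L_n(U)$ is indecomposable. Finally, assume $U$ is simple and let $W \subseteq L_n(U)$ be a nonzero submodule. Then $W(n)$ is an $A_n(V)$-submodule of $L_n(U)(n) \cong U$, hence is $0$ or all of $U$; if $W(n) = L_n(U)(n)$ then $W \supseteq \mathcal{U}(\hat V)L_n(U)(n) = L_n(U)$ by (a), while $W(n) = 0$ forces $W = 0$ by (c). Hence $L_n(U)$ is simple.

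The step I expect to be the main obstacle is (b): identifying $L_n(U)(n)$ with $U$ as $A_n(V)$-modules, and, relatedly, making concrete the a priori abstract isomorphism $\Omega_n/\Omega_{n-1}(L_n(U)) \cong U$ of Theorem~\ref{mainthm} by tracing it through the degree-$n$ subspace. This requires unwinding the $U^{*}$-pairing definition of $J$ together with the inductive reduction (via Remark 3.3 of \cite{DLM}) of length-$s$ vectors in $M_n(U)(n) = \mathcal{U}(\hat V)_0 U$ to vectors of smaller length, so that $M_n(U)(n)$ is controlled by $U$ and $J \cap M_n(U)(n)$ is exactly the radical of the pairing; once this is in place, the generation statement (a), the maximality statement (c), and the two module-theoretic arguments above are routine.
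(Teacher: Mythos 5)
The paper does not prove this corollary: it is recalled verbatim from \cite{BVY} (ultimately resting on \cite{DLM}, Propositions 4.3, 4.6, 4.7 and Theorem 4.8), so there is no in-paper argument to compare against. Judged on its own terms, your proposal follows exactly the route those references take --- reduce to Theorem \ref{mainthm} by noting that $U$ itself cannot factor through $A_{n-1}(V)$, then extract from the construction of $M_n(U)$ and $J$ the three facts that $L_n(U)$ is generated by its degree-$n$ piece, that $L_n(U)(n)\cong U$, and that $J$ is maximal among graded submodules trivial in degree $n$ --- and the deductions of indecomposability and simplicity from (a)--(c) are correct. The one point to be careful about is that your simplicity argument takes an arbitrary nonzero submodule $W$ and immediately speaks of $W(n)$; this is only automatic if $W$ is graded, which is the standing convention for submodules of $\mathbb{N}$-gradable (admissible) modules in \cite{DLM} and \cite{BVY}, so the argument is fine for the notion of simplicity intended here, but you should state that convention explicitly. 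Your identification of the main technical burden --- pinning down $L_n(U)(n)\cong U$ by showing $J\cap M_n(U)(n)$ is precisely the radical of the $U^{*}$-pairing via the length-reduction of Remark 3.3 of \cite{DLM} --- is also where the cited references spend their effort, so the assessment is accurate.
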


\section{The Heisenberg vertex operator algebra, its level zero and level one Zhu algebras, and its irreducible modules}\label{Heisenberg-section}

We denote by $\mathfrak{h}$ a one-dimensional abelian Lie algebra spanned by $\alpha$ with a bilinear form $\langle \cdot, \cdot \rangle$ such that $\langle \alpha, \alpha \rangle = 1$, and by
\[
\hat{\mathfrak{h}} = \mathfrak{h}\otimes \C[t, t^{-1}] \oplus \C \mathbf{k}
\]
the affinization of $\mathfrak{h}$ with bracket relations
\[
[a(m), b(n)] = m\langle a, b\rangle\delta_{m+n, 0}\mathbf{k}, \;\;\; a, b \in \mathfrak{h},
\]
\[
[\mathbf{k}, a(m)] = 0,
\]
where we define $a(m) = a \otimes t^m$ for $m \in \mathbb{Z}$ and $a \in \mathfrak{h}$.

Set
\[
\hat{\mathfrak{h}}^{+} =\mathfrak{h} \otimes   t\C[t] \qquad \mbox{and} \qquad \hat{\mathfrak{h}}^{-} = \mathfrak{h} \otimes t^{-1}\C[t^{-1}].
\]
Then $\hat{\mathfrak{h}}^{+}$ and $\hat{\mathfrak{h}}^{-}$ are abelian subalgebras of $\hat{\mathfrak{h}}$.  Consider the induced $\hat{\mathfrak{h}}$-module given by 
\[
M(1) = U(\hat{\mathfrak{h}})\otimes_{U(\C[t]\otimes \mathfrak{h} \oplus \C c)} \C{\bf 1} \simeq S(\hat{\mathfrak{h}}^{-}) \qquad \mbox{(linearly)},
\]
where $U(\cdot)$ and $S(\cdot)$ denote the universal enveloping algebra and symmetric algebra, respectively, $\mathfrak{h} \otimes \C[t]$ acts trivially on $\mathbb{C}\mathbf{1}$ and $\mathbf{k}$ acts as multiplication by $1$.   Then $M(1)$ is a vertex operator algebra, often called the {\it vertex operator algebra associated to the rank one Heisenberg}, or the {\it rank one Heisenberg vertex operator algebra}, or the {\it one free boson vertex operator algebra}.  Here, the Heisenberg Lie algebra in question being precisely $\hat{\mathfrak{h}} \diagdown \mathbb{C}\alpha(0)$.

Any element of $M(1)$ can be expressed as a linear combination of elements of the form
\begin{equation}\label{generators-for-V}
\alpha(-k_1)\cdots \alpha(-k_j){\bf 1}, \quad \mbox{with} \quad  k_1 \geq \cdots \geq k_j \geq 1.
\end{equation}

The conformal element for $M(1)$ is given by 
\[
\omega = \frac{1}{2} \alpha(-1)^2{\bf 1}.
\]
However we can also shift the conformal element by 
\[ \omega_a = \frac{1}{2} \alpha(-1)^2{\bf 1} + a \alpha(-2) \mathbf{1},\]
for any $a \in \mathbb{C}$, and then $M(1)$ with the conformal element $\omega_a$ is a vertex operator algebra with central charge $c = 1 - 12 a^2$.  We denote this vertex operator algebra with shifted conformal elements by $M_a(1)$.    So, for instance $M(1) = M_0(1)$. 

Writing $Y (\omega_a,x) = \sum_{k \in \mathbb{Z}} L_a (k) x^{-k-2}$ we have that the representation of the Virasoro algebra corresponding to the vertex operator algebras structure $(M_a(1), Y, \mathbf{1}, \omega_a)$ is given as follows:  
For $n$ even
\begin{equation}\label{Ln-even}
L_{a} (-n) = \sum_{k = n/2 + 1}^\infty \alpha(-k)\alpha(-n+k) + \frac{1}{2} \alpha(-n/2)^2 + a (n-1) \alpha(-n),
\end{equation}
and for $n$ odd, we have
\begin{equation}\label{Ln-odd}
L_{a} (-n) = \sum_{k = (n+1)/2}^\infty \alpha(-k)\alpha(-n+k)  + a (n-1) \alpha(-n) .
\end{equation}

For this paper we only need the fact that $M_a(1)$ is simple and has infinitely many inequivalent irreducible modules, which can be easily classified (see \cite{LL}). (Indecomposable modules for $M_a(1)$ were classified by Milas \cite{M}.)  In particular,  for every highest weight irreducible $M_a(1)$-module $W$ there exists $\lambda \in \C$ such that
\begin{equation}\label{irreducibles}
W \cong M_a(1) \otimes_{\mathbb{C}} \mathbb{C}_\lambda,
\end{equation} 
where $\mathbb{C}_{\lambda}$  is the one-dimensional $\mathfrak{h}$-module such that $\alpha(0)$  acts as multiplication by $\lambda$. We denote these irreducible $M_a(1)$-modules by  $M_a(1, \lambda)= M_a(1)\otimes_{\mathbb{C}}  \mathbb{C}_{\lambda}$, so that $M_a(1, 0) = M_a(1)$.  If we let $v_\lambda \in \mathbb{C}_\lambda$ such that $\mathbb{C}_{\lambda} = \C v_{\lambda}$, then for instance,
\begin{eqnarray}
L_{a} (-1) v_\lambda &=& \alpha(-1)\alpha(0) v_{\lambda} \ = \ \lambda \alpha(-1)v_{\lambda}, \label {L(-1)-on-v-lambda}\\
L_{a} (0) v_\lambda &=& \left( \frac{1}{2}\alpha(0)^2  - a\alpha(0) \right) v_\lambda \ = \ \left( \frac{\lambda^2}{2} - a\lambda \right) v_{\lambda}. \label {L(0)-on-v-lambda}
\end{eqnarray}

Note that $M_a(1, \lambda)$ is admissible with 
\[M_a(1, \lambda) = \coprod_{k \in \mathbb{N}} M_a(1, \lambda)_k \]
and $M_a(1, \lambda)_0 \neq 0$ where $M_a(1, \lambda)_k$ is the eigenspace of eigenvectors of weight $k + \frac{\lambda^2}{2} - a\lambda$.  And since $M_a(1)$ is simple these $M_a(1, \lambda)$ are faithful $M_a(1)$-modules.  

\begin{rem}\label{a-not-mattering-remark}
{\em Note that for $v \in M_a(1)$, we have 
\begin{equation}\label{L(0)-a-condition} 
L_a(0) v = L_0(0) v
\end{equation}
and for $w \in M_a(1) \otimes_\mathbb{C}  \mathbb{C}_\lambda$, 
\begin{equation}\label{L(-1)-a-condition}
L_a(-1) w = L_0(-1)w .
\end{equation}
Thus in calculating $A_n(M_a(1))$, the choice of conformal vector, i.e. the choice of $a \in\mathbb{C}$ for $\omega_a$, does not change the structure of $A_n(M_a(1))$.   }
\end{rem}

The following is proved in \cite{FZ}:

\begin{thm}\label{A_0-theorem} \cite{FZ} 
As algebras 
\[A_0(M_a(1)) \cong \mathbb{C}[x,y]/(x^2-y) \cong \mathbb{C}[x]\] 
under the indentifications 
\[\alpha(-1)\mathbf{1} + O_0(M_a(1)) \longleftrightarrow x, \qquad \mbox{and} \qquad  \alpha(-1)^2\mathbf{1} + O_0(M_a(1)) \longleftrightarrow y.\]  
In addition, there is a bijection between isomorphism classes of irreducible admissible $M_a(1)$-modules and irreducible $\mathbb{C}[x]$-modules given by $M_a(1,\lambda) \longleftrightarrow \mathbb{C}[x]/(x - \lambda)$. 
\end{thm}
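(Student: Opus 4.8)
The plan is to establish the ring isomorphism $A_0(M_a(1))\cong\mathbb{C}[x,y]/(x^2-y)\cong\mathbb{C}[x]$ first, and then read the module bijection off Zhu's correspondence. By Remark~\ref{a-not-mattering-remark} the relevant computation does not depend on $a$, so I may take $a=0$ and write $\alpha:=\alpha(-1)\mathbf{1}$ and $L(k):=L_0(k)$. For the ring isomorphism there are two steps: (i) produce a surjection $\mathbb{C}[x,y]/(x^2-y)\cong\mathbb{C}[x]\twoheadrightarrow A_0(M_a(1))$ sending $x\mapsto[\alpha]$ and $y\mapsto[\alpha(-1)^2\mathbf{1}]$; (ii) show it is injective using the irreducible modules $M_a(1,\lambda)$.

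For step (i), recall from \eqref{generators-for-V} that $M_a(1)$ is strongly generated by $\alpha$, i.e.\ spanned by the monomials $\alpha(-k_1)\cdots\alpha(-k_j)\mathbf{1}$. Two short residue computations drive the reduction. First, from \eqref{*_n-definition} with $n=0$ and $\wt\alpha=1$, together with the fact that $\alpha(0)$ annihilates $M_a(1)$, one gets $\alpha*_0 b=\alpha(-1)b$ exactly in $M_a(1)$, hence $[\alpha(-1)b]=[\alpha]*_0[b]$ for all $b\in M_a(1)$. Second, from the defining relations \eqref{define-circ} of $O_0(M_a(1))$ — and their iterated $L(-1)$-descendants $\res_x\frac{(1+x)^{1+m}}{x^{m+2}}Y(\alpha,x)b\in O_0(M_a(1))$, $m\ge0$, which follow from \eqref{define-circ} by an integration-by-parts induction in $x$ (using $v\circ_0\mathbf{1}=(L(-1)+L(0))v$ from Remark~\ref{first-remark}) — one can rewrite, modulo $O_0(M_a(1))$, any $\alpha(-k)b$ with $k\ge2$ as a $\mathbb{C}$-linear combination of terms $\alpha(-k')b$ with $1\le k'<k$. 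A double induction, outer on the length $j$ and inner on the leading index $k_1$ of a monomial, then collapses every element of $M_a(1)$, modulo $O_0(M_a(1))$, onto a polynomial in $[\alpha]$; in particular $[\alpha(-1)^j\mathbf{1}]=[\alpha]^{*_0 j}$, so $A_0(M_a(1))$ is commutative and generated by $[\alpha]$. A direct computation of $\alpha*_0\alpha$ from \eqref{*_n-definition} gives $[\alpha]*_0[\alpha]=[\alpha(-1)^2\mathbf{1}]$, which is precisely the relation $x^2=y$. This yields the surjection in (i).

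For step (ii), suppose $p(x)=\sum_k c_k x^k\in\mathbb{C}[x]$ maps to $0$ in $A_0(M_a(1))$. Since $[\alpha(-1)^k\mathbf{1}]=[\alpha]^{*_0 k}$, the class of the vector $\sum_k c_k\,\alpha(-1)^k\mathbf{1}$ is $p([\alpha])=0$, so this vector lies in $O_0(M_a(1))$ and therefore its zero mode annihilates $\Omega_0(W)$ for every $\mathbb{N}$-gradable $M_a(1)$-module $W$ (by the well-definedness of the action $o$ on $A_0(M_a(1))$). Using $o(u*_0 v)=o(u)o(v)$ and $o(\alpha)=\alpha(0)$, that zero mode equals $p(\alpha(0))$ on $\Omega_0(W)$. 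Taking $W=M_a(1,\lambda)$, where $\Omega_0(W)=M_a(1,\lambda)_0=\mathbb{C}v_\lambda$ with $\alpha(0)$ acting as $\lambda$ by construction, we get $p(\lambda)=0$ for every $\lambda\in\mathbb{C}$, hence $p=0$. Thus $A_0(M_a(1))\cong\mathbb{C}[x]$. For the module statement, Zhu's correspondence (the $n=0$ case of the functors recalled in Section~\ref{zhu-algebra-definition-section}, via $W\mapsto\Omega_0(W)=W(0)$ and $L_0$) identifies isomorphism classes of irreducible admissible $M_a(1)$-modules with those of irreducible $\mathbb{C}[x]$-modules, which are exactly the $\mathbb{C}[x]/(x-\lambda)$, $\lambda\in\mathbb{C}$ (every maximal ideal of $\mathbb{C}[x]$ is some $(x-\lambda)$, $\mathbb{C}$ being algebraically closed). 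Under this correspondence $M_a(1,\lambda)$ goes to $M_a(1,\lambda)_0=\mathbb{C}v_\lambda$, on which $x=[\alpha(-1)\mathbf{1}]$ acts by $\lambda$, i.e.\ to $\mathbb{C}[x]/(x-\lambda)$; this is consistent with the classification recalled around \eqref{irreducibles} that every irreducible admissible $M_a(1)$-module is some $M_a(1,\lambda)$.

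The main obstacle is the reduction in step (i): verifying that $O_0(M_a(1))$ is large enough to express every monomial \eqref{generators-for-V} as a polynomial in $[\alpha]$. This is the one genuinely combinatorial point — a bookkeeping induction feeding the $\circ_0$-relations (which trade a mode $\alpha(-k)$ for lower ones) into the multiplication rule $\alpha*_0(-)=\alpha(-1)(-)$. Conceptually it is forced: $M_a(1)=U(\hat{\mathfrak{h}}^{-})\mathbf{1}$ is a ``Verma-type'' induced module carrying no singular vectors to quotient out, so nothing beyond $y=x^2$ can appear and $A_0(M_a(1))$ must be the full polynomial algebra $U(\mathbb{C}\alpha(0))=\mathbb{C}[x]$. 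Everything else — the two residue identities and the appeal to Zhu's module correspondence — is routine.
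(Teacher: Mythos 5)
The paper does not prove this theorem; it is quoted verbatim from Frenkel--Zhu \cite{FZ}, so there is no in-paper argument to compare against. Your proof is correct and is essentially the standard one: the two residue identities you use are exactly right ($\alpha(-1)\mathbf{1} *_0 b = \alpha(-1)b$ since $\alpha(0)$ kills $M_a(1)$, and the family $\res_x (1+x)^{1+m}x^{-m-2}Y(\alpha(-1)\mathbf{1},x)b \in O_0(M_a(1))$ trades $\alpha(-k)b$, $k\ge 2$, for lower modes), the double induction collapses everything onto polynomials in $[\alpha(-1)\mathbf{1}]$, and injectivity follows by evaluating zero modes on the lowest-weight vectors of the $M_a(1,\lambda)$, which separate polynomials since $\lambda$ ranges over all of $\mathbb{C}$. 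One small inaccuracy in attribution: the higher-denominator relations are not really a consequence of $v\circ_0\mathbf{1}=(L(-1)+L(0))v$; the standard derivation applies the defining relation $u\circ_0 v\in O_0(V)$ to $u$ replaced by $L(-1)u$ (which raises the weight by one) and integrates by parts in $x$, inducting on $m$. The conclusion you need is correct either way, and the rest of the argument --- including reading the module bijection off the $n=0$ case of the $\Omega_0$/$L_0$ correspondence recalled in Section \ref{zhu-algebra-definition-section} --- is sound.
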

And from \cite{BVY-Heisenberg}, we have
\begin{thm}\label{A_1-theorem}\cite{BVY-Heisenberg}
Let $I$ be the ideal generated by the polynomial $(x^2-y)(x^2-y+2)$ in $\mathbb{C}[x, y]$.  
Then we have the following isomorphisms of algebras 
\begin{eqnarray}\label{first-characterization-level-one}
A_1(M_a(1)) &\cong& \mathbb{C}[x, y]/I \ \cong \ \mathbb{C}[x,y]/(x^2 - y) \oplus \mathbb{C}[x,y]/(x^2 - y + 2) \\
& \cong & A_0(M_a(1)) \oplus \mathbb{C}[x,y]/(x^2 - y+ 2)  \ \cong \  \mathbb{C}[x] \oplus \mathbb{C} [x] \label{second-characterization}
\end{eqnarray}
under the identifications
\[
\alpha(-1){\bf 1} + O_1(M_a(1)) \longleftrightarrow x + I, \quad \qquad  \alpha(-1)^2{\bf 1} + O_1(M_a(1)) \longleftrightarrow y + I.
\]
\end{thm}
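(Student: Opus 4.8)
The plan is to establish the central isomorphism $A_1(M_a(1)) \cong \mathbb{C}[x,y]/I$ by proving two things separately: a \emph{surjection} $\mathbb{C}[x,y]/I \twoheadrightarrow A_1(M_a(1))$, and the \emph{injectivity} of that map. Once this is done, the remaining isomorphisms in \eqref{first-characterization-level-one}--\eqref{second-characterization} are immediate: the Chinese Remainder Theorem applies because $(x^2-y)+(x^2-y+2)$ contains the unit $2$, so $\mathbb{C}[x,y]/I \cong \mathbb{C}[x,y]/(x^2-y) \oplus \mathbb{C}[x,y]/(x^2-y+2)$, the first summand is $A_0(M_a(1)) \cong \mathbb{C}[x]$ by Theorem \ref{A_0-theorem} (matched with the surjection $A_1 \twoheadrightarrow A_0$ of Lemma \ref{DLM-commutator-lemma}(i)), and the second is visibly $\cong \mathbb{C}[x]$.

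For the surjection I would first show that $A_1(M_a(1))$ is generated, as an algebra under $*_1$, by the classes $x = \alpha(-1)\mathbf{1} + O_1(M_a(1))$ and $\omega_a + O_1(M_a(1))$. Since $(L(-1)+L(0))w \in O^L(M_a(1)) \subseteq O_1(M_a(1))$, one gets in particular $\alpha(-2)\mathbf{1} \equiv_1 -\alpha(-1)\mathbf{1}$, hence $\omega_a \equiv_1 \tfrac12 \alpha(-1)^2\mathbf{1} - a\,\alpha(-1)\mathbf{1}$, so generation by $\{x,\omega_a\}$ is equivalent to generation by $\{x,y\}$ with $y = \alpha(-1)^2\mathbf{1} + O_1(M_a(1))$. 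The generation statement itself is the first technical input: using the $O^L$ relations together with the relations $\alpha(-1)\mathbf{1} \circ_1 w$ and $\omega_a \circ_1 w$, one reduces an arbitrary PBW monomial $\alpha(-k_1)\cdots\alpha(-k_j)\mathbf{1}$ modulo $O_1(M_a(1))$ by induction on weight and on the largest part $k_1$, until only polynomials in $\alpha(-1)\mathbf{1}$ and $\alpha(-1)^2\mathbf{1}$ remain. Commutativity of $A_1(M_a(1))$ is then automatic: $\omega_a + O_1(M_a(1))$ is central by Lemma \ref{DLM-commutator-lemma}(ii), and an algebra generated by one central element together with one further element is commutative. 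Thus $x \mapsto \alpha(-1)\mathbf{1}+O_1(M_a(1))$, $y \mapsto \alpha(-1)^2\mathbf{1}+O_1(M_a(1))$ defines a surjective algebra homomorphism $\mathbb{C}[x,y] \twoheadrightarrow A_1(M_a(1))$; to see that $I$ lies in its kernel, I would exhibit an explicit element of $O_1(M_a(1))$ --- a suitable combination of $\circ_1$-products and $*_1$-products of the generators, reduced via the relations above --- whose class in $A_1(M_a(1))$ equals $(x^2-y)(x^2-y+2)$.

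For injectivity I would use the irreducible modules $M_a(1,\lambda)$ directly. Because $v_m M_a(1,\lambda)_k \subseteq M_a(1,\lambda)_{k+\wt v - m - 1}$, one checks that $M_a(1,\lambda)_0 = \mathbb{C}v_\lambda$ and $M_a(1,\lambda)_1 = \mathbb{C}\alpha(-1)v_\lambda$ each lie in $\Omega_1(M_a(1,\lambda))$ and are preserved by the $A_1(M_a(1))$-action $o(v+O_1(M_a(1))) = v_{\wt v - 1}$, hence are one-dimensional $A_1(M_a(1))$-modules. A short computation, using $\alpha(0)v_\lambda = \lambda v_\lambda$ and the fact that the zero mode of $\alpha(-1)^2\mathbf{1}$ is $2\sum_{j\geq 1}\alpha(-j)\alpha(j) + \alpha(0)^2$, shows that on $M_a(1,\lambda)_0$ the classes $x,y$ act by $\lambda,\lambda^2$, and on $M_a(1,\lambda)_1$ they act by $\lambda,\lambda^2+2$. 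Consequently any $p(x,y)$ in the kernel of $\mathbb{C}[x,y] \to A_1(M_a(1))$ vanishes at $(\lambda,\lambda^2)$ and at $(\lambda,\lambda^2+2)$ for every $\lambda \in \mathbb{C}$, so $p$ vanishes on the union of the two affine plane curves $\{y = x^2\}$ and $\{y = x^2+2\}$; since $\mathbb{C}[x,y]/I$ is reduced, $I$ is the (radical) vanishing ideal of that union, so $p \in I$. Together with the previous paragraph this pins the kernel down to exactly $I$, giving $A_1(M_a(1)) \cong \mathbb{C}[x,y]/I$.

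I expect essentially all of the difficulty to lie in the ``surjection'' half: proving the reduction/generation statement, and especially identifying the precise relation $(x^2-y)(x^2-y+2)$ by producing the explicit member of $O_1(M_a(1))$ that realizes it and verifying that no further relations are forced (the latter being exactly what the module computation in the third paragraph certifies). The ``injectivity'' half is a routine eigenvalue computation on two one-dimensional modules. An alternative to the explicit-curve argument would be to invoke Theorem \ref{mainthm}: apply $L_1$ to a suitable $A_1(M_a(1))$-module to produce an $\mathbb{N}$-gradable $M_a(1)$-module not factoring through $A_0(M_a(1))$, thereby witnessing the second summand $\mathbb{C}[x,y]/(x^2-y+2)$ intrinsically; but since the relevant modules are already available, the curve argument is the more economical route.
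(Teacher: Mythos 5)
Your proposal is correct and follows essentially the same strategy as the cited source \cite{BVY-Heisenberg} (the present paper imports this theorem without reproving it): reduce the generators of $A_1(M_a(1))$ to $\alpha(-1)\mathbf{1}$ and $\alpha(-1)^2\mathbf{1}$ via the $O^L$ and $\circ_1$ relations, exhibit $(x^2-y)(x^2-y+2)$ as the class of an explicit element of $O_1(M_a(1))$, and rule out further relations by the zero-mode action on the degree $0$ and degree $1$ subspaces of the irreducible modules $M_a(1,\lambda)$ --- exactly the two-pronged method this paper redeploys at level two in Sections 5--7 and in Lemma \ref{o-mode-lemma}. Your Nullstellensatz packaging of the injectivity step (the kernel vanishes on the union of the curves $y=x^2$ and $y=x^2+2$, whose radical vanishing ideal is $I$) is a clean equivalent of the eigenvalue argument used there.
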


\section{Generators and relations for the algebras $A_n(M_a(1))$ from \cite{AB-general-n}}\label{A_n-construction-section}

In this section we give some general results about the generators for the level $n$ Zhu algebra for the Heisenberg vertex operator algebra $M_a(1)$ that follow as a special case of the results from \cite{AB-general-n}.  Note that these results use only the internal structure of the vertex operator algebra itself, and no knowledge of the nature of the vertex operator algebra modules.  

Recall the notation (\ref{define-O-subsets}) and (\ref{define-O}).  To distinguish what relation we are using in our results below, we use the notation $u \approx v$ if $u \equiv v \, \mathrm{mod} \,O^L(V)$.  And we will write $u \sim_n v $ if  $u \equiv v  \, \mathrm{mod} \, O_n^\circ(V)$.  More broadly if $u$ is equivalent to $w$ modulo $O_n(V)$, we will write $u \equiv_n w$.  

We also use the following notation from \cite{AB-general-n}: For $v \in M_a(1)$ and $r \in \mathbb{N}$, let $F_r(v)$ denote the subspace of $M_a(1)$ spanned by elements of the form 
\[ \alpha(-k_1) \cdots \alpha(-k_m)v, \quad \mbox{for $k_1, \dots, k_m \in \mathbb{Z}_+$ with $m\leq r$.} \]

We have the following Corollary to Lemma 3.1, Proposition 3.2, and Lemma 3.3 in \cite{AB-general-n}.

\begin{cor}\label{recursion-corollary-n} \cite{AB-general-n}
For  $r \in \mathbb{N}$, and $ k_1, \dots, k_r \in \mathbb{Z}$, we have 
\begin{multline}\label{L-reduction}
\left(\sum_{j = 1}^r -k_j \right)\alpha(k_1) \alpha(k_2) \cdots \alpha(k_r) \mathbf{1} \\
\approx
 \sum_{j = 1}^r  k_j  \alpha(k_1) \alpha(k_2)\cdots \alpha(k_{j-1}) \alpha( k_j - 1)  \alpha(k_{j+1}) \cdots \alpha(k_r) \mathbf{1} .
\end{multline}
In particular, for $u,v \in M_a(1)$ and $j,k \in \mathbb{Z}$, $j \neq 0$, we have
\begin{equation}\label{L-reduction-special-case} 
u_{-j-1} v_{-k} {\bf 1} \approx - \left( \frac{\mathrm{wt} \, u  + \mathrm{wt} \, v  +j  +k  -2}{j} \right) u_{-j} v_{-k} {\bf 1}  - \frac{k}{j} u_{-j} v_{-k-1} {\bf 1} .
\end{equation}

Given $n \in \mathbb{N}$, for all $m\ge n+1$, and $v\in M_a(1)$,
\begin{equation}\label{recursion-equation}
\alpha(-m)v\sim_n (-1)^{m+1}\sum_{j=1}^{n+1}\binom{m-n-1}{j-1}\binom{m-n-j-1}{n + 1-j}\alpha(-n-j)v.
\end{equation}

Letting $Y^+(u,x) \in (\mathrm{End}(V))[[x]]$ and $Y^-(u,x)\in x^{-1}(\mathrm{End}(M_a(1)))[[x^{-1}]]$ denote the regular and singular parts, respectively, of $Y(u,x)$ for $u \in M_a(1)$, we have 
\begin{multline}\label{Y+}
Y^+(\alpha(-1) \mathbf{1} ,x)v \sim_n 
 \sum_{k=1}^{n}  \alpha(-k) v x^{k-1} \\
 +   \sum_{m\ge n+1} \sum_{k = n  + 1}^{2n + 1} (-1)^{m- 1}  \binom{m-n-1}{k-n  -1} \binom{m-k-1}{2n-k + 1 } \alpha(-k) v x^{m-1}  .
\end{multline}
\end{cor}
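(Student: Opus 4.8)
The plan is to derive Corollary \ref{recursion-corollary-n} directly from the three cited results of \cite{AB-general-n} (Lemma 3.1, Proposition 3.2, Lemma 3.3) by specializing them to $V = M_a(1)$, where every homogeneous vector is a monomial $\alpha(-k_1)\cdots\alpha(-k_r)\mathbf{1}$ and the weight of $\alpha(-k)\mathbf{1}$ is just $k$. The first identity \eqref{L-reduction} should come from applying the relation $(L(-1)+L(0))w \approx 0$ to $w = \alpha(k_1)\cdots\alpha(k_r)\mathbf{1}$: since $L(0)$ acts as $-\sum k_j$ on this vector and $L(-1)$ acts as a derivation sending each $\alpha(k_j)$ to $-k_j\,\alpha(k_j-1)$ (using the $L(-1)$-derivative property $[L(-1),\alpha(k)] = -k\,\alpha(k-1)$ and $L(-1)\mathbf{1}=0$), collecting terms gives exactly \eqref{L-reduction}. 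The special case \eqref{L-reduction-special-case} is then the $r=2$ instance with $(k_1,k_2)=(-j,-k)$, rewritten in mode notation using $\mathrm{wt}\,u_{-j-1}v_{-k}\mathbf{1}$ bookkeeping; I would just substitute and solve for the $u_{-j-1}v_{-k}\mathbf{1}$ term, which is legitimate because $j\neq 0$ so we may divide by $j$.

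Next I would handle \eqref{recursion-equation}. This is the specialization of the general $O_n^\circ$-reduction in \cite{AB-general-n} (their Lemma 3.3 or Proposition 3.2) that expresses $\alpha(-m)v$ for $m \geq n+1$ in terms of the ``low'' modes $\alpha(-n-1),\dots,\alpha(-2n-2)$ modulo $O_n^\circ(V)$; here I would need to check that the binomial coefficients in the general statement, when $u = \alpha(-1)\mathbf{1}$ has weight $1$, collapse to $\binom{m-n-1}{j-1}\binom{m-n-j-1}{n+1-j}$. The key input is the defining relation $u\circ_n v \sim_n 0$ with $u = \alpha(-1)\mathbf{1}$: expanding $\mathrm{Res}_x (1+x)^{1+n} Y(\alpha(-1)\mathbf{1},x)v\, x^{-2n-2}$ and reading off the coefficient relations among $\alpha(-k)v$. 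I expect the combinatorial identity to be a Vandermonde-type rearrangement, which I would cite from \cite{AB-general-n} rather than re-derive.

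Finally, \eqref{Y+} is essentially a repackaging of \eqref{recursion-equation}: write $Y^+(\alpha(-1)\mathbf{1},x)v = \sum_{m\geq 1}\alpha(-m)v\,x^{m-1}$, split off the terms $m = 1,\dots,n$ (which survive untouched), and for each $m \geq n+1$ substitute \eqref{recursion-equation} to re-express $\alpha(-m)v$ in the basis $\alpha(-k)v$ for $k = n+1,\dots,2n+1$; then interchange the order of summation (summing over $k$ outside, $m$ inside). The coefficient $(-1)^{m-1}\binom{m-n-1}{k-n-1}\binom{m-k-1}{2n-k+1}$ in \eqref{Y+} should match the coefficient in \eqref{recursion-equation} after the reindexing $j \mapsto k - n$, i.e.\ $\binom{m-n-1}{j-1} = \binom{m-n-1}{k-n-1}$ and $\binom{m-n-j-1}{n+1-j} = \binom{m-k-1}{2n-k+1}$, together with $(-1)^{m+1} = (-1)^{m-1}$.

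The main obstacle is the bookkeeping in matching the binomial coefficients of the general formulas from \cite{AB-general-n} against the clean forms \eqref{recursion-equation} and \eqref{Y+} stated here; everything else is a mechanical specialization. Since \cite{AB-general-n} is the authors' own prior paper and these are stated there in general form, in the actual proof I would simply invoke those results and verify the specialization $\mathrm{wt}\,\alpha(-1)\mathbf{1} = 1$, leaving the reader to check the elementary binomial manipulations, and flag the reindexing $j = k-n$ that converts \eqref{recursion-equation} into the double sum in \eqref{Y+}.
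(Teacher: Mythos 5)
Your proposal is correct and matches the paper's treatment: the paper gives no independent proof of Corollary \ref{recursion-corollary-n}, simply recording it as the specialization to $V=M_a(1)$, $u=\alpha(-1)\mathbf{1}$ (weight $1$) of Lemma 3.1, Proposition 3.2, and Lemma 3.3 of \cite{AB-general-n}, which is exactly what you do, and your sketches of the underlying derivations (the $(L(-1)+L(0))v\in O^L(V)$ argument for \eqref{L-reduction} and \eqref{L-reduction-special-case}, and the reindexing $j=k-n$ converting \eqref{recursion-equation} into \eqref{Y+}) are accurate. The only slight imprecision is that \eqref{L-reduction-special-case} is not literally the $r=2$ case of \eqref{L-reduction}, since it holds for arbitrary $u,v\in M_a(1)$ rather than just $\alpha$-modes, but the argument you describe --- apply $(L(-1)+L(0))$ to $u_{-j}v_{-k}\mathbf{1}$, use $[L(-1),u_m]=-m\,u_{m-1}$ together with the weight count $\mathrm{wt}\,u+\mathrm{wt}\,v+j+k-2$, and divide by $j\neq 0$ --- is the correct one.
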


Note that from (\ref{generators-for-V}), we see that $M_a(1)$ is strongly generated by $\alpha(-1) {\bf 1}$, i.e. $M_a(1) = \langle \alpha (-1) {\bf 1} \rangle^1$, in the terminology and notation of \cite{AB-general-n}, and satisfies the permutation property from \cite{AB-general-n}.  Thus Proposition 3.6, Lemma 3.7, Theorem 3.8, Lemma 3.9, Corollary 3.13, Proposition 3.14, and Theorem 3.15 from \cite{AB-general-n} hold in this setting giving the following results:

\begin{cor}\label{reducing-general-generators-cor-heisenberg}\cite{AB-general-n}
For $n \in \mathbb{Z}_+$, $A_n(M_a(1))$ is generated by elements of the form
\[ \alpha(-2n)^{i_{2n}} \alpha(-2 n + 1)^{i_{2n -1}} \cdots \alpha( - 1)^{i_1 }\mathbf{1} + O_n(M_a(1)), \quad \mbox{and} \quad  \alpha(-1)^j {\bf 1} + O_n(M_a(1)),\]
for $i_1, i_2, \dots, i_{2n} \in \mathbb{N}$ with $0 \leq i_1 \leq n$, and $j \in \mathbb{Z}_+$.

Furthermore, for $v \in M_a(1)$ and $k \in \mathbb{Z}_+$, we have the following relations in $A_n(M_a(1))$:
\begin{equation}\label{-2n-1-reduction}
\alpha(-2n-1) v_{-k} {\bf 1} \approx -\left( \frac{ \mathrm{wt} \, v  + 2n + k - 1}{2n} \right) \alpha(-2n) v_{-k} {\bf 1}  - \frac{k}{2n} \alpha (-2n) v_{-k-1} {\bf 1}, 
\end{equation} 
and in particular
\begin{equation}\label{-2n-1-reduction-Heisenberg}
\alpha(-2n-1) \mathbf{1} \approx - \alpha(-2n) \mathbf{1}. 
\end{equation}
For $i_1, \dots, i_{2n} \in \mathbb{N}$ and $i_{2n+1}, k \in \mathbb{Z}_+$ 
\begin{equation}\label{reduce-equation-Heisenberg}
 \alpha(-2n-1)^{i_{2n + 1}} \alpha(-2n)^{i_{2n}} \alpha(-2n + 1)^{i_{2n-1}} \cdots \alpha(-1)^{i_1} v_{-k} {\bf 1}  \hspace{2in}
 \end{equation}
 \begin{eqnarray*}
&\equiv_n& \! \!  -\frac{1}{2n( i_{2n} + 1) } \Biggl(  \left(\mathrm{wt} \,  v_{-k} {\bf 1}  - 1 +   \sum_{j = 1}^{2n+1} j i_j  \right) \alpha(-2n - 1)^{i_{2n+1}-1} \alpha(-2 n)^{i_{2n}+1} \nonumber \\
& & \quad \alpha(-2n+1)^{i_{2n-1}}   \cdots \alpha( - 1)^{i_1 }  v_{-k} {\bf 1}  \nonumber \\
& & \quad - \, (2n+1) (i_{2n+1} -1)\left(  \sum_{j = 1}^{n+ 1} \binom{n + 1}{j-1}  \alpha( - n - j) \right) \alpha(-2n-1)^{i_{2n+1} -2} \alpha(-2n)^{i_{2n}+1} \nonumber \\
& & \quad  \alpha(-2n+1)^{i_{2n-1}}  \cdots \alpha(-1)^{i_1}  v_{-k} {\bf 1}  \nonumber \\
& & \quad  + \, (2n-1) i_{2n-1} \alpha(-2n-1)^{i_{2n + 1}-1} \alpha(-2n)^{i_{2n} + 2} \alpha(-2n + 1)^{i_{2n-1} - 1} \alpha(-2n + 2)^{i_{2n-2}} \nonumber \\
& & \quad \cdots \alpha(-1)^{i_1}  v_{-k} {\bf 1}  + \cdots  \nonumber  \\
& & \quad \cdots + 2 i_2 \alpha(-2n - 1)^{i_{2n+1}-1} \alpha(-2 n)^{i_{2n}+1} \alpha(-2n+1)^{i_{2n-1}}  \cdots \alpha(-4)^{i_4}  \alpha(-3)^{i_3 + 1} \nonumber\\
& & \quad \alpha(-2)^{i_2 - 1} \alpha( - 1)^{i_1 }  v_{-k} {\bf 1}  \nonumber \\
& & \quad + \, i_1 \alpha(-2n - 1)^{i_{2n+1}-1} \alpha(-2 n)^{i_{2n}+1} \alpha(-2n+1)^{i_{2n-1}} \cdots \alpha(-3)^{i_3} \alpha(-2)^{i_2 + 1} \alpha( - 1)^{i_1-1 }  v_{-k} {\bf 1}  \nonumber \\
&  & \quad + \,  k  \alpha(-2n - 1)^{i_{2n+1}-1} \alpha(-2 n)^{i_{2n}+1} \alpha(-2n+1)^{i_{2n-1}} \cdots  \alpha( - 1)^{i_1 } v_{-k-1} {\bf 1}  \Biggr)\nonumber
\end{eqnarray*}
where if $n=1$ this is to be interpreted as
\begin{eqnarray}\label{reduce-equation-n=1}
\lefteqn{ \ \ \ \alpha(-3)^{i_{3}} \alpha(-2)^{i_{2}} \alpha(-1)^{i_1}  v_{-k} {\bf 1}  }\\
&\equiv_1 & \! \! -\frac{1}{2( i_{2} + 1) } \Biggl( \biggl(  \mathrm{wt} \, v_{-k} {\bf 1}   - 1 +  \sum_{j = 1}^{3} i_j  j   \biggr) \alpha(-3)^{i_{3}-1} \alpha(-2)^{i_{2}+1} \alpha( - 1)^{i_1 }  v_{-k} {\bf 1}  \nonumber \\
& &  - 3 (i_{3} -1)\biggl(  \sum_{j = 1}^{2} \! \binom{2}{j-1}  \alpha(- j - 1) \biggr) \alpha(-3)^{i_{3} -2} \alpha(-2)^{i_{2}+1} \alpha(-1)^{i_1}  v_{-k} {\bf 1}  \nonumber \\
& &  + \, i_1 \alpha(-3)^{i_{3}-1} \alpha(-2)^{i_{2}+2} \alpha( - 1)^{i_1-1 } v_{-k} {\bf 1}    +  k \alpha(-3)^{i_{3}- 1} \alpha(-2)^{i_{2}+1} \alpha( - 1)^{i_1 }  v_{-k-1} {\bf 1}   \Biggr).\nonumber
\end{eqnarray}

Let $t\in \mathbb{Z}_+$, and $i_1,\dots, i_t \in \mathbb{N}$.  For $1\leq s \leq t$, set $p_s=\sum_{j=1}^{s}i_j$, $p_0 = 0$,  and $r=\sum_{j=1}^{t} j  i_j$.  We have the following formula for $\alpha(-t)^{i_{t}}\dots \alpha(-1)^{i_{1}}{\bf{1}}*_nv$ and $ v\in M_a(1)$
\begin{multline}\label{multiplication-formula-infinity} 
\alpha(-t)^{i_{t}}\cdots \alpha(-1)^{i_{1}}{\bf{1}}*_nv\\
= \sum_{m=0}^n \sum_{j=-m}^n (-1)^m\binom{m+n}{n}\binom{n+r}{j+m}\displaystyle\sum_{\substack{k_1,\dots ,k_{p_t}\in \mathbb{N}\\k_1+\cdots +k_{p_t}=n-j}} \! \! {}_{\circ}^{\circ}P_{i_1,\dots, i_{t}}(k_1,\dots, k_{p_t}) {}_{\circ}^{\circ} v +g_{i_1,\dots, i_{t}}(v), 
\end{multline}
where 
\begin{equation}\label{define-P}
P_{i_1,\dots, i_{t}}(k_1,\dots, k_{p_t})=\prod_{s=1}^{t}\prod_{l=1}^{i_{s}}\binom{k_{l+p_{s-1}}+s-1}{s-1} \alpha(-k_{l+p_{s-1}}-s),
\end{equation}
and $g_{i_1,\dots, i_{t}}(v)$ is a linear combination of elements of the form ${}_{\circ}^{\circ} \alpha(q_1) \cdots \alpha(q_{p_t}){}_{\circ}^{\circ}v$ for some $q_1, \dots, q_{p_t} \in \mathbb{Z}$ with at least one of the  $q$'s nonnegative.  In particular,  for $i, t \in \mathbb{Z}_+$, and $v \in M_a(1)$
\begin{multline}\label{multiplication-cor-formula}
\alpha(-t)^i \mathbf{1} *_n v =  \sum_{m=0}^n \sum_{j =0}^{m+n}  \sum_{\stackrel{p_1, p_2, \dots, p_i \in \mathbb{N}}{p_1 + p_2 + \cdots + p_i =  j} } (-1)^m \binom{m+n}{n}  \binom{n + it }{m + n - j} \\\left( \prod_{l=1}^i \binom{p_l + t - 1}{t - 1} \alpha(-p_l-t) \right)  v +  g_{i}(v) , 
\end{multline}
where  $g_{i}(v)$ is a linear combination of elements of the form  ${}_{\circ}^{\circ}\alpha(q_1)\cdots \alpha(q_{i}) {}_{\circ}^{\circ}v$ where at least one of the  $q$'s is nonnegative.
That is, modulo $g_{i}(v)$, the product $\alpha(-t)^i \mathbf{1} *_n v$ involves a linear combination of terms of the form $\alpha(-p_1-t) \alpha(-p_2-t)  \cdots \alpha(-p_i-t)  \, v$ where $\mathbf{p} = (p_1, p_2, \dots, p_i)$ is an ordered partition of $j$ into $i$ nonnegative integer parts for $0 \leq j \leq m+ n \leq 2n$. 

Furthermore, when $i = 1$ and $t = 1$, then $g_1(v) = 0$.  In addition,  if $n \geq t  + 1$, then we have 
\[g_1(v) =   \sum_{m=0}^{t - 1} \sum_{j=-1}^{m -t}(-1)^m\binom{m+n}{n}\binom{n+ t }{m + n - j }\binom{j  + t-1}{t-1} \alpha(-j  - t) v , \]
and more generally
\[ g_1(v) =  \sum_{m=0}^n\sum_{j=-1}^{-\infty}(-1)^m\binom{m+n}{n}\binom{n+ t}{m + n - j }\binom{j  + t-1}{t-1} \alpha(-j  - t)  v.\]

And finally, we have that for $i_1, i_2, \dots, i_{2n} \in \mathbb{N}$, 
\begin{multline}\label{reduce1-prop}
\binom{i_1 + n}{n} \sum_{m=0}^n  (-1)^m  \binom{i_1 }{m}  \alpha(-2n)^{i_{2n}} \alpha(-2 n + 1)^{i_{2n -1}} \cdots \alpha( - 2)^{i_2 } \alpha(-1)^{i_1}  \mathbf{1} \\
\equiv_n \alpha(-1)^{i_1} \mathbf{1} *_n \alpha(-2n)^{i_{2n}} \alpha(-2 n + 1)^{i_{2n -1}} \cdots \alpha( - 2)^{i_2 }\mathbf{1} + w',
\end{multline}
where $w'$ is a linear combination of terms of the form $\alpha(-2n)^{i_{2n}'} \cdots \alpha(-1)^{i_1'} \mathbf{1}$ where either $ i_1' + \cdots + i_{2n}' < i_1 + \cdots + i_{2n}$, or $ i_1' + \cdots + i_{2n}' = i_1 + \cdots + i_{2n}$ and  $i_1'< i_1$.    

Furthermore, the coefficient of the term on the lefthand side of Eqn.\ (\ref{reduce1-prop}) is zero if and only if  $0 < i_1  \leq n$, and is equal to 
\[ (-1)^n  \binom{i_1 + n}{n} \binom{i_1 - 1}{n} \]
if $i_1 >n$.
\end{cor}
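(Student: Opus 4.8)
The plan is to prove the assertions by bootstrapping from Corollary~\ref{recursion-corollary-n}, which already packages, for $M_a(1)$, the $(L(-1)+L(0))$-reductions (\ref{L-reduction})--(\ref{L-reduction-special-case}), the singular-mode recursion (\ref{recursion-equation}), and the regular-part expansion (\ref{Y+}). I would treat the displayed relations (\ref{-2n-1-reduction})--(\ref{reduce1-prop}) as lemmas to be derived from these, and then feed them into the headline generation statement, which is the real goal. The starting point is that, since the negative modes $\alpha(-a)$ (for $a>0$) mutually commute by the Heisenberg bracket, every element of $M_a(1)$ is a linear combination of ordered monomials $\alpha(-k_1)\cdots\alpha(-k_j)\mathbf{1}$ as in (\ref{generators-for-V}); hence $A_n(M_a(1))=M_a(1)/O_n(M_a(1))$ is spanned by the images of these monomials, and it suffices to rewrite each such image in terms of the two advertised families.

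First I would record the single-mode reductions. Relation (\ref{-2n-1-reduction}) is exactly the case $u=\alpha(-1)\mathbf{1}$, $j=2n$ of (\ref{L-reduction-special-case}), using $(\alpha(-1)\mathbf{1})_m=\alpha(m)$ and $\mathrm{wt}\,\alpha(-1)\mathbf{1}=1$; and (\ref{-2n-1-reduction-Heisenberg}) is the specialization $r=1$, $k_1=-2n$ of (\ref{L-reduction}). The recursion (\ref{reduce-equation-Heisenberg}) I would obtain by applying (\ref{L-reduction}) to the monomial $\alpha(-2n-1)^{i_{2n+1}-1}\alpha(-2n)^{i_{2n}+1}\cdots\alpha(-1)^{i_1}v_{-k}\mathbf{1}$ (whose total weight is precisely $\mathrm{wt}\,v_{-k}\mathbf{1}-1+\sum_j j\,i_j$, matching the leading coefficient) and solving for the family of terms in which one of the $i_{2n}+1$ copies of $\alpha(-2n)$ has its index lowered to $\alpha(-2n-1)$; this accounts for the prefactor $-\tfrac{1}{2n(i_{2n}+1)}$, while the $\alpha(-n-j)$ contributions are the images of the leftover $\alpha(-2n-1)$ factors under (\ref{recursion-equation}) and the $v_{-k-1}$ contribution is the index-lowering of the embedded mode, exactly as in (\ref{L-reduction-special-case}).

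With these in hand I would establish the mode-bounding step: every ordered monomial is congruent modulo $O_n(M_a(1))$ to a combination of monomials $\alpha(-2n)^{i_{2n}}\cdots\alpha(-1)^{i_1}\mathbf{1}$ using only the modes $\{-1,\dots,-2n\}$. Processing one factor at a time (permuting freely, since negative modes commute), I rewrite any factor $\alpha(-m)$ with $m\ge n+1$ by (\ref{recursion-equation}) into modes in $\{-(n+1),\dots,-(2n+1)\}$, and then strip the surviving $\alpha(-2n-1)$ factors using (\ref{-2n-1-reduction-Heisenberg}) and (\ref{reduce-equation-Heisenberg}). The hard part is termination, because (\ref{-2n-1-reduction}) feeds back a $v_{-k-1}$ term that temporarily enlarges an index; I would control this by a well-founded double induction, first on the weight---which is non-increasing under all of these moves and strictly decreasing whenever a mode of index $<-(2n+1)$ is reduced---and, at fixed weight, on a secondary combinatorial measure of the excess modes. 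This well-foundedness is the main obstacle, and it is exactly the content of Lemma~3.7, Lemma~3.9, and Corollary~3.13 of \cite{AB-general-n}.

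Finally I would cut the power of $\alpha(-1)$ down to $i_1\le n$. The multiplication formula (\ref{multiplication-formula-infinity})--(\ref{multiplication-cor-formula}) follows from the definition (\ref{*_n-definition}) of $*_n$ together with the mode expansion of $Y(\alpha(-t)^{i}\mathbf{1},x)$ (of which (\ref{Y+}) is the $t=1$ instance), by collecting the normal-ordered negative-mode terms and relegating every term carrying a nonnegative mode to $g$; for the generation argument only the case $t=1$, $u=\alpha(-1)^{i_1}\mathbf{1}$ is needed, and there the $g$-terms reduce, after contracting their nonnegative modes against $v$, to monomials of strictly fewer factors. Isolating the top monomial in (\ref{multiplication-cor-formula}) gives (\ref{reduce1-prop}), whose left coefficient $\binom{i_1+n}{n}\sum_{m=0}^{n}(-1)^m\binom{i_1}{m}$ I would evaluate using the alternating-sum identity $\sum_{m=0}^{n}(-1)^m\binom{i_1}{m}=(-1)^n\binom{i_1-1}{n}$, giving $(-1)^n\binom{i_1+n}{n}\binom{i_1-1}{n}$, which vanishes precisely when $0<i_1\le n$. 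Thus for $i_1>n$ the coefficient is invertible, and (\ref{reduce1-prop}) rewrites $\alpha(-2n)^{i_{2n}}\cdots\alpha(-1)^{i_1}\mathbf{1}$ as a scalar multiple of $\alpha(-1)^{i_1}\mathbf{1}*_n\big(\alpha(-2n)^{i_{2n}}\cdots\alpha(-2)^{i_2}\mathbf{1}\big)$ plus a combination $w'$ of monomials strictly smaller in the order $(\sum_j i_j,\,i_1)$. Since the first factor is a generator of the second listed family and the second is a generator of the first family (with $i_1=0$), a descending induction in this order shows every spanning monomial lies in the subalgebra generated by the two families, completing the generation claim; the $n=1$ form (\ref{reduce-equation-n=1}) and the remaining displayed identities then record the specializations and tools used along the way.
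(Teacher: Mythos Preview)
Your proposal is correct and follows the same route the paper does, since the paper offers no proof here beyond observing that $M_a(1)$ is strongly generated by $\alpha(-1)\mathbf{1}$, satisfies the permutation property, and then citing Proposition~3.6, Lemmas~3.7, 3.9, Theorem~3.8, Corollary~3.13, Proposition~3.14, and Theorem~3.15 of \cite{AB-general-n}; you are sketching those arguments in the Heisenberg setting. One small imprecision: in your derivation of (\ref{reduce-equation-Heisenberg}), the $\sum_{j}\binom{n+1}{j-1}\alpha(-n-j)$ term is not the image of a leftover $\alpha(-2n-1)$ under (\ref{recursion-equation}) but the image of the $\alpha(-2n-2)$ produced when $L(-1)$ lowers one of the $i_{2n+1}-1$ remaining $\alpha(-2n-1)$ factors, then rewritten via (\ref{recursion-equation}) with $m=2n+2$.
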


\section{Generators and relations for the algebra  $A_2(M_a(1))$}

We first give the results from \cite{AB-general-n} summarized in Section \ref{A_n-construction-section} for $V = M_a(1)$ and $n \in \mathbb{N}$ applied to the setting of $V = M_a(1)$ and $n = 2$ to determine an initial generating set and relations for $A_2(M_a(1))$.  Then we use these relations and the definition of the higher level Zhu algebra to build further relations and to further reduce the set of generators for $A_2(M_a(1))$. This culminates in Theorem \ref{main-generators-thm}  where we give the minimal set of generators for $A_2(M_a(1))$ to be used in the subsequent section, Section \ref{relations-section} to determine the structure of $A_2(M_a(1))$.  

\subsection{Results from \cite{AB-general-n}}\label{AB-Corollaries-2-section}

It is immediate from Corollaries \ref{recursion-corollary-n} and 
\ref{reducing-general-generators-cor-heisenberg} applied to $n = 2$ that the following hold:

\begin{cor}\label{level-two-corollary} 
$A_2(M_a(1))$ is generated by elements of the form
\begin{equation}\label{generators-level-2}
 \alpha(-4)^{i_4} \alpha(-3)^{i_3} \alpha(-2)^{i_2}  \alpha( - 1)^{i_1 }\mathbf{1} + O_2(M_a(1)), \quad \mbox{and} \quad  \alpha(-1)^j {\bf 1} + O_2(M_a(1)),
 \end{equation}
for $i_1,  \dots, i_{4} \in \mathbb{N}$ with $0 \leq i_1 \leq 2$, and $j \in \mathbb{Z}_+$.

Furthermore, for $v \in M_a(1)$, we have the following relations in $A_2(M_a(1))$:

For all $m\geq 3$ 
\begin{eqnarray}\label{recursion-level-2}
\alpha(-m)v &\sim_2& (-1)^{m+1}\sum_{j=1}^{3}\binom{m-3}{j-1}\binom{m-j-3}{3-j}\alpha(-j-2)v\\
&\sim_2& (-1)^{m+1}\biggl(\frac{(m-4)(m-5)}{2} \alpha(-3) + (m-3)(m-5) \alpha(-4) \nonumber \\
& & \quad + \, \frac{(m-3)(m-4)}{2} \alpha(-5) \biggr) v. \nonumber
\end{eqnarray}

\begin{equation}\label{-5-reduction}
\alpha(-5) v \approx -\left( \frac{ 4  + \mathrm{wt} \, v}{4} \right) \alpha(-4) v - \frac{1}{4} \alpha (-4) v_{-2} {\bf 1}, 
\end{equation} 
and in particular
\begin{equation}\label{-5-reduction-Heisenberg}
\alpha(-5) \mathbf{1} \approx - \alpha(-4) \mathbf{1}. 
\end{equation}
For $i_1, \dots, i_{4} \in \mathbb{N}$ and $i_{5} \in \mathbb{Z}_+$ 
\begin{eqnarray}\label{reduce-equation-Heisenberg-level2}
\lefteqn{\ \ \ \ \ \ \ \ \ \alpha(-5)^{i_5} \alpha(-4)^{i_4} \alpha(-3)^{i_3} \alpha(-2)^{i_2} \alpha(-1)^{i_1} v  }\\
&\equiv_2& -\frac{1}{4( i_4+ 1) } \Biggl(  \left(\mathrm{wt} \, v - 1 +   \sum_{j = 1}^{5} j i_j  \right) \alpha(-5)^{i_5-1} \alpha(-4)^{i_4+1} \alpha(-3)^{i_3} \alpha(-2)^{i_2}  \alpha( - 1)^{i_1 } v \nonumber \\
& & \quad - \, 5 (i_5 -1)\left(  \sum_{j = 1}^{3} \binom{3}{j-1}  \alpha( - 2 - j) \right) \alpha(-5)^{i_5 -2} \alpha(-4)^{i_4+1}  \alpha(-3)^{i_3}  \alpha(-2)^{i_2}  \alpha(-1)^{i_1} v \nonumber \\
& & \quad  + \, 3 i_3 \alpha(-5)^{i_5-1} \alpha(-4)^{i_4 + 2} \alpha(-3)^{i_3- 1} \alpha(-2)^{i_2}  \alpha(-1)^{i_1} v   \nonumber  \\
& & \quad  + \, 2 i_2 \alpha(-5)^{i_5-1} \alpha(-4)^{i_4+1} \alpha(-3)^{i_3+1}  \alpha(-2)^{i_2 - 1} \alpha( - 1)^{i_1 } v \nonumber \\
& & \quad + \, i_1 \alpha(-5)^{i_5-1} \alpha(-4)^{i_4+1} \alpha(-3)^{i_3} \alpha(-2)^{i_2 + 1} \alpha( - 1)^{i_1-1 } v \nonumber \\
&  & \quad + \,  \alpha(-5)^{i_5-1} \alpha(-4)^{i_4+1} \alpha(-3)^{i_3} \alpha(-2)^{i_2}  \alpha( - 1)^{i_1 } v_{-2} {\bf 1}  \Biggr) .\nonumber
\end{eqnarray}

Let $t\in \mathbb{Z}_+$, and $i_1,\dots, i_t \in \mathbb{N}$.  For $1\leq s \leq t$, set $p_s=\sum_{j=1}^{s}i_j$, $p_0 = 0$,  and $r=\sum_{j=1}^{t} j  i_j$.  We have the following formula for $\alpha(-t)^{i_{t}}\dots \alpha(-1)^{i_{1}}{\bf{1}}*_2 v$, where $ v\in M_a(1)$
\begin{multline}\label{multiplication-formula-infinity-level2} 
\alpha(-t)^{i_{t}}\cdots \alpha(-1)^{i_{1}}{\bf{1}}*_2 v\\
= \sum_{m=0}^2 \sum_{j=-m}^2 (-1)^m\binom{m+2}{2}\binom{r+2}{j+m}\displaystyle\sum_{\substack{k_1,\dots ,k_{p_t}\in \mathbb{N}\\k_1+\cdots +k_{p_t}=2-j}} \! \! {}_{\circ}^{\circ}P_{i_1,\dots, i_{t}}(k_1,\dots, k_{p_t}) {}_{\circ}^{\circ} v +g_{i_1,\dots, i_{t}}(v), 
\end{multline}
where $P_{i_1,\dots, i_{t}}(k_1,\dots, k_{p_t})$ is defined by (\ref{define-P}),
and $g_{i_1,\dots, i_{t}}(v)$ is a linear combination of elements of the form  ${}_{\circ}^{\circ} \alpha(q_1) \cdots \alpha(q_{p_t}){}_{\circ}^{\circ}v$ for $q_1, \dots, q_{p_t} \in \mathbb{Z}$ with at least one of the  $q$'s nonnegative.   In particular,  for $i, t \in \mathbb{Z}_+$,  and $v \in M_a(1)$
\begin{multline}\label{multiplication-cor-formula-level2}
\alpha(-t)^i \mathbf{1} *_2 v =  \sum_{m=0}^2 \sum_{j =0}^{m+2}  \sum_{\stackrel{p_1, p_2, \dots, p_i \in \mathbb{N}}{p_1 + p_2 + \cdots + p_i =  j} } (-1)^m \binom{m+2}{2}  \binom{2 + it }{m + 2 - j} \\\left( \prod_{l=1}^i \binom{p_l + t - 1}{t - 1} \alpha(-p_l-t) \right)  v + g_{i}(v) , 
\end{multline}
where  $g_{i}(v)$ is a linear combination of elements of the form  ${}_{\circ}^{\circ}\alpha(q_1)\cdots \alpha(q_{i}) {}_{\circ}^{\circ}v$ where at least one of the  $q$'s is nonnegative.
That is, modulo $g_{i}(v)$, the product $\alpha(-t)^i \mathbf{1} *_n v$ involves a linear combination of terms of the form $\alpha(-p_1-t) \alpha(-p_2-t)  \cdots \alpha(-p_i-t)  \, v$ where $\mathbf{p} = (p_1, p_2, \dots, p_i)$ is an ordered partition of $j$ into $i$ nonnegative integer parts for $0 \leq j \leq m+ n \leq 4$. 

Furthermore, when $i = 1$ and $t = 1$, then $g_1(v) = 0$, and more generally if $i = 1$, and $t>1$, we have 
\begin{equation}\label{g-formula}
 g_1(v) = \sum_{j=-1}^{-\infty} \left(  \binom{2+ t}{ 2 - j }   - 3 \binom{2+ t}{3 - j } +  6 \binom{2+ t}{4 - j } \right)\binom{j  + t-1}{t-1} \alpha(-j  - t)  v .
 \end{equation}

And for $i_1, i_2, i_3, i_{4} \in \mathbb{N}$, we have
\begin{multline}\label{reduce1-prop-level2}
\sum_{m=0}^2  (-1)^m \binom{m+2}{2}  \binom{ i_1 + 2 }{m + 2 }  \alpha(-4)^{i_{4}} \alpha(-3)^{i_{3}}  \alpha( - 2)^{i_2 } \alpha(-1)^{i_1}  \mathbf{1} \\
\equiv_2 \alpha(-1)^{i_1} \mathbf{1} *_2 \alpha(-4)^{i_{4}} \alpha(-3)^{i_{3}}  \alpha( - 2)^{i_2 }\mathbf{1} + w',
\end{multline}
where $w'$ is a linear combination of terms of the form $\alpha(-4)^{i_{4}'} \cdots \alpha(-1)^{i_1'} \mathbf{1}$ where either $ i_1' + \cdots + i_{4}' < i_1 + \cdots + i_{4}$, or $ i_1' + \cdots + i_{4}' = i_1 + \cdots + i_{4}$ and  $i_1'< i_1$.    

Furthermore, the coefficient of the term on the lefthand side of Eqn.\ (\ref{reduce1-prop-level2}) is zero if and only if $0 < i_1  \leq 2$ and is equal to 
\[\frac{ (i_1^2 - 1) (i_1^2 - 4) }{4} \]
if $i_1 > 2$. 

Finally, letting $Y^+(u,x) \in (\mathrm{End}(V))[[x]]$ denote the regular part of $Y(u,x)$ for $u \in M_a(1)$, we have 
\begin{multline}\label{Y+-2}
Y^+(\alpha(-1) \mathbf{1} ,x) v 
 \sim_2 \alpha(-1) v  + \alpha(-2) v x  +   \sum_{m\ge 3} (-1)^{m- 1}\left(   \binom{m-4}{2 } \alpha(-3)   
  \right. \\
 \left. +   (m-3) (m-5)    \alpha(-4)   +  \binom{m-3}{2}  \alpha(-5)  \right) v x^{m-1}  .
\end{multline}
\end{cor}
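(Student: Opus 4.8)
The plan is to obtain every assertion in Corollary~\ref{level-two-corollary} as a direct specialization to $n=2$ of the general-$n$ statements already established in Corollaries~\ref{recursion-corollary-n} and~\ref{reducing-general-generators-cor-heisenberg}, followed only by elementary simplification of binomial coefficients. Those corollaries apply here because $M_a(1)$ is strongly generated by $\alpha(-1)\mathbf{1}$, i.e. $M_a(1) = \langle\alpha(-1)\mathbf{1}\rangle^1$, and satisfies the permutation property, as recorded just before Corollary~\ref{reducing-general-generators-cor-heisenberg}. Setting $n=2$ gives $2n=4$, $2n+1=5$, $2n-1=3$; thus the generating set (\ref{generators-level-2}) is exactly the generating set of Corollary~\ref{reducing-general-generators-cor-heisenberg} with the constraint $0\le i_1\le n$ specialized to $0\le i_1\le 2$, and with the exceptional family $\alpha(-1)^j\mathbf{1}$ carried over verbatim.

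I would then read off the remaining relations one family at a time. The recursion (\ref{recursion-level-2}) is (\ref{recursion-equation}) with $n=2$, so the sum runs over $j\in\{1,2,3\}$ with summand $\binom{m-3}{j-1}\binom{m-j-3}{3-j}\alpha(-j-2)v$, and the stated closed form follows from $\binom{m-4}{2}=\tfrac{(m-4)(m-5)}{2}$, $\binom{m-3}{1}\binom{m-5}{1}=(m-3)(m-5)$, and $\binom{m-3}{2}=\tfrac{(m-3)(m-4)}{2}$. The relation (\ref{-5-reduction-Heisenberg}) is (\ref{-2n-1-reduction-Heisenberg}) at $n=2$, while (\ref{-5-reduction}) comes from (\ref{-2n-1-reduction}) at $n=2$ upon taking $k=1$ and using the creation property $v_{-1}\mathbf{1}=v$ (so $v_{-k}\mathbf{1}$ becomes $v$ and $v_{-k-1}\mathbf{1}$ becomes $v_{-2}\mathbf{1}=L(-1)v$). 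The same substitution $n=2$, $k=1$, $v_{-1}\mathbf{1}=v$ turns the general recursion (\ref{reduce-equation-Heisenberg}) into (\ref{reduce-equation-Heisenberg-level2}); here one also records $\sum_{j=1}^{n+1}\binom{n+1}{j-1}\alpha(-n-j)=\sum_{j=1}^{3}\binom{3}{j-1}\alpha(-2-j)$ and reads the indicated chains of terms for $n=2$, where the only modes occurring are $\alpha(-5),\alpha(-4),\alpha(-3),\alpha(-2),\alpha(-1)$, so the intervening ``$\cdots$'' in (\ref{reduce-equation-Heisenberg}) collapses to the explicit terms listed in (\ref{reduce-equation-Heisenberg-level2}).

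For the multiplication formulas, (\ref{multiplication-formula-infinity-level2}) and (\ref{multiplication-cor-formula-level2}) are (\ref{multiplication-formula-infinity}) and (\ref{multiplication-cor-formula}) verbatim with $n=2$ (so $n+r\to r+2$, $n-j\to 2-j$, $m+n-j\to m+2-j$), and (\ref{g-formula}) follows from the general $g_1(v)$ formula at $n=2$ by collapsing the inner sum: $\sum_{m=0}^{2}(-1)^m\binom{m+2}{2}\binom{2+t}{m+2-j}=\binom{2+t}{2-j}-3\binom{2+t}{3-j}+6\binom{2+t}{4-j}$ using $\binom{2}{2}=1$, $\binom{3}{2}=3$, $\binom{4}{2}=6$; the case $i=t=1$ giving $g_1(v)=0$ is inherited. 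For (\ref{reduce1-prop-level2}) I would specialize (\ref{reduce1-prop}) to $n=2$ and rewrite the left-hand coefficient: the general form $\binom{i_1+2}{2}\sum_{m=0}^{2}(-1)^m\binom{i_1}{m}$ agrees term by term with $\sum_{m=0}^{2}(-1)^m\binom{m+2}{2}\binom{i_1+2}{m+2}$ via the ``subset of a subset'' identity $\binom{i_1+2}{2}\binom{i_1}{m}=\binom{m+2}{2}\binom{i_1+2}{m+2}$; the clause ``zero if and only if $0<i_1\le 2$'' is inherited, and the nonzero value $(-1)^n\binom{i_1+n}{n}\binom{i_1-1}{n}$ at $n=2$ equals $\binom{i_1+2}{2}\binom{i_1-1}{2}=\tfrac{(i_1+2)(i_1+1)(i_1-1)(i_1-2)}{4}=\tfrac{(i_1^2-1)(i_1^2-4)}{4}$. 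Finally, (\ref{Y+-2}) is (\ref{Y+}) at $n=2$: the polynomial part is $\sum_{k=1}^{2}\alpha(-k)vx^{k-1}=\alpha(-1)v+\alpha(-2)vx$, and in the tail the inner sum over $k\in\{3,4,5\}$ produces the coefficients $\binom{m-4}{2}$, $(m-3)(m-5)$, $\binom{m-3}{2}$ of $\alpha(-3)$, $\alpha(-4)$, $\alpha(-5)$ respectively, exactly as above.

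I do not expect a genuine obstacle: every assertion is a mechanical instance of a previously proved general-$n$ result, and the only computations are the finitely many binomial evaluations together with the single elementary identity $\binom{i_1+2}{2}\binom{i_1}{m}=\binom{m+2}{2}\binom{i_1+2}{m+2}$ needed to reconcile the two equivalent forms of the coefficient in (\ref{reduce1-prop-level2}). The only point requiring a little care is the bookkeeping: making sure each general statement is invoked under its correct hypotheses and with the right index shift (in particular reading the ``$\cdots$'' chains in (\ref{reduce-equation-Heisenberg}) correctly for the small value $n=2$), and recording where the creation property $v_{-1}\mathbf{1}=v$ is used to pass from the ``$v_{-k}\mathbf{1}$'' form of the general statements to the ``$v$'' form appearing in Corollary~\ref{level-two-corollary}.
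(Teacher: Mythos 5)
Your proposal is correct and matches the paper exactly: the paper offers no separate proof, stating only that Corollary \ref{level-two-corollary} is immediate from Corollaries \ref{recursion-corollary-n} and \ref{reducing-general-generators-cor-heisenberg} applied to $n=2$, which is precisely the specialization-plus-binomial-simplification you carry out (and your evaluations, including the identity $\binom{i_1+2}{2}\binom{i_1}{m}=\binom{m+2}{2}\binom{i_1+2}{m+2}$ and the value $\frac{(i_1^2-1)(i_1^2-4)}{4}$, all check out).
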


Next we give some simplified formulas that follow from Corollary \ref{level-two-corollary} explicitly as these will be used later to reduce generators and determine relations for $A_2(M_a(1))$.  

\begin{cor}  
For $i, t \in \mathbb{Z}_+$, and $v \in F_r({\bf 1}) \subset M_a(1)$ 
\begin{multline}\label{multiplication-t-cor-formula-level2-expllicit}
\alpha(-t)^i \mathbf{1} *_2 v
=  \sum_{j =0}^{4}  \sum_{\stackrel{p_1, p_2, \dots, p_i \in \mathbb{N}}{p_1 + p_2 + \cdots + p_i =  j} }   \left( \binom{it+ 2}{ 2 - j}  - 3  \binom{it+ 2 }{3 - j} + 6  \binom{it + 2 }{4 - j}   \right) \\
\left( \prod_{l=1}^i \binom{p_l + t- 1}{t-1} \alpha(-p_l-t) \right)  v +   f_{i + r-2}({\bf 1}) , 
\end{multline}
for some $f_{i + r-2}({\bf 1}) \in F_{i + r-2}({\bf 1})$.  In particular,
\begin{multline}\label{multiplication-one-cor-formula-level2-expllicit}
\alpha(-1)^i \mathbf{1} *_2 v
=  \sum_{j =0}^{4}  \sum_{\stackrel{p_1, p_2, \dots, p_i \in \mathbb{N}}{p_1 + p_2 + \cdots + p_i =  j} }   \left( \binom{i+ 2}{ 2 - j}  - 3  \binom{i+ 2 }{3 - j} + 6  \binom{i+ 2 }{4 - j}   \right) \alpha(-p_1-1)  \\
\alpha(-p_2-1)  \cdots \alpha(-p_i-1)  \, v   +   f_{i + r-2}({\bf 1}) , 
\end{multline}
for some $f_{i + r-2}({\bf 1}) \in F_{i + r-2}({\bf 1})$, and 
\begin{equation}\label{multiplication-one-one}
\alpha(-1)   \mathbf{1} *_2 v 
=  10 \alpha(-3)  v  + 15 \alpha(-4)  v + 6 \alpha(-5)  v,
\end{equation}
and 
\begin{multline}\label{multiplication-two-cor-formula-level2-explicit}
\alpha(-2)^i \mathbf{1} *_2 v =  \sum_{j =0}^{4}  \sum_{\stackrel{p_1, p_2, \dots, p_i \in \mathbb{N}}{p_1 + p_2 + \cdots + p_i =  j} } 
\left( \binom{2i+ 2}{ 2 - j}  - 3  \binom{2i+ 2 }{3 - j} + 6  \binom{2i + 2 }{4 - j}   \right)
 \\
\left( \prod_{l=1}^i (p_l + 1)\alpha(-p_l-2) \right)  v +   f_{i + r -2}({\bf 1}), 
\end{multline}
for some $f_{i+ r-2}({\bf 1}) \in F_{i + r-2}({\bf 1})$, and in the case $i = 1$, we have $f_{i + r - 2}({\bf 1}) = f_{r-1}(v) = 0$ in Eqn.\ (\ref{multiplication-two-cor-formula-level2-explicit}).   
\end{cor}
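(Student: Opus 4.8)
The plan is to specialize the general multiplication formula (\ref{multiplication-cor-formula-level2}) to the case $n=2$ and to the argument being drawn from $F_r(\mathbf 1)$, and then to track carefully which terms land in the filtered piece $F_{i+r-2}(\mathbf 1)$. First I would take (\ref{multiplication-cor-formula-level2}) with generic $t$, and collapse the inner sum over $m \in \{0,1,2\}$: the coefficient $\sum_{m=0}^2 (-1)^m \binom{m+2}{2}\binom{2+it}{m+2-j} = \binom{it+2}{2-j} - 3\binom{it+2}{3-j} + 6\binom{it+2}{4-j}$, which is exactly the coefficient appearing in (\ref{multiplication-t-cor-formula-level2-expllicit}) and (\ref{g-formula}). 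Since $0\le j\le m+n\le 4$, the outer $j$-sum runs over $j\in\{0,1,2,3,4\}$, giving the displayed range. The ``principal'' part of the sum is precisely the collection of terms $\prod_{l=1}^i \binom{p_l+t-1}{t-1}\alpha(-p_l-t)\, v$ with $p_1+\cdots+p_i=j$; everything else is absorbed into $g_i(v)$, which by the statement of Corollary \ref{reducing-general-generators-cor-heisenberg} / (\ref{g-formula}) is a linear combination of normally ordered products $\nord \alpha(q_1)\cdots\alpha(q_i)\nord v$ with at least one $q$ nonnegative.

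The key step is to show $g_i(v) \in F_{i+r-2}(\mathbf 1)$ when $v \in F_r(\mathbf 1)$. For this I would argue as follows. A term $\nord\alpha(q_1)\cdots\alpha(q_i)\nord v$ with some $q_\ell \ge 0$ acts on $v$: the operator $\alpha(q_\ell)$ with $q_\ell \ge 0$ is a lowering (annihilation) operator, so applied to an element of $F_r(\mathbf 1)$ it either kills it or, for $q_\ell=0$, scales it, and in either case produces an element of $F_{r-1}(\mathbf 1)$ (it contracts against one of the creation modes in $v$, or against a $\lambda$-eigenvalue which on $M_a(1)=M_a(1,0)$ is zero; in the pure VOA $M_a(1)$ only the contraction survives, dropping the creation-count by one). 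The remaining $i-1$ modes $\alpha(q_r)$ with $r\ne\ell$, regardless of sign, add at most $i-1$ to the creation count. Hence the whole term lies in $F_{(r-1)+(i-1)}(\mathbf 1) = F_{i+r-2}(\mathbf 1)$, as claimed. This gives (\ref{multiplication-t-cor-formula-level2-expllicit}) with $f_{i+r-2}(\mathbf 1) := g_i(v)$. Specializing $t=1$ gives (\ref{multiplication-one-cor-formula-level2-expllicit}), and specializing further to $i=1$, $t=1$ uses the fact noted in Corollary \ref{reducing-general-generators-cor-heisenberg} that $g_1(v)=0$ when $i=t=1$; then the coefficients are $\binom{3}{2-j}-3\binom{3}{3-j}+6\binom{3}{4-j}$, which vanish for $j=0,1$ and equal $10, 15, 6$ for $j=2,3,4$ respectively, yielding (\ref{multiplication-one-one}) after noting that for $i=1$ the only partition of $j$ is $p_1=j$ so the terms are $\alpha(-j-1)v$ for $j=2,3,4$, i.e. $10\alpha(-3)v+15\alpha(-4)v+6\alpha(-5)v$. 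For $t=2$, the product $\prod_l\binom{p_l+1}{1} = \prod_l(p_l+1)$ gives the stated coefficient, and again the $i=1$ vanishing of $g_1$ — valid here because with $i=1,t=2$ the formula (\ref{g-formula}) must be checked to vanish, or one invokes the $i=1$ case of the general claim with $r$ bookkeeping — gives $f_{r-1}(v)=0$ in (\ref{multiplication-two-cor-formula-level2-explicit}).

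The main obstacle I anticipate is the $i=1$, $t=2$ vanishing claim $f_{r-1}(v)=0$: unlike the $i=t=1$ case (where $g_1(v)=0$ identically), here one must show that the leftover $g_1(v)$ from (\ref{g-formula}), namely $\sum_{j\le -1}\big(\binom{4}{2-j}-3\binom{4}{3-j}+6\binom{4}{4-j}\big)\binom{j+1}{1}\alpha(-j-2)v$, actually contributes nothing when combined with the principal part and interpreted via the established relations — i.e. that the apparent ``extra'' terms either vanish because the binomial $\binom{j+1}{1}=j+1$ is zero at $j=-1$ and the series genuinely has no further surviving terms, or cancel. I would handle this by directly evaluating: at $j=-1$, $\binom{j+1}{1}=0$; for $j\le -2$, $\binom{j+1}{1}=j+1\ne 0$ but $\binom{4}{2-j},\binom{4}{3-j},\binom{4}{4-j}$ all vanish since $2-j\ge 4$ forces... actually $\binom{4}{2-j}$ is nonzero only for $2-j\le 4$, i.e. $j\ge -2$; so only $j=-2$ potentially survives, and there one checks $\binom{4}{4}-3\binom{4}{5}+6\binom{4}{6} = 1-0+0 = 1$ times $\binom{-1}{1}=-1$, giving a term $-\alpha(0)v$ — which is zero on $M_a(1)$. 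Thus $g_1(v)=0$ after all in this case. This finite, slightly delicate binomial bookkeeping is the one place requiring genuine care; everything else is direct specialization plus the filtration estimate above.
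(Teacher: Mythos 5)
Your proposal is correct and follows essentially the same route as the paper's proof: specialize Eqn.\ (\ref{multiplication-cor-formula-level2}), collapse the $m$-sum using the vanishing of $\binom{p}{q}$ for $q>p\geq 0$, absorb $g_i(v)$ into $F_{i+r-2}(\mathbf{1})$ via the observation that a nonnegative mode contracting against $v$ drops the mode count by two (or kills the term), and check directly that for $i=1$, $t=2$ the expression (\ref{g-formula}) collapses to $-\alpha(0)v=0$. Your explicit binomial bookkeeping in the last step is just a spelled-out version of the paper's one-line remark.
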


\begin{proof}  These formulas all follow from Eqn.\ \eqref{multiplication-cor-formula-level2} by first noting that $\binom{p}{q} = 0$ if $p\geq 0$ and $q>p$, and thus for any expression $X$,
\[ \sum_{j = 0}^2 \binom{ it + 2}{2-j}  X = \sum_{j = 0}^4 \binom{ it + 2 }{2-j} X \quad \mbox{and} \quad 
 \sum_{j = 0}^3 \binom{ it + 2}{3-j}  X = \sum_{j = 0}^4 \binom{it + 2}{3-j}X. \]
Then the fact that the $g_i(v)$ terms in Eqn.\ (\ref{multiplication-cor-formula-level2}) are in $f_{i + r - 2}({\bf 1})$ follow from the fact that $g_i(v)$ has at least one $\alpha(q)$ with $q\geq 0$ and the bracket relations for the Heisenberg algebra, so that the nonnegative mode acting on any positive mode in $v$ will decrease the number of modes by 2 or will be zero.
The last statement follows from the fact that $g_1(v)$ in Eqn.\ (\ref{g-formula}) reduces to $-\alpha(0)v = 0$.
\end{proof}

\subsection{Further reduction of generators arising from the \cite{AB-general-n} Corollaries}

We have been ordering commuting modes from smallest to largest weight, to emphasize that many of the results we have used thus far are special cases of results obtained in \cite{AB-general-n} where the modes are ordered in this way. From now on though, it will be useful to order commuting modes from largest to smallest weight and we will use this convention throughout the remainder of the paper.
In this section, applying the Corollaries of Section \ref{AB-Corollaries-2-section}, we further reduce the set of generators of $A_2(M_a(1))$.  

\begin{prop}\label{reducing-one-prop}
 $A_2(M_a(1))$ is generated by $v + O_2(M_a(1))$ for $v$ in 
\begin{equation}\label{generators-again}
 \{ \alpha(-1)^i \alpha(-2)^j \alpha(-3)^k \alpha(-4)^l \mathbf{1} \; | \; 0\leq i \leq 2, \ j, k, l \in \mathbb{N} \}, 
 \end{equation}
\end{prop}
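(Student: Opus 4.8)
The plan is to start from the generating set already obtained in Corollary~\ref{level-two-corollary}, namely the elements of the form $\alpha(-4)^{i_4}\alpha(-3)^{i_3}\alpha(-2)^{i_2}\alpha(-1)^{i_1}\mathbf{1} + O_2(M_a(1))$ with $0\leq i_1\leq 2$ together with $\alpha(-1)^j\mathbf{1} + O_2(M_a(1))$ for $j\in\mathbb{Z}_+$, and to show that the second family is redundant once we keep the first family (now rewritten with modes ordered largest-to-smallest, which is harmless since the $\alpha(-k)$ commute). So the only real content is: for $j\geq 3$, the element $\alpha(-1)^j\mathbf{1} + O_2(M_a(1))$ lies in the subalgebra generated by the elements in \eqref{generators-again}. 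For $j=0,1,2$ there is nothing to prove since those elements already appear in \eqref{generators-again}.

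The key tool is Eqn.~\eqref{reduce1-prop-level2} applied with $i_2=i_3=i_4=0$ and $i_1=j$: it says that
\[
\sum_{m=0}^2 (-1)^m\binom{m+2}{2}\binom{j+2}{m+2}\,\alpha(-1)^j\mathbf{1}
\;\equiv_2\; \alpha(-1)^j\mathbf{1}*_2\mathbf{1} + w',
\]
where $w'$ is a linear combination of $\alpha(-1)^{j'}\mathbf{1}$ with $j'<j$, and the scalar on the left is $\tfrac{(j^2-1)(j^2-4)}{4}$, which is nonzero precisely when $j>2$. Since $\alpha(-1)^j\mathbf{1}*_2\mathbf{1} = \alpha(-1)^j\mathbf{1}$ (because $\mathbf{1}$ is the identity of $A_2(M_a(1))$), the right-hand side is $\alpha(-1)^j\mathbf{1} + w'$. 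Wait --- more carefully, one must track that the product $\alpha(-1)^j\mathbf{1}*_2\mathbf{1}$, computed via \eqref{multiplication-one-cor-formula-level2-expllicit} with $v=\mathbf{1}$, produces $\alpha(-1)^j\mathbf{1}$ together with lower-$F_r$ correction terms; alternatively one simply invokes that $\mathbf{1}+O_2$ is the multiplicative identity. Either way, rearranging gives
\[
\Bigl(\tfrac{(j^2-1)(j^2-4)}{4} - 1\Bigr)\alpha(-1)^j\mathbf{1} \;\equiv_2\; (\text{linear combo of }\alpha(-1)^{j'}\mathbf{1},\ j'<j),
\]
and since the scalar $\tfrac{(j^2-1)(j^2-4)}{4}-1$ is nonzero for every $j\geq 3$ (it equals $\tfrac{j^4-5j^2}{4}$, which vanishes only at $j=0,\pm\sqrt5$), we can solve for $\alpha(-1)^j\mathbf{1}$ modulo $O_2$ in terms of lower powers. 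An induction on $j$ then expresses every $\alpha(-1)^j\mathbf{1}+O_2(M_a(1))$ as a polynomial in $\alpha(-1)\mathbf{1}+O_2$, $\alpha(-1)^2\mathbf{1}+O_2$ and products $\alpha(-1)^{i}\mathbf{1}*_2(\cdots)$ already in the span of \eqref{generators-again}, completing the reduction.

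The step I expect to be the main (though modest) obstacle is bookkeeping the correction terms $w'$ and the $g_i(v)$/$f_{i+r-2}(\mathbf{1})$ contributions: one has to be sure that when $\eqref{reduce1-prop-level2}$ and the multiplication formula \eqref{multiplication-cor-formula-level2} are combined, all the "error" terms genuinely lie in the span of strictly lower-degree monomials $\alpha(-1)^{j'}\mathbf{1}$ (or, after using \eqref{reduce-equation-Heisenberg-level2} and \eqref{recursion-level-2} to re-express any $\alpha(-k)$ with $k\geq 3$ appearing, in the span of \eqref{generators-again}), so that the induction hypothesis applies. The scalar nonvanishing check $\tfrac{(j^2-1)(j^2-4)}{4}\neq 1$ for $j\geq 3$ is elementary and should be stated explicitly. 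No deep idea is needed beyond organizing the induction; the substance was already done in \cite{AB-general-n} and recorded in the Corollaries of Section~\ref{AB-Corollaries-2-section}.
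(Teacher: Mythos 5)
Your overall mechanism is sound and is a genuine (mild) variant of the paper's route. The paper first uses the $O^L$ relation $i\,\alpha(-1)^i\mathbf{1}\approx-\alpha(-1)^{i-1}\alpha(-2)\mathbf{1}$ (Eqn.~(\ref{reduce-alpha(-1)})) to trade the pure power for $\alpha(-1)^{r-1}\alpha(-2)\mathbf{1}$, and then invokes the multiplication formula to write this as a nonzero multiple of $\alpha(-1)^{r-1}\mathbf{1}*_2\alpha(-2)\mathbf{1}$ plus corrections, so that the left factor has strictly smaller total degree and the induction on total degree applies to it. You instead apply Eqn.~(\ref{reduce1-prop-level2}) directly with $v=\mathbf{1}$ and exploit that $\mathbf{1}$ is the identity, so the scalar to invert is $\tfrac{(j^2-1)(j^2-4)}{4}-1=\tfrac{j^2(j^2-5)}{4}$ rather than $\tfrac{(j^2-1)(j^2-4)}{4}$; both are nonzero for $j\ge 3$, and your version has the small advantage that the first step produces a purely linear (no $*_2$-product) expression for $\alpha(-1)^j\mathbf{1}$.

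There is, however, a concrete misstatement that breaks your induction as written: you assert that $w'$ is ``a linear combination of $\alpha(-1)^{j'}\mathbf{1}$ with $j'<j$.'' Eqn.~(\ref{reduce1-prop-level2}) only gives that $w'$ is a linear combination of general monomials $\alpha(-4)^{i_4'}\alpha(-3)^{i_3'}\alpha(-2)^{i_2'}\alpha(-1)^{i_1'}\mathbf{1}$ of total degree $<j$, or of total degree $j$ with $i_1'<j$; terms such as $\alpha(-1)^{j-1}\alpha(-2)\mathbf{1}$ and $\alpha(-1)^{j-2}\alpha(-2)^2\mathbf{1}$ genuinely occur. For large $j$ these have $\alpha(-1)$-exponent strictly between $2$ and $j$, so they lie neither in \eqref{generators-again} nor among the pure powers, and a single induction on $j$ does not close on them. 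Your proposed repair (re-expressing $\alpha(-k)$ for $k\ge 3$ via the recursions) does not address this, since modes $\alpha(-3),\alpha(-4)$ are already permitted in \eqref{generators-again}; the obstruction is the exponent of $\alpha(-1)$. What is needed is a secondary induction, within fixed total degree, on that exponent: applying Eqn.~(\ref{reduce1-prop-level2}) to an offending mixed monomial expresses it, up to invertible scalar, as $\alpha(-1)^{i_1'}\mathbf{1}*_2(\text{monomial in }\alpha(-2),\alpha(-3),\alpha(-4))$ plus terms of smaller $(\text{total degree},i_1)$, and now the left factor is a pure power of strictly smaller total degree, covered by the outer induction, while the right factor lies in \eqref{generators-again}. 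With that refinement your argument is complete and lands essentially where the paper's does (whose own write-up is comparably terse on exactly this point).
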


\begin{proof} From Corollary \ref{level-two-corollary}, we have that 
$A_2(M_a(1))$ is generated by $v + O_2(M_a(1))$ for $v$ in 
\[
 \{ \alpha(-1)^i \alpha(-2)^j \alpha(-3)^k \alpha(-4)^l \mathbf{1}, \alpha(-1)^m \mathbf{1}  \; | \; 0\leq i \leq 2, \ j, k, l, m \in \mathbb{N} \}. 
 \]
Thus it remains to prove that $\alpha(-1)^i \mathbf{1}$ for $i >2$ is generated by elements in (\ref{generators-again}) mod $O_2(V)$.  We induct on the total degree of elements $\alpha(-1)^i \alpha(-2)^j \alpha(-3)^k \alpha(-4)^l \mathbf{1}$, given by $ i + j + k + l$.  

From (\ref{L-reduction})  with $k_1 = k_2 = \cdots = k_r = -1$, we have
\begin{equation}\label{reduce-alpha(-1)}
i\alpha(-1)^i \mathbf{1} \approx - \alpha(-1)^{i-1} \alpha(-2) \mathbf{1} .
\end{equation}
Thus $\alpha(-1)^3 \mathbf{1}$ is generated by elements in (\ref{generators-again}) mod $O_2(V)$, and therefore the result holds for all elements of degree 3 or less.  We make the inductive assumption that elements of total degree $i + j + k + l < r$ are generated by elements in  (\ref{generators-again}) mod $O_2(V)$.   Since all elements of total degree $r$ are generated by elements in (\ref{generators-level-2}) mod $O_2(V)$, it remains to prove the result for  $\alpha(-1)^r \mathbf{1}$.  

Using the multiplication formula (\ref{multiplication-one-cor-formula-level2-expllicit}) with $v = \alpha(-2)^j \mathbf{1}$, to observe that for $i \geq 4$ and $j \in \mathbb{N}$, we have
\begin{eqnarray*}
\lefteqn{\alpha(-1)^i \mathbf{1} *_2 \alpha(-2)^j \mathbf{1} }\\
&=&   \left( \binom{i+ 2}{ 2}  - 3  \binom{i+ 2 }{3} + 6  \binom{i+ 2 }{4}   \right) \alpha(-1)^i  
\alpha(-2)^j  \mathbf{1} \\
& &  + \left( \binom{i+ 2}{ 1}  - 3  \binom{i+ 2 }{2} + 6  \binom{i+ 2 }{3}   \right)  \! \!  \sum_{\stackrel{p_1, p_2, \dots, p_i \in \mathbb{N}}{p_1 + p_2 + \cdots + p_i =  1} }   \left( \prod_{l=1}^i \alpha(-p_l-1) \right)  \alpha(-2)^j \mathbf{1}  \\
& & +   \left( 1 - 3  \binom{i+ 2 }{1} + 6  \binom{i+ 2 }{2}   \right)  \! \! \sum_{\stackrel{p_1, p_2, \dots, p_i \in \mathbb{N}}{p_1 + p_2 + \cdots + p_i =  2} }  \left( \prod_{l=1}^i \alpha(-p_l-1) \right)   \alpha(-2)^j \mathbf{1} \\ 
& & +   \left(  - 3   + 6  \binom{i+ 2 }{1}   \right) \! \! \sum_{\stackrel{p_1, p_2, \dots, p_i \in \mathbb{N}}{p_1 + p_2 + \cdots + p_i =  3} }  \left( \prod_{l=1}^i \alpha(-p_l-1) \right)     \alpha(-2)^j \mathbf{1} \\
& & + 6 \! \! \sum_{\stackrel{p_1, p_2, \dots, p_i \in \mathbb{N}}{p_1 + p_2 + \cdots + p_i =  4} }  \left( \prod_{l=1}^i \alpha(-p_l-1) \right)    \alpha(-2)^j \mathbf{1}  
+  f_{i + j-2}(\mathbf{1}) \\
&=& \frac{(i^2-1) (i^2 - 4)}{4}   \alpha(-1)^i  
\alpha(-2)^j  \mathbf{1} + i (i + \frac{1}{2})(i-1)(i + 2) \alpha(-1)^{i-1} \alpha(-2)^{j+1} \mathbf{1}\\
& & + (3i^2 + 6i + 1) \left( i \alpha(-1)^{i - 1} \alpha(-2)^j \alpha(-3) \mathbf{1} + \binom{i}{2}\alpha(-1)^{i-2} \alpha(-2)^{j + 2} \mathbf{1}\right)  \\ 
& & + (6i + 9) \left( i \alpha(-1)^{i - 1} \alpha(-2)^j \alpha(-4) \mathbf{1} +  i (i - 1) \alpha(-1)^{i - 2} \alpha(-2)^{j + 1} \alpha(-3) \mathbf{1} \right.\\
& & \left. + \binom{i}{3}\alpha(-1)^{i - 3} \alpha(-2)^{j + 3} \mathbf{1} \right)  
 +  6  \biggl( i \alpha(-1)^{i-1} \alpha(-2)^j \alpha(-5) \mathbf{1}  \\
 & &  +  i ( i -1)  \alpha(-1)^{i-2} \alpha(-2)^{j+1} \alpha(-4) \mathbf{1} + \binom{i}{2} \alpha(-1)^{i - 2} \alpha(-2)^j \alpha(-3)^2 \mathbf{1} \\
& & \left. + \frac{i (i-1) (i-2) }{2} \alpha(-1)^{i-3} \alpha(-2)^{j+2} \alpha(-3) {\bf 1}  + \binom{i}{4} \alpha(-1)^{i - 4} \alpha(-2)^{j + 4} \mathbf{1}\right)  \\
& &  +  f_{i+j -2}( \mathbf{1}) ,  \\
&=&  \frac{(i^2-1) (i^2 -  4)}{4}   \alpha(-1)^i  
\alpha(-2)^j  \mathbf{1}  + w +  6i  \alpha(-1)^{i-1} \alpha(-2)^j \alpha(-5) \mathbf{1} 
+ f_{i+ j-2}( \mathbf{1}) ,
\end{eqnarray*}
for $w$ a linear combination of elements of the form $\alpha(-1)^{i'} \alpha(-2)^{j'} \alpha(-3)^{k'} \alpha(-4)^{l'} \mathbf{1}$ with $i' + j' + k' + l' = i + j$ and $i'<i$, and for some $f_{i+ j-2}(\mathbf{1}) \in F_{i+j-2}(\mathbf{1})$.

Using Eqn. (\ref{reduce-equation-Heisenberg-level2}), we have that 
\begin{multline*}
6\alpha(-1)^{i-1} \alpha(-2)^j \alpha(-5) \mathbf{1} \equiv_2 - \frac{3}{2} \left( (3 + i + 2j \alpha(-1)^{i-1} \alpha(-2)^j \alpha(-4) \mathbf{1} \right. \\
\left. + 2j   \alpha(-1)^{i-1} \alpha(-2)^{j-1} \alpha(-3) \alpha(-4) \mathbf{1} + (i-1) \alpha(-1)^{i-2} \alpha(-2)^{j+1} \alpha(-4) \mathbf{1} \right).
\end{multline*}
 Thus 
\begin{eqnarray}\label{reduce-alpha(-1)-2}
\alpha(-1)^i \alpha(-2)^j  \mathbf{1}  \equiv_2   \frac{4}{(i^2 - 1) (i^2 - 4)}  \alpha (-1)^i {\bf 1} *_2 \alpha(-2)^j \mathbf{1} + w' +  f_{i + j - 2} (\mathbf{1}) 
\end{eqnarray}
for $w'$ a linear combination of elements of the form $\alpha(-1)^{i'} \alpha(-2)^{j'} \alpha(-3)^{k'} \alpha(-4)^{l'} \mathbf{1}$ with $i' + j' + k' + l' = i + j$ and $i'<i$, and  $f_{i + j - 2} (\mathbf{1})  \in F_{i+j - 2}(\mathbf{1})$.  

Therefore by Eqn. (\ref{reduce-alpha(-1)}), we have that for $r > 3$,
\begin{eqnarray*}
\alpha(-1)^r \mathbf{1} &\approx& - \frac{1}{r} \alpha(-1)^{r-1} \alpha(-2) \mathbf{1} \\
&=&  - \frac{4}{(r-3)(r-2)r (r + 1)} \alpha (-1)^{r-1} {\bf 1}  *_2 \alpha(-2) \mathbf{1} + w' +  f'_{r - 2} (\mathbf{1}) ,
\end{eqnarray*}
for $w'$ a linear combination of elements of the form $\alpha(-1)^{i'} \alpha(-2)^{j'} \alpha(-3)^{k'} \alpha(-4)^{l'} \mathbf{1}$ with $i' + j' + k' + l' = r $ and $i'<r$ and $f_{r - 2} (\mathbf{1})  \in F_{r - 2}(\mathbf{1})$.  The result follows by the inductive assumption on $r$.  
\end{proof}

\begin{prop}\label{generators-prop}
$A_2(M_a(1))$ is generated by $v + O_2(M_a(1))$ for $v$ in 
\begin{equation}\label{generators-again2}
\{ \alpha(-1)^i \alpha(-2)^j \alpha(-3)^k \alpha(-4)^l \mathbf{1} \; | \; 0\leq i \leq 2, \ 0\leq j\leq 1,\  k, l \in \mathbb{N} \}.
 \end{equation}
\end{prop}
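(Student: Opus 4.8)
The plan is to show that the subalgebra $B\subseteq A_2(M_a(1))$ generated by the images of the elements listed in (\ref{generators-again2}) is all of $A_2(M_a(1))$. By Proposition \ref{reducing-one-prop} this reduces to proving that $\alpha(-1)^{i}\alpha(-2)^{j}\alpha(-3)^{k}\alpha(-4)^{l}\mathbf{1}+O_2(M_a(1))\in B$ for every $0\le i\le 2$, $j\ge 2$ and $k,l\in\mathbb{N}$ (the monomials with $j\le 1$ already lying in $B$). I would argue by a double induction: an outer induction on the total number $\nu=i+j+k+l$ of factors, and, for fixed $\nu$, an inner induction on the weighted count $Q=5i+j$. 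The base cases are $\nu\le1$, and — within a given $\nu$ — the monomials with $i\le2$ and $j\le1$, which are among (\ref{generators-again2}). Whenever an intermediate computation produces an $\alpha(-m)$ with $m\ge5$ I would rewrite it, modulo $O_2(M_a(1))$, as a combination of monomials in $\alpha(-1),\dots,\alpha(-4)$ using (\ref{recursion-level-2}), (\ref{-5-reduction}) and (\ref{reduce-equation-Heisenberg-level2}); part of the bookkeeping is that these rewrites neither increase the number of factors nor increase $Q$.

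The inductive step exploits the fact that the multiplication formulas inherited from \cite{AB-general-n} have leading coefficients that vanish exactly on the boundary of the allowed range of exponents. If $i\ge3$ and the monomial is not $\alpha(-1)^{\nu}\mathbf{1}$, then (\ref{reduce1-prop-level2}), whose left-hand coefficient $\tfrac14(i^{2}-1)(i^{2}-4)$ is then nonzero, expresses the monomial — up to a nonzero scalar — as $\alpha(-1)^{i}\mathbf{1}*_2\bigl(\alpha(-4)^{l}\alpha(-3)^{k}\alpha(-2)^{j}\mathbf{1}\bigr)$ plus correction terms; here both factors have fewer than $\nu$ factors, hence lie in $B$ by the outer induction, while each correction term has either fewer factors or the same number of factors with strictly smaller $Q$. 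Symmetrically, if $i\le2$, $j\ge2$ and the monomial is not $\alpha(-2)^{\nu}\mathbf{1}$, then (\ref{multiplication-two-cor-formula-level2-explicit}) — now through the leading coefficient $(j^{2}-1)(4j^{2}-1)\ne0$ — expresses it in terms of $\alpha(-2)^{j}\mathbf{1}*_2\bigl(\alpha(-1)^{i}\alpha(-3)^{k}\alpha(-4)^{l}\mathbf{1}\bigr)$ and lower correction terms, again reducing to cases covered by the induction. The two diagonal monomials need a slightly different input: for $\alpha(-1)^{\nu}\mathbf{1}$ with $\nu\ge3$ one uses (\ref{reduce-alpha(-1)}) to pass to $-\tfrac{1}{\nu}\alpha(-1)^{\nu-1}\alpha(-2)\mathbf{1}$, which has the same $\nu$ but strictly smaller $Q$; and for $\alpha(-2)^{\nu}\mathbf{1}$ with $\nu\ge3$ one applies (\ref{multiplication-two-cor-formula-level2-explicit}) to $\alpha(-2)^{\nu-1}\mathbf{1}*_2\alpha(-2)\mathbf{1}$, whose leading coefficient $\nu(\nu-2)(2\nu-1)(2\nu-3)$ is nonzero while $\alpha(-2)^{\nu-1}\mathbf{1}$ has $\nu-1<\nu$ factors.

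The single genuinely exceptional monomial is $\alpha(-2)^{2}\mathbf{1}$ (the case $\nu=2$, $i=k=l=0$, $j=2$), where every one of the above leading coefficients degenerates to $0$. For it I would extract a relation directly from $O_2^\circ(M_a(1))$: since $\alpha(-1)^{2}\mathbf{1}\circ_2\mathbf{1}\in O_2(M_a(1))$, and since $Y(\alpha(-1)^{2}\mathbf{1},x)\mathbf{1}=e^{xL(-1)}\alpha(-1)^{2}\mathbf{1}$, a short computation of $\res_x \frac{(1+x)^{4}e^{xL(-1)}\alpha(-1)^{2}\mathbf{1}}{x^{6}}$ shows that this element equals $4\,\alpha(-2)^{2}\mathbf{1}$ plus a linear combination of $\alpha(-3)\alpha(-1)\mathbf{1}$, $\alpha(-4)\alpha(-1)\mathbf{1}$, $\alpha(-3)\alpha(-2)\mathbf{1}$ and of monomials containing $\alpha(-5)$ or $\alpha(-6)$; rewriting the last group via (\ref{recursion-level-2}) and (\ref{-5-reduction}) produces only further elements of (\ref{generators-again2}), so solving for $\alpha(-2)^{2}\mathbf{1}$ places it in $B$. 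This completes the induction, hence the proposition.

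I expect the real work to be the bookkeeping in the inductive step: one has to check, term by term, that the $g$- and $f$-type correction terms appearing in (\ref{reduce1-prop-level2}) and (\ref{multiplication-two-cor-formula-level2-explicit}), together with all the monomials created when eliminating the modes $\le-5$, lie strictly below the current monomial in the $(\nu,Q)$-ordering — the coefficient $5$ in $Q=5i+j$ being chosen precisely so that lowering the exponent of $\alpha(-1)$ by one (which may raise the exponent of $\alpha(-2)$) still strictly decreases $Q$. The other subtle point is simply to recognize $\alpha(-2)^{2}\mathbf{1}$ as the unique monomial for which the multiplicative machinery returns a zero leading coefficient, so that it must be handled through an explicit element of $O_2^\circ(M_a(1))$ rather than through $*_2$.
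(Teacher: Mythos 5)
Your proposal is essentially the paper's argument: both reduce, via Proposition \ref{reducing-one-prop}, to monomials with $0\le i\le 2$ and $j\ge 2$, induct on the total number of factors, and peel off the $\alpha(-2)$-powers by writing the monomial as the leading term of $\alpha(-2)^j\mathbf{1}*_2\bigl(\alpha(-1)^i\alpha(-3)^k\alpha(-4)^l\mathbf{1}\bigr)$, using that the leading coefficient $(j^2-1)(4j^2-1)$ of Eqn.\ (\ref{multiplication-two-cor-formula-level2-explicit}) is nonzero for $j\ge 2$ and that the correction terms (including those created by eliminating $\alpha(-5)$ and $\alpha(-6)$ via (\ref{recursion-level-2}) and (\ref{reduce-equation-Heisenberg-level2})) are lower in a suitable ordering. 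Your secondary induction on $Q=5i+j$ is a clean formalization of what the paper does by iterating Eqn.\ (\ref{reducing-2}) explicitly (possible there only because $i\le 2$), and since you let the outer induction range over \emph{all} monomials you also absorb the $F_s(\mathbf{1})$-type corrections, which is slightly more careful than the paper. The one genuine divergence is the pure power $\alpha(-2)^j\mathbf{1}$: you split off a factor as $\alpha(-2)^{j-1}\mathbf{1}*_2\alpha(-2)\mathbf{1}$, whose leading coefficient $j(j-2)(2j-1)(2j-3)$ vanishes at $j=2$, forcing you to treat $\alpha(-2)^2\mathbf{1}$ by an explicit element of $O_2^{\circ}(M_a(1))$ (your residue computation is correct in its essentials, though the list of companion terms is incomplete --- e.g.\ $\alpha(-1)\alpha(-2)\mathbf{1}$, $\alpha(-3)^2\mathbf{1}$, $\alpha(-2)\alpha(-4)\mathbf{1}$ also appear --- all of which still lie in (\ref{generators-again2}), so the conclusion stands). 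The paper avoids this special case entirely by instead taking $v=\mathbf{1}$ in the same multiplication formula and exploiting that $\alpha(-2)^j\mathbf{1}*_2\mathbf{1}=\alpha(-2)^j\mathbf{1}$, which yields the solvable self-referential relation $\bigl(1-\tfrac{1}{(j^2-1)(4j^2-1)}\bigr)\alpha(-2)^j\mathbf{1}\equiv_2 w''+f_{j-2}(\mathbf{1})$ for every $j\ge 2$, including $j=2$; that trick would let you delete your exceptional case altogether.
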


\begin{proof} 
From Proposition \ref{reducing-one-prop}, we need only prove that elements of the form 
\begin{equation}\label{form}
\alpha(-1)^i \alpha(-2)^j \alpha(-3)^k \alpha(-4)^l \mathbf{1}
\end{equation}
for $j>2$ are in (\ref{generators-again2}).  We again proceed by induction on the total degree $i + j + k + l = r$.  The result follows immediately for terms with total degree $r \leq 1$, and for elements of degree $i + j + k + l$ with $j \leq 1$.  Assume the result holds for elements of total degree $i + j + k + l <r$.  We need to show that elements of degree $r = i + j + k + 1$ with $j>1$ are of the form (\ref{generators-again2}) mod $O_2(V)$.

From Eqn. (\ref{multiplication-two-cor-formula-level2-explicit}) we have that for $j\geq 2$ and $v \in F_r({\bf 1}) \subset M_a(1)$, 
\begin{eqnarray*}
\lefteqn{\left( \binom{2j+ 2}{ 2}  - 3  \binom{2j+ 2 }{3} + 6  \binom{2j + 2 }{4}   \right) \alpha(-2)^j v  }\\
&=& \alpha(-2)^j \mathbf{1} *_2 v\\
& &  -  
\left( \binom{2j+ 2}{ 1}  - 3  \binom{2j+ 2 }{2} + 6  \binom{2j + 2 }{3}   \right)
\! \!  \sum_{\stackrel{p_1, p_2, \dots, p_j \in \mathbb{N}}{p_1 + p_2 + \cdots + p_j =  1} }
\left( \prod_{l=1}^j (p_l + 1)\alpha(-p_l-2) \right)  v \\
& & -
\left( 1  - 3  \binom{2j+ 2 }{1} + 6  \binom{2j + 2 }{2}   \right)  \! \! \sum_{\stackrel{p_1, p_2, \dots, p_j \in \mathbb{N}}{p_1 + p_2 + \cdots + p_j =  2} } 
\left( \prod_{l=1}^j (p_l + 1)\alpha(-p_l-2) \right)  v\\
& & - \left(  - 3  + 6  \binom{2j + 2 }{1}   \right) \! \! 
 \sum_{\stackrel{p_1, p_2, \dots, p_j \in \mathbb{N}}{p_1 + p_2 + \cdots + p_j =  3} }  
\left( \prod_{l=1}^j (p_l + 1)\alpha(-p_l-2) \right)  v\\
& & - \  6  \! \! 
 \sum_{\stackrel{p_1, p_2, \dots, p_j \in \mathbb{N}}{p_1 + p_2 + \cdots + p_j =  4} } 
\left( \prod_{l=1}^j (p_l + 1)\alpha(-p_l-2) \right)  v
+  f_{j+ r-2}({\bf 1}) \\
&=&  \alpha(-2)^j \mathbf{1} *_2 v + w - 12j(4j + 3) \alpha(-5) \alpha(-2)^{j-1}v -30j \alpha(-6) \alpha(-2)^{j-1}v \\
& & - \ 48 j (j -1) \alpha(-5) \alpha(-3) \alpha(-2)^{j-2}v +   f_{j+r -2}({\bf 1} )
\end{eqnarray*}
where $w$ is a linear combination of terms of the form $\alpha(-2)^{j'} \alpha(-3)^{k'} \alpha(-4)^{l'}v$ with $j' + k' + l' = j$ and $j'<j$, and $f_{j+r-2}({\bf 1} ) \in F_{j+r-2}({\bf 1})$.  

Using the recursion Eqn.\ (\ref{recursion-level-2}), we have that for any $w \in V$
\[\alpha(-6)w \sim_2 - \left( \alpha(-3) + 3 \alpha(-4) + 3 \alpha(-5) \right)w \]
and substituting this into the previous equation as well as simplifying the coefficient of the term on the left, we have
\begin{eqnarray}\label{reducing-2-first}
\lefteqn{ (j+1)(2j+1) (j-1)(2j-1)  \alpha(-2)^jv  \ =  \ (j^2 - 1)(4j^2 -1)  \alpha(-2)^jv }  \\
&\sim_2&  \alpha(-2)^j \mathbf{1} *_2 v + w   - 12j(4j + 3)  \alpha(-5) \alpha(-2)^{j-1}v  \nonumber  \\
& & + 30j \left( \alpha(-3) + 3 \alpha(-4) + 3 \alpha(-5)\right) \alpha(-2)^{j-1}v  \nonumber \\
& & - \ 48 j(j-1) \alpha(-5) \alpha(-3) \alpha(-2)^{j-2}v +  f_{j+r -2}({\bf 1})  \nonumber \\
& = &  \alpha(-2)^j \mathbf{1} *_2 v - 6j(8j - 9) \alpha(-5) \alpha(-2)^{j-1}v  \nonumber \\
& & - 48 j(j-1) \alpha(-5) \alpha(-3) \alpha(-2)^{j-2}v  + w' +  f_{j+r-2}({\bf 1}) \nonumber
\end{eqnarray}
where $w'$ is a linear combination of terms of the form $\alpha(-2)^{j'} \alpha(-3)^{k'} \alpha(-4)^{l'}v$ with $j' + k' + l' = j$ and $j'<j$, and  $f_{j+r-2}({\bf 1}) \in F_{j+r-2}({\bf 1})$. 

Letting $v = \alpha(-1)^i \alpha(-3)^k \alpha(-4)^l \mathbf{1}$, for $k, l \in \mathbb{Z}_+$ and $0\leq i \leq 2$, we use Eqn.\ (\ref{reduce-equation-Heisenberg-level2}) to reduce terms involving $\alpha(-5)$, noting that  if $i_5 = 1$, $i_4 = l$, $i_3 = k$,  $i_2 = j-1$, and $i_1 = i$ in Eqn.\ (\ref{reduce-equation-Heisenberg-level2}), we have 
\begin{eqnarray*}
\lefteqn{\alpha(-5)   \alpha(-4)^l \alpha(-3)^k \alpha(-2)^{j-1} \alpha(-1)^i\mathbf{1} }\\
&\equiv_2& -\frac{1}{4(l+ 1) } \Bigl(  \left(4  + i + 2(j-1) + 3k + 4l \right) \alpha(-4)^{l+1} \alpha(-3)^{k} \alpha(-2)^{j-1}  \alpha( - 1)^{i } \mathbf{1}  \\
& & \quad  + \, 3 k \alpha(-4)^{l + 2} \alpha(-3)^{k- 1} \alpha(-2)^{j-1}  \alpha(-1)^{i} \mathbf{1}   \nonumber  \\
& & \quad  + \, 2 (j-1) \alpha(-4)^{l+1} \alpha(-3)^{k+1}  \alpha(-2)^{j - 2} \alpha( - 1)^{i } \mathbf{1}  \\
& & \quad + \, i \alpha(-4)^{l+1} \alpha(-3)^{k} \alpha(-2)^{j} \alpha( - 1)^{i-1 } \mathbf{1}  \Bigr) ,
\end{eqnarray*}
and if $i_5 = 1$, $i_4 = l$, $i_3 = k+1$,  $i_2 = j-2$, and $i_1 = i$ in Eqn.\ (\ref{reduce-equation-Heisenberg-level2}), we have 
\begin{eqnarray*}
\lefteqn{\alpha(-5) \alpha(-4)^{l} \alpha(-3)^{k+1} \alpha(-2)^{j-2} \alpha(-1)^{i} \mathbf{1} } \\
&\equiv_2& -\frac{1}{4( l + 1) } \Bigl(  \left(4  +  i + 2(j-2) + 3(k+1) + 4l  \right) \alpha(-4)^{l+1} \alpha(-3)^{k+1} \alpha(-2)^{j-2}  \alpha( - 1)^{i } \mathbf{1} \ \\
& & \quad  + \, 3 (k+1)  \alpha(-4)^{l + 2} \alpha(-3)^{k} \alpha(-2)^{j-2}  \alpha(-1)^{i} \mathbf{1}    \\
& & \quad  + \, 2 (j-2) \alpha(-4)^{l+1} \alpha(-3)^{k+2}  \alpha(-2)^{j-3} \alpha( - 1)^{i } \mathbf{1}  \\
& & \quad + \, i \alpha(-4)^{l+1} \alpha(-3)^{k+1} \alpha(-2)^{j-1} \alpha( - 1)^{i-1 } \mathbf{1} \Bigr) .
\end{eqnarray*}

Therefore  for $j >1$, and $i,j,k \in \mathbb{N}$, substituting these last two equations into Eqn.\ (\ref{reducing-2-first}), with $v = \alpha(-1)^i \alpha(-3)^k \alpha(-4)^l \mathbf{1}$, we have
\begin{eqnarray}\label{reducing-2}
\lefteqn{\ \ \ \ \ \alpha(-1)^i \alpha(-2)^j \alpha(-3)^k \alpha(-4)^l \mathbf{1} = \alpha(-2)^j (\alpha(-1)^i \alpha(-3)^k \alpha(-4)^l \mathbf{1}) } \\
&\equiv_2&  \frac{1}{(j^2-1)(4j^2-1)} \Bigl(   \alpha(-2)^j \mathbf{1} *_2   \alpha(-1)^i \alpha(-3)^k \alpha(-4)^l \mathbf{1}   \nonumber \\
& &   + 6j(8j - 9)  \frac{i}{4( l+ 1) }  \alpha(-4)^{l+1} \alpha(-3)^{k} \alpha(-2)^{j} \alpha( - 1)^{i-1 } \mathbf{1} 
\Bigr)   + w''+   f'_{i+j+k + l-2}( \mathbf{1}) \nonumber
\end{eqnarray}
where  $w''$ is a linear combination of terms of the form $\alpha(-1)^{i'} \alpha(-2)^{j'} \alpha(-3)^{k'} \alpha(-4)^{l'} \mathbf{1} $ with $i'+ j' + k' + l' = i + j + k +
 l$, $i'\leq i$, and $j'<j$, and $f'_{i+ j+ k + l-2}(\mathbf{1}) \in F_{i + j + k + l-2}(\mathbf{1})$.  
 
If $i \neq 0$, we apply Eqn.\ (\ref{reducing-2}) again for the second term to obtain
\begin{eqnarray*}
\lefteqn{\alpha(-1)^i \alpha(-2)^j \alpha(-3)^k \alpha(-4)^l \mathbf{1} }\\
&\equiv_2&  \frac{1}{(j^2-1)(4j^2-1)} \biggl(   \alpha(-2)^j \mathbf{1} *_2   \alpha(-1)^i \alpha(-3)^k \alpha(-4)^l \mathbf{1}  \nonumber \\
& &  +  6j(8j -9)  \frac{i}{4( l+ 1) } \frac{1}{(j^2 - 1)(4j^2-1)} \Bigl(   \alpha(-2)^j \mathbf{1} *_2   \alpha(-1)^{i-1} \alpha(-3)^k \alpha(-4)^{l+1} \mathbf{1}   \\
& &  + 6j(8j -9 ) \frac{1}{4( l+ 2) } (i-1) \alpha(-4)^{l+2} \alpha(-3)^{k} \alpha(-2)^{j} \alpha( - 1)^{i-2 } \mathbf{1}  \Bigr)
\biggr) + \tilde{w} +   f''_{i+j+k + l-2}( \mathbf{1})
\end{eqnarray*}
where  $\tilde{w}$ is a linear combination of terms of the form $\alpha(-1)^{i'} \alpha(-2)^{j'} \alpha(-3)^{k'} \alpha(-4)^{l'} \mathbf{1}$ with $i'+ j' + k' + l' = i + j + k +
 l$, $i'<i$, and $j'<j$, and $f''_{i+ j+ k + l-2}(\mathbf{1}) \in F_{i + j + k + l-2}(\mathbf{1})$.   And finally if $i = 2$, we apply  Eqn.\ (\ref{reducing-2}) again for the third term to obtain
\begin{equation*}
\alpha(-1)^i \alpha(-2)^j \alpha(-3)^k \alpha(-4)^l \mathbf{1} \equiv_2   \alpha(-2)^j \mathbf{1} *_2  v + \tilde{w}'  +   f'''_{i+j+k + l-2}( \mathbf{1})
\end{equation*}
where  $v$ is a linear combination of elements of the form $\alpha(-1)^{i'} \alpha(-3)^{k'} \alpha(-4)^{l'} \mathbf{1}$ with $i'\leq i$, and $i' + j' + k' = i + j + k$,  and $\tilde{w}'$ is a linear combination of terms of the form $\alpha(-1)^{i'} \alpha(-2)^{j'} \alpha(-3)^{k'} \alpha(-4)^{l'}\mathbf{1}$ with $i'+ j' + k' + l' = i + j + k + l$, $i'<1$, and $j'<j$, and $f'''_{i+ j+ k + l-2}(\mathbf{1}) \in F_{i + j + k + l-2}(\mathbf{1})$. 
 
 By induction on the total degree $i + j + k + l$ for $0\leq i\leq2$ and $j,k,l \in \mathbb{N}$, we have that elements in (\ref{generators-again}) are generated mod $O_2(V)$,  by elements in (\ref{generators-again2}) and $\alpha(-2)^j \mathbf{1}$ for $j \in \mathbb{Z}_+$.  
 
 It remains to show that  $\alpha(-2)^j \mathbf{1}$ for $j \in \mathbb{Z}_+$ are generated by elements in (\ref{generators-again2}) mod $O_2(V)$.

But setting $i = k = l = 0$ in Eqn.\ (\ref{reducing-2}), we have that 
\begin{equation*}
\alpha(-2)^j\mathbf{1} \equiv_2  \frac{1}{(j^2-1)(4j^2-1)} \left(   \alpha(-2)^j \mathbf{1} *_2    \mathbf{1} \right) + w''+  f_{j-2}( \mathbf{1}) 
\end{equation*}
where $w''$ is a linear combination of terms of the form $\alpha(-2)^{j'} \alpha(-3)^{k'} \alpha(-4)^{l'}\mathbf{1}$ with  $j' + k' + l' = j$  and $j'<j$, and  $f_{ j-2}(\mathbf{1}) \in F_{j -2}(\mathbf{1})$.   Since $\mathbf{1} + O_2(M_a(1))$ is a multiplicative identity in $A_2(M_a(1))$, we have that
 \begin{equation*}
\Bigl(1 -   \frac{1}{(j^2-1)(4j^2-1)} \Bigr)\alpha(-2)^j\mathbf{1} 
\equiv_2 w''+  f_{j-2}( \mathbf{1}) .
\end{equation*}
Since $j\geq 0$, the coefficient on the left is nonzero, and it follows that $\alpha(-2)^j\mathbf{1}$ is a linear combination of terms of the form $\alpha(-2)^{j'} \alpha(-3)^{k'} \alpha(-4)^{l'}\mathbf{1}$ with $j' + k' + l' =  j$ and $j'<j$, and elements in  $F_{ j -2}(\mathbf{1})$.  The result follows by induction on $j$.
\end{proof}

\subsection{Additional formulas and relations needed for the further reduction of generators}

Continuing along this path of reducing the set of generators, we next want to prove that $A_2(M_a(1))$ is generated by $v + O_2(M_a(1))$ for $v$ in 
\[ \{ \alpha(-1)^i \alpha(-2)^j \alpha(-3)^k \alpha(-4)^l \mathbf{1} \; | \; 0\leq i \leq 2, \ 0\leq j\leq 1,\  0\leq k \leq 1, \, l \in \mathbb{N} \}.\]
To do so, we will need several formulas arising from Eqn.\ \eqref{reduce-equation-Heisenberg-level2} as well as some relations given by elements of $O_2(M_a(1))$.  We develop these formulas in the following lemmas.

In the next two lemmas, we determine some relations given by elements of $O_2(M_a(1))$ that follow from Eqn.\ (\ref{Y+-2}), the $L(-1)$-derivative property in $M_a(1)$, and the definition of $\circ_2$.  

\begin{lem}\label{Y+-lemma} 
Recalling that $Y^+(u,x) \in (\mathrm{End}(V))[[x]]$ and $Y^-(u,x) \in x^{-1}(\mathrm{End}(V))[[x^{-1}]]$ denote the regular and singular terms for the vertex operator $Y(u,x)$ for $u \in V$, respectively, we have  that for $v \in M_a(1)$
\begin{eqnarray*}
Y^+(\alpha(-1){\mathbf{1}},x)v &\sim_2& \alpha(-1)v+\alpha(-2)vx+\alpha(-3)v\frac{x^2}{1+x}+(\alpha(-3)+\alpha(-4))v\frac{x^3}{(1+x)^2}\\
& & \quad + \,  (\alpha(-3)+2\alpha(-4)+\alpha(-5))v\frac{x^4}{(1+x)^3},\\
Y^+(\alpha(-2){\mathbf{1}},x) v &\sim_2& \alpha(-2)v +\alpha(-3) v \frac{2x+x^2}{(1+x)^2} +(\alpha(-3)+\alpha(-4)) v \frac{3x^2+x^3}{(1+x)^3}\\
& & \quad + \,  (\alpha(-3)+2\alpha(-4)+\alpha(-5))v\frac{4x^3+x^4}{(1+x)^4},\\
Y^+(\alpha(-3){\mathbf{1}},x) v &\sim_2& \alpha(-3)v \frac{1}{(1+x)^3}+(\alpha(-3)+\alpha(-4))v \frac{3x}{(1+x)^4}\\
& & \quad +\,  (\alpha(-3)+2\alpha(-4)+\alpha(-5))v \frac{6x^2}{(1+x)^5},\\
Y^+(\alpha(-4){\mathbf{1}},x)v &\sim_2& -\alpha(-3)v \frac{1}{(1+x)^4}+(\alpha(-3)+\alpha(-4))v \frac{(1-3x)}{(1+x)^5}\\
& & \quad + \,  (\alpha(-3)+2\alpha(-4)+\alpha(-5))v \frac{4x-6x^2}{(1+x)^6}.
\end{eqnarray*}

In addition, for $m \in \mathbb{Z}_+$
\begin{equation}\label{Y-}
 \mathrm{Res}_x x^{k + m-1} Y^-(\alpha(-m)\mathbf{1},x )= 0, \quad \mbox{for $k<0$,} 
 \end{equation}
 and for $m_1, m_2 \in \mathbb{Z}_+$
 \begin{equation}\label{Y-2}
 \mathrm{Res}_x x^{k + m_1 + m_2 -1} Y^-(\alpha(-m_1)\mathbf{1},x ) Y^-(\alpha(-m_2) \mathbf{1}, x) = 0, \quad \mbox{for $k<0$.} 
 \end{equation}
\end{lem}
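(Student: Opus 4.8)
The plan is to reduce everything to the $m=1$ case of (\ref{Y+-2}) together with the $L(-1)$-derivative property. First I would put the $m=1$ formula into closed form: in (\ref{Y+-2}), collect the coefficients of $\alpha(-3)v$, $\alpha(-4)v$, and $\alpha(-5)v$ as formal power series in $x$ and resum each via the identity $\frac{x^r}{(1+x)^{j+1}}=\sum_{k\ge 0}(-1)^k\binom{k+j}{j}x^{k+r}$. A short manipulation over the common denominator $(1+x)^3$ then shows the $\alpha(-3)v$-series equals $\frac{x^2}{1+x}+\frac{x^3}{(1+x)^2}+\frac{x^4}{(1+x)^3}$, the $\alpha(-4)v$-series equals $\frac{x^3}{(1+x)^2}+\frac{2x^4}{(1+x)^3}$, and the $\alpha(-5)v$-series equals $\frac{x^4}{(1+x)^3}$; adding back $\alpha(-1)v+\alpha(-2)vx$ yields the first displayed formula.

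For the remaining three formulas I would use that $\alpha(-m)\mathbf{1}=\frac{1}{(m-1)!}L(-1)^{m-1}\alpha(-1)\mathbf{1}$ (compare the two expansions of $Y(\alpha(-1)\mathbf{1},x)\mathbf{1}=e^{xL(-1)}\alpha(-1)\mathbf{1}=\sum_{k\ge 1}\alpha(-k)\mathbf{1}x^{k-1}$), so that the $L(-1)$-derivative property gives $Y(\alpha(-m)\mathbf{1},x)=\frac{1}{(m-1)!}\partial_x^{m-1}Y(\alpha(-1)\mathbf{1},x)$. Since $\partial_x$ carries $(\mathrm{End}\,M_a(1))[[x]]$ into itself and $x^{-1}(\mathrm{End}\,M_a(1))[[x^{-1}]]$ into $x^{-2}(\mathrm{End}\,M_a(1))[[x^{-1}]]$, it respects the splitting $Y=Y^++Y^-$, hence $Y^+(\alpha(-m)\mathbf{1},x)=\frac{1}{(m-1)!}\partial_x^{m-1}Y^+(\alpha(-1)\mathbf{1},x)$. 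Moreover $\partial_x$ acts on an $x$-expansion by reindexing and rescaling coefficients by integers, and $O_2^\circ(M_a(1))$ is a subspace, so $\partial_x$ preserves $\sim_2$. Therefore $Y^+(\alpha(-m)\mathbf{1},x)v\sim_2\frac{1}{(m-1)!}\partial_x^{m-1}$ applied to the right-hand side of the $m=1$ formula, and differentiating those rational functions the needed number of times and simplifying (e.g.\ $\partial_x\frac{x^2}{1+x}=\frac{2x+x^2}{(1+x)^2}$) produces the $m=2,3,4$ displays.

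For (\ref{Y-}) and (\ref{Y-2}) the key is the single observation that $Y^-(\alpha(-m)\mathbf{1},x)\in x^{-m}(\mathrm{End}(M_a(1)))[[x^{-1}]]$ for every $m\in\mathbb{Z}_+$: by the same argument $Y^-(\alpha(-m)\mathbf{1},x)=\frac{1}{(m-1)!}\partial_x^{m-1}Y^-(\alpha(-1)\mathbf{1},x)$, while $Y^-(\alpha(-1)\mathbf{1},x)=\sum_{n\ge 0}\alpha(n)x^{-n-1}\in x^{-1}(\mathrm{End})[[x^{-1}]]$, so applying $\partial_x^{m-1}$ lands in $x^{-m}(\mathrm{End})[[x^{-1}]]$. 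Consequently $x^{k+m-1}Y^-(\alpha(-m)\mathbf{1},x)\in x^{k-1}(\mathrm{End})[[x^{-1}]]$, and for $k<0$ one has $k-1\le -2$, so this series has no $x^{-1}$ term and its residue is $0$; this is (\ref{Y-}). For (\ref{Y-2}), the product $Y^-(\alpha(-m_1)\mathbf{1},x)Y^-(\alpha(-m_2)\mathbf{1},x)$ lies in $x^{-m_1-m_2}(\mathrm{End})[[x^{-1}]]$, so after multiplying by $x^{k+m_1+m_2-1}$ it again lies in $x^{k-1}(\mathrm{End})[[x^{-1}]]$ with $k-1\le -2$, and its residue vanishes.

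I do not expect a genuine obstacle here: this lemma is essentially a bookkeeping computation. The one point needing care is verifying that $\partial_x$ is compatible both with the splitting $Y=Y^++Y^-$ and with the congruence $\sim_2$, which is what legitimizes differentiating the $m=1$ identities; once that is in hand the $m=2,3,4$ formulas and the residue vanishings follow immediately. The longest (though still routine) step is resumming the three series in (\ref{Y+-2}) into the stated rational functions for the $m=1$ case.
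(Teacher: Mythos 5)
Your proposal is correct and follows essentially the same route as the paper's proof: the paper likewise resums the series in Eqn.\ (\ref{Y+-2}) into the rational functions for the $\alpha(-1)\mathbf{1}$ case and then obtains the formulas for $\alpha(-m)\mathbf{1}$, $m=2,3,4$, together with (\ref{Y-}) and (\ref{Y-2}), from the $L(-1)$-derivative property and $(m-1)!\,\alpha(-m)\mathbf{1}=L(-1)^{m-1}\alpha(-1)\mathbf{1}$. Your explicit checks that $\partial_x$ is compatible with the splitting $Y=Y^++Y^-$ and with $\sim_2$ make precise a point the paper leaves implicit, and all of your resummed closed forms and derivatives agree with the stated formulas.
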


\begin{proof}
From Eqn.\ (\ref{Y+-2}), and assuming any rational function in $x$ means the formal power series expanded in terms of positive coefficients in $x$, we have 
\begin{eqnarray*}
\lefteqn{Y^+(\alpha(-1){\mathbf{1}}, x)v}\\
&\sim_2& \alpha(-1)v+\alpha(-2)vx+\frac{\alpha(-3)v}{2}\sum_{m\ge3}(-1)^{m+1}(m-4)(m-5)x^{m-1}\\
& & \quad + \ \alpha(-4)v\sum_{m\ge3}(-1)^{m+1}(m-3)(m-5)x^{m-1} \\
& & \quad + \ \frac{\alpha(-5)v}{2}\sum_{m\ge3}(-1)^{m+1}(m-3)(m-4)x^{m-1}\\
&=&\alpha(-1)v+\alpha(-2)vx+\alpha(-3)v\frac{x^2(3x^2+3x+1)}{(1+x)^3} + \alpha(-4)v\frac{3x^4+x^3}{(1+x)^3}+\alpha(-5)v\frac{x^4}{(1+x)^3}\\
&= &\alpha(-1)v+\alpha(-2)vx+\alpha(-3)v\frac{x^2}{1+x}+(\alpha(-3)+\alpha(-4))v\frac{x^3}{(1+x)^2}\\
&& \quad + \ (\alpha(-3)+2\alpha(-4)+\alpha(-5))v\frac{x^4}{(1+x)^3}.
\end{eqnarray*}
The other formulas are obtained by using the $L(-1)$ derivative property, and the fact that $(k-1)!\alpha(-k) \mathbf{1} = L(-1)^k \alpha(-1)\mathbf{1}$.   The $L(-1)$ derivative property along with the fact that  $\mathrm{Res}_x x^{k} Y^-(\alpha(-1)\mathbf{1},x ) = \sum_{j\in \mathbb{N}} \alpha(j) x^{k-j-1}= 0$ for $k<0$ gives (\ref{Y-}) as well as (\ref{Y-2}).
\end{proof}

We now calculate some elements in $O_2(V)$ using the definition of $\circ_2$ given by \eqref{define-circ} and Lemma \ref{Y+-lemma} which allows one to target which $i, j, k, l \in \mathbb{N}$ give terms in $u \circ_2 v$ for $u = \alpha(-1)^i\alpha(-2)^j\alpha(-3)^k\alpha(-4)^l \mathbf{1}$ where the singular terms for $Y(u, x)$ do not contribute.  In particular, 
\begin{eqnarray}\label{start-of-O2-lemma}
\lefteqn{\alpha(-1)^i\alpha(-2)^j\alpha(-3)^k\alpha(-4)^l{\mathbf{1}}\circ_2v}\\
&=&
\res_x\frac{(1+x)^{i+2j+3k+4l+2}}{x^6}Y(\alpha(-1)^i\alpha(-2)^j\alpha(-3)^k\alpha(-4)^l{\mathbf{1}},x)v \nonumber \\
&=& 
\res_x\frac{(1+x)^{2}}{x^6} {}_{\circ}^{\circ} \Big(((1+ x)Y^-(\alpha(-1){\mathbf{1}}, x)+ (1+x)Y^+(\alpha(-1){\mathbf{1}}, x))^i \nonumber \\
& & \quad ((1+x)^2 Y^-(\alpha(-2){\mathbf{1}}, x)  +  (1 + x)^2 Y^+(\alpha(-2){\mathbf{1}}, x))^j  \nonumber \\
& & \quad ( ( 1 + x)^3 Y^-(\alpha(-3){\mathbf{1}}, x)  + \ (1 + x)^3 Y^+(\alpha(-3){\mathbf{1}}, x))^k  \nonumber \\
& & \quad  ((1 + x)^4 Y^-(\alpha(-4){\mathbf{1}}, x)  +  (1 + x)^4 Y^+(\alpha(-4){\mathbf{1}}, x))^l\Big){}_{\circ}^{\circ}v\nonumber.
\end{eqnarray}
Thus we can use Lemma \ref{Y+-lemma} to determine that each regular term $Y^
+(\alpha(-m) \mathbf{1} , x)$ for $m = 1, 2, 3$ and $4$ in (\ref{start-of-O2-lemma}), modulo $O_2(V)$, is of the order $(1+ x)^{-2}$ and higher in $(1+x)$.  This and (\ref{Y-}) imply that, for instance, in the cases when $(i,j,k,l)=(1,0,1,0)$, $(2,0,0,0)$, $(0,2,0,0),$ and $(0,0,2,0),$ the singular terms can be ignored in this expression and give important elements in $O_2(V)$, and thus relations in $A_2(V)$, as we show in the proof of the following Lemma.

For convenience, we introduce the following shorthand
\begin{equation}\label{ABC-notation}
A=\alpha(-1)+\alpha(-2),\hspace{.1in} B=\alpha(-2)+\alpha(-3),\hspace{.1in} C=\alpha(-3)+\alpha(-4), \hspace{.1in}D=\alpha(-4)+\alpha(-5).
\end{equation}

\begin{lem}\label{first-ABC-lemma} 
The following are in $O^\circ_2(V) \subset O_2(V)$ for all $v\in M_a(1)$.
\begin{eqnarray}\label{O21}
(\alpha(-3)+\alpha(-4))(\alpha(-3)+2\alpha(-4)+\alpha(-5))v   &=&  C(C+D)v ,\\
\label{O??}
(\alpha(-3)+2\alpha(-4)+\alpha(-5))^2 v  &=& (C+D)^2 v,\\ 
\label{O22}
(\alpha(-4)+\alpha(-5))(\alpha(-3)+2\alpha(-4)+\alpha(-5))v  &=& D(C+D)v ,
\end{eqnarray}
\begin{multline}\label{O23}
\bigl( (\alpha(-2)+2\alpha(-3)+\alpha(-4))^2+2(\alpha(-2)+\alpha(-3))(\alpha(-3)+2\alpha(-4)+\alpha(-5)) \bigr)v \\
= \big( (B+C)^2 + 2B(C+D) \big) v, 
\end{multline}
\begin{multline}\label{O24}
\bigl((\alpha(-1)+\alpha(-2))(\alpha(-2)+3\alpha(-3)+3\alpha(-4)+\alpha(-5))+\\(\alpha(-2)+\alpha(-3))(\alpha(-2)+2\alpha(-3)+\alpha(-4)) \bigr) v \\
 = A(B+2C+D)v + B(B+C)v  \\
=((A+B)(B+C)+A(C+D))v. \hspace{1.8in}
\end{multline}
\end{lem}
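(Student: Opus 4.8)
The plan is to compute each of the five expressions directly from the defining formula \eqref{start-of-O2-lemma} for $u \circ_2 v$, choosing $u = \alpha(-1)^i\alpha(-2)^j\alpha(-3)^k\alpha(-4)^l\mathbf{1}$ with $(i,j,k,l)$ equal to $(0,0,2,0)$, $(0,0,1,1)$, $(0,0,0,2)$, $(0,1,1,0)$, and $(1,1,0,0)$ respectively. For each such $u$, the associated weight $i+2j+3k+4l+2$ makes $(1+x)^{\mathrm{wt}\,u+2}/x^6$ have the exact shape needed so that, after substituting the $Y^+$ formulas from Lemma \ref{Y+-lemma}, the only surviving contribution modulo $O_2(V)$ comes from the regular parts, and taking $\res_x$ (i.e.\ extracting the coefficient of $x^5$) of the resulting rational function in $x$ produces precisely the claimed quadratic expression in $A,B,C,D$ acting on $v$.

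The first step is to justify that the singular terms $Y^-(\alpha(-m)\mathbf{1},x)$ may be dropped in each of these five cases. Here I would invoke \eqref{Y-} and \eqref{Y-2}: since each $Y^+(\alpha(-m)\mathbf{1},x)v$ is, modulo $O_2(V)$, of order $(1+x)^{-2}$ or higher in $(1+x)$ by Lemma \ref{Y+-lemma}, a normal-ordered product of two such factors in \eqref{start-of-O2-lemma} starts at order $x^0$ in the $x$-expansion and the prefactor $(1+x)^2/x^6$ then forces any cross term with a singular factor to have its $\res_x$ vanish by \eqref{Y-} or \eqref{Y-2} (one checks the powers of $x$ carefully case by case). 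With the singular parts gone, \eqref{start-of-O2-lemma} reduces to $\res_x \frac{(1+x)^2}{x^6}\,{}^{\circ}_{\circ}\big((1+x)^{m_1}Y^+(\alpha(-m_1)\mathbf{1},x)\big)\big((1+x)^{m_2}Y^+(\alpha(-m_2)\mathbf{1},x)\big){}^{\circ}_{\circ}v$ for the relevant $m_1,m_2$.

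The second step is the bookkeeping: substitute the explicit rational-function forms of $Y^+(\alpha(-m)\mathbf{1},x)v$ from Lemma \ref{Y+-lemma}, multiply, and note that the $(1+x)^{m_i}$ factors exactly cancel the denominators $(1+x)^{\pm}$ appearing there, leaving a polynomial-times-$1/x^6$ whose $\res_x$ picks out a single coefficient. For example, with $(i,j,k,l)=(0,0,2,0)$ one gets $\res_x \frac{(1+x)^2}{x^6}\big((1+x)^3 Y^+(\alpha(-3)\mathbf{1},x)\big)^2 v$, and using $Y^+(\alpha(-3)\mathbf{1},x)v \sim_2 \alpha(-3)v\,(1+x)^{-3} + C v\,\frac{3x}{(1+x)^4} + (C+D)v\,\frac{6x^2}{(1+x)^5}$ the square, after clearing denominators against $(1+x)^6$ and the extra $(1+x)^2/x^6$, yields exactly the $x^5$-coefficient $C(C+D)v$ after the lower-order terms are absorbed into $O_2^\circ(V)$ (the terms that are not the top one are themselves of the form $w'\circ_2 v'$ and vanish mod $O_2$). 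The analogous computations give \eqref{O??}, \eqref{O22} from $(0,0,1,1)$ and $(0,0,0,2)$, and the mixed cases $(0,1,1,0)$, $(1,1,0,0)$ give \eqref{O23} and \eqref{O24}, where in the latter the two displayed equal expressions are just algebraic rearrangements in terms of $A,B,C,D$ using $A+B=\alpha(-1)+2\alpha(-2)+\alpha(-3)$ etc. The final equality in \eqref{O24} is then pure algebra: $A(B+2C+D)+B(B+C) = (A+B)(B+C)+A(C+D)$ expands identically on both sides.

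The main obstacle I anticipate is the singular-part vanishing argument and the normal-ordering bookkeeping in the two mixed cases $(0,1,1,0)$ and $(1,1,0,0)$: there one must expand ${}^{\circ}_{\circ}\big((1+x)^2 Y^+(\alpha(-2)\mathbf{1},x)\big)\big((1+x)^3 Y^+(\alpha(-3)\mathbf{1},x)\big){}^{\circ}_{\circ}v$ (and the analogous product with $\alpha(-1)$), keep track of which $\alpha(-m)$ commute past which, confirm that the contributions from cross terms involving a singular factor really do have $\res_x = 0$ by \eqref{Y-}/\eqref{Y-2} given the exact powers of $x$, and check that every term other than the asserted top-degree one is visibly a multiple of some $w \circ_2 v'$ or $(L(-1)+L(0))v'$ and hence lies in $O_2(V)$. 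The arithmetic of extracting the $x^5$-coefficient from each rational function is routine but must be done carefully to land the stated coefficients $1$, $2$, $1$ in front of $\alpha(-4)$ inside $C+D$ and the overall combinations.
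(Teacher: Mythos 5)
Your overall strategy --- compute $u\circ_2 v$ for suitable degree-two monomials $u$ and read off the relations from the residue --- is the same as the paper's, but your specific choices of $(i,j,k,l)$ do not work, and the claim that each residue ``produces precisely the claimed quadratic expression'' is false. Each computation produces a \emph{linear combination} of quadratic expressions. For the first three identities this is repairable: $\alpha(-3)^2\mathbf{1}\circ_2 v\sim_2 36C(C+D)v-72(C+D)^2v$ and $\alpha(-3)\alpha(-4)\mathbf{1}\circ_2 v\sim_2 -54C(C+D)v+144(C+D)^2v$, and since the coefficient matrix is nonsingular one recovers \eqref{O21} and \eqref{O??}, with \eqref{O22} then following from $D(C+D)=(C+D)^2-C(C+D)$; your third computation $(0,0,0,2)$ is redundant. (The paper instead gets \eqref{O21} from $\alpha(-1)\alpha(-3)\mathbf{1}\circ_2 v$, which yields $9C(C+D)v$ on the nose, and then needs only one more computation.)

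The fatal gap is the choice $(0,1,1,0)$ for \eqref{O23}. For $u=\alpha(-2)\alpha(-3)\mathbf{1}$ the prefactor is $(1+x)^{7}/x^{6}$, and in the product of regular parts the only cross terms whose combined $(1+x)$-denominator exceeds $7$ are $C\cdot(C+D)$, $(C+D)\cdot C$, and $(C+D)\cdot(C+D)$; every term involving $\alpha(-2)$ or $\alpha(-3)$ paired with a lower-order factor becomes a polynomial of degree at most $4$ after clearing denominators, so it is annihilated by $\res_x x^{-6}(\cdot)$. One finds $\alpha(-2)\alpha(-3)\mathbf{1}\circ_2 v\sim_2 -21C(C+D)v+24(C+D)^2v$, which lies in the span of the relations you already have and contains no $B$-terms whatsoever; it therefore cannot yield $(B+C)^2+2B(C+D)$. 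The paper's choice $\alpha(-2)^2\mathbf{1}$ has weight $4$, so the prefactor is $(1+x)^{6}/x^{6}$ and the term $\alpha(-2)^2(1+x)^6$ survives the residue ($[x^5](1+x)^6=6$), which is exactly what puts the $B$-terms into the answer. Your final choice $(1,1,0,0)$ then inherits the problem: a direct computation gives $\alpha(-1)\alpha(-2)\mathbf{1}\circ_2 v\sim_2 \bigl(A(B+2C+D)+B(B+C)\bigr)v+3\bigl((B+C)^2+2B(C+D)\bigr)v$, i.e., it yields \eqref{O24} only \emph{modulo} \eqref{O23}, which your method has not established. To repair the proof you must replace $(0,1,1,0)$ by $(0,2,0,0)$ (and, if you keep $(1,1,0,0)$ for \eqref{O24}, you must explicitly quotient by \eqref{O23}; the paper's choice $(2,0,0,0)$ avoids this).
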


%
%
%
\begin{proof}
Using Lemma \ref{Y+-lemma} and Eqn.\ (\ref{start-of-O2-lemma}), and continuing to use the convention that negative powers of $(1+x)$ are understood as expansions in positive powers of $x$, we calculate
\begin{eqnarray*}
\lefteqn{\alpha(-1)\alpha(-3){\mathbf{1}}\circ_2v }\\
&=& 
\res_x\frac{(1+x)^{2}}{x^6} {}_{\circ}^{\circ} \left(  (1+ x)Y^-(\alpha(-1){\mathbf{1}}, x)+ (1+x)Y^+(\alpha(-1){\mathbf{1}}, x) \right) \\
& & \quad \left( ( 1 + x)^3 Y^-(\alpha(-3){\mathbf{1}}, x)  + \ (1 + x)^3 Y^+(\alpha(-3){\mathbf{1}}, x) \right) {}_{\circ}^{\circ}v\\
&\sim_2 &
\res_x\frac{(1+x)^2}{x^6}   {}_{\circ}^{\circ} \Big( (1+ x)Y^-(\alpha(-1){\mathbf{1}}, x) + \alpha(-1)(1 + x) +\alpha(-2)x(1 + x) +\alpha(-3) x^2 \\
& & \quad + \ (\alpha(-3) +  \alpha(-4))\frac{x^3}{1+x}   + (\alpha(-3)+2\alpha(-4)+\alpha(-5))\frac{x^4}{(1+x)^2}\Big) \\
& & \quad \Big(  ( 1 + x)^3 Y^-(\alpha(-3){\mathbf{1}}, x)  +  \alpha(-3) + (\alpha(-3)+\alpha(-4))\frac{3x}{1+x} \\
& & \quad + \  (\alpha(-3)+2\alpha(-4) + \alpha(-5))\frac{6x^2}{(1+x)^2}\Big)  {}_{\circ}^{\circ} v    \\
&=&
\res_x\frac{1}{x^6}\Big(\alpha(-1)(1+x)^2+\alpha(-2)x(1+x)^2+\alpha(-3)x^2(1+x)\\
& & \quad + \ (\alpha(-3)+\alpha(-4))x^3+(\alpha(-3)+2\alpha(-4)+\alpha(-5))\frac{x^4}{1+x}\Big)\Big(\alpha(-3)(1+x)\\
& & \quad + \ (\alpha(-3)+\alpha(-4))3x+(\alpha(-3)+2\alpha(-4)+\alpha(-5))\frac{6x^2}{(1+x)}\Big)v \\
& & \quad +\  \res_x \frac{(1+ x)^6}{x^6} Y^-(\alpha(-1){\mathbf{1}}, x) Y^-(\alpha(-3){\mathbf{1}}, x)v + \res_x \frac{(1 + x)}{x^6} p_1(x) Y^-(\alpha(-1){\mathbf{1}}, x) v \\
& & \quad + \  \res_x  \frac{(1+ x)^3}{x^6} p_3(x) Y^-(\alpha(-3){\mathbf{1}}, x) v,
\end{eqnarray*}
where $p_1(x)$ and $p_3(x)$ are polynomials in $x$ of degree 2 and 4, respectively, with coefficients of the form $\alpha(-m) $.   By Eqn. (\ref{Y-}) and (\ref{Y-2}), the last three terms are zero, and we have
\begin{eqnarray*}
\lefteqn{\alpha(-1)\alpha(-3){\mathbf{1}}\circ_2v }\\
&\sim_2&
\res_x\frac{1}{x^6}(\alpha(-3)+\alpha(-4))(\alpha(-3)+2\alpha(-4)+\alpha(-5))\left( \frac{6x^5}{1 + x} + \frac{3x^5}{1 + x} \right)v \\
&=&
9(\alpha(-3)+\alpha(-4))(\alpha(-3)+2\alpha(-4)+\alpha(-5))v,
\end{eqnarray*}
which  implies \eqref{O21}.


Similarly, to prove \eqref{O22}, using Lemma \ref{Y+-lemma} and Eqn.\ (\ref{start-of-O2-lemma}), we have
\begin{eqnarray*}
\lefteqn{\alpha(-3)^2{\mathbf{1}}\circ_2v \ = \   \res_x\frac{(1+x)^{8}}{x^6} {}_{\circ}^{\circ} \left(  Y^-(\alpha(-3){\mathbf{1}}, x)+ Y^+(\alpha(-3){\mathbf{1}}, x) \right)^2  {}_{\circ}^{\circ}v} \\
&\sim_2& \res_x\frac{(1+x)^8}{x^6} {}_{\circ}^{\circ}  \Big(Y^-(\alpha(-3) \mathbf{1}, v) + \alpha(-3)\frac{1}{(1+x)^3}+(\alpha(-3)+\alpha(-4))\frac{3x}{(1+x)^4}\\
& &\quad + \ (\alpha(-3)+2\alpha(-4)+\alpha(-5))\frac{6x^2}{(1+x)^5}\Big)^2 {}_{\circ}^{\circ} v\\
&=& \res_x\frac{1}{x^6} \Big( (\alpha(-3)+\alpha(-4))3x + (\alpha(-3)+ 2\alpha(-4)+\alpha(-5))\frac{6x^2}{1+x}\Big)^2 v \\
& & + \  \res_x \frac{(1+ x)^8}{x^6} (Y^-(\alpha(-3){\mathbf{1}}, x))^2v  + \res_x \frac{(1 + x)^3}{x^6} p(x) Y^-(\alpha(-3){\mathbf{1}}, x)v  ,
\end{eqnarray*}
for $p(x)$ a polynomial of in $x$ of degree 2 with coefficients of the form $\alpha(-m)$.  By Eqn. (\ref{Y-}) and (\ref{Y-2}), the last two terms are zero, and we have
\begin{eqnarray*}
\alpha(-3)^2{\mathbf{1}}\circ_2v & \sim_2 &  \res_ x\frac{1}{x^6} \Big( (\alpha(-3)+\alpha(-4))(\alpha(-3)+ 2\alpha(-4)+\alpha(-5)) \frac{18x^3}{1+ x}  \\
& & + \   (\alpha(-3)+ 2\alpha(-4)+\alpha(-5))^2   \frac{36x^4}{(1+x)^2}\Big)^2 v \\
&=& 36(\alpha(-3)+\alpha(-4))(\alpha(-3)+2\alpha(-4)+\alpha(-5))v\\
& & - \, 72(\alpha(-3)+2\alpha(-4)+\alpha(-5))^2 v\\
\end{eqnarray*}

This combined with \eqref{O21} implies $(\alpha(-3)+2\alpha(-4)+\alpha(-5))^2 v \in O_2(M_a(1))$, giving Eqn.\ (\ref{O??}).  

Thus $(C+D)^2v - C(C+D)v = (CD + D^2)v = D(C+D)v \in O_2(M_a(1))$ 
which is Eqn.\ \eqref{O22}.

For Eqn.\ (\ref{O23}), using Lemma \ref{Y+-lemma} and Eqn.\ (\ref{start-of-O2-lemma}), we calculate
\begin{eqnarray*}
\lefteqn{\alpha(-2)^2{\mathbf{1}}\circ_2 v} \\
&=& \res_x\frac{(1+x)^6}{x^6} \left( Y^-(\alpha(-2) \mathbf{1}, x) + Y^+(\alpha(-2)\mathbf{1}, x \right)^2 \\
&\sim_2& \res_x\frac{(1+x)^6}{x^6}\Big(Y^-(\alpha(-2) \mathbf{1}, x) + \alpha(-2)+\alpha(-3)\frac{2x+x^2}{(1+x)^2}+(\alpha(-3) \\
&& + \, \alpha(-4))\frac{3x^2+x^3}{(1+x)^3}+ (\alpha(-3)+2\alpha(-4)+\alpha(-5))\frac{4x^3+x^4}{(1+x)^4}\Big)^2v\\
&=&
\res_x\frac{1}{x^6}\Big(\alpha(-2)(1+x)^3+\alpha(-3)(2x+x^2)(1+x)+(\alpha(-3)+\alpha(-4))(3x^2+x^3)\\
& & + \,  (\alpha(-3)+2\alpha(-4)+\alpha(-5))\frac{4x^3+x^4}{(1+x)}\Big)^2v \\
&=& 6\Bigl( (B+C)^2 + 2(C+D)B + 4(C+D)C \Big)v \\
&\sim_2& 6\Bigl( (B+C)^2 + 2(C+D)B  \Big)v, 
\end{eqnarray*}
where the last line follows from Eqn.\ \eqref{O21}, and this gives  Eqn.\ (\ref{O23}).

Finally, for Eqn.\ (\ref{O24}), using Lemma \ref{Y+-lemma} and Eqn.\ (\ref{start-of-O2-lemma}), we calculate
\begin{eqnarray*}
\lefteqn{\alpha(-1)^2{\mathbf{1}}\circ_2v }\\
&=& \res_x\frac{(1+x)^4}{x^6}  \left( Y^-(\alpha(-1) \mathbf{1}, x) + Y^+(\alpha(-1)\mathbf{1}, x \right)^2 v \\
&\sim_2 &\res_x\frac{(1+x)^4}{x^6}\Big(Y^-(\alpha(-1) \mathbf{1}, x)  + \alpha(-1)+\alpha(-2)x+\alpha(-3)\frac{x^2}{1+x}+(\alpha(-3)\\
& & + \, \alpha(-4))\frac{x^3}{(1+x)^2}+(\alpha(-3)+2\alpha(-4)+\alpha(-5))\frac{x^4}{(1+x)^3}\Big)^2v\\
&=& \res_x\frac{1}{x^6}\Big(\alpha(-1)(1+x)^2+\alpha(-2)x(1+x)^2\\
& & + \, \alpha(-3)x^2(1+x)+(\alpha(-3)+\alpha(-4))x^3+(\alpha(-3)+2\alpha(-4)+\alpha(-5))\frac{x^4}{1+x}\Big)^2v \\
&=& 2 \Big( \big( \alpha(-1)+\alpha(-2) \big) \big(\alpha(-2)+3\alpha(-3)+3\alpha(-4)+\alpha(-5) \big)\\
&& + \, \big(\alpha(-2)+\alpha(-3) \big) \big(\alpha(-2)+2\alpha(-3)+\alpha(-4) \big) \Big)v  \in O_2(V) 
\end{eqnarray*}
giving Eqn.\ \eqref{O24}. 
\end{proof}

\begin{lem}\label{helprel} 
For $l  \in \mathbb{N}$, we have 
\begin{eqnarray}
\alpha(-4)^l \alpha(-5) {\mathbf{1}} \! &\equiv_2 & \!  -\alpha(-4)^{l+1}{\mathbf{1}}, \label{45=44}\\
\alpha(-3)\alpha(-4)^l{\mathbf{1}}  \! & \equiv_2 &\!    -\alpha(-4)^{l+1}{\mathbf{1}}, \label{34=44} \\
\alpha(-3)^2\alpha(-4)^l{\mathbf{1}} \! &  \equiv_2 & \!   \alpha(-4)^{l+2}{\mathbf{1}},  \label{3^24} \\
\ \ \ \ \ \ \ \ \  \ \alpha(-3)\alpha(-4)^l\alpha(-5){\mathbf{1}} \! &  \equiv_2  & \! 
\frac{1}{4(l+1)} \big(-3\alpha(-4)^{l+2}{\mathbf{1}}  \label{345} \\
& & \quad - \, (3+4(l+1))\alpha(-3)\alpha(-4)^{l+1}{\mathbf{1}} \big)  \nonumber \\
&  \equiv_2  &  \!  \alpha(-4)^{l+2}{\bf 1}, \nonumber\\
\label{455}
\alpha(-4)^l\alpha(-5)^2{\mathbf{1}}
\! & \equiv_2 & \!  \frac{1}{4(l+1)} \big(5\alpha(-3)\alpha(-4)^{l+1}{\mathbf{1}}\\
& & \quad + \, (5+4(l+1))\alpha(-4)^{l+2}{\bf{1}} \big),\nonumber \\
\label{145}
\alpha(-1)\alpha(-4)^{l}\alpha(-5){\bf{1}}  \! & 
 \equiv_2   \! &   -\frac{1}{4(l+1)}\alpha(-2)\alpha(-4)^{l+1}{\bf{1}}  \\
& &   \quad - \,  \frac{(1+4(l+1))}{4(l+1)}\alpha(-1)\alpha(-4)^{l+1}{\bf{1}}, \nonumber\\
\label{245}
\alpha(-2)\alpha(-4)^l\alpha(-5){\mathbf{1}}
\! &  \equiv_2 & \! 
   \frac{1}{2(l+1)}\alpha(-4)^{l+2} \mathbf{1} - \frac{(1+2(l+1))}{2(l+1)}\alpha(-2)\alpha(-4)^{l+1}\mathbf{1} , \\
\label{234}
\alpha(-2)\alpha(-3)\alpha(-4)^l{\mathbf{1}} \!  &  \equiv_2  &  \!  \frac{1}{10}\alpha(-1){\mathbf{1}}*_2\alpha(-2)\alpha(-4)^l{\mathbf{1}}
- \frac{3(3l+2)}{10(l+1)}\alpha(-2)\alpha(-4)^{l+1}{\mathbf{1}}\\
& & \quad - \, \frac{3}{10(l+1)}\alpha(-4)^{l+2}{\mathbf{1}},  \nonumber\\
\label{134} 
\alpha(-1)\alpha(-3)\alpha(-4)^l{\mathbf{1}} \! &  \equiv_2   & \!  \frac{1}{10}\alpha(-1){\mathbf{1}}*_2\alpha(-1)\alpha(-4)^l{\mathbf{1}}
- \frac{3(6l+5)}{20(l+1)}\alpha(-1)\alpha(-4)^{l+1}{\mathbf{1}}\\
& & \quad + \, \frac{3}{20(l+1)}\alpha(-2)\alpha(-4)^{l+1}{\mathbf{1}}, \nonumber \\
\label{224}
\alpha(-2)^2\alpha(-4)^l{\mathbf{1}} \! &  \equiv_2 &  \!  -\frac{3}{5}\alpha(-1){\mathbf{1}}*_2\alpha(-2)\alpha(-4)^l{\mathbf{1}}+\frac{7l+3}{5(l+1)}\alpha(-2)\alpha(-4)^{l+1}{\mathbf{1}} \\
& & \quad - \,  \frac{(5l+1)}{5(l+1)}\alpha(-4)^{l+2}{\mathbf{1}}. \nonumber
\end{eqnarray} 
\end{lem}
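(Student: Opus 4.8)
\emph{Proof plan.} The plan is to establish the listed relations one at a time, in the order forced by their interdependence, each time taking a monomial in the modes $\alpha(-1),\dots,\alpha(-5)$ acting on $\mathbf{1}$ and reducing it to the asserted combination using four ingredients already available: the reduction formula (\ref{reduce-equation-Heisenberg-level2}) for monomials carrying one factor $\alpha(-5)$ or the factor $\alpha(-5)^2$ (supplemented, if an $\alpha(-6)$ ever appears, by the recursion (\ref{recursion-level-2})); the $L$-reductions modulo $O^L(V)$ of (\ref{L-reduction}), in particular $\alpha(-m)\mathbf{1}\approx-\alpha(-m-1)\mathbf{1}$; the membership statements of Lemma \ref{first-ABC-lemma}, chiefly (\ref{O21}), (\ref{O??}), and (\ref{O23}); and the product formula (\ref{multiplication-one-one}) for $\alpha(-1)\mathbf{1}*_2 v$. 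Throughout, all modes occurring commute (they act on $\mathbf{1}$), and one must remember that $O_2(M_a(1))$ is a two-sided ideal only for $*_2$, not for naive left multiplication by a mode; so the only multiplications allowed are via $*_2$ (hence the terms $\alpha(-1)\mathbf{1}*_2(\cdot)$ in (\ref{234}), (\ref{134}), (\ref{224})) and via applying an $O_2^\circ(M_a(1))$-membership from Lemma \ref{first-ABC-lemma} to a suitably chosen vector $v$.

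First I would dispose of the relations that come straight out of (\ref{reduce-equation-Heisenberg-level2}) with $v=\mathbf{1}$: specializing the tuple $(i_5,i_4,i_3,i_2,i_1)$ to $(1,l,0,0,0)$, $(1,l,0,0,1)$, and $(1,l,1,0,0)$ gives (\ref{45=44}), (\ref{145}), and the first line of (\ref{345}) verbatim (using $\mathbf{1}_{-2}\mathbf{1}=0$), while $(2,l,0,0,0)$ gives (\ref{455}) after substituting (\ref{45=44}) at level $l+1$. Once (\ref{34=44}) is in hand (see the next paragraph), the tuple $(1,l,0,1,0)$ gives (\ref{245}), the first line of (\ref{345}) gives its second line, and (\ref{multiplication-one-one}) applied with $v=\alpha(-2)\alpha(-4)^l\mathbf{1}$ and with $v=\alpha(-1)\alpha(-4)^l\mathbf{1}$—in each case solving for $\alpha(-3)v$ and substituting (\ref{245}) resp.\ (\ref{145}) for the $\alpha(-5)$-monomial $\alpha(-5)v$ produced—gives (\ref{234}) resp.\ (\ref{134}).

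The crux is the coupled pair (\ref{34=44}), (\ref{3^24}). Write, modulo $O_2(M_a(1))$, $a_l:=\alpha(-3)\alpha(-4)^l\mathbf{1}$, $b_l:=\alpha(-3)^2\alpha(-4)^l\mathbf{1}$, $c_l:=\alpha(-4)^{l+1}\mathbf{1}$; the base case $a_0\equiv_2-c_0$ is $\alpha(-3)\mathbf{1}\approx-\alpha(-4)\mathbf{1}$ from (\ref{L-reduction}). For $l\geq 1$, apply (\ref{O21}) to $v=\alpha(-4)^{l-1}\mathbf{1}$, expand the operator $(\alpha(-3)+\alpha(-4))(\alpha(-3)+2\alpha(-4)+\alpha(-5))$ into monomials, and substitute (\ref{45=44}) together with the first line of (\ref{345}); this produces a relation $b_{l-1}+\frac{8l-3}{4l}a_l+\frac{4l-3}{4l}c_l\equiv_2 0$. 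Applying (\ref{O??}) to the same $v$ and substituting (\ref{45=44}), (\ref{455}), and the first line of (\ref{345}) produces a second relation $b_{l-1}+\frac{8l-1}{4l}a_l+\frac{4l-1}{4l}c_l\equiv_2 0$. Subtracting eliminates $b_{l-1}$ and gives $a_l+c_l\equiv_2 0$, which is (\ref{34=44}) for all $l\geq 1$ (and hence, with the base case, for all $l\in\mathbb{N}$); feeding (\ref{34=44}) into the first relation at level $l+1$ then gives $b_l\equiv_2 c_{l+1}$, which is (\ref{3^24}).

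Finally, for (\ref{224}) I would apply (\ref{O23}) to $v=\alpha(-4)^l\mathbf{1}$, expand $(\alpha(-2)+2\alpha(-3)+\alpha(-4))^2+2(\alpha(-2)+\alpha(-3))(\alpha(-3)+2\alpha(-4)+\alpha(-5))$ into monomials acting on $\alpha(-4)^l\mathbf{1}$, and reduce each monomial with the already-proved (\ref{45=44}), (\ref{34=44}), (\ref{3^24}), (\ref{234}), (\ref{245}), and the second line of (\ref{345}); collecting coefficients isolates $\alpha(-2)^2\alpha(-4)^l\mathbf{1}$ as the stated combination of $\alpha(-1)\mathbf{1}*_2\alpha(-2)\alpha(-4)^l\mathbf{1}$, $\alpha(-2)\alpha(-4)^{l+1}\mathbf{1}$, and $\alpha(-4)^{l+2}\mathbf{1}$. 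The main obstacle is organizational rather than conceptual: one must order the relations so the two linear systems—the one solving for $a_l,b_l$ and the one solving for $\alpha(-2)^2\alpha(-4)^l\mathbf{1}$—close without circularity, and carry the binomial arithmetic in (\ref{reduce-equation-Heisenberg-level2}) and the coefficient bookkeeping in the Lemma \ref{first-ABC-lemma} expansions without error; no individual step is deep.
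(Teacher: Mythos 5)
Your proposal is correct and follows essentially the same route as the paper: the immediate cases \eqref{45=44}, \eqref{145}, \eqref{345}, \eqref{455} from Eqn.\ \eqref{reduce-equation-Heisenberg-level2}, the base case of \eqref{34=44} from the $L$-reduction, the coupled pair \eqref{34=44}, \eqref{3^24} from Lemma \ref{first-ABC-lemma}, the relations \eqref{234} and \eqref{134} by solving Eqn.\ \eqref{multiplication-one-one} for the $\alpha(-3)$-term, and \eqref{224} from Eqn.\ \eqref{O23}. The only cosmetic difference is that you obtain \eqref{34=44} by subtracting the \eqref{O21} and \eqref{O??} relations to eliminate $\alpha(-3)^2\alpha(-4)^{l-1}\mathbf{1}$, whereas the paper invokes \eqref{O22} directly; since \eqref{O22} is derived in Lemma \ref{first-ABC-lemma} precisely as that difference, the two computations coincide.
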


\begin{proof} Eqn.\  \eqref{45=44}, the first equivalences in Eqn.\ \eqref{345}, and Eqn.\ \eqref{145} follow from Eqn.\ \eqref{reduce-equation-Heisenberg-level2}.  (The second equivalence in \eqref{345}  requires \eqref{34=44}, which we prove below.) Eqn.\  
\eqref{455} follows by applying Eqn.\ \eqref{reduce-equation-Heisenberg-level2} twice.

When $l=0$, Eqn.\  \eqref{34=44}  follows from the fact that $(L(-1)+L(0))\alpha(-3){\bf{1}}\in O_2(V)$.  
For $l>0$, by Eqn.\ \eqref{O22}, applied to $v = \alpha(-4)^l \mathbf{1}$, we have
\begin{multline*}
\alpha(-3)\alpha(-4)^{l+1}{\mathbf{1}} \equiv_2 -\alpha(-3)\alpha(-4)^l\alpha(-5){\mathbf{1}}\\
-2\alpha(-4)^{l+2}{\mathbf{1}}- 3 \alpha(-4)^{l+1}\alpha(-5){\mathbf{1}}
-\alpha(-4)^l\alpha(-5)^2{\mathbf{1}}.
\end{multline*} 
Substituting Eqns.\ (\ref{45=44}), the first equivalence in Eqn.\  \eqref{345}, and Eqn.\ \eqref{455} into this equation and simplifying gives 
\[
\alpha(-3)\alpha(-4)^{l+1}{\mathbf{1}} \equiv_2 \frac{-2}{4(l+1)} \alpha(-4)^{l+2} \mathbf{1} + \left( \frac{-2+4(l+1)}{4(l+1)} \right) \alpha(-3) \alpha(-4)^{l+1} \mathbf{1} \]
which simplifies to  Eqn.\ \eqref{34=44}. Substituting Eqn.\ \eqref{34=44} into the first equivalence in (\ref{345}) gives the second equivalence in Eqn.\ (\ref{345}). 

Eqn.\ \eqref{245} follows from applying Eqn.\ \eqref{reduce-equation-Heisenberg-level2}  and then applying \eqref{34=44}.

Eqn.\ \eqref{3^24} follows from applying Eqn.\  \eqref{O21} and then applying Eqns.\  \eqref{34=44}, \eqref{345}, and \eqref{45=44}.

Eqns.\ \eqref{134} and \eqref{234} follow from applying Eqn.\ \eqref{multiplication-one-one}, and then Eqn.\  \eqref{reduce-equation-Heisenberg-level2}, and in the case of \eqref{234}, then applying \eqref{34=44}.

To prove \eqref{224}, we apply \eqref{O23} to obtain
\begin{eqnarray*}
\alpha(-2)^2\alpha(-4)^l{\mathbf{1}}&  \equiv_2 &-6 \alpha(-2) \alpha(-3)\alpha(-4)^l{\mathbf{1}} - 6 \alpha(-3)^2\alpha(-4)^l{\mathbf{1}} - 6 \alpha(-2)\alpha(-4)^{l+1}{\mathbf{1}}\\
& & \quad - \, 8 \alpha(-3)\alpha(-4)^{l+1}{\mathbf{1}} - \alpha(-4)^{l+2}{\mathbf{1}} - 2 \alpha(-2)\alpha(-4)^l\alpha(-5){\mathbf{1}} \\
& & \quad - \, 2\alpha(-3)\alpha(-4)^l\alpha(-5){\mathbf{1}}
\end{eqnarray*}
and then apply \eqref{34=44}, \eqref{3^24},  \eqref{345}, \eqref{245}, and \eqref{234}.
\end{proof}

\begin{lem}\label{powersalpha1-lemma}
For $l \in \mathbb{N}$, we have the following relation in $A_2(M_a(1))$
\begin{equation}\label{powersalpha1}
(\alpha(-1){\mathbf{1}})^l \equiv_2 (-1)^l\alpha(-4)^l{\mathbf{1}}.
\end{equation}
That is, in $A_2(M_a(1))$ we have  $(\alpha(-1) {\bf 1}  + O_2(M_a(1))^l  = (-1)^l\alpha(-4)^l{\mathbf{1}}+ O_2(M_a(1)$.
\end{lem}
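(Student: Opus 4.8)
The plan is to induct on $l$. The base cases $l=0$ and $l=1$ are immediate: for $l=0$ both sides equal $\mathbf{1}+O_2(M_a(1))$, and for $l=1$ the claim is $\alpha(-1)\mathbf{1} \equiv_2 -\alpha(-4)\mathbf{1}$, which follows by combining the recursion \eqref{recursion-level-2} with $m=4$, namely $\alpha(-4)v \sim_2 -\alpha(-3)v$ — wait, more directly, from the $L(-1)$-derivative relations: $(L(-1)+L(0))\alpha(-1)\mathbf{1}\in O_2(V)$ together with Eqn.\ \eqref{34=44} at $l=0$ and Eqn.\ \eqref{recursion-level-2}. In fact the cleanest route is to note $\alpha(-1)\mathbf{1} \approx -\alpha(-2)\mathbf{1}$ from \eqref{reduce-alpha(-1)} with $i=1$ is vacuous, so instead use that $\alpha(-1)\mathbf{1},\alpha(-2)\mathbf{1},\alpha(-3)\mathbf{1}$, and $\alpha(-4)\mathbf{1}$ are all equivalent up to sign modulo $O_2(M_a(1))$ via repeated application of \eqref{-5-reduction-Heisenberg}-style relations and Eqn.\ \eqref{34=44}; this gives $\alpha(-1)\mathbf{1}\equiv_2 -\alpha(-4)\mathbf{1}$.

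For the inductive step, assume $(\alpha(-1)\mathbf{1})^{l-1}\equiv_2 (-1)^{l-1}\alpha(-4)^{l-1}\mathbf{1}$. Then
\[
(\alpha(-1)\mathbf{1})^l = (\alpha(-1)\mathbf{1})*_2 (\alpha(-1)\mathbf{1})^{l-1} \equiv_2 (-1)^{l-1}\,\alpha(-1)\mathbf{1} *_2 \alpha(-4)^{l-1}\mathbf{1}.
\]
Now apply the multiplication formula \eqref{multiplication-one-one} with $v=\alpha(-4)^{l-1}\mathbf{1}$:
\[
\alpha(-1)\mathbf{1}*_2 \alpha(-4)^{l-1}\mathbf{1} = 10\,\alpha(-3)\alpha(-4)^{l-1}\mathbf{1} + 15\,\alpha(-4)^l\mathbf{1} + 6\,\alpha(-5)\alpha(-4)^{l-1}\mathbf{1}.
\]
The first term is handled by Eqn.\ \eqref{34=44}: $\alpha(-3)\alpha(-4)^{l-1}\mathbf{1}\equiv_2 -\alpha(-4)^l\mathbf{1}$. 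The third term is handled by Eqn.\ \eqref{45=44}: $\alpha(-4)^{l-1}\alpha(-5)\mathbf{1}\equiv_2 -\alpha(-4)^l\mathbf{1}$. Substituting,
\[
\alpha(-1)\mathbf{1}*_2 \alpha(-4)^{l-1}\mathbf{1} \equiv_2 (-10 + 15 - 6)\,\alpha(-4)^l\mathbf{1} = -\alpha(-4)^l\mathbf{1},
\]
so $(\alpha(-1)\mathbf{1})^l \equiv_2 (-1)^{l-1}(-1)\alpha(-4)^l\mathbf{1} = (-1)^l\alpha(-4)^l\mathbf{1}$, completing the induction.

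The only subtlety — and the step I would be most careful about — is making sure that $\alpha(-3)\alpha(-4)^{l-1}\mathbf{1}$ and $\alpha(-4)^{l-1}\alpha(-5)\mathbf{1}$ are exactly the expressions appearing, up to reordering of commuting modes, in \eqref{34=44} and \eqref{45=44} as stated (both hold for all $l\in\mathbb{N}$, so at parameter $l-1$ they read $\alpha(-3)\alpha(-4)^{l-1}\mathbf{1}\equiv_2 -\alpha(-4)^l\mathbf{1}$ and $\alpha(-4)^{l-1}\alpha(-5)\mathbf{1}\equiv_2 -\alpha(-4)^l\mathbf{1}$), and that $*_2$ is associative so that the iterated product $(\alpha(-1)\mathbf{1})^l$ is unambiguous and can be peeled off one factor at a time. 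Both points are routine given the results already established in the excerpt, so no genuine obstacle remains.
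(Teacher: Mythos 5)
Your proposal is correct and follows essentially the same route as the paper: induction on $l$ with base case $l=0$, peeling off one factor via \eqref{multiplication-one-one} and reducing the resulting terms with \eqref{34=44} and \eqref{45=44} to get the coefficient $-10+15-6=-1$. The separate (and somewhat muddled) discussion of the $l=1$ base case is unnecessary, since that case is already produced by the inductive step applied at parameter $l-1=0$, where \eqref{34=44} and \eqref{45=44} still hold.
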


\begin{proof}
We proceed by induction on $l$.  Eqn.\ (\ref{powersalpha1}) is clearly true for $l=0$. Assuming Eqn.\ (\ref{powersalpha1}) holds for some $l$, applying \eqref{multiplication-one-one}, the induction hypothesis, and Eqns.\ (\ref{45=44}) and (\ref{34=44}), we have 
\begin{eqnarray*}
 (\alpha(-1){\mathbf{1}})^{l+1} &=& \alpha(-1) \mathbf{1} *_2 (\alpha(-1) \mathbf{1})^l   \ \equiv_2  \ \alpha(-1) \mathbf{1} *_2 ( (-1)^l\alpha(-4)^l{\mathbf{1}})  \\
& = & (-1)^l  \left( 10 \alpha(-3)  + 15 \alpha(-4) + 6 \alpha(-5) \right) \alpha(-4)^l \mathbf{1} \\
& \ \ \equiv_2& (-1)^{l+1} 10 \alpha(-4)^{l+1} {\bf{1}} + (-1)^l15\alpha(-4)^{l+1}{\mathbf{1}}+  (-1)^{l+1} 6 \alpha(-4)^{l+1}{\mathbf{1}} \\
&=& (-1)^{l+1} \alpha(-4)^{l+1}{\mathbf{1}}.
\end{eqnarray*}
\end{proof}

\begin{lem}\label{powersofalpha345}
For all $k,l,m\in \mathbb{N},$ 
\begin{equation*}\alpha(-3)^k\alpha(-4)^l\alpha(-5)^m{\bf{1}}\equiv_2(-1)^{k+m}\alpha(-4)^{k+l+m}{\bf{1}}\equiv_2(-1)^l(\alpha(-1){\bf1})^{k+l+m}.
\end{equation*}
\end{lem}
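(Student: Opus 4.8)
The plan is to prove the two claimed equivalences separately, and in each case to induct on the exponents, reducing everything to the relations already established in Lemmas \ref{first-ABC-lemma}, \ref{helprel}, and \ref{powersalpha1-lemma}. Since $\alpha(-3)$, $\alpha(-4)$, and $\alpha(-5)$ are pairwise commuting modes, it is harmless to reorder them, so the real content is a reduction scheme that strips off factors of $\alpha(-3)$ and $\alpha(-5)$ one at a time, converting each to a factor of $-\alpha(-4)$ modulo $O_2(M_a(1))$. The second equivalence, $(-1)^{k+m}\alpha(-4)^{k+l+m}{\bf{1}}\equiv_2(-1)^l(\alpha(-1){\bf1})^{k+l+m}$, is then immediate from Lemma \ref{powersalpha1-lemma} applied with exponent $k+l+m$ (noting $(-1)^{k+l+m}(-1)^l = (-1)^{k+m}$ up to the sign bookkeeping, since $(-1)^{2l}=1$), so the substance is entirely in the first equivalence.

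For the first equivalence I would argue by induction on $k+m$. The base case $k=m=0$ is trivial. For the inductive step, first handle the case $m>0$: write $\alpha(-3)^k\alpha(-4)^l\alpha(-5)^m{\bf{1}} = \alpha(-3)^k\alpha(-4)^l\alpha(-5)^{m-2}\cdot\alpha(-5)^2{\bf 1}$ when $m\geq 2$ and apply Eqn.\ \eqref{455} (with $v$ absorbing the remaining factors), which replaces $\alpha(-5)^2$ by a combination of $\alpha(-3)\alpha(-4)^{l'+1}$ and $\alpha(-4)^{l'+2}$ type terms — i.e.\ it trades two $\alpha(-5)$'s for lower $m$ at the cost of at most one extra $\alpha(-3)$; when $m=1$ apply Eqn.\ \eqref{45=44} (again with the other factors playing the role of $v$, after commuting $\alpha(-5)$ past them), which gives $\alpha(-4)^l\alpha(-5)\cdot(\text{stuff})\equiv_2 -\alpha(-4)^{l+1}\cdot(\text{stuff})$, lowering $m$ to $0$ and raising the power of $\alpha(-4)$ by one with a sign. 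One must check the induction measure $k+m$ strictly decreases in the $m=1$ case (it does: $m$ drops by $1$, $k$ unchanged) and in the $m\geq 2$ case (here $m$ drops by $2$ and $k$ rises by at most $1$, so $k+m$ drops by at least $1$). Once $m=0$, reduce the power of $\alpha(-3)$: for $k\geq 2$ use Eqn.\ \eqref{3^24} to trade $\alpha(-3)^2$ for $\alpha(-4)^2$ (dropping $k$ by $2$, no sign), and for $k=1$ use Eqn.\ \eqref{34=44} to trade the single $\alpha(-3)$ for $-\alpha(-4)$ (dropping $k$ to $0$, one sign). Tracking the accumulated signs shows exactly $(-1)^{k+m}$ appears, proving $\alpha(-3)^k\alpha(-4)^l\alpha(-5)^m{\bf{1}}\equiv_2(-1)^{k+m}\alpha(-4)^{k+l+m}{\bf{1}}$.

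The main obstacle is the bookkeeping in the $m\geq 2$ step: Eqn.\ \eqref{455} does not simply lower $m$ by $2$ and produce a pure power of $\alpha(-4)$; it produces a sum in which one term carries an extra $\alpha(-3)$ and a rational coefficient $\frac{1}{4(l+1)}$ (and the exponent of $\alpha(-4)$ shifts), so one must be careful that the induction hypothesis is being applied to genuinely smaller data and that the coefficients recombine to give precisely $(-1)^{k+m}$ with no residual fractions. This is where I expect to spend the most care: setting up the induction on the pair (or on the single integer $k+m$) so that every term produced by \eqref{455} — after also invoking \eqref{345}, \eqref{34=44}, and \eqref{3^24} to clean up — falls strictly below the current level, and then verifying the signs and that the rational coefficients telescope to $1$. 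The remaining steps are routine given the relations already in hand, and the second claimed equivalence follows with one line from Lemma \ref{powersalpha1-lemma}.
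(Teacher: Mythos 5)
Your overall plan (a double induction that converts each $\alpha(-3)$ and $\alpha(-5)$ into $-\alpha(-4)$, plus the one-line deduction of the second equivalence from Lemma \ref{powersalpha1-lemma}) is the same as the paper's, but the mechanism you propose for the reduction has a genuine gap. You treat the congruences of Lemma \ref{helprel} --- e.g.\ \eqref{45=44}, \eqref{3^24}, \eqref{455} --- as substitution rules that may be applied inside an arbitrary monomial, ``with the other factors playing the role of $v$, after commuting $\alpha(-5)$ past them.'' Those congruences are statements about specific vectors modulo $O_2(M_a(1))$, and $O_2(M_a(1)) = O^L(V) + O_2^\circ(V)$ is \emph{not} stable under left multiplication by modes $\alpha(-n)$: in particular $\alpha(-n)(L(-1)+L(0))v = (L(-1)+L(0))\alpha(-n)v - n(\alpha(-n-1)+\alpha(-n))v$, and the trailing term is generally not in $O_2(V)$. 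Since the relations of Lemma \ref{helprel} are built partly from $O^L(V)$ (e.g.\ the $l=0$ case of \eqref{34=44} comes from $(L(-1)+L(0))\alpha(-3)\mathbf{1}$), you cannot multiply both sides of \eqref{45=44} by $\alpha(-3)^k$ and keep the congruence. Concretely, the clean coefficient $-1$ in $\alpha(-4)^l\alpha(-5)\mathbf{1}\equiv_2-\alpha(-4)^{l+1}\mathbf{1}$ depends on $\mathrm{wt}\,\mathbf{1}=0$; the valid general statement, Eqn.\ \eqref{reduce-equation-Heisenberg-level2}, gives for $\alpha(-5)\alpha(-4)^l\alpha(-3)^k\mathbf{1}$ the coefficient $-\tfrac{3k+4l+4}{4(l+1)}$ together with an extra cross term $-\tfrac{3k}{4(l+1)}\alpha(-3)^{k-1}\alpha(-4)^{l+2}\mathbf{1}$, neither of which your $m=1$ step accounts for. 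The same objection applies to your use of \eqref{3^24} to ``trade $\alpha(-3)^2$ for $\alpha(-4)^2$'' inside $\alpha(-3)^k\alpha(-4)^l\mathbf{1}$ for $k>2$, and to your use of \eqref{455} for $m\ge 2$.

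The fix is what the paper does: work only with the relations that hold for \emph{all} $v$, namely \eqref{O21} (applied to $v=\alpha(-3)^{k-2}\alpha(-4)^l\mathbf{1}$ for the $k$-induction at $m=0$) and the full Eqn.\ \eqref{reduce-equation-Heisenberg-level2} (for the $m$-induction), accept the weight-dependent coefficients and cross terms these produce, and verify that after applying the inductive hypothesis to every resulting monomial the rational coefficients telescope to $(-1)^{k+m}$. Your proposal correctly anticipates this bookkeeping for the $m\ge2$ case but incorrectly assumes the $m=1$ and $k$-reduction steps are clean substitutions; as written, the induction does not close.
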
 
  
\begin{proof}
The second equivalence follows from Lemma \ref{powersalpha1-lemma}, so we need only prove the first equivalence. To prove the first equivalence, we  first  fix $m=0$ and $l\in \mathbb{N}$ and induct on $k$, thereby proving the equivalence for $m=0$ and for all $k,l\in \mathbb{N}$. We then fix $k, l\in \mathbb{N}$ and induct on $m$, proving the equivalence for all $k, l, m\in \mathbb{N}$.

Let $m = 0$ and fix $l \in \mathbb{N}$.  Then the first equivalence is trivially true in the 
case that $k=0$, and follows from Eqn.\ \eqref{34=44} in the case that $k = 1$. Now fix 
$k>1$ and assume the first equivalence holds for $m=0$, $l \in \mathbb{N}$, and  $k' \in \mathbb{N}$ 
with $k'<k$. By Eqn.\  \eqref{O21} applied to $v = \alpha(-3)^{k-2} \alpha(-4)^l {\bf 1}$,   using Eqn.\ \eqref{reduce-equation-Heisenberg-level2}, and then applying the inductive assumption,  we have  
\begin{eqnarray*}
\lefteqn{\alpha(-3)^k\alpha(-4)^l{\bf 1} }\\
&\sim_2& - 3\alpha(-3)^{k-1}\alpha(-4)^{l+1}{\bf 1} -\alpha(-3)^{k-1}\alpha(-4)^l\alpha(-5){\bf 1}\\
& & \quad - \, 2\alpha(-3)^{k-2}\alpha(-4)^{l+2}{\bf 1} -\alpha(-3)^{k-2}\alpha(-4)^{l+1}\alpha(-5){\bf 1}\\
&\equiv_2& - 3\alpha(-3)^{k-1}\alpha(-4)^{l+1}{\bf 1} + \frac{1}{4(l+1)}\Big( \big(3(k-1)+4(l+1) \big)\alpha(-3)^{k-1}\alpha(-4)^{l+1}{\bf 1} \\
& & \quad + \,  3(k-1)\alpha(-3)^{k-2}\alpha(-4)^{l+2}{\bf 1} \Big)  -2\alpha(-3)^{k-2}\alpha(-4)^{l+2}{\bf 1}\\
& & \quad + \, \frac{1}{4(l+2)} \Big( \big(3(k-2)+4(l+2) \big)\alpha(-3)^{k-2}\alpha(-4)^{l+2}{\bf1} \\
& & \quad + \,  3(k-2)\alpha(-3)^{k-3}\alpha(-4)^{l+3}{\bf 1}\Big)\\
&\equiv_2& (-1)^k \alpha(-4)^{k+l} {\bf 1},
\end{eqnarray*}
proving the result in the case $m = 0$, and $k,l \in \mathbb{N}$.  

Next fix $m \in  \mathbb{Z}_+$, and assume the equivalence is true for all $m'<m$ and $k,l\in \mathbb{N}$.  Then applying Eqn.\ \eqref{reduce-equation-Heisenberg-level2}  and then applying this most recent inductive assumption, we have  
\begin{eqnarray*}
\lefteqn{\alpha(-3)^k\alpha(-4)^l\alpha(-5)^m{\bf 1} }\\
&\equiv_2& -\frac{1}{4(l+1)} \Big( \big(3k + 4l + 5m -1 \big) \alpha(-3)^k \alpha(-4)^{l+1} \alpha(-5)^{m-1} {\bf 1} \\
& & \quad - \,  5(m-1) \big( \alpha(-3) + 3\alpha(-4) + 3 \alpha(-5) \big) \alpha(-3)^k \alpha(-4)^{l+1} \alpha(-5)^{m-2} {\bf 1}  \\
& & \quad + \, 3k \alpha(-3)^{k-1} \alpha(-4)^{l+2} \alpha(-5)^{m-1} {\bf 1} \Big) \\
&\equiv_2&  -\frac{1}{4(l+1)} \Big( \big(3k + 4l + 5m -1 \big) (-1)^{k+m-1} \alpha(-4)^{k+l  + m } {\bf 1} \\
& & \quad - \,  5(m-1) \big( (-1)^{k + m - 1} + 3(-1)^{k + m - 2}  + 3(-1)^{k + m - 1} \big) \alpha(-4)^{k + l + m} {\bf 1}  \\
& & \quad + \, 3k (-1)^{k + m -2} \alpha(-4)^{k + l+m} {\bf 1} \Big) \\
&=&(-1)^{k+m}\alpha(-4)^{k+l+m}{\bf 1},
\end{eqnarray*} 
proving the result. 
\end{proof}

\subsection{Reducing generators further}

We are now prepared to reduce the generators further than the generating set given in Proposition \ref{generators-prop} in a series of reduction steps.

\begin{prop}\label{reduce-301}
$A_2(V)$ is generated by elements of the form $v + O_2(V)$ for $v$ in the set
\[ \{ \alpha(-1)^i\alpha(-2)^j\alpha(-3)^k\alpha(-4)^l{\mathbf{1}} \, | \, 0\le i\le 2, \, 0\le j\le 1, \, 0\le k\le 1, \, l\in \mathbb{N} \  \mbox{with $(i,j) \neq (0,0)$} \} . \]
\end{prop}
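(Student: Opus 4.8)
The plan is to start from the generating set of Proposition \ref{generators-prop}, namely $\{\alpha(-1)^i\alpha(-2)^j\alpha(-3)^k\alpha(-4)^l\mathbf{1}+O_2(V): 0\le i\le 2,\ 0\le j\le 1,\ k,l\in\mathbb{N}\}$, and to show that every such element lies in the subalgebra $B$ of $A_2(M_a(1))$ generated by the set in the statement. Two features must be removed from a generic generator: exponents $k\ge 2$, and the case $(i,j)=(0,0)$.

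The case $(i,j)=(0,0)$ is immediate from what has already been proved: by Lemma \ref{powersofalpha345} (with the $\alpha(-5)$-exponent equal to $0$) we have $\alpha(-3)^k\alpha(-4)^l\mathbf{1}\equiv_2(-1)^l(\alpha(-1)\mathbf{1})^{k+l}$, so modulo $O_2(V)$ this element is a scalar multiple of a power of $\alpha(-1)\mathbf{1}+O_2(V)$ and therefore lies in $B$, since $\alpha(-1)\mathbf{1}$ belongs to the new set. (Equivalently, one may use Lemma \ref{powersalpha1-lemma} to rewrite it as $\pm\,\alpha(-4)^{k+l}\mathbf{1}$.) Thus it remains to push $k$ down to $\le 1$ for the generators with $(i,j)\ne(0,0)$.

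For these I would run a strong induction on the total degree $N=i+j+k+l$, and within a fixed $N$ proceed by a secondary induction. Concretely, for $k\ge 2$ apply the relation \eqref{O21} to $v=\alpha(-1)^i\alpha(-2)^j\alpha(-3)^{k-2}\alpha(-4)^l\mathbf{1}$ and expand: this rewrites $\alpha(-1)^i\alpha(-2)^j\alpha(-3)^k\alpha(-4)^l\mathbf{1}$ modulo $O_2(V)$ as a combination of two monomials of strictly smaller $\alpha(-3)$-exponent, together with two monomials containing a single $\alpha(-5)$; the latter are eliminated using \eqref{reduce-equation-Heisenberg-level2} (taken with $i_5=1$ and the separate factor equal to $\mathbf{1}$, so that the $v_{-2}\mathbf{1}$ summand vanishes). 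When $j=0$ the outcome involves only monomials with $\alpha(-3)$-exponent $\le k-1$ and $\alpha(-1)$-exponent $\le 2$, so the induction on the $\alpha(-3)$-exponent closes directly; I would dispose of this entire $j=0$ family, for every $k$, before turning to $j=1$. When $j=1$, applying \eqref{reduce-equation-Heisenberg-level2} leaves two awkward terms: one of the form $\alpha(-1)^i\alpha(-3)^{k}\alpha(-4)^{l+1}\mathbf{1}$, which is a $j=0$ generator already known to be in $B$, and one of the form $\alpha(-1)^{i-1}\alpha(-2)^2\alpha(-3)^{k-1}\alpha(-4)^{l+1}\mathbf{1}$, whose $\alpha(-2)$-exponent is $2$. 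This last one is fed into the $\alpha(-2)$-exponent reduction \eqref{reducing-2} from the proof of Proposition \ref{generators-prop} (together with \eqref{multiplication-two-cor-formula-level2-explicit} and Lemma \ref{helprel}); its output splits into $*_2$-products of pieces of total degree $<N$ — which lie in $B$ by the outer induction — lower-filtration terms of total degree $<N$, and same-$N$ monomials with either strictly smaller $\alpha(-1)$-exponent (same $k$, $j=1$), or $j=0$, or $j\le 1$ with strictly smaller $\alpha(-3)$-exponent. A secondary induction on the $\alpha(-1)$-exponent, with the $j=0$ family already in hand, then finishes the $j=1$ case, and with it the proposition.

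The hard part is not any single computation — the individual rewriting steps are routine applications of Corollary \ref{level-two-corollary}, Lemma \ref{helprel}, Lemma \ref{powersalpha1-lemma}, and Lemma \ref{powersofalpha345} — but rather arranging the bookkeeping so that the process terminates. The difficulty is that eliminating an $\alpha(-5)$ can momentarily raise the $\alpha(-2)$-exponent to $2$, and the subsequent $\alpha(-2)$-reduction, through the multiplication formula \eqref{multiplication-two-cor-formula-level2-explicit}, can momentarily raise the $\alpha(-3)$-exponent again; one also has to verify that all $*_2$-products and filtration remainders that appear genuinely have strictly smaller total degree. The fix is the ordering chosen for the nested inductions — total degree on the outside, then the $j=0$ subfamily for all $\alpha(-3)$-exponents, then the $j=1$ subfamily with an inner induction on the $\alpha(-1)$-exponent — which guarantees that every term produced is either strictly smaller in this well-order or already recognized as an element of $B$.
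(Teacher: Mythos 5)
Your overall strategy is the same as the paper's: use \eqref{O21} together with \eqref{reduce-equation-Heisenberg-level2} to lower the $\alpha(-3)$-exponent, dispose of the $(i,j)=(0,0)$ family via Lemma \ref{powersofalpha345}, and treat the stray $\alpha(-2)^2$-monomials by a separate application of \eqref{O23}. But the induction ordering you propose is circular, and since you yourself identify the termination bookkeeping as the entire difficulty, this is a genuine gap rather than a presentational one. The problem is in your $j=0$ step: when $i\geq 1$, eliminating the $\alpha(-5)$ produced by \eqref{O21} via \eqref{reduce-equation-Heisenberg-level2} generates the summand $\frac{i}{4(l+1)}\,\alpha(-1)^{i-1}\alpha(-2)\,\alpha(-3)^{k-1}\alpha(-4)^{l+1}\mathbf{1}$ (the ``$i_1\alpha(-4)^{i_4+1}\alpha(-3)^{i_3}\alpha(-2)^{i_2+1}\alpha(-1)^{i_1-1}$'' term of that formula). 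This monomial has $\alpha(-2)$-exponent $1$, the same total degree, and $\alpha(-3)$-exponent $k-1$, which is still $\geq 2$ whenever $k\geq 3$. So reducing the $j=0$ family leaks into the $j=1$ family, which sits \emph{later} in your well-order and is not yet known to lie in $B$. Your assertion that for $j=0$ ``the outcome involves only monomials with $\alpha(-3)$-exponent $\le k-1$ and $\alpha(-1)$-exponent $\le 2$, so the induction on the $\alpha(-3)$-exponent closes directly'' is true as far as it goes but omits exactly this $j$-raising term; you correctly record the analogous term for $j=1$ (the $\alpha(-2)^2$ monomial) but not for $j=0$.

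The repair is to reorder: the correct well-order on the pairs $(i,j)$ is lexicographic in $(i+j,\,i)$, i.e. $(0,1)$, then $(1,0)$, then $(1,1)$, then $(2,0)$, then $(2,1)$, with the induction on the $\alpha(-3)$-exponent run to completion inside each case before moving to the next. This works because the $j$-lowering term $2j\,\alpha(-1)^i\alpha(-2)^{j-1}\alpha(-3)^{k}\alpha(-4)^{l+1}\mathbf{1}$ lands in a pair with smaller $i+j$, while the $j$-raising term $i\,\alpha(-1)^{i-1}\alpha(-2)^{j+1}\cdots$ lands in a pair with the same $i+j$ but smaller $i$ (and, when $j+1=2$, is handled by the separate \eqref{O23} argument, whose output only involves pairs already completed — note in particular that the $\alpha(-2)^2$ reduction can \emph{raise} the $\alpha(-3)$-exponent again, which is harmless only because the $(0,1)$ and $(1,1)$ cases it feeds into have already been finished for all $k$). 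In particular $(0,1)$ must come \emph{before} $(1,0)$ and $(1,1)$ before $(2,0)$, which is incompatible with doing all of $j=0$ first. This is precisely the case ordering used in the paper's proof.
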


\begin{proof} For the purposes of this proof, we call the set specified in the proposition $S$.  By Proposition \ref{generators-prop} and Lemma \ref{powersofalpha345}, 
we only need to prove that elements in $V$ of the form $
\alpha(-1)^i\alpha(-2)^j\alpha(-3)^{k+2} \alpha(-4)^l\mathbf{1}$ are generated in $A_2(V)$ by $S$ for the following cases 
\begin{enumerate}
\item $i=0$, $j=1$,
\item $i=1$, $j=0$,
\item $i=j=1$,
\item $i=2$, $j=0$,
\item $i=2$, $j=1$,
\end{enumerate} 
and $k,l \in \mathbb{N}$.  

By Eqn.\ \eqref{O21} applied to $v = \alpha(-1)^i\alpha(-2)^j\alpha(-3)^k\alpha(-4)^l\mathbf{1}$ for $i,j,k,l \in \mathbb{N}$, and then applying \eqref{reduce-equation-Heisenberg-level2}, we have
\begin{eqnarray}\label{redpowersofalpha3}
\lefteqn{\ \ \alpha(-1)^i\alpha(-2)^j\alpha(-3)^{k+2}\alpha(-4)^l \mathbf{1}}\\
\ \ \ &\equiv_2& \! \!  -3\alpha(-1)^i\alpha(-2)^j\alpha(-3)^{k+1}\alpha(-4)^{l+1}\mathbf{1}   - \alpha(-1)^i\alpha(-2)^j\alpha(-3)^{k+1}\alpha(-4)^l\alpha(-5) \mathbf{1} \nonumber \\
& & \  - \, 2\alpha(-1)^i\alpha(-2)^j\alpha(-3)^k\alpha(-4)^{l+2} \mathbf{1}   -  \alpha(-1)^i\alpha(-2)^j\alpha(-3)^k\alpha(-4)^{l+1}\alpha(-5) \mathbf{1} \nonumber\\
&\equiv_2& \! \!  -3\alpha(-1)^{i}\alpha(-2)^{j}\alpha(-3)^{k+1}\alpha(-4)^{l+1}{\bf 1} \nonumber \\
& & \  + \, \frac{1}{4(l+1)}\Big((i+2j+3(k+1)+4(l+1))\alpha(-1)^i\alpha(-2)^j\alpha(-3)^{k+1}\alpha(-4)^{l+1}{\bf 1}\nonumber \\
& & \  + \, 3(k+1)\alpha(-1)^i\alpha(-2)^j\alpha(-3)^k\alpha(-4)^{l+2}{\bf 1} \nonumber \\
& & \   + \, 2j\alpha(-1)^i\alpha(-2)^{j-1}\alpha(-3)^{k+2}\alpha(-4)^{l+1}{\bf 1}\nonumber\\
& & \  + \, i\alpha(-1)^{i-1}\alpha(-2)^{j+1}\alpha(-3)^{k+1}\alpha(-4)^{l+1}{\bf 1} \Big)  \nonumber \\
& & \  - \, 2\alpha(-1)^i\alpha(-2)^j\alpha(-3)^k\alpha(-4)^{l+2}{\bf 1} \nonumber \\
& & \  + \, \frac{1}{4(l+2)}\Big( (i+2j+3k+4(l+2))\alpha(-1)^i\alpha(-2)^j\alpha(-3)^{k}\alpha(-4)^{l+2}{\bf 1} \nonumber \\
& &\  + \,  3k\alpha(-1)^i\alpha(-2)^j\alpha(-3)^{k-1}\alpha(-4)^{l+3}{\bf 1}  \nonumber \\
& & \  + \,  2j\alpha(-1)^i\alpha(-2)^{j-1}\alpha(-3)^{k+1}\alpha(-4)^{l+2}{\bf 1} \nonumber\\
& & \  + \,  i\alpha(-1)^{i-1}\alpha(-2)^{j+1}\alpha(-3)^{k}\alpha(-4)^{l+2}{\bf 1} \Big).\nonumber
\end{eqnarray}
 
Case (1):  Eqn.\ \eqref{redpowersofalpha3} together with Lemma \ref{powersofalpha345} implies  $\alpha(-2)\alpha(-3)^{k+2}\alpha(-4)^l \mathbf{1}$ is equivalent to a linear combination of elements in $S$ or elements of the form $\alpha(-2)\alpha(-3)^{k'}\alpha(-4)^l \mathbf{1}$ with $k'<k+2$.  The result for this case follows by induction on $k$. 

Case (2):  Eqn.\ \eqref{redpowersofalpha3} implies  $\alpha(-1)\alpha(-3)^{k+2}\alpha(-4)^l \mathbf{1}$ is equivalent to a linear combination of elements in $S$, elements of the form $\alpha(-1)\alpha(-3)^{k'}\alpha(-4)^l \mathbf{1}$ with $k'<k+2$, or elements of the form of Case (1).  The result for this case follows by induction on $k$. 

Case (3): Eqn.\ \eqref{redpowersofalpha3} implies  $\alpha(-1)\alpha(-2)\alpha(-3)^{k+2}\alpha(-4)^l \mathbf{1}$ is equivalent to a linear combination of elements in $S$,  elements of the form of Case (2), elements of the form $\alpha(-1)\alpha(-2)\alpha(-3)^{k'}\alpha(-4)^l \mathbf{1}$ for $k'<k+2$, or elements of the form $\alpha(-2)^2 \alpha(-3)^{k'} \alpha(-4)^{l'} {\bf 1}$ for $k'<k+2$ and $l'\in \mathbb{N}$.  Thus we are done by induction on $k$ and by showing these last terms are generated by $S$. 

To show that  $\alpha(-2)^2 \alpha(-3)^{k'} \alpha(-4)^{l'} {\bf 1}$ is generated by $S$, we apply Eqn.\ \eqref{O23} to $v =  \alpha(-3)^{k'} \alpha(-4)^{l'} {\bf 1}$  and then use  \eqref{reduce-equation-Heisenberg-level2}, to obtain 
\begin{eqnarray*}
\lefteqn{\alpha(-2)^2\alpha(-3)^{k'}\alpha(-4)^{l'}{\bf 1} } \\
&\equiv_2& -6\alpha(-2)\alpha(-3)^{k'+1}\alpha(-4)^{l'}{\bf 1}-6\alpha(-2)\alpha(-3)^{k'}\alpha(-4)^{l'+1}{\bf 1}\\
& &\quad + \,  \frac{1}{2(l'+1)} \Big((2+3k'+4(l'+1))  \alpha(-2)\alpha(-3)^{k'}\alpha(-4)^{l'+1}{\bf 1}  \\
& & \quad + \,  3k'\alpha(-2)\alpha(-3)^{k'-1}\alpha(-4)^{l'+2}{\bf 1} + 2\alpha(-3)^{k'+1}\alpha(-4)^{l'+1}{\bf 1}    \Big) \\
& & \quad - \, 6\alpha(-3)^{k'+2}\alpha(-4)^{l'}{\bf 1}  - 8\alpha(-3)^{k'+1}\alpha(-4)^{l'+1}{\bf 1} \\
& & \quad - \,  2\alpha(-3)^{k'+1}\alpha(-4)^{l'}\alpha(-5){\bf 1} - \alpha(-3)^{k'}\alpha(-4)^{l'+2}{\bf 1}.
\end{eqnarray*} 
Applying Lemma \ref{powersofalpha345} to this expression for $\alpha(-2)^2 \alpha(-3)^{k'} \alpha(-4)^{l'} {\bf 1}$, we see that all terms in this case are  on the righthand side above are in $S$  or reduce to terms in Case (1).

Case (4):  Eqn.\ \eqref{redpowersofalpha3} implies  $\alpha(-1)^2 \alpha(-3)^{k+2}\alpha(-4)^l \mathbf{1}$ is equivalent to a linear combination of elements in $S$, elements of the form $\alpha(-1)^2\alpha(-3)^{k'}\alpha(-4)^l \mathbf{1}$ with $k'<k+2$, or elements of the form of Case (3).  The result for this case follows by induction on $k$.  

Case (5): Eqn.\ \eqref{redpowersofalpha3} implies  $\alpha(-1)^2\alpha(-2)\alpha(-3)^{k+2}\alpha(-4)^l \mathbf{1}$ is equivalent to a linear combination of elements in $S$,  elements of the form of Case (4), elements of the form $\alpha(-1)^2\alpha(-2)\alpha(-3)^{k'}\alpha(-4)^l \mathbf{1}$ for $k'<k+2$, or of the form $\alpha(-1) \alpha(-2)^2 \alpha(-3)^{k'} \alpha(-4)^{l'} {\bf 1}$ for $k'<k+2$ and $l'\in \mathbb{N}$.  Thus we are done by induction on $k$ and by showing these last terms are generated by $S$. 

To show that  $\alpha(-1) \alpha(-2)^2 \alpha(-3)^{k'} \alpha(-4)^{l'} {\bf 1}$ is generated by $S$, we apply Eqn.\ \eqref{O23} to $v = \alpha(-1) \alpha(-3)^{k'} \alpha(-4)^{l'} {\bf 1}$  and then use  \eqref{reduce-equation-Heisenberg-level2}, to obtain
\begin{eqnarray}
\lefteqn{\alpha(-1)\alpha(-2)^2\alpha(-3)^{k'}\alpha(-4)^{l'}{\bf 1} }\\
&\equiv_2& - 6\alpha(-1)\alpha(-2)\alpha(-3)^{k'+1}\alpha(-4)^{l'}{\bf 1}-6\alpha(-1)\alpha(-2)\alpha(-3)^{k'}\alpha(-4)^{l'+1}{\bf 1} \nonumber \\
& & \quad + \, \frac{1}{2(l'+1)} \Big((3+3k'+4(l'+1))\alpha(-1)\alpha(-2)\alpha(-3)^{k'}\alpha(-4)^{l'+1}{\bf 1} \nonumber \\
& & \quad + \,  3k' \alpha(-1) \alpha(-2) \alpha(-3)^{k'-1} \alpha(-4)^{l'+2} {\bf 1}  + 2\alpha(-1)\alpha(-3)^{k'+1}\alpha(-4)^{l'+1}{\bf 1}  \nonumber \\
& & \quad  + \,  \alpha(-2)^2\alpha(-3)^{k'}\alpha(-4)^{l'+1}{\bf 1}   \Big) - 6\alpha(-1)\alpha(-3)^{k'+2}\alpha(-4)^{l'}{\bf 1} \nonumber \\
& & \quad - \, 8\alpha(-1)\alpha(-3)^{k'+1}\alpha(-4)^{l'+1}{\bf 1} \nonumber \\
& &\quad   + \, \frac{1}{2(l'+1)} \Big((1+3(k'+1)+4(l'+1))\alpha(-1)\alpha(-3)^{k'+1}\alpha(-4)^{l'+1}{\bf 1}  \nonumber \\
& & \quad + \,  3(k'+1)\alpha(-1)\alpha(-3)^{k'}\alpha(-4)^{l'+2}{\bf 1}  + \alpha(-2)\alpha(-3)^{k'+1}\alpha(-4)^{l'+1} {\bf 1} \Big)  \nonumber \\
& & \quad -  \, \alpha(-1)\alpha(-3)^{k'}\alpha(-4)^{l'+2}{\bf 1}, \nonumber
\end{eqnarray}
and all terms reduce to terms in Cases (1), (2) or (3), or terms appearing in subcases of (3) already proved to be generated by $S$.
\end{proof}

We are now ready to prove the following refinement of the previous proposition.

\begin{prop}\label{reduce-300}
$A_2(V)$ is generated by elements of the form $v + O_2(V)$ for $v$ in the set
\[ \{\alpha(-1)^i\alpha(-2)^j\alpha(-4)^l{\mathbf{1}} \, | \, 0\le i\le 2, \ 0\le j\le 1,\ (i,j) \neq (0,0),  \ l\in \mathbb{N}\} . \]
\end{prop}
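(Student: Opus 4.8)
The plan is to start from the generating set in Proposition \ref{reduce-301}, namely elements of the form $\alpha(-1)^i\alpha(-2)^j\alpha(-3)^k\alpha(-4)^l\mathbf{1}$ with $0\le i\le 2$, $0\le j\le 1$, $0\le k\le 1$, $l\in\mathbb{N}$, and $(i,j)\neq(0,0)$, and to eliminate the cases with $k=1$. Thus it suffices to show that each of the elements
\[
\alpha(-2)\alpha(-3)\alpha(-4)^l\mathbf{1}, \quad \alpha(-1)\alpha(-3)\alpha(-4)^l\mathbf{1}, \quad \alpha(-1)\alpha(-2)\alpha(-3)\alpha(-4)^l\mathbf{1},
\]
\[
\alpha(-1)^2\alpha(-3)\alpha(-4)^l\mathbf{1}, \quad \alpha(-1)^2\alpha(-2)\alpha(-3)\alpha(-4)^l\mathbf{1}
\]
lies in the span modulo $O_2(V)$ of the set $S'$ in the proposition together with elements already handled (and the powers $\alpha(-4)^l\mathbf{1}\equiv_2(-1)^l(\alpha(-1)\mathbf{1})^l$ from Lemma \ref{powersofalpha345}, which are themselves in $S'$ via $i=1$, $j=0$ after using \eqref{powersalpha1}). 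The key inputs are already in place: Eqn.\ \eqref{234} handles $\alpha(-2)\alpha(-3)\alpha(-4)^l\mathbf{1}$ directly, rewriting it in terms of $\alpha(-1)\mathbf{1}*_2\alpha(-2)\alpha(-4)^l\mathbf{1}$ (a product of two elements already in $S'$, hence in $A_2(V)$) plus $\alpha(-2)\alpha(-4)^{l+1}\mathbf{1}$ and $\alpha(-4)^{l+2}\mathbf{1}$, both in $S'$. Likewise Eqn.\ \eqref{134} handles $\alpha(-1)\alpha(-3)\alpha(-4)^l\mathbf{1}$.

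For the three remaining cases I would proceed by inducting on $l$ (or on total degree) and using the multiplication formula \eqref{multiplication-one-one} together with the reduction relations from Lemma \ref{helprel}. Concretely, for $\alpha(-1)\alpha(-2)\alpha(-3)\alpha(-4)^l\mathbf{1}$ I would compute $\alpha(-1)\mathbf{1}*_2\alpha(-1)\alpha(-2)\alpha(-4)^l\mathbf{1}$ using \eqref{multiplication-one-one}: this equals $10\alpha(-3)\alpha(-1)\alpha(-2)\alpha(-4)^l\mathbf{1}+15\alpha(-4)\alpha(-1)\alpha(-2)\alpha(-4)^l\mathbf{1}+6\alpha(-5)\alpha(-1)\alpha(-2)\alpha(-4)^l\mathbf{1}$. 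The first term is $10$ times the element I want; the second is $15\alpha(-1)\alpha(-2)\alpha(-4)^{l+1}\mathbf{1}\in S'$; the third term $\alpha(-5)\alpha(-1)\alpha(-2)\alpha(-4)^l\mathbf{1}$ is reduced via \eqref{reduce-equation-Heisenberg-level2} (with $i_5=1$, $i_4=l$, $i_3=0$, $i_2=1$, $i_1=1$) to a linear combination of $\alpha(-1)\alpha(-2)\alpha(-4)^{l+1}\mathbf{1}$, $\alpha(-1)\alpha(-3)\alpha(-4)^{l+1}\mathbf{1}$ (Case handled by \eqref{134}), $\alpha(-1)^2\alpha(-2)\alpha(-4)^{l+1}\mathbf{1}$ ($\in S'$), and $\alpha(-1)\alpha(-2)\alpha(-4)^{l+1}\mathbf{1}$ with a $v_{-2}\mathbf 1$ shift reducing degree. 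Solving for the coefficient of $\alpha(-1)\alpha(-2)\alpha(-3)\alpha(-4)^l\mathbf{1}$ (which is nonzero—this is the crux), I express it in terms of $S'$ and lower-degree data. The cases $i=2$, $j=0$ and $i=2$, $j=1$ are handled the same way, using $\alpha(-1)\mathbf{1}*_2\alpha(-1)^2\alpha(-4)^l\mathbf{1}$ and $\alpha(-1)\mathbf{1}*_2\alpha(-1)^2\alpha(-2)\alpha(-4)^l\mathbf{1}$ respectively, each time peeling off an $\alpha(-5)$-term via \eqref{reduce-equation-Heisenberg-level2} and noting every resulting term either lies in $S'$, reduces to an already-handled case ($i\le 2$, $j\le 1$, $k\le1$ with smaller total degree, or Cases \eqref{234}/\eqref{134}), or has strictly smaller total degree.

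The main obstacle I anticipate is not conceptual but bookkeeping: I must verify in each of the three remaining cases that when I isolate $\alpha(-1)\mathbf{1}*_2(\text{element of }S')$ using \eqref{multiplication-one-one}, the coefficient of the target generator $\alpha(-1)^i\alpha(-2)^j\alpha(-3)\alpha(-4)^l\mathbf{1}$ on the resulting side of the equation is genuinely nonzero after collecting the contribution that comes back through the $\alpha(-5)$-reduction \eqref{reduce-equation-Heisenberg-level2} (which also produces an $\alpha(-3)$-containing term with an $l$-dependent coefficient). Because \eqref{reduce-equation-Heisenberg-level2} has the denominator $4(i_4+1)=4(l+1)$ and the $\alpha(-5)\mapsto\cdots$ expansion feeds an $\alpha(-3)\alpha(-4)^{l+1}$ piece back in with coefficient $\propto i_3$, and here $i_3=0$ on the element we hit with $\alpha(-5)$, that feedback term actually does not reintroduce the target generator—so the coefficient is simply $10$ from the leading term of \eqref{multiplication-one-one}, which is nonzero. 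I would state this observation carefully (the $\alpha(-3)$-reintroduction in \eqref{reduce-equation-Heisenberg-level2} is proportional to $i_3$, which vanishes on the relevant $\alpha(-1)^i\alpha(-2)^j\alpha(-4)^l\mathbf{1}$), conclude solvability, and close the induction on total degree. All terms on the right-hand side are then either products in $A_2(V)$ of elements of $S'$, elements of $S'$ themselves, elements covered by \eqref{234} or \eqref{134}, or elements of strictly smaller total degree, completing the proof.
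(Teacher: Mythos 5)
Your overall strategy is the same as the paper's: multiply $\alpha(-1)^i\alpha(-2)^j\alpha(-4)^l{\bf 1}$ on the left by $\alpha(-1){\bf 1}$ using \eqref{multiplication-one-one}, solve for the $\alpha(-3)$-term (coefficient $10$, which is indeed nonzero), and reduce the $\alpha(-5)$-term via \eqref{reduce-equation-Heisenberg-level2}. However, there is a genuine gap in your accounting of what the $\alpha(-5)$-reduction produces. The $i_1$-term of \eqref{reduce-equation-Heisenberg-level2} is $i_1\,\alpha(-5)^{i_5-1}\alpha(-4)^{i_4+1}\alpha(-3)^{i_3}\alpha(-2)^{i_2+1}\alpha(-1)^{i_1-1}v$: it \emph{converts an $\alpha(-1)$ into an $\alpha(-2)$}, raising the $\alpha(-2)$-exponent. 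With $(i_1,i_2)=(1,1)$ this yields $\alpha(-2)^2\alpha(-4)^{l+1}{\bf 1}$, not the $\alpha(-1)^2\alpha(-2)\alpha(-4)^{l+1}{\bf 1}$ you list; and with $(i_1,i_2)=(2,1)$ it yields $2\,\alpha(-1)\alpha(-2)^2\alpha(-4)^{l+1}{\bf 1}$. Neither of these lies in the target set (both have $\alpha(-2)$-exponent $2$), neither has strictly smaller total degree, and neither reduces to a case already handled, so your closing claim that ``every resulting term either lies in $S'$, reduces to an already-handled case, or has strictly smaller total degree'' is false as stated and the induction does not close.

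The first stray term, $\alpha(-2)^2\alpha(-4)^{l+1}{\bf 1}$, is rescued by Eqn.\ \eqref{224} of Lemma \ref{helprel}, which you invoke only in passing; you would need to cite it explicitly at this point. The second, $\alpha(-1)\alpha(-2)^2\alpha(-4)^{l+1}{\bf 1}$, is \emph{not} covered by Lemma \ref{helprel} at all and requires a separate argument: the paper applies the relation \eqref{O23} to $v=\alpha(-1)\alpha(-4)^{l+1}{\bf 1}$ followed by \eqref{reduce-equation-Heisenberg-level2}, which in turn produces $\alpha(-1)\alpha(-3)^2\alpha(-4)^{l+1}{\bf 1}$ and forces yet another application of \eqref{O21}. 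This chain of auxiliary reductions is the real content of the $(i,j)=(2,1)$ case and is entirely absent from your proposal. Your treatment of the first two elements via \eqref{234} and \eqref{134}, and of the case $j=0$, is fine; the proof is repairable, but only after correcting the term bookkeeping and supplying the missing reduction of the $\alpha(-2)^2$-type terms.
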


\begin{proof}  For the purposes of this proof, we call the set in the proposition $S$.  
By Proposition \ref{reduce-301}, we need only show that elements of the form $\alpha(-1)^i\alpha(-2)^j\alpha(-3)\alpha(-4)^l{\bf 1}$, for  $0\le i\le 2$,  $0\le j\le 1$, with $(i,j) \neq (0,0)$, and $l\in \mathbb{N}$, are generated in $A_2(V)$ by elements of the form $v+O_2(V)$ for $v\in S$.  Applying  Eqn.\  \eqref{multiplication-one-one}  for $v =  \alpha(-1)^i\alpha(-2)^j\alpha(-4)^l{\bf 1}$, and then Eqn.\ 
\eqref{reduce-equation-Heisenberg-level2},  we have
\begin{eqnarray}\label{red-301b}
\lefteqn{ \alpha(-1)^i\alpha(-2)^j\alpha(-3)\alpha(-4)^l{\bf 1}} \\
&\equiv_2& \frac{1}{10} \Big(\alpha(-1)   \mathbf{1} *_2 \alpha(-1)^i\alpha(-2)^j\alpha(-4)^l{\bf 1}-15\alpha(-1)^i\alpha(-2)^j\alpha(-4)^{l+1}{\bf 1} \nonumber \\
&& \quad   +  \, 6\frac{1}{4(l+1)} \big(i\alpha(-1)^{i-1}\alpha(-2)^{j+1}\alpha(-4)^{l+1}{\bf 1} \nonumber \\
& & \quad  +  \, 2j\alpha(-1)^i\alpha(-2)^{j-1}\alpha(-3)\alpha(-4)^{l+1}{\bf 1}   \nonumber \\
& & \quad  +  \,  (i+2j+4(l+1))\alpha(-1)^i\alpha(-2)^j\alpha(-4)^{l+1}{\bf 1} \big)  \Big) . \nonumber
\end{eqnarray}

We have the following cases 
\begin{enumerate}
\item $0< i\le 2$, $j=0$,
\item $i=0$, $j = 1$,
\item $i=j=1$,
\item $i=2$, $j=1$.
\end{enumerate}

Eqn.\ \eqref{red-301b} directly implies  that elements in Case (1) are generated by elements in $S$.  Case (2)  follows from Eqn.\  \eqref{red-301b} by observing that   the only term in the righthand side of \eqref{red-301b} that is not  already in $S$ is the term $2j \alpha(-1)^i\alpha(-2)^{j-1}\alpha(-3)\alpha(-4)^{l+1}{\bf 1}$, when $j = 1$, and in this case we are assuming $i = 0$, so that this term is $2 \alpha(-3) \alpha(-4)^{l+1} {\bf 1}$.  However, by Eqn.\ (\ref{34=44}) in Lemma \ref{helprel}, this is  equivalent to $-2\alpha(-4)^{l+2}{\bf 1}$ which is in $S$.

For elements  in  Case (3), all elements on the righthand side of  Eqn.\  \eqref{red-301b}, are in $S$ or are generated in $A_2(V)$ by $S$ via Case (1), except for the $\alpha(-2)^{2}\alpha(-4)^{l+1}{\bf 1}$ term, which is shown to be generated by $S$ by Eqn.\ \eqref{224} in Lemma \ref{helprel}.

It remains to handle Case (4). In this case, all elements on the righthand side of  Eqn.\ \eqref{red-301b}, are in $S$ or are generated in $A_2(V)$ by $S$ via Case (1), except for the following term: $\alpha(-1) \alpha(-2)^{2}\alpha(-4)^{l+1}{\bf 1}$.  To prove that this term is generated by $S$, we 
apply Eqn.\ \eqref{O23} to $v = \alpha(-1) \alpha(-4)^{l+1} {\bf 1}$ and then apply Eqn.\ \eqref{reduce-equation-Heisenberg-level2} giving 
\begin{eqnarray}\label{two-squared-four-reduce}
\lefteqn{ \alpha(-1)\alpha(-2)^2\alpha(-4)^{l+1}{\bf 1}}\\
&\equiv_2& - 6\alpha(-1)\alpha(-2)\alpha(-3)\alpha(-4)^{l+1}{\bf 1}-6\alpha(-1)\alpha(-2)\alpha(-4)^{l+2}{\bf 1} \nonumber \\
& & \quad   +  \, \frac{1}{2(l+2)} \big((3+4(l+2))\alpha(-1)\alpha(-2)\alpha(-4)^{l+2}{\bf 1}  \nonumber \\
& & \quad    + \,  2\alpha(-1)\alpha(-3)\alpha(-4)^{l+2}{\bf 1} + \alpha(-2)^2\alpha(-4)^{l+2}{\bf 1} \big)  \nonumber \\
& & \quad - \, 6\alpha(-1)\alpha(-3)^{2}\alpha(-4)^{l+1}{\bf 1}-8\alpha(-1)\alpha(-3)\alpha(-4)^{l+2}{\bf 1} \nonumber \\
& & \quad + \,  \frac{1}{2(l+2)} \big((4+4(l+2))\alpha(-1)\alpha(-3)\alpha(-4)^{l+2}{\bf 1} \nonumber \\
& & \quad +  \,  3\alpha(-1)\alpha(-4)^{l+3}{\bf 1}  + \alpha(-2)\alpha(-3)\alpha(-4)^{l+2}{\bf1}  \big)  \nonumber \\
& &  \quad  - \, \alpha(-1)\alpha(-4)^{l+3}{\bf 1} \nonumber .
\end{eqnarray}

The only term in the righthand side of   Eqn.\  \eqref{two-squared-four-reduce} that is not in $S$, or does not reduce to previous cases is the term  $\alpha(-1)\alpha(-3)^2\alpha(-4)^{l+1}{\bf 1}$.  We show this term is generated by $S$ by 
applying Eqn.\  \eqref{O21}  to $v = \alpha(-1) \alpha(-4)^{l+1} {\bf 1}$ and then applying Eqn.\ \eqref{reduce-equation-Heisenberg-level2}, to  give
\begin{eqnarray*}
\lefteqn{ \alpha(-1)\alpha(-3)^2\alpha(-4)^{l+1}{\bf 1}}\\
&\equiv_2&- 3\alpha(-1)\alpha(-3)\alpha(-4)^{l+2}{\bf 1} - 2 \alpha(-1)\alpha(-4)^{l+3}{\bf 1} \\
& & \quad +  \, \frac{1}{4(l+2)} \big( (4+4(l+2))\alpha(-1)\alpha(-3)\alpha(-4)^{l+2}{\bf 1} \\
& & \quad + \, 3\alpha(-1)\alpha(-4)^{l+3}{\bf 1} + \alpha(-2)\alpha(-3)\alpha(-4)^{l+2}{\bf 1} \big) \\
& & \quad  + \,  \frac{1}{4(l+3)}\bigl( (1+4(l+3))\alpha(-1)\alpha(-4)^{l+3}{\bf1} + \alpha(-2)\alpha(-4)^{l+3}{\bf 1} \big).
\end{eqnarray*} 
All terms on the righthand side of this expression correspond to Case (1) or (2), which we have already considered, or are elements in $S$.  The result follows.
\end{proof}

To further reduce the set of generators, we first prove a lemma, which will also be helpful later when we prove what relations are satisfied by the generators.

\begin{lem}\label{eqnB}
For all $l\in \mathbb{N}$ 
\begin{eqnarray}\label{red124to1424}
\ \ \ 10\alpha(-1)\alpha(-2)\alpha(-4)^l\mathbf{1} 
&\equiv_2 & \frac{7l+5}{l+1}\alpha(-1)\alpha(-4)^{l+1}{\mathbf{1}}-3\alpha(-1){\mathbf{1}}*_2\alpha(-1)\alpha(-4)^l\mathbf{1}   \\
& & \quad - \, \frac{4l+3}{l+1}\alpha(-2)\alpha(-4)^{l+1}{\mathbf{1}}+\frac{10l+7}{l+1}\alpha(-4)^{l+2}\mathbf{1} \nonumber \\
& & \quad + \, 6 \alpha(-1) \mathbf{1} *_2\alpha(-2)\alpha(-4)^{l}\mathbf{1}. \nonumber 
\end{eqnarray}
\end{lem}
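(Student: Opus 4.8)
The plan is to derive \eqref{red124to1424} from the single relation \eqref{O24}, specialized to $v=\alpha(-4)^l\mathbf{1}$, by rewriting every monomial that then appears using the identities already established in Lemma~\ref{helprel} (and, for the purely $\alpha(-3),\alpha(-4)$ terms, Lemma~\ref{powersofalpha345}). First I would expand the first form $\bigl(A(B+2C+D)+B(B+C)\bigr)v$ of the element of $O_2^\circ(V)$ recorded in \eqref{O24}; the monomial $\alpha(-1)\alpha(-2)$ occurs there with coefficient $1$, so setting $v=\alpha(-4)^l\mathbf{1}$ and solving for it gives
\begin{align*}
\alpha(-1)\alpha(-2)\alpha(-4)^l\mathbf{1} \equiv_2 {} & -3\alpha(-1)\alpha(-3)\alpha(-4)^l\mathbf{1}-3\alpha(-1)\alpha(-4)^{l+1}\mathbf{1}-\alpha(-1)\alpha(-4)^l\alpha(-5)\mathbf{1}\\
& -2\alpha(-2)^2\alpha(-4)^l\mathbf{1}-6\alpha(-2)\alpha(-3)\alpha(-4)^l\mathbf{1}-4\alpha(-2)\alpha(-4)^{l+1}\mathbf{1}\\
& -\alpha(-2)\alpha(-4)^l\alpha(-5)\mathbf{1}-2\alpha(-3)^2\alpha(-4)^l\mathbf{1}-\alpha(-3)\alpha(-4)^{l+1}\mathbf{1}.
\end{align*}

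Next comes the substitution step: replace $\alpha(-1)\alpha(-3)\alpha(-4)^l\mathbf{1}$ by \eqref{134}, $\alpha(-1)\alpha(-4)^l\alpha(-5)\mathbf{1}$ by \eqref{145}, $\alpha(-2)^2\alpha(-4)^l\mathbf{1}$ by \eqref{224}, $\alpha(-2)\alpha(-3)\alpha(-4)^l\mathbf{1}$ by \eqref{234}, $\alpha(-2)\alpha(-4)^l\alpha(-5)\mathbf{1}$ by \eqref{245}, $\alpha(-3)^2\alpha(-4)^l\mathbf{1}$ by \eqref{3^24}, and $\alpha(-3)\alpha(-4)^{l+1}\mathbf{1}$ by \eqref{34=44}. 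After these replacements every term lies in the span of the five elements $\alpha(-1)\mathbf{1}*_2\alpha(-1)\alpha(-4)^l\mathbf{1}$, $\alpha(-1)\mathbf{1}*_2\alpha(-2)\alpha(-4)^l\mathbf{1}$, $\alpha(-1)\alpha(-4)^{l+1}\mathbf{1}$, $\alpha(-2)\alpha(-4)^{l+1}\mathbf{1}$, $\alpha(-4)^{l+2}\mathbf{1}$ appearing on the right-hand side of \eqref{red124to1424}.

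Finally I would collect coefficients. Over the common denominator $20(l+1)$ the coefficients of $\alpha(-1)\mathbf{1}*_2\alpha(-1)\alpha(-4)^l\mathbf{1}$, $\alpha(-1)\mathbf{1}*_2\alpha(-2)\alpha(-4)^l\mathbf{1}$, $\alpha(-1)\alpha(-4)^{l+1}\mathbf{1}$, $\alpha(-2)\alpha(-4)^{l+1}\mathbf{1}$, $\alpha(-4)^{l+2}\mathbf{1}$ collapse to $-3/10$, $3/5$, $(7l+5)/(10(l+1))$, $-(4l+3)/(10(l+1))$, $(10l+7)/(10(l+1))$ respectively; multiplying through by $10$ yields exactly \eqref{red124to1424}. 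The only real obstacle is the bookkeeping — keeping the rational coefficients straight through the seven substitutions — but it is a bounded, mechanical computation with no case distinction and no input beyond Lemmas~\ref{helprel} and~\ref{powersofalpha345}; one could equally start from the third form $\bigl((A+B)(B+C)+A(C+D)\bigr)v$ of \eqref{O24}, which produces the same list of monomials.
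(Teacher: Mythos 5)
Your proposal is correct and follows essentially the same route as the paper's own proof: the paper likewise applies \eqref{O24} to $v=\alpha(-4)^l\mathbf{1}$, isolates $\alpha(-1)\alpha(-2)\alpha(-4)^l\mathbf{1}$, and then substitutes exactly the identities \eqref{134}, \eqref{145}, \eqref{224}, \eqref{234}, \eqref{245}, \eqref{3^24}, and \eqref{34=44} before combining like terms. Your stated final coefficients over the common denominator also check out.
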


\begin{proof}
Applying Eqn.\ \eqref{O24} to $v = \alpha(-4)^l$, we have 
\begin{eqnarray*}
\lefteqn{\alpha(-1)\alpha(-2)\alpha(-4)^l\mathbf{1}}\\
&\equiv_2&-3\alpha(-1)\alpha(-3)\alpha(-4)^l\mathbf{1} -3 \alpha(-1)\alpha(-4)^{l+1}\mathbf{1} - \alpha(-1)\alpha(-4)^l\alpha(-5)\mathbf{1}\\
& & \quad - \, 2 \alpha(-2)^2\alpha(-4)^l\mathbf{1} - 6 \alpha(-2)\alpha(-3)\alpha(-4)^l\mathbf{1 } - 4 \alpha(-2)\alpha(-4)^{l+1} \\
& & \quad - \, \alpha(-2)\alpha(-4)^l\alpha(-5)\mathbf{1}-2\alpha(-3)^2\alpha(-4)^l\mathbf{1} - \alpha(-3)\alpha(-4)^{l+1}\mathbf{1}.
 \end{eqnarray*}
Applying Eqns.\ \eqref{134}, \eqref{145}, \eqref{224}, \eqref{234}, \eqref{245}, \eqref{3^24}, and \eqref{34=44}, we have  
\begin{eqnarray*}
\lefteqn{10\alpha(-1)\alpha(-2)\alpha(-4)^l\mathbf{1}}\\
&\equiv_2& -3\Big(\alpha(-1)\mathbf{1}*_2\alpha(-1)\alpha(-4)^l\mathbf{1}-\frac{3(6l+5)}{2(l+1)} \alpha(-1)\alpha(-4)^{l+1}\mathbf{1}\\
& & \quad + \,  \frac{3}{2(l+1)}\alpha(-2)\alpha(-4)^{l+1}\mathbf{1}\Big) -  30\alpha(-1)\alpha(-4)^{l+1}\mathbf{1}\\
& & \quad + \, 10\Big(\frac{1}{4(l+1)} \big(\alpha(-2)\alpha(-4)^{l+1}\mathbf{1} + (1+4(l+1))\alpha(-1)\alpha(-4)^{l+1}\mathbf{1} \big)\Big)\\
& & \quad - \,  20\Big(-\frac{3}{5}\alpha(-1){\mathbf{1}}*_2\alpha(-2)\alpha(-4)^l \mathbf{1} +\frac{7l+3}{5(l+1)}\alpha(-2)\alpha(-4)^{l+1}\mathbf{1}\\
& & \quad - \, \frac{(5l+1)}{5(l+1)}\alpha(-4)^{l+2}\mathbf{1}\Big)  - 6\Big(\alpha(-1)\mathbf{1}*_2\alpha(-2)\alpha(-4)^l{\mathbf{1}} \\
& & \quad - \,  \frac{3(3l+2)}{l+1} \alpha(-2)\alpha(-4)^{l+1}\mathbf{1} - \frac{3}{l+1}\alpha(-4)^{l+2}\mathbf{1}\Big) - 40\alpha(-2)\alpha(-4)^{l+1}\mathbf{1}\\
& & \quad - \,  10\Big(\frac{2}{4(l+1)}\alpha(-4)^{l+2}\mathbf{1}-\frac{(1+2(l+1))}{2(l+1)}\alpha(-2)\alpha(-4)^{l+1}\mathbf{1}\Big)\\
& & \quad - \,  20\alpha(-4)^{l+2}{\mathbf{1}}+10\alpha(-4)^{l+2}\mathbf{1}.
\end{eqnarray*}

Simplifying this expression by combining like terms proves the lemma.
\end{proof}

Lemma \ref{eqnB}  along with Proposition \ref{reduce-300} immediately implies the following proposition: 

\begin{prop}\label{reduce-no11}
$A_2(V)$ is generated by elements of the form $v + O_2(V)$ for $v$ in the set
\[ \{\alpha(-1)^i\alpha(-2)^j\alpha(-4)^l{\mathbf{1}} \, | \, 0\le i\le 2, \, 0\le j\le 1,\, (i,j)\ne  (0,0), (1,1),\, l\in \mathbb{N}\} . \]
\end{prop}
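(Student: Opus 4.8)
The plan is to deduce this directly from Proposition~\ref{reduce-300} together with Lemma~\ref{eqnB}, so that the argument amounts to bookkeeping. By Proposition~\ref{reduce-300}, $A_2(V)$ is generated by the residues modulo $O_2(V)$ of the elements $\alpha(-1)^i\alpha(-2)^j\alpha(-4)^l{\mathbf{1}}$ with $0\le i\le 2$, $0\le j\le 1$, $(i,j)\neq(0,0)$, and $l\in\mathbb{N}$. Comparing with the set $S$ in the present statement, the only elements that still need to be eliminated are those with $(i,j)=(1,1)$, namely $\alpha(-1)\alpha(-2)\alpha(-4)^l{\mathbf{1}}$ for $l\in\mathbb{N}$. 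So it suffices to show that each of these lies, modulo $O_2(V)$, in the subalgebra of $A_2(V)$ generated by $\{v+O_2(V)\mid v\in S\}$.

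To do this I would invoke Eqn.~\eqref{red124to1424} of Lemma~\ref{eqnB}, which expresses $10\,\alpha(-1)\alpha(-2)\alpha(-4)^l{\mathbf{1}}$ modulo $O_2(V)$ as a linear combination of $\alpha(-1)\alpha(-4)^{l+1}{\mathbf{1}}$, $\alpha(-2)\alpha(-4)^{l+1}{\mathbf{1}}$, $\alpha(-4)^{l+2}{\mathbf{1}}$, $\alpha(-1){\mathbf{1}}*_2\alpha(-1)\alpha(-4)^l{\mathbf{1}}$, and $\alpha(-1){\mathbf{1}}*_2\alpha(-2)\alpha(-4)^l{\mathbf{1}}$. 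The first two terms already belong to $S$; the last two are $*_2$-products of elements of $S$ (the factors $\alpha(-1){\mathbf{1}}$, $\alpha(-1)\alpha(-4)^l{\mathbf{1}}$, $\alpha(-2)\alpha(-4)^l{\mathbf{1}}$ all correspond to $(i,j)\in\{(1,0),(0,1)\}$), so they lie in the generated subalgebra; and the term $\alpha(-4)^{l+2}{\mathbf{1}}$ is handled by Lemma~\ref{powersalpha1-lemma}, which gives $\alpha(-4)^{l+2}{\mathbf{1}}\equiv_2(-1)^{l+2}(\alpha(-1){\mathbf{1}})^{l+2}$, again a product of copies of the generator $\alpha(-1){\mathbf{1}}\in S$. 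Since the scalar $10$ on the left-hand side of \eqref{red124to1424} is nonzero, I can then solve for $\alpha(-1)\alpha(-2)\alpha(-4)^l{\mathbf{1}}$ and conclude it lies in the subalgebra generated by $S$, which finishes the proof.

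I do not expect any real obstacle: all of the genuine work has been front-loaded into Lemma~\ref{eqnB}, whose proof (via Eqn.~\eqref{O24} and the relations assembled in Lemma~\ref{helprel}) is the only computationally substantial step. The one subtlety worth flagging is that $\alpha(-4)^l{\mathbf{1}}$ is itself \emph{not} among the listed generators in $S$, so the $\alpha(-4)^{l+2}{\mathbf{1}}$ term must be rewritten using Lemma~\ref{powersalpha1-lemma} rather than simply declared to be a generator; once this is observed, the proposition follows at once.
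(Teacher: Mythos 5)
Your proposal is correct and is exactly the paper's argument: the paper derives Proposition \ref{reduce-no11} immediately from Proposition \ref{reduce-300} together with Lemma \ref{eqnB}, just as you do. Your additional remarks — that the $*_2$-products in \eqref{red124to1424} have both factors in $S$, and that $\alpha(-4)^{l+2}{\bf 1}$ must be rewritten via Lemma \ref{powersalpha1-lemma} as a power of $\alpha(-1){\bf 1}$ since it is not itself listed in $S$ — correctly fill in the bookkeeping the paper leaves implicit.
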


To further reduce the set of generators, we prove the following lemma.

\begin{lem}\label{eqnC} 
For  $l \in \mathbb{N}$
\begin{eqnarray}\label{eqnLemC}
\ \ \ \ \ \ \ \frac{3(4l+1)}{10(l+1)}\alpha(-2)\alpha(-4)^{l+1} \mathbf{1}
\! \! & \equiv_2&  \! \! \frac{(2l+1)}{2(l+1)}\alpha(-1)\alpha(-4)^{l+1}\mathbf{1}  - \frac{5l+12}{5(l+1)}\alpha(-4)^{l+2}\mathbf{1} \\
& &\    + \, \alpha(-1){\mathbf{1}}*_2\alpha(-1)\alpha(-4)^l\mathbf{1}  + \alpha(-1)^2 \mathbf{1} *_2\alpha(-4)^{l}\mathbf{1}  \nonumber \\
& & \    - \, \frac{6}{5}\alpha(-1) \mathbf{1}*_2\alpha(-2)\alpha(-4)^l\mathbf{1}. \nonumber
\end{eqnarray}
\end{lem}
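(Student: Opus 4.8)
The plan is to obtain \eqref{eqnLemC} by computing the product $\alpha(-1)^2\mathbf{1}*_2\alpha(-4)^l\mathbf{1}$ explicitly, rewriting every monomial that appears by means of the reduction identities already established in Lemmas \ref{eqnB} and \ref{helprel}, and then solving the resulting congruence for $\alpha(-2)\alpha(-4)^{l+1}\mathbf{1}$. The first step is to apply the multiplication formula \eqref{multiplication-one-cor-formula-level2-expllicit} with $i=2$, $t=1$, $v=\alpha(-4)^l\mathbf{1}$. The $j=0$ coefficient $\binom{4}{2}-3\binom{4}{3}+6\binom{4}{4}=0$ vanishes (equivalently, the leading coefficient of \eqref{reduce1-prop-level2} is zero for $i_1=2$), so that $\alpha(-1)^2\mathbf{1}*_2\alpha(-4)^l\mathbf{1}$ is an explicit integer combination of the eight ``lower'' monomials $\alpha(-1)\alpha(-2)\alpha(-4)^l\mathbf{1}$, $\alpha(-1)\alpha(-3)\alpha(-4)^l\mathbf{1}$, $\alpha(-2)^2\alpha(-4)^l\mathbf{1}$, $\alpha(-1)\alpha(-4)^{l+1}\mathbf{1}$, $\alpha(-2)\alpha(-3)\alpha(-4)^l\mathbf{1}$, $\alpha(-1)\alpha(-4)^l\alpha(-5)\mathbf{1}$, $\alpha(-2)\alpha(-4)^{l+1}\mathbf{1}$, $\alpha(-3)^2\alpha(-4)^l\mathbf{1}$, together with a remainder term $f_l(\mathbf{1})\in F_l(\mathbf{1})$ coming from the $g_2$-part of \eqref{multiplication-cor-formula-level2}.

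The second step is to show that this remainder is $\equiv_2 0$. Since $f_l(\mathbf{1})$ is a combination of terms $\nord\alpha(q_1)\alpha(q_2)\nord\alpha(-4)^l\mathbf{1}$ with a nonnegative mode, the only surviving contributions come from contracting that mode against an $\alpha(-4)$ factor, so $f_l(\mathbf{1})$ is a (computable, $l$-proportional) linear combination of the terms $\alpha(-m)\alpha(-4)^{l-1}\mathbf{1}$ with $4\le m\le 10$. Applying the recursion \eqref{recursion-level-2} and then \eqref{34=44} and \eqref{45=44} reduces each of these to a scalar multiple of $\alpha(-4)^l\mathbf{1}$, and the scalars cancel; this is the one place where some genuine computation is unavoidable, and it is the step I expect to be the main obstacle, being analogous in spirit to the lower-order analyses used elsewhere in the paper.

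The final step is purely a reduction-and-collection argument. Substitute \eqref{red124to1424} for $\alpha(-1)\alpha(-2)\alpha(-4)^l\mathbf{1}$, \eqref{134} for $\alpha(-1)\alpha(-3)\alpha(-4)^l\mathbf{1}$, \eqref{224} for $\alpha(-2)^2\alpha(-4)^l\mathbf{1}$, \eqref{234} for $\alpha(-2)\alpha(-3)\alpha(-4)^l\mathbf{1}$, \eqref{145} for $\alpha(-1)\alpha(-4)^l\alpha(-5)\mathbf{1}$, and \eqref{3^24} for $\alpha(-3)^2\alpha(-4)^l\mathbf{1}$. After these substitutions the whole expression is a combination of $\alpha(-1)\alpha(-4)^{l+1}\mathbf{1}$, $\alpha(-2)\alpha(-4)^{l+1}\mathbf{1}$, $\alpha(-4)^{l+2}\mathbf{1}$, $\alpha(-1)\mathbf{1}*_2\alpha(-1)\alpha(-4)^l\mathbf{1}$ and $\alpha(-1)\mathbf{1}*_2\alpha(-2)\alpha(-4)^l\mathbf{1}$ modulo $O_2(V)$. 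Collecting like terms, the coefficient of $\alpha(-2)\alpha(-4)^{l+1}\mathbf{1}$ works out to $\frac{3(4l+1)}{10(l+1)}$, which is nonzero, while the coefficients of the remaining five terms match those on the right-hand side of \eqref{eqnLemC}; isolating $\alpha(-2)\alpha(-4)^{l+1}\mathbf{1}$ then gives the lemma. Apart from the vanishing of $f_l(\mathbf{1})$, the only thing to be careful about is keeping the $l$-dependent rational coefficients (with their $(l+1)$ and $(l+2)$ denominators) straight through the roughly dozen substitutions.
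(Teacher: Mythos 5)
Your proposal is correct and follows essentially the same route as the paper: expand $\alpha(-1)^2\mathbf{1}*_2\alpha(-4)^l\mathbf{1}$ (where the paper's explicit residue computation confirms your claim that the $l$-proportional remainder terms $\alpha(-m)\alpha(-4)^{l-1}\mathbf{1}$, $m=4,\dots,10$, cancel identically after applying the recursion \eqref{recursion-level-2}), reduce the surviving eight monomials via Lemma \ref{helprel}, and combine with Eqn.\ \eqref{red124to1424}. The only difference is the direction of the final substitution --- you insert \eqref{red124to1424} into the product expansion while the paper inserts the expansion into \eqref{red124to1424} --- which is the same linear elimination, and your stated coefficient $\frac{3(4l+1)}{10(l+1)}$ of $\alpha(-2)\alpha(-4)^{l+1}\mathbf{1}$ is exactly what that elimination produces.
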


\begin{proof}
By the definition of multiplication in $A_2(V)$, Eqn.\  (\ref{*_n-definition}), and the recursion Eqn.\ (\ref{recursion-level-2}), we have 
\begin{eqnarray*}
\lefteqn{\alpha(-1)^2{\mathbf{1}}*_2\alpha(-4)^l\mathbf{1}}\\
&=&  \res_x  \left( \frac{1}{x^3}  -  \frac{3}{x^4} +  \frac{6}{x^5} \right) (1+x)^4 Y(\alpha(-1)^2{\mathbf{1}}, x) \alpha(-4)^l\mathbf{1} \\
&=& 12 \alpha(-1)\alpha(-4)^l  \alpha(-5){\bf 1}  + 12 \alpha(-2) \alpha(-4)^{l+1} {\bf 1}  + 6 \alpha(-3)^2\alpha(-4)^l {\bf 1}  \\
& & \quad  +  \, 48l \alpha(-10)\alpha(-4)^{l-1}  {\bf 1}  + 42  \alpha(-1) \alpha(-4)^{l+1} {\bf 1}  + 42  \alpha(-2) \alpha(-3)\alpha(-4)^l {\bf 1}  \\
& & \quad +  \, 168l  \alpha(-9)\alpha(-4)^{l-1} {\bf 1}  + 50  \alpha(-1) \alpha(-3)\alpha(-4)^l {\bf 1}  + 25 \alpha(-2)^2\alpha(-4)^l {\bf 1}  \\
& & \quad +  \, 200l \alpha(-8) \alpha(-4)^{l-1} {\bf 1}  + 20 \alpha(-1) \alpha(-2)\alpha(-4)^l {\bf 1}  + 80l \alpha(-7) \alpha(-4)^{l-1} {\bf 1}  \\
& & \quad + \, 8l  \alpha(-5) \alpha(-4)^{l-1} {\bf 1} + 8l \alpha(-4)^{l} {\bf 1} \\
&\sim_2& 12 \alpha(-1)\alpha(-4)^l  \alpha(-5){\bf 1}  + 12 \alpha(-2) \alpha(-4)^{l+1} {\bf 1}  + 6 \alpha(-3)^2\alpha(-4)^l {\bf 1}  - 48l \big(15\alpha(-3) \\
& & \quad + \,  35\alpha(-4) + 21\alpha(-5)  \big) \alpha(-4)^{l-1}  {\bf 1}  + 42  \alpha(-1) \alpha(-4)^{l+1} {\bf 1}  + 42  \alpha(-2) \alpha(-3)\alpha(-4)^l {\bf 1} \\
& & \quad  +  \, 168l  \big(10 \alpha(-3) + 24\alpha(-4) + 15 \alpha(-5)    \big)\alpha(-4)^{l-1} {\bf 1}  + 50  \alpha(-1) \alpha(-3)\alpha(-4)^l {\bf 1}  \\
& & \quad + \,  25 \alpha(-2)^2\alpha(-4)^l {\bf 1}  - 200l \big( 6 \alpha(-3) + 15 \alpha(-4) + 10 \alpha(-5)   \big) \alpha(-4)^{l-1} {\bf 1}  \\
& & \quad + \, 20 \alpha(-1) \alpha(-2)\alpha(-4)^l {\bf 1}  + 80l \big(3 \alpha(-3) + 8 \alpha(-4) + 6 \alpha(-5)   \big) \alpha(-4)^{l-1} {\bf 1}  \\
& & \quad + \, 8l  \alpha(-5) \alpha(-4)^{l-1} {\bf 1} + 8l \alpha(-4)^{l} {\bf 1} \\
&=& 12 \alpha(-1)\alpha(-4)^l  \alpha(-5){\bf 1}  + 12 \alpha(-2) \alpha(-4)^{l+1} {\bf 1}  + 6 \alpha(-3)^2\alpha(-4)^l {\bf 1} \\
& & \quad + \,42  \alpha(-1) \alpha(-4)^{l+1} {\bf 1}  + 42  \alpha(-2) \alpha(-3)\alpha(-4)^l {\bf 1}  +  50  \alpha(-1) \alpha(-3)\alpha(-4)^l {\bf 1}  \\
& & \quad + \,  25 \alpha(-2)^2\alpha(-4)^l {\bf 1}  +  20 \alpha(-1) \alpha(-2)\alpha(-4)^l {\bf 1}  .
\end{eqnarray*}
Applying  \eqref{145},  \eqref{3^24}, \eqref{234}-\eqref{224}, and then simplifying by combining like terms we have
\begin{eqnarray*}
\lefteqn{\alpha(-1)^2{\mathbf{1}}*_2\alpha(-4)^l\mathbf{1}}\\
&\equiv_2& - \, \frac{3}{l + 1}  \left(\alpha(-2) \alpha(-4)^{l+1} {\bf 1} + (1 + 4(l+1)) \alpha(-1) \alpha(-4)^{l+1} {\bf 1}    \right) + 12 \alpha(-2) \alpha(-4)^{l+1} {\bf 1}  \\
& & \quad + \, 6 \alpha(-4)^{l+2}  {\bf 1}  +  42  \alpha(-1) \alpha(-4)^{l+1} {\bf 1}   +  42\Big(   \frac{1}{10}\alpha(-1) \mathbf{1} *_2\alpha(-2)\alpha(-4)^l \mathbf{1} \\
& & \quad - \,  \frac{3(3l+2)}{10(l+1)}\alpha(-2)\alpha(-4)^{l+1}{\mathbf{1}} -  \frac{3}{10(l+1)}\alpha(-4)^{l+2}{\mathbf{1}}\Big)   \\
& & \quad + \,  50 \Big(  \frac{1}{10}\alpha(-1) \mathbf{1} *_2\alpha(-1)\alpha(-4)^l \mathbf{1} -   \frac{3(6l+5)}{20(l+1)}\alpha(-1)\alpha(-4)^{l+1} \mathbf{1} \\
& & \quad + \,  \frac{3}{20(l+1)}\alpha(-2)\alpha(-4)^{l+1} \mathbf{1} \Big)    +   25\Big(   -\frac{3}{5}\alpha(-1){\mathbf{1}}*_2\alpha(-2)\alpha(-4)^l \mathbf{1} \\
& & \quad + \, \frac{7l+3}{5(l+1)}\alpha(-2)\alpha(-4)^{l+1} \mathbf{1}  - \ \frac{(5l+1)}{5(l+1)}\alpha(-4)^{l+2} \mathbf{1} \Big)  +  20 \alpha(-1) \alpha(-2)\alpha(-4)^l {\bf 1}   \\
&=& \frac{92l+63}{10(l+1)}\alpha(-2)\alpha(-4)^{l+1} \mathbf{1} -\frac{3(10l+7)}{2(l+1)}\alpha(-1)\alpha(-4)^{l+1}\mathbf{1} - \frac{(95l+58)}{5(l+1)}\alpha(-4)^{l+2} \mathbf{1}\\
& & \quad - \, \frac{54}{5}\alpha(-1) \mathbf{1} *_2\alpha(-2)\alpha(-4)^l \mathbf{1} + 5\alpha(-1) \mathbf{1} *_2 \alpha(-1)\alpha(-4)^l \mathbf{1}  \\
& & \quad + \, 20\alpha(-1)\alpha(-2)\alpha(-4)^l{\mathbf{1}}.
\end{eqnarray*}

Using this expression to rewrite $10 \alpha(-1)\alpha(-2)\alpha(-4)^{l}{\bf 1}$ in \eqref{red124to1424} and simplifying proves Lemma \ref{eqnC}.
\end{proof}

Applying Lemma \ref{powersofalpha345}, we see that Lemma \ref{eqnC} gives a recursive definition for $\alpha(-2)\alpha(-4)^{l}\mathbf{1}$, $l>0$ in terms of powers of $\alpha(-1){\bf 1}$, elements of the form $\alpha(-1)\alpha(-4)^{l'} \mathbf{1}$, with  $l' \leq l$, and $\alpha(-1)^2 {\bf 1}$.  In addition, when $l=0$, we have $\alpha(-2)\alpha(-4)^{l}{\mathbf{1}}=\alpha(-2){\bf{1}} \equiv_2 - \alpha(-1) {\bf 1}$ by the fact that $(L(-1)+L(0))\alpha(-1){\bf 1}\in O_2(V)$. These facts along with Proposition \ref{reduce-no11}, give the following proposition.

\begin{prop}\label{reduce-mislabeled}
$A_2(V)$ is generated by elements of the form $v + O_2(V)$ for $v$ in the set
\begin{align*} 
 \{\alpha(-1)^i&\alpha(-2)^j\alpha(-4)^l{\mathbf{1}} \, | \, 0\le i\le 2, \, 0\le j\le 1,\, (i,j)\ne (0,0),   (1,1), (0,1),\, l\in \mathbb{N}\} \\
&= \{\alpha(-1)^i\alpha(-2)^j\alpha(-4)^l{\mathbf{1}} \, | \, (i,j) =  (1,0),   (2,0), (2,1),\, l\in \mathbb{N}\}. 
\end{align*}
\end{prop}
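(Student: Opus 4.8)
The plan is to derive Proposition \ref{reduce-mislabeled} from Proposition \ref{reduce-no11} together with Lemma \ref{eqnC} and Lemma \ref{powersofalpha345}, exactly as the paragraph immediately preceding the statement indicates. By Proposition \ref{reduce-no11}, it suffices to show that every generator of the form $\alpha(-2)\alpha(-4)^l\mathbf{1}$ (the $(i,j)=(0,1)$ case) is already generated modulo $O_2(V)$ by elements $v+O_2(V)$ with $v$ in the smaller set $\{\alpha(-1)^i\alpha(-2)^j\alpha(-4)^l\mathbf{1}\,|\,(i,j)=(1,0),(2,0),(2,1)\}$; the equality of the two displayed sets is then a trivial rewriting.

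First I would dispose of the base case $l=0$: since $(L(-1)+L(0))\alpha(-1)\mathbf{1}\in O_2(V)$ and $L(-1)\alpha(-1)\mathbf{1}=\alpha(-2)\mathbf{1}$, $L(0)\alpha(-1)\mathbf{1}=\alpha(-1)\mathbf{1}$, we get $\alpha(-2)\mathbf{1}\equiv_2 -\alpha(-1)\mathbf{1}$, which is in the target set. For $l>0$ I would proceed by induction on $l$. Lemma \ref{eqnC} expresses $\frac{3(4l+1)}{10(l+1)}\alpha(-2)\alpha(-4)^{l+1}\mathbf{1}$ — and the leading coefficient $\frac{3(4l+1)}{10(l+1)}$ is nonzero for all $l\in\mathbb{N}$, so we may solve for $\alpha(-2)\alpha(-4)^{l+1}\mathbf{1}$ — in terms of: (a) $\alpha(-1)\alpha(-4)^{l+1}\mathbf{1}$ and $\alpha(-4)^{l+2}\mathbf{1}$, (b) the products $\alpha(-1)\mathbf{1}*_2\alpha(-1)\alpha(-4)^l\mathbf{1}$, $\alpha(-1)^2\mathbf{1}*_2\alpha(-4)^l\mathbf{1}$, and $\alpha(-1)\mathbf{1}*_2\alpha(-2)\alpha(-4)^l\mathbf{1}$. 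Terms of type (a): $\alpha(-1)\alpha(-4)^{l+1}\mathbf{1}$ is $(i,j)=(1,0)$, hence already in the target set, and $\alpha(-4)^{l+2}\mathbf{1}\equiv_2(-1)^{l+2}(\alpha(-1)\mathbf{1})^{l+2}$ by Lemma \ref{powersofalpha345} (equivalently Lemma \ref{powersalpha1-lemma}), so it is a product of target generators. Terms of type (b): $\alpha(-1)\mathbf{1}$, $\alpha(-1)^2\mathbf{1}$, $\alpha(-1)\alpha(-4)^l\mathbf{1}$, and $\alpha(-4)^l\mathbf{1}$ are all in the target set (using $\alpha(-4)^l\mathbf{1}\equiv_2(-1)^l(\alpha(-1)\mathbf{1})^l$ again), while $\alpha(-2)\alpha(-4)^l\mathbf{1}$ is handled by the inductive hypothesis; since the set of elements generated by the target generators is closed under the associative product $*_n$ of $A_2(V)$, all type-(b) terms are generated by the target set. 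Combining, $\alpha(-2)\alpha(-4)^{l+1}\mathbf{1}$ is generated by the target set, completing the induction.

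There is essentially no obstacle here — the proposition is a bookkeeping corollary of the two lemmas already proved. The only point requiring a word of care is that Lemma \ref{eqnC} is stated as a relation for $\alpha(-2)\alpha(-4)^{l+1}\mathbf{1}$ with parameter $l\in\mathbb{N}$, so the induction is genuinely on $l+1\ge 1$ with base case $l=0$ handled separately by the $L(-1)$-derivative relation; and one should note explicitly that the coefficient $\frac{3(4l+1)}{10(l+1)}$ never vanishes so division is legitimate. I would write the argument in two short paragraphs: one isolating the base case and the reduction of $\alpha(-4)^m\mathbf{1}$ to powers of $\alpha(-1)\mathbf{1}$ via Lemma \ref{powersofalpha345}, and one carrying out the inductive step using Lemma \ref{eqnC}, closing with the remark that the two displayed generating sets coincide because $(i,j)\notin\{(0,0),(1,1),(0,1)\}$ with $0\le i\le 2$, $0\le j\le 1$ forces $(i,j)\in\{(1,0),(2,0),(2,1)\}$.
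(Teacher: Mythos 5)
Your proposal is correct and follows exactly the route the paper takes: the paper derives this proposition by observing that Lemma \ref{eqnC} (whose leading coefficient $\tfrac{3(4l+1)}{10(l+1)}$ is nonzero) gives a recursion expressing $\alpha(-2)\alpha(-4)^{l+1}\mathbf{1}$ in terms of $\alpha(-1)\alpha(-4)^{l'}\mathbf{1}$, powers of $\alpha(-1)\mathbf{1}$ via Lemma \ref{powersofalpha345}, $\alpha(-1)^2\mathbf{1}$, and $\alpha(-2)\alpha(-4)^{l}\mathbf{1}$, with the base case $\alpha(-2)\mathbf{1}\equiv_2-\alpha(-1)\mathbf{1}$ coming from $(L(-1)+L(0))\alpha(-1)\mathbf{1}\in O_2(V)$, and then invokes Proposition \ref{reduce-no11}. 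Your write-up is simply a more explicit version of the same induction.
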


To continue to reduce the set of generators we will need the following lemma.

\begin{lem}\label{eqnD-lemma} 
 For $l \in \mathbb{N}$
\begin{eqnarray}\label{l4times4l}
\lefteqn{ \alpha(-1)\alpha(-4){\bf 1} *_2 \alpha(-4)^l{\bf 1} }\\
&\equiv_2&  2\alpha(-1){\bf 1}*_2\alpha(-1)\alpha(-4)^l{\bf 1}+\frac{2l+1}{l+1}\alpha(-1)\alpha(-4)^{l+1}{\bf 1} \nonumber \\
& & \quad - \, \frac{3l}{l+1}\alpha(-2)\alpha(-4)^{l+1}{\bf 1}-3\alpha(-1){\bf 1}*_2\alpha(-2)\alpha(-4)^l{\bf 1} \nonumber \\
& & \quad - \, \frac{(l+5)}{l+1}\alpha(-4)^{l+2}{\bf 1} \nonumber \\
\label{eqnD}
&\equiv_2& - \frac{2(l-1)}{4l+1}  \alpha(-1){\bf 1}*_2\alpha(-1)\alpha(-4)^l{\bf 1}-\frac{(2l+1)(l-1)}{(l+1)(4l+1)}\alpha(-1)\alpha(-4)^{l+1}{\bf 1} \\
& & \quad + \, \frac{6l^2+3l-5}{(l+1)(4l+1)}\alpha(-4)^{l+2}{\bf 1}-\frac{10l}{4l+1}\alpha(-1)^2{\bf 1}*_2\alpha(-4)^l{\bf 1}\nonumber \\
& & \quad - \, \frac{3}{4l+1}\alpha(-1){\bf 1}*_2\alpha(-2)\alpha(-4)^l{\bf 1}. \nonumber
\end{eqnarray}
\end{lem}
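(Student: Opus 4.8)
The plan is to prove Lemma~\ref{eqnD-lemma} in two stages corresponding to its two displayed equivalences, exactly the way the preceding lemmas were structured. For the first equivalence~\eqref{l4times4l}, I would start from the definition of the $*_2$-multiplication~\eqref{*_n-definition} applied to $u = \alpha(-1)\alpha(-4)\mathbf{1}$ and $v = \alpha(-4)^l\mathbf{1}$, writing
\[
\alpha(-1)\alpha(-4)\mathbf{1} *_2 \alpha(-4)^l\mathbf{1} = \res_x \left( \frac{1}{x^3} - \frac{3}{x^4} + \frac{6}{x^5}\right)(1+x)^{5} Y(\alpha(-1)\alpha(-4)\mathbf{1},x)\alpha(-4)^l\mathbf{1},
\]
since $\mathrm{wt}\,(\alpha(-1)\alpha(-4)\mathbf{1}) = 5$ and $n=2$. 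I would expand $Y(\alpha(-1)\alpha(-4)\mathbf{1},x)$ using the normal-ordered product structure (as was done for $\alpha(-1)^2\mathbf{1}$ in the proof of Lemma~\ref{eqnC}), extract the relevant residues, and then reduce the resulting modes using the recursion~\eqref{recursion-level-2} to eliminate modes $\alpha(-m)$ for $m \geq 6$, followed by repeated applications of~\eqref{reduce-equation-Heisenberg-level2} and the catalog of relations in Lemma~\ref{helprel} (especially~\eqref{45=44}, \eqref{34=44}, \eqref{3^24}, \eqref{345}, \eqref{145}, \eqref{234}, \eqref{224}) together with Lemma~\ref{powersofalpha345} to rewrite everything in terms of the target generators. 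Collecting like terms should yield~\eqref{l4times4l}.

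For the second equivalence~\eqref{eqnD}, the cleanest route is to combine~\eqref{l4times4l} with the expression for $\alpha(-1)^2\mathbf{1} *_2 \alpha(-4)^l\mathbf{1}$ that was derived inside the proof of Lemma~\ref{eqnC}, namely
\[
\alpha(-1)^2\mathbf{1}*_2\alpha(-4)^l\mathbf{1} \equiv_2 \tfrac{92l+63}{10(l+1)}\alpha(-2)\alpha(-4)^{l+1}\mathbf{1} - \tfrac{3(10l+7)}{2(l+1)}\alpha(-1)\alpha(-4)^{l+1}\mathbf{1} - \cdots,
\]
which expresses $\alpha(-2)\alpha(-4)^{l+1}\mathbf{1}$ (equivalently $\alpha(-1)\alpha(-2)\alpha(-4)^l\mathbf{1}$, via~\eqref{red124to1424}) in terms of the $*_2$-products and powers of $\alpha(-1)\mathbf{1}$. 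Substituting this and Lemma~\ref{eqnC} into~\eqref{l4times4l} to eliminate the $\alpha(-2)\alpha(-4)^{l+1}\mathbf{1}$ term, then simplifying by combining like terms, should produce~\eqref{eqnD}. One must be slightly careful: the coefficients in~\eqref{eqnD} have denominator $4l+1$, which suggests that the substitution involves solving a linear equation where the $\alpha(-2)\alpha(-4)^{l+1}\mathbf{1}$ coefficients on both sides must be gathered, and the resulting coefficient of the remaining unknown is a multiple of $\tfrac{4l+1}{l+1}$ — so the step is really ``move all $\alpha(-2)\alpha(-4)^{l+1}\mathbf{1}$ terms to one side and divide.''

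The main obstacle will be purely computational bookkeeping: the vertex operator $Y(\alpha(-1)\alpha(-4)\mathbf{1},x)$ produces many terms (products of a degree-one mode with a degree-four mode, each shifted), and after taking the three residues one obtains a long list of monomials $\alpha(-p)\alpha(-q)\alpha(-4)^{l-1}\mathbf{1}$ and $\alpha(-p)\alpha(-q)\alpha(-4)^l\mathbf{1}$ with $p,q$ ranging over a moderate set, many of which involve $\alpha(-m)$ with $m$ up to roughly $10$. Taming these via~\eqref{recursion-level-2} and then running them through the reduction relations is lengthy and error-prone; the coefficient arithmetic (rational functions of $l$) must be tracked exactly. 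I expect no conceptual difficulty — every tool needed is already in hand (the multiplication formula, the recursion, Lemma~\ref{helprel}, Lemma~\ref{powersofalpha345}, and the intermediate computation from Lemma~\ref{eqnC}) — but the proof will read as a sequence of substitute-and-simplify steps, and the honest write-up would either push the heavy residue computation to the Appendix (as the paper does for Lemma~\ref{lower-order-lemY}) or present it in compressed form with ``simplifying by combining like terms'' doing the final work, exactly as in the proofs of Lemmas~\ref{eqnB} and~\ref{eqnC}.
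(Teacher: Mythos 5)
Your overall strategy matches the paper's proof: the first equivalence \eqref{l4times4l} is obtained by expanding the $*_2$-product via \eqref{*_n-definition} as a residue, reducing the resulting deep modes with the recursion \eqref{recursion-level-2}, and then running everything through Lemma \ref{helprel} (specifically \eqref{145}--\eqref{134}) and Lemma \ref{powersofalpha345}; the second equivalence \eqref{eqnD} is obtained exactly as you guess, by solving \eqref{eqnLemC} for $\alpha(-2)\alpha(-4)^{l+1}{\bf 1}$ (whose coefficient there is $\tfrac{3(4l+1)}{10(l+1)}$, which is where the $4l+1$ denominators come from) and substituting into \eqref{l4times4l}. Your slightly roundabout detour through the intermediate formula for $\alpha(-1)^2{\bf 1}*_2\alpha(-4)^l{\bf 1}$ and \eqref{red124to1424} is unnecessary: the statement of Lemma \ref{eqnC} already carries the term $\alpha(-1)^2{\bf 1}*_2\alpha(-4)^l{\bf 1}$ on its right-hand side, so a single substitution of \eqref{eqnLemC} suffices.

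One concrete slip must be fixed before the computation can produce the stated coefficients: in the residue formula you wrote the factor $(1+x)^5$, but the definition \eqref{*_n-definition} requires $(1+x)^{\mathrm{wt}\,u + n}$, and since $\mathrm{wt}\,(\alpha(-1)\alpha(-4){\bf 1}) = 5$ and $n=2$ the correct exponent is $7$. Carried through as written, every coefficient in \eqref{l4times4l} would come out wrong, so this is not merely cosmetic, though it is an arithmetic rather than a conceptual error.
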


\begin{proof} By the formula for multiplication in $A_2(V)$,  Eqn.\  (\ref{*_n-definition}), the recursion \eqref{recursion-level-2}, simplifying, and then applying Lemma  \ref{powersofalpha345},  we have 
\begin{eqnarray*}
\lefteqn{\alpha(-1)\alpha(-4){\bf 1}*_2\alpha(-4)^l{\bf 1} } \\
&=&   \res_x  \left( \frac{1}{x^3}  -  \frac{3}{x^4} +  \frac{6}{x^5} \right) (1+x)^7 \nord \Biggl( \sum_{j \in \mathbb{Z}} \alpha(j) x^{-j-1}\Biggr)\Biggl( \sum_{k \in \mathbb{Z}}  \frac{1}{3!}\left( \frac{\partial}{\partial x}\right)^3 \alpha(k) x^{-k-1}\Biggr)  \nord \\
& & \quad \cdot \alpha(-4)^l\mathbf{1}   \\
&=& - \frac{1}{6} \res_x \sum_{j,k \in \mathbb{Z}} \big( 6x^{-5} + 39 x^{-4} + 106 x^{-3} + 154 x^{-2} + 126 x^{-1} + 56 + 14x + 6x^2 + 4x^3 \\
& & \quad +  \, x^4 \big) x^{-j-k-5}  (k+1)(k+2) (k+3) \nord \alpha(j) \alpha(k) \nord \alpha(-4)^l {\bf 1} \\
&=& \Big(210\alpha(-1)\alpha(-8) + 120\alpha(-2)\alpha(-7) + 60\alpha(-3)\alpha(-6) + 30\alpha(-4)\alpha(-5)\\
&&\quad + \, 780\alpha(-1)\alpha(-7) + 390\alpha(-2)\alpha(-6) + 156\alpha(-3)\alpha(-5) +  39\alpha(-4)^2\\
&& \quad  + \, 1060\alpha(-1)\alpha(-6) + 424\alpha(-2)\alpha(-5) + 106\alpha(-3)\alpha(-4) + 616\alpha(-1)\alpha(-5)\\
&& \quad + \, 154\alpha(-2)\alpha(-4) + 126\alpha(-1)\alpha(-4) \Big)\alpha(-4)^l{\bf 1}\\
&& \quad  + \,  \Big(1110\alpha(-13) + 5070\alpha(-12) + 9010\alpha(-11) + 7546\alpha(-10) + 2646\alpha(-9)\\
&&\quad - \, 210\alpha(-7) - 150\alpha(-6)- 1 24\alpha(-5) - 34\alpha(-4)\Big)\alpha(4)\alpha(-4)^l{\bf 1}\\
& \sim_2 & \Big( 20 \alpha(-1) \alpha(-3) + 36 \alpha(-1) \alpha(-4) + 16 \alpha(-1)  \alpha(-5)  - 30 \alpha(-2) \alpha(-3)\\
& & \quad  - \, 56 \alpha(-2) \alpha(-4)  -  26 \alpha(-2) \alpha(-5)  -60 \alpha(-3)^2  - 74 \alpha(-3) \alpha(-4) \\
& & \quad - \,   24\alpha(-3) \alpha(-5)   +  30\alpha(-4)\alpha(-5)  +  39\alpha(-4)^2    \Big)\alpha(-4)^l{\bf 1} \\
&\equiv_2 &  20 \alpha(-1) \alpha(-3) \alpha(-4)^l{\bf 1}+ 36 \alpha(-1)  \alpha(-4)^{l+1} {\bf 1}+ 16 \alpha(-1)   \alpha(-4)^l \alpha(-5) {\bf 1} \\
& & \quad - \,  30 \alpha(-2) \alpha(-3) \alpha(-4)^l{\bf 1}  -  56 \alpha(-2) \alpha(-4)^{l+1}{\bf 1}  -  26 \alpha(-2) \alpha(-4)^l  \alpha(-5) {\bf 1}\\
& & \quad  - \,  \alpha(-4)^{l+2} {\bf 1} .
\end{eqnarray*} 

%

Applying Eqns.\ \eqref{145}--\eqref{134} and simplifying proves Eqn.\  (\ref{l4times4l}).

Solving for $\alpha(-2)\alpha(-4)^{l+1}{\bf 1}$ in  \eqref{eqnLemC} and plugging this expression into \eqref{l4times4l}, gives Eqn.\ (\ref{eqnD}). 
\end{proof}

Using Lemma \ref{eqnD-lemma} in conjunction with Lemma \ref{eqnC} and Proposition \ref{reduce-mislabeled}, we have the following further reduction of generators:

\begin{prop}\label{reduce-no01-first}
$A_2(V)$ is generated by elements of the form $v + O_2(V)$ for $v$ in the set
\begin{multline*} 
\{\alpha(-1){\bf 1}, \ \alpha(-1)^2{\bf 1}, \ \alpha(-1)\alpha(-4){\bf 1}, \ \alpha(-1) \alpha(-4)^2 {\bf 1}, \ \alpha(-1)^2\alpha(-4)^l{\bf 1}, \\ 
\alpha(-1)^2\alpha(-2)\alpha(-4)^{l}{\bf 1} \, | \, l\in \mathbb{N}\} . 
\end{multline*}
\end{prop}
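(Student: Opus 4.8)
The plan is to start from Proposition~\ref{reduce-mislabeled}, which already reduces the generating problem to the three one-parameter families $\alpha(-1)\alpha(-4)^l{\mathbf 1}$, $\alpha(-1)^2\alpha(-4)^l{\mathbf 1}$, $\alpha(-1)^2\alpha(-2)\alpha(-4)^l{\mathbf 1}$, $l\in\mathbb{N}$. The last two of these families are retained verbatim in the set $S$ appearing in the statement, so the entire content of the proof is to show that $\alpha(-1)\alpha(-4)^l{\mathbf 1}+O_2(V)$ for $l\geq 3$ lies in the subalgebra of $A_2(V)$ generated by $S$. Because the reduction identities for these vectors, namely Eqns.~\eqref{eqnD} and \eqref{eqnLemC}, unavoidably involve the vectors $\alpha(-2)\alpha(-4)^l{\mathbf 1}$ as well, I would carry out a single simultaneous strong induction on $l$, proving the statement: \emph{both $\alpha(-1)\alpha(-4)^l{\mathbf 1}+O_2(V)$ and $\alpha(-2)\alpha(-4)^l{\mathbf 1}+O_2(V)$ lie in the subalgebra of $A_2(V)$ generated by $S$.}

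For the base cases $l=0,1,2$: one has $\alpha(-1)\alpha(-4)^l{\mathbf 1}\in S$ for these $l$, and $\alpha(-2){\mathbf 1}\equiv_2-\alpha(-1){\mathbf 1}$ since $(L(-1)+L(0))\alpha(-1){\mathbf 1}\in O_2(V)$; for $\alpha(-2)\alpha(-4){\mathbf 1}$ and $\alpha(-2)\alpha(-4)^2{\mathbf 1}$ I would apply Eqn.~\eqref{eqnLemC} with parameter $0$ and $1$ respectively, using Lemma~\ref{powersalpha1-lemma} to replace each pure power $\alpha(-4)^m{\mathbf 1}$ by $(-1)^m(\alpha(-1){\mathbf 1})^m$ and observing that the remaining $*_2$-products on the right-hand side have all factors either in $S$ or equal to lower-index vectors $\alpha(-2)\alpha(-4)^{l'}{\mathbf 1}$ already controlled.

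For the inductive step, fix $l\geq 3$ and assume the claim for all smaller indices. First apply Eqn.~\eqref{eqnD} with its parameter replaced by $l-1$; the coefficient of $\alpha(-1)\alpha(-4)^{l}{\mathbf 1}$ is then $-\tfrac{(2l-1)(l-2)}{l(4l-3)}$, which is nonzero since $l\geq 3$, so one may solve for $\alpha(-1)\alpha(-4)^{l}{\mathbf 1}$. The resulting expression is a $\mathbb{C}$-linear combination of $\alpha(-1)\alpha(-4){\mathbf 1}*_2\alpha(-4)^{l-1}{\mathbf 1}$, $\alpha(-1){\mathbf 1}*_2\alpha(-1)\alpha(-4)^{l-1}{\mathbf 1}$, $\alpha(-1)^2{\mathbf 1}*_2\alpha(-4)^{l-1}{\mathbf 1}$, $\alpha(-1){\mathbf 1}*_2\alpha(-2)\alpha(-4)^{l-1}{\mathbf 1}$, and $\alpha(-4)^{l+1}{\mathbf 1}$; by Lemma~\ref{powersalpha1-lemma} and the inductive hypothesis every factor occurring here lies in the subalgebra generated by $S$, and since that subalgebra is closed under $*_2$, so does $\alpha(-1)\alpha(-4)^{l}{\mathbf 1}$. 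Next apply Eqn.~\eqref{eqnLemC} with parameter $l-1$; its coefficient of $\alpha(-2)\alpha(-4)^{l}{\mathbf 1}$ is $\tfrac{3(4l-3)}{10l}\neq 0$, and its right-hand side now involves only $\alpha(-1)\alpha(-4)^{l}{\mathbf 1}$ (just handled), $\alpha(-1)\alpha(-4)^{l-1}{\mathbf 1}$ and $\alpha(-2)\alpha(-4)^{l-1}{\mathbf 1}$ (inductive hypothesis), powers of $\alpha(-1){\mathbf 1}$, and $\alpha(-1)^2{\mathbf 1}$, so $\alpha(-2)\alpha(-4)^{l}{\mathbf 1}$ is generated by $S$ as well. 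This closes the induction, and combining it with Proposition~\ref{reduce-mislabeled} yields the proposition.

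The main obstacle here is the interdependence of the two recursions rather than any new computation: Eqn.~\eqref{eqnLemC} expresses $\alpha(-2)\alpha(-4)^{l}{\mathbf 1}$ through $\alpha(-1)\alpha(-4)^{l}{\mathbf 1}$ at the \emph{same} index, while Eqn.~\eqref{eqnD} feeds $\alpha(-2)\alpha(-4)^{l-1}{\mathbf 1}$ back into the expression for $\alpha(-1)\alpha(-4)^{l}{\mathbf 1}$, so one must order the two steps correctly inside the inductive step and verify that the system genuinely ``triangularizes''. The delicate point is that the coefficient of $\alpha(-1)\alpha(-4)^{l}{\mathbf 1}$ in \eqref{eqnD} with parameter $l-1$ vanishes exactly at $l=2$, which is precisely why $\alpha(-1)\alpha(-4)^2{\mathbf 1}$ (and not $\alpha(-1)\alpha(-4)^3{\mathbf 1}$) must be kept in $S$; checking that all denominators that occur are nonzero in the relevant ranges is the only real care required.
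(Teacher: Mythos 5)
Your proof is correct and follows essentially the same route as the paper's: both solve Eqn.~\eqref{eqnD} for the top term $\alpha(-1)\alpha(-4)^{l}{\bf 1}$ (using that its coefficient vanishes exactly when the exponent is $2$, which is why $\alpha(-1)\alpha(-4)^2{\bf 1}$ stays in the generating set) and control the resulting $\alpha(-2)\alpha(-4)^{l'}{\bf 1}$ terms via Lemma~\ref{eqnC} together with Lemma~\ref{powersalpha1-lemma}. The only difference is organizational --- you fold the $\alpha(-2)\alpha(-4)^{l}{\bf 1}$ family into a simultaneous strong induction, whereas the paper applies Lemma~\ref{eqnC} repeatedly to unwind those terms down to $\alpha(-2){\bf 1}\equiv_2-\alpha(-1){\bf 1}$ --- which makes the triangular structure of the two recursions somewhat more explicit but is not a different argument.
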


\begin{proof} 
By Proposition \ref{reduce-mislabeled}, we need only show that elements of the form $\alpha(-1) \alpha(-4)^{l+1} {\bf 1}$ for  $l >1$ are generated modulo $O_2(V)$ by elements in the set specified, which for the purposes of this proof, we call $S$.    We do this by first solving for $\alpha(-1)\alpha(-4)^{l+1}{\bf 1}$ in \eqref{eqnD} (which has a nonzero coefficient if $l>1$) and then using Lemma \ref{powersalpha1-lemma}  to show $\alpha(-1) \alpha(-4)^{l+1} {\bf 1}$ is generated by terms in $S$, terms of the form $\alpha(-1) \alpha(-4)^{l'} {\bf 1}$ with $l'<l+1$, and terms of the form $\alpha(-2) \alpha(-4)^l {\bf 1}$.   Applying   Lemma \ref{eqnC} repeatedly, we  obtain an expression for $\alpha(-1)\alpha(-4)^{l+1}{\bf 1}$ where all terms involve only $\alpha(-1)\alpha(-4)^{l'}{\bf 1}$ with $l'<l+1$, $\alpha(-4)^k{\bf 1}=(-1)^k(\alpha(-1){\bf 1})^k$, $\alpha(-1)^2{\bf 1}$, and $\alpha(-2){\bf 1}\equiv_2-\alpha(-1){\bf 1}$.
The result follows by induction on $l$.
\end{proof} 

To decrease the amount of calculations needed to reduce the generators further, we note that in the calculations that lead to the set of generators given in Proposition \ref{reduce-no01-first}, it is helpful to deduce the following corollary based on our previous calculations: 

\begin{cor}\label{termsR1}
Any element  in $A_2(M_a(1))$ of the form
\[ \alpha(-1)^{i_1}\alpha(-2)^{i_2}\cdots \alpha(-t)^{i_t}{\bf 1}+O_2(M_a(1)), \] 
where $i_1\le 1$,  $t \in \mathbb{Z}_+$, and $i_2,\dots, i_t\in \mathbb{N}$, is generated in $A_2(M_a(1))$ by the set
\[ R =\{v + O_2(M_a(1)) \in A_2(M_a(1)) \; | \; v =  \alpha(-1) {\bf 1}, \, \alpha(-1)^2 {\bf 1}, \,  \alpha(-1) \alpha(-4) {\bf 1}, \,  \alpha(-1)\alpha(-4)^2{\bf 1} \}.\]
\end{cor}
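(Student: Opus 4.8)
The plan is to reduce an arbitrary monomial $v = \alpha(-1)^{i_1}\alpha(-2)^{i_2}\cdots\alpha(-t)^{i_t}{\bf 1}$ with $i_1 \le 1$ to the four-element set $R$ by combining the generator reductions already established. First I would invoke Proposition \ref{reduce-mislabeled} (or rather the chain of reductions culminating in Proposition \ref{reduce-no01-first}): since $i_1 \le 1$, the hypothesis $(i,j)\ne(0,0),(1,1),(0,1)$ that distinguishes the troublesome cases never applies with $i=2$, so the only surviving generators relevant to an $i_1\le1$ monomial are those with $(i,j)=(1,0)$, i.e.\ $\alpha(-1)\alpha(-4)^l{\bf 1}$, together with $\alpha(-4)^l{\bf 1}$-type terms (which by Lemma \ref{powersofalpha345} are $(-1)^l(\alpha(-1){\bf 1})^l$, hence powers of $\alpha(-1){\bf 1}$, hence in $\langle R\rangle$). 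The point is that the reduction machinery, when started on a monomial with $i_1\le 1$, never creates terms with $i_1>1$ — each of Eqns.\ \eqref{reducing-2}, \eqref{redpowersofalpha3}, \eqref{red-301b}, and the lemmas of Section~5.4 either preserves or lowers the power of $\alpha(-1)$ appearing (this is exactly the ``$i'\le i$'' clause recorded throughout those propositions). So the whole reduction stays inside the $i_1\le1$ world, and lands in the span of $\{\alpha(-1)\alpha(-4)^l{\bf 1}\}_{l\in\mathbb N}$ together with powers of $\alpha(-1){\bf 1}$.

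Next I would handle the family $\alpha(-1)\alpha(-4)^l{\bf 1}$ for $l\ge 0$. For $l=0,1,2$ these are literally the elements $\alpha(-1){\bf 1}$ (up to using $\alpha(-2){\bf 1}\equiv_2-\alpha(-1){\bf 1}$ and $\alpha(-4){\bf 1}\equiv_2-\alpha(-1){\bf 1}$ from Lemma \ref{powersalpha1-lemma}, or directly), $\alpha(-1)\alpha(-4){\bf 1}$, and $\alpha(-1)\alpha(-4)^2{\bf 1}$ — all in $R$. For $l\ge 3$, I would use Lemma \ref{eqnD-lemma}, Eqn.\ \eqref{eqnD}: solving that relation for $\alpha(-1)\alpha(-4)^{l+1}{\bf 1}$ (whose coefficient $-\frac{(2l+1)(l-1)}{(l+1)(4l+1)}$ is nonzero once $l>1$) expresses it in terms of $\alpha(-1){\bf 1}*_2\alpha(-1)\alpha(-4)^l{\bf 1}$, $\alpha(-4)^{l+2}{\bf 1}$, $\alpha(-1)^2{\bf 1}*_2\alpha(-4)^l{\bf 1}$, and $\alpha(-1){\bf 1}*_2\alpha(-2)\alpha(-4)^l{\bf 1}$. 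The first of these is $\alpha(-1){\bf 1}$ times a lower member of the same family; $\alpha(-4)^{l+2}{\bf 1}$ and $\alpha(-1)^2{\bf 1}*_2\alpha(-4)^l{\bf 1}$ are, by Lemma \ref{powersofalpha345} and Lemma \ref{powersalpha1-lemma}, polynomials in $\alpha(-1){\bf 1}$ hence products of elements of $R$; and $\alpha(-2)\alpha(-4)^l{\bf 1}$ is eliminated via Lemma \ref{eqnC}, Eqn.\ \eqref{eqnLemC}, which rewrites it using $\alpha(-1)\alpha(-4)^{l'}{\bf 1}$ with $l'\le l$, powers of $\alpha(-1){\bf 1}$, and $\alpha(-1)^2{\bf 1}$ — again all in $\langle R\rangle$ by induction. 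So induction on $l$ closes the family $\alpha(-1)\alpha(-4)^l{\bf 1}$ inside the subalgebra generated by $R$.

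Finally I would assemble the two pieces: the reduction machinery sends a general $i_1\le1$ monomial to a $\C$-linear combination of $\ast_2$-products of elements from $\{\alpha(-1)\alpha(-4)^l{\bf 1}\}_{l\ge0}\cup\{(\alpha(-1){\bf 1})^k\}_{k\ge0}$ (the $f_{r}({\bf 1})$ ``lower-order'' terms appearing in those propositions are themselves lower-total-degree monomials handled by an outer induction on degree, and the ones with $i_1\le1$ fall under the same argument while those with $i_1>1$ do not arise, as noted), and each such building block lies in the subalgebra of $A_2(M_a(1))$ generated by $R$. The main obstacle is bookkeeping rather than any new idea: one must check carefully that every reduction step invoked (Eqns.\ \eqref{reducing-2}, \eqref{redpowersofalpha3}, \eqref{red-301b}, \eqref{red124to1424}, \eqref{eqnLemC}, \eqref{l4times4l}, \eqref{eqnD}, and Lemmas \ref{helprel}, \ref{powersofalpha345}, \ref{powersalpha1-lemma}) never introduces a factor of $\alpha(-1)$ to a power exceeding the input power, so that starting from $i_1\le1$ one indeed stays within the four elements of $R$ and never needs $\alpha(-1)^2\alpha(-4)^l{\bf 1}$ or $\alpha(-1)^2\alpha(-2)\alpha(-4)^l{\bf 1}$; this is visible in the explicit ``$i'<i$'' or ``$i'\le i$'' clauses attached to each $w$, $w'$, $\tilde w$ term throughout Section~5, and tracing it through is the only real work.
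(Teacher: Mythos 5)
Your proposal is correct and follows essentially the same route as the paper's proof: the paper likewise traces the reduction chain (the recursion, Eqn.\ \eqref{reduce-equation-Heisenberg-level2}, Propositions \ref{generators-prop}, \ref{reduce-301}, \ref{reduce-300}, and Lemmas \ref{helprel}, \ref{powersofalpha345}, \ref{powersalpha1-lemma}, \ref{eqnB}, \ref{eqnC}, and the proof of Proposition \ref{reduce-no01-first}) and verifies at each step that the exponent of $\alpha(-1)$ never increases, so that terms of the form $\alpha(-1)^2\alpha(-4)^l{\bf 1}$ for $l>0$ or $\alpha(-1)^2\alpha(-2)\alpha(-4)^l{\bf 1}$ never arise. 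The bookkeeping you flag as "the only real work" is precisely what the paper's proof carries out.
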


\begin{proof}  We trace the steps up to the proof of Proposition \ref{reduce-no01-first} to verify that reducing elements of the form in question to those in the set of generators given in Proposition \ref{reduce-no01-first} never result in terms of the form $\alpha(-1)^2 \alpha(-4)^l {\bf 1}$ for $l>0$ or $\alpha(-1)^2 \alpha(-2) \alpha(-4)^l {\bf 1}$ for $l \in \mathbb{N}$.  That is, never result in an increase in the exponent of the $\alpha(-1)$ term in the expression modulo $O_2(V)$, unless it involves just $\alpha(-1)^2 {\bf 1}$, and thus are generated by elements in the set $R$.

By the recursion, Eqn.\ (\ref{recursion-level-2}), any element of the form given in the Corollary can be 
written as $\alpha(-1)^{i_1}\alpha(-2)^{i_2}\cdots \alpha(-5)^{i_5}{\bf 1}+O_2(M_a(1))$,
where $i_1\le 1$ and $ i_2,\cdots, i_5\in \mathbb{N}$, since the recursion does not introduce any $\alpha(-1)$ terms.  

Then in using Eqn.\ (\ref{reduce-equation-Heisenberg-level2}) to reduce to elements of the form $\alpha(-1)^{j_1}\alpha(-2)^{j_2}\alpha(-3)^{j_3} \alpha(-4)^{j_4}{\bf 1}$ modulo $O_2(M_a(1))$, we note that this again does not introduce any $\alpha(-1)$ terms, and thus the elements in question can be generated by elements of this form with $j_1\le 1$ and $j_2,j_3, j_4\in \mathbb{N}$.

Then, in proving Proposition \ref{generators-prop}, which reduces the $j_2$ exponent of $\alpha(-2)$ in these terms of the form $\alpha(-1)^{j_1}\alpha(-2)^{j_2}\alpha(-3)^{j_3} \alpha(-4)^{j_4}{\bf 1}+O_2(M_a(1))$, we note that the exponent $j_1$ of $\alpha(-1)$ in all expressions is only equal to or less than that in previous expressions.  Thus the terms in question are all generated by elements of the form $\alpha(-1)^{j_1}\alpha(-2)^{j_2}\alpha(-3)^{j_3} \alpha(-4)^{j_4}{\bf 1}+O_2(M_a(1))$ where $j_1, j_2 \le 1$, and $j_3, j_4 \in \mathbb{N}$.

Following the proof of Proposition \ref{reduce-301}, we have that cases (1) and (3), which handles terms that have a nonzero exponent for the $\alpha(-2)$ term, reduce  via Eqn.\ (\ref{redpowersofalpha3})  to elements in the generating set given in Proposition \ref{reduce-301} without introducing any $\alpha(-1)$ terms.  

Following the proof of Proposition \ref{reduce-301}, for case (2) which handles terms that have an $\alpha(-1)$ but no $\alpha(-2)$, gives a reduction to terms of the form $\alpha(-1) \alpha(-3)^{j_3} \alpha(-4)^{j_4}{\bf 1} + O_2(M_a(1))$ for $j_3 = 0$ or $1$ and $j_4 \in \mathbb{N}$.   
And then terms with no $\alpha(-1)$ or $\alpha(-2)$, i.e., of the form $\alpha(-3)^{j_3} \alpha(-4)^{j_4} {\bf 1} + O_2(M_a(1))$ are generated by elements in $R$ by Lemma \ref{powersofalpha345}.

We are left to show that elements of the form $\alpha(-1)^{j_1} \alpha(-2)^{j_2} \alpha(-3)^{j_3} 
\alpha(-4)^{j_4} {\bf 1}$ modulo $O_2(M_a(1))$, with $j_1, j_2, j_3 \leq 1$, $(j_1, j_2) \neq (0,0)$ and $j_4 \in \mathbb{N}$ are generated by $R$.  

Moving to Proposition \ref{reduce-300} and the proof of this, we see that via Eqn.\ (\ref{red-301b}) and Cases (1) and (2),  terms of these forms with an $\alpha(-3)$ term can be written as $\alpha(-1)^{j_1} \alpha(-2)^{j_2}\alpha(-4)^{j_4} {\bf 1}$ with $j_1, j_2 \leq 1$, $(j_1, j_2) \neq (0,0)$, $j_4 \in \mathbb{N}$ and $\alpha(-2)^2 \alpha(-4)^{j_4} {\bf 1}$.  But this last term can be written in terms of elements generated by $R$ or of the form $\alpha(-2) \alpha(-4)^i {\bf 1}$ via Lemmas \ref{helprel} and \ref{powersalpha1-lemma}.

We are left to show that elements of the form $\alpha(-1)^{j_1} \alpha(-2)^{j_2} 
\alpha(-4)^{j_4} {\bf 1} + O_2(M_a(1))$ with $j_1, j_2 \leq 1$, $(j_1, j_2) \neq (0,0)$ and $j_4 \in \mathbb{N}$ are generated by $R$.  That $\alpha(-1) \alpha(-2) \alpha(-4)^l {\bf 1}$ terms are generated by elements in $R$ as well as elements of the form $\alpha(-1) \alpha(-4)^j {\bf 1}$ and 
$\alpha(-2) \alpha(-4)^j {\bf 1}$ for $j>0$ follows from Lemmas \ref{eqnB} and \ref{powersalpha1-lemma}. 

That terms involving $\alpha(-2) \alpha(-4)^j {\bf 1}$ for $j>0$ are generated by elements in $R$ and elements of the form $\alpha(-1) \alpha(-4)^j {\bf 1}, j>0$ follows from Lemmas \ref{eqnC} and \ref{powersalpha1-lemma}. 

Finally, that elements of the form $\alpha(-1)\alpha(-4)^l{\bf 1}$,  for $ l>0$ are generated by the set $R$ follows from the proof of Proposition \ref{reduce-no01-first}.
\end{proof}

The usefulness of the corollary above is that it will allow us to avoid writing down explicit formulas for many of the terms in our later calculations, and instead only write the terms up to elements generated by the set $R$.

For the remainder of this section, we denote by $\mathcal{R}$  the set of elements that 
are linear combinations of terms of the form $\alpha(-1)^{i_1}\alpha(-2)^{i_2}\cdots \alpha(-t)^{i_t}   {\bf 1} + O_2(M_a(1))$, where $t \in \mathbb{Z}_+$,  $i_2, \dots, i_t \in \mathbb{N}$, and $i_1\le 1$. By Corollary  
\ref{termsR1}, such terms are generated in $A_2(V)$ by the set  $R$.  Avoiding writing down
 explicit formulas for the  $\mathcal{R}$ terms greatly simplifies the calculations required
 in further reducing our set of generators.

We use this to prove the following two lemmas.  The first lemma will allow us to express the term $\alpha(-1)^2 \alpha(-4)^2 {\bf 1}$ in terms of other generators and elements in $\mathcal{R}$, and thus help in the further reduction of generators from those given in Proposition \ref{reduce-no01-first} to those given in Proposition \ref{reduce-no01}, below.

\begin{lem}\label{reduce-one^2-four^2}  
\[
\alpha(-1)^2{\bf 1}*_2\alpha(-1)\alpha(-4){\bf 1} \equiv_2  -\frac{1}{2}\alpha(-1)^2\alpha(-4)^2{\bf 1} -\alpha(-1){\bf 1}*_2\alpha(-1)^2\alpha(-4){\bf 1} + r
\]
for some $r \in \mathcal{R}$.
\end{lem}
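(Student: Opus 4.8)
The plan is to compute the product $\alpha(-1)^2{\bf 1}*_2\alpha(-1)\alpha(-4){\bf 1}$ directly from the definition of $*_2$ in Eqn.\ (\ref{*_n-definition}), reducing everything modulo $O_2(M_a(1))$ and absorbing all terms of the form $\alpha(-1)^{i_1}\alpha(-2)^{i_2}\cdots{\bf 1}$ with $i_1\le 1$ into an $\mathcal{R}$-term, which by Corollary \ref{termsR1} is harmless. First I would write
\[
\alpha(-1)^2{\bf 1}*_2\alpha(-1)\alpha(-4){\bf 1} = \res_x \left(\tfrac{1}{x^3} - \tfrac{3}{x^4} + \tfrac{6}{x^5}\right)(1+x)^4 Y(\alpha(-1)^2{\bf 1},x)\,\alpha(-1)\alpha(-4){\bf 1},
\]
using $\mathrm{wt}\,\alpha(-1)^2{\bf 1} = 2$ and $n=2$, so the exponent is $\mathrm{wt}\,u + n = 4$ and the three terms $m=0,1,2$ in (\ref{*_n-definition}) give the coefficients $1, -3, 6$. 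Expanding $Y(\alpha(-1)^2{\bf 1},x) = {}^{\circ}_{\circ}Y(\alpha(-1){\bf 1},x)^2{}^{\circ}_{\circ}$ into singular and regular parts as in the computations of Lemma \ref{eqnC} (cf.\ the computation of $\alpha(-1)^2{\bf 1}*_2\alpha(-4)^l{\bf 1}$ there with $l$ replaced appropriately), I would extract the residue, then use the recursion Eqn.\ (\ref{recursion-level-2}) to eliminate all modes $\alpha(-m)$ with $m\ge 6$ in favor of $\alpha(-3),\alpha(-4),\alpha(-5)$.

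Next I would apply the reduction relations already established: Eqn.\ (\ref{reduce-equation-Heisenberg-level2}) to push $\alpha(-5)$'s down, and Lemmas \ref{helprel}, \ref{powersalpha1-lemma}, \ref{powersofalpha345} to rewrite pure $\alpha(-3),\alpha(-4),\alpha(-5)$ monomials and mixed monomials like $\alpha(-3)\alpha(-4)^l$, $\alpha(-4)^l\alpha(-5)$ in normal form. The target monomials that survive with $\alpha(-1)$-exponent two are $\alpha(-1)^2\alpha(-4)^2{\bf 1}$ (and possibly $\alpha(-1)^2\alpha(-2)\alpha(-4){\bf 1}$ or $\alpha(-1)^2\alpha(-3)\alpha(-4){\bf 1}$, which I would further reduce using (\ref{reduce-equation-Heisenberg-level2}) and (\ref{O21})); everything with $\alpha(-1)$-exponent $\le 1$ goes into $r\in\mathcal{R}$. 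In parallel I would compute $\alpha(-1){\bf 1}*_2\alpha(-1)^2\alpha(-4){\bf 1}$ the same way — here $\mathrm{wt}\,\alpha(-1){\bf 1}=1$ so the exponent is $1+2=3$ and one uses Eqn.\ (\ref{multiplication-one-one}), giving $\alpha(-1){\bf 1}*_2 v = 10\alpha(-3)v + 15\alpha(-4)v + 6\alpha(-5)v$ with $v = \alpha(-1)^2\alpha(-4){\bf 1}$ — so this product also produces a multiple of $\alpha(-1)^2\alpha(-4)^2{\bf 1}$ (after reducing the $\alpha(-3),\alpha(-5)$ terms via (\ref{reduce-equation-Heisenberg-level2})) plus $\mathcal{R}$-terms. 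Comparing the coefficients of $\alpha(-1)^2\alpha(-4)^2{\bf 1}$ in the two products should yield the stated linear relation, with the coefficients $-\tfrac12$ and $-1$.

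The main obstacle I anticipate is bookkeeping: correctly tracking the combinatorial coefficients coming from the normal-ordered square $Y(\alpha(-1)^2{\bf 1},x)$ acting on $\alpha(-1)\alpha(-4){\bf 1}$ — in particular the contributions where a nonnegative mode in the regular part contracts against $\alpha(-1)$ or $\alpha(-4)$ (via the Heisenberg bracket $[\alpha(m),\alpha(-k)] = m\delta_{m-k,0}$), which lower the number of modes and must be carefully separated from the terms that stay at mode-count four. As in the proof of Lemma \ref{eqnC}, the singular parts $Y^-$ contribute nothing to the surviving terms because of Eqns.\ (\ref{Y-}) and (\ref{Y-2}), so one only needs $Y^+(\alpha(-1){\bf 1},x)$ from Lemma \ref{Y+-lemma}; but one must still be vigilant that the $\alpha(0)$ mode (which annihilates everything in $M_a(1)$, since $M_a(1)=M_a(1,0)$) is discarded and that the weight-shift in $\mathrm{wt}\,\alpha(-1)\alpha(-4){\bf 1} = 5$ enters the coefficients in (\ref{reduce-equation-Heisenberg-level2}) correctly. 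Once the coefficient of $\alpha(-1)^2\alpha(-4)^2{\bf 1}$ is pinned down in each product, the rest is routine: collect all lower-$\alpha(-1)$-exponent terms into $r$ and invoke Corollary \ref{termsR1}.
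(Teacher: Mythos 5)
Your overall strategy matches the paper's: expand $\alpha(-1)^2{\bf 1}*_2\alpha(-1)\alpha(-4){\bf 1}$ from the definition of $*_2$ (with $(1+x)^4$ and coefficients $1,-3,6$), use the recursion (\ref{recursion-level-2}) to kill modes $\alpha(-m)$ with $m\ge 6$, absorb everything with $\alpha(-1)$-exponent at most one into $\mathcal{R}$ via Corollary \ref{termsR1}, and then trade the surviving $\alpha(-1)^2\alpha(-3)\alpha(-4){\bf 1}$ term for $\alpha(-1){\bf 1}*_2\alpha(-1)^2\alpha(-4){\bf 1}$ using Eqn.\ (\ref{multiplication-one-one}), with Eqn.\ (\ref{reduce-equation-Heisenberg-level2}) cleaning up the $\alpha(-5)$ factors. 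That much is exactly what the paper does, and your anticipated bookkeeping concerns (contractions of nonnegative modes against $\alpha(-1)$ and $\alpha(-4)$, vanishing of the singular parts) are the right ones.

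However, there is a genuine gap in your handling of the term $\alpha(-1)^2\alpha(-2)\alpha(-4){\bf 1}$, which does appear in the expansion (with coefficient $20$) because $Y^+(\alpha(-1){\bf 1},x)$ contributes an $\alpha(-2)x$ term. You propose to reduce it "using (\ref{reduce-equation-Heisenberg-level2}) and (\ref{O21})", but neither tool applies: Eqn.\ (\ref{reduce-equation-Heisenberg-level2}) only rewrites monomials containing an $\alpha(-5)$ factor, and Eqn.\ (\ref{O21}) is the relation $C(C+D)v\sim_2 0$ with $C=\alpha(-3)+\alpha(-4)$ and $D=\alpha(-4)+\alpha(-5)$, which contains no $\alpha(-2)$ and cannot touch a lone $\alpha(-2)$ factor. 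Since this monomial has $\alpha(-1)$-exponent two, it also cannot be swept into $r\in\mathcal{R}$. The paper eliminates it by applying the circle-product relation Eqn.\ (\ref{O24}) from Lemma \ref{first-ABC-lemma} to $v=\alpha(-1)\alpha(-4){\bf 1}$, which rewrites $\alpha(-1)\alpha(-2)\cdot v$ in terms of $\alpha(-3)$-, $\alpha(-4)$-, and $\alpha(-5)$-type products that the remaining machinery can absorb. Without invoking (\ref{O24}) (or an equivalent $O_2$-relation involving $\alpha(-2)$), your reduction stalls at this term and the stated coefficients $-\tfrac12$ and $-1$ cannot be obtained.
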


\begin{proof}
By the definition of multiplication in $A_2(V)$, Eqn.\ (\ref{*_n-definition}), Corollary \ref{termsR1}, the recursion \eqref{recursion-level-2}, and simplifying, we have that for some $r \in \mathcal{R}$,  
\begin{eqnarray*}
\lefteqn{ \alpha(-1)^2{\bf 1}*_2\alpha(-1)\alpha(-4){\bf 1}} \\
&=&  12 \alpha(-1)^2 \alpha(-4) \alpha(-5) {\bf 1} +  42 \alpha(-1)^2 \alpha(-4)^2 {\bf 1} + 50 \alpha(-1)^2  \alpha(-3) \alpha(-4) {\bf 1} \\
& & \quad + \,  20 \alpha (-1)^2 \alpha(-2) \alpha(-4) {\bf 1} + r .
\end{eqnarray*}

By Eqn.\ (\ref{O24}) applied to $v = \alpha(-1) \alpha(-4){\bf 1}$, we have, 
\begin{eqnarray*}
\lefteqn{20 \alpha (-1)^2 \alpha(-2) \alpha(-4) {\bf 1}  }\\
&=&  20\alpha(-1)\alpha(-2)\big(\alpha(-1) \alpha(-4){\bf 1} \big) \\
&\sim_2 & -20\big(3 \alpha(-1) \alpha(-3) + 3 \alpha(-1) \alpha(-4) + \alpha(-1) \alpha(-5) + 2 \alpha(-2)^2 + 4 \alpha(-2) \alpha(-3) \\
& & \quad + \, 4 \alpha(-2) \alpha(-4) + \alpha(-2) \alpha(-5) + 2 \alpha(-2) \alpha(-3)  + 2 \alpha(-3)^2 \\
& & \quad + \, \alpha(-3) \alpha(-4) \big) \alpha(-1) \alpha(-4) {\bf 1} ,
\end{eqnarray*}
and thus for some $r' \in \mathcal{R}$, we have
\begin{eqnarray*}
\lefteqn{ \alpha(-1)^2{\bf 1}*_2\alpha(-1)\alpha(-4){\bf 1}} \\
&\sim_2&  12 \alpha(-1)^2 \alpha(-4) \alpha(-5) {\bf 1} +  42 \alpha(-1)^2 \alpha(-4)^2 {\bf 1} + 50 \alpha(-1)^2  \alpha(-3) \alpha(-4) {\bf 1} \\
& & \quad  - \, 60 \alpha(-1)^2 \alpha(-3)\alpha(-4) {\bf 1}  -60  \alpha(-1)^2 \alpha(-4)^2 {\bf 1}  -20 \alpha(-1)^2 \alpha(-4)  \alpha(-5) {\bf 1} + r' \\
&=&  -8  \alpha(-1)^2 \alpha(-4) \alpha(-5) {\bf 1} -18  \alpha(-1)^2 \alpha(-4)^2 {\bf 1} -10  \alpha(-1)^2  \alpha(-3) \alpha(-4) {\bf 1}  + r' .
\end{eqnarray*}

Applying \eqref{multiplication-one-one} with $v =\alpha(-1)^2 \alpha(-4) {\bf 1}$  to the term $-10\alpha(-1)^2\alpha(-3)\alpha(-4){\bf 1}$, simplifying,  then applying \eqref{reduce-equation-Heisenberg-level2} to $-2\alpha(-1)^2\alpha(-4)\alpha(-5){\bf 1}$, and simplifying gives
\begin{eqnarray*}
\lefteqn{ \alpha(-1)^2{\bf 1}*_2\alpha(-1)\alpha(-4){\bf 1}} \\
&\sim_2& -8  \alpha(-1)^2 \alpha(-4) \alpha(-5) {\bf 1} -18  \alpha(-1)^2 \alpha(-4)^2 {\bf 1} - \alpha(-1){\bf 1} *_2 \alpha(-1)^2 \alpha(-4) {\bf 1} \\
& & \quad + \,  15 \alpha(-4) \alpha(-1)^2 \alpha(-4) {\bf 1} +  6 \alpha(-5) \alpha(-1)^2 \alpha(-4) {\bf 1} + r'\\
&=&- 2\alpha(-1)^2\alpha(-4)\alpha(-5){\bf 1}-3\alpha(-1)^2\alpha(-4)^2{\bf 1}-\alpha(-1){\bf 1}*_2\alpha(-1)^2\alpha(-4){\bf 1} + r''\\
&\equiv_2&  \frac{1}{4} \Big( 10 \alpha(-1)^2 \alpha(-4)^2 {\bf 1}  + 2\alpha(-1) \alpha(-2) \alpha(-4)^2 {\bf 1} \Big) -3\alpha(-1)^2\alpha(-4)^2{\bf 1} \\
& & \quad - \, \alpha(-1){\bf 1}*_2\alpha(-1)^2\alpha(-4){\bf 1} + r''\\
&=&    -\frac{1}{2} \alpha(-1)^2 \alpha(-4)^2 {\bf 1}  - \alpha(-1){\bf 1}*_2\alpha(-1)^2\alpha(-4){\bf 1} + r''',
\end{eqnarray*}
for some $r'',  r''' \in \mathcal{R}$, proving the result. 
\end{proof}

The next lemma will allow us to express the terms involving $\alpha(-1)^2 \alpha(-4)^j {\bf 1}$, for $j >2$, in terms of other generators, $\alpha(-1)^2 \alpha(-4)^2 {\bf 1}$  and  elements in $\mathcal{R}$.

\begin{lem}\label{reduce-one^2_four^l} 
For $l \in \mathbb{Z}_+$,  we have 
\begin{equation*}
\frac{l-1}{l+1}\alpha(-1)^2\alpha(-4)^{l+1}{\bf 1}\equiv_2-\alpha(-1){\bf 1}*_2\alpha(-1)^2\alpha(-4)^l{\bf 1}-\alpha(-1)^2\alpha(-4){\bf 1}*_2\alpha(-4)^l{\bf 1}+  r
\end{equation*}
where $r \in \mathcal{R}$.
\end{lem}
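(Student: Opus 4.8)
The plan is to compute the product $\alpha(-1)^2{\bf 1} *_2 \alpha(-1)^2\alpha(-4)^l{\bf 1}$ directly from the definition of $*_2$ in Eqn.\ (\ref{*_n-definition}), reduce the resulting terms using the recursion (\ref{recursion-level-2}) and the relations already established, and isolate the coefficient of $\alpha(-1)^2\alpha(-4)^{l+1}{\bf 1}$. This is the same strategy used in Lemmas \ref{eqnC}, \ref{eqnD-lemma}, and \ref{reduce-one^2-four^2}: write out the normal-ordered product of modes, apply the recursion (\ref{recursion-level-2}) to push all modes $\alpha(-m)$ with $m\geq 6$ down to combinations of $\alpha(-3),\alpha(-4),\alpha(-5)$, and then throw away everything that lands in $\mathcal{R}$ (i.e. has $\alpha(-1)$-exponent $\leq 1$, or is a power of $\alpha(-1){\bf 1}$) using Corollary \ref{termsR1}. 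The only terms that survive modulo $\mathcal{R}$ are those with $\alpha(-1)^2$ sitting in front, so the bookkeeping is substantially lighter than in the earlier lemmas.

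First I would expand $\alpha(-1)^2{\bf 1} *_2 \alpha(-1)^2\alpha(-4)^l{\bf 1}$ using (\ref{*_n-definition}) with $n=2$, writing $Y(\alpha(-1)^2{\bf 1},x) = \nord Y(\alpha(-1){\bf 1},x)^2\nord$ and extracting the relevant residues, exactly as in the opening display of the proof of Lemma \ref{eqnC}. Among the modes produced, the ones contributing a factor of $\alpha(-1)^2$ (and hence not absorbed into $r$) come from the two positive modes in $Y(\alpha(-1){\bf 1},x)^2$ annihilating into $\alpha(-4)^l{\bf 1}$ or acting trivially, leaving the two creation operators $\alpha(-1)$ intact; these yield terms of the form $\alpha(-1)^2\alpha(-p_1-1)\alpha(-p_2-1)\alpha(-4)^l{\bf 1}$ with $p_1+p_2\leq 4$. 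Using (\ref{recursion-level-2}) on any mode $\alpha(-5)$, $\alpha(-6)$ and then the reduction (\ref{reduce-equation-Heisenberg-level2}), all such terms collapse onto $\mathbb{C}$-combinations of $\alpha(-1)^2\alpha(-4)^{l+1}{\bf 1}$, $\alpha(-1)^2\alpha(-3)\alpha(-4)^l{\bf 1}$, $\alpha(-1)^2\alpha(-4)^l\alpha(-5){\bf 1}$, $\alpha(-1)^2\alpha(-2)\alpha(-4)^l{\bf 1}$, $\alpha(-1)^2\alpha(-4)^{l-1}{\bf 1}$-type terms, and elements of $\mathcal{R}$. The terms $\alpha(-1)^2\alpha(-3)\alpha(-4)^l{\bf 1}$ and $\alpha(-1)^2\alpha(-4)^l\alpha(-5){\bf 1}$ I would then rewrite via (\ref{multiplication-one-one}) (applied to $v=\alpha(-1)^2\alpha(-4)^l{\bf 1}$) and (\ref{reduce-equation-Heisenberg-level2}) respectively — note (\ref{multiplication-one-one}) introduces an $\alpha(-1){\bf 1}*_2(\cdots)$ term, which is where the $-\alpha(-1){\bf 1}*_2\alpha(-1)^2\alpha(-4)^l{\bf 1}$ on the right-hand side originates. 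Separately I would invoke Eqn.\ (\ref{l4times4l}) of Lemma \ref{eqnD-lemma}, or rather recompute $\alpha(-1)^2\alpha(-4){\bf 1}*_2\alpha(-4)^l{\bf 1}$ modulo $\mathcal{R}$ the same way, to produce the second product term on the right-hand side; the point is that $\alpha(-1)^2\alpha(-4){\bf 1}*_2\alpha(-4)^l{\bf 1}$ and $\alpha(-1)^2{\bf 1}*_2\alpha(-1)^2\alpha(-4)^l{\bf 1}$ differ, modulo $\mathcal{R}$ and modulo $\alpha(-1){\bf 1}*_2(\cdots)$, only by a multiple of $\alpha(-1)^2\alpha(-4)^{l+1}{\bf 1}$.

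The key structural observation making this work is that $\alpha(-1)^2{\bf 1}$ is central in $A_2(M_a(1))$-fashion only up to lower-degree corrections; more precisely, from Lemma \ref{DLM-commutator-lemma}(iii) the commutator $u*_2 v - v*_2 u$ equals $\mathrm{Res}_x Y(u,x)v(1+x)^{\mathrm{wt}\,u-1}$, which for $u=\alpha(-1)^2{\bf 1}$ and $v=\alpha(-1)\alpha(-4)^l{\bf 1}$ or similar is a lower-order term living in $\mathcal{R}$ plus controlled pieces; this lets me freely reorder factors at the cost of $\mathcal{R}$-errors. Combining the two expansions and solving for the coefficient of $\alpha(-1)^2\alpha(-4)^{l+1}{\bf 1}$ — which I expect to come out as $\tfrac{l-1}{l+1}$ after the dust settles, vanishing exactly at $l=1$ as it must since $\alpha(-1)^2\alpha(-4)^2{\bf 1}$ is a genuine generator not reducible this way — gives the stated identity.

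\textbf{Main obstacle.} The hard part will be the error-term accounting: reductions via (\ref{reduce-equation-Heisenberg-level2}) and (\ref{recursion-level-2}) repeatedly generate terms like $\alpha(-1)^2\alpha(-2)\alpha(-4)^l{\bf 1}$, $\alpha(-1)^2\alpha(-4)^l\alpha(-5){\bf 1}$, $\alpha(-1)^2\alpha(-3)\alpha(-4)^l{\bf 1}$ and their analogues with the $\alpha(-4)$-exponent shifted by one, and I must check that every such term either reduces back (modulo $\mathcal{R}$) to a multiple of $\alpha(-1)^2\alpha(-4)^{l+1}{\bf 1}$, or of $\alpha(-1){\bf 1}*_2\alpha(-1)^2\alpha(-4)^l{\bf 1}$, or of $\alpha(-1)^2\alpha(-4){\bf 1}*_2\alpha(-4)^l{\bf 1}$, with no residual generator of a new type appearing and no spurious $\tfrac{1}{l}$-type singularity (the coefficients have denominators $4(l+1)$, $4(l+2)$, etc., all nonvanishing for $l\in\mathbb{Z}_+$). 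Tracking the exact rational coefficients through three or four nested applications of (\ref{reduce-equation-Heisenberg-level2}) is purely mechanical but error-prone, so I would organize the computation by first writing each product modulo $\mathcal{R}$ as an explicit $\mathbb{C}$-linear combination of the five monomials $\{\alpha(-1)^2\alpha(-4)^{l+1}{\bf 1},\ \alpha(-1)^2\alpha(-3)\alpha(-4)^l{\bf 1},\ \alpha(-1)^2\alpha(-4)^l\alpha(-5){\bf 1},\ \alpha(-1)^2\alpha(-2)\alpha(-4)^l{\bf 1},\ \alpha(-1){\bf 1}*_2(\cdots)\}$, using (\ref{reduce-equation-Heisenberg-level2}) to eliminate the $\alpha(-5)$ monomial and (\ref{multiplication-one-one}) to eliminate the $\alpha(-3)$ monomial at the cost of the $*_2$-term, leaving a clean two-variable linear system whose solution yields the claimed recursion.
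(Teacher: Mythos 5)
Your overall strategy (expand a $*_2$ product via (\ref{*_n-definition}), push high modes down with (\ref{recursion-level-2}) and (\ref{reduce-equation-Heisenberg-level2}), discard everything in $\mathcal{R}$ via Corollary \ref{termsR1}, and solve for the coefficient of $\alpha(-1)^2\alpha(-4)^{l+1}{\bf 1}$) is the right one, and your second computation, $\alpha(-1)^2\alpha(-4){\bf 1}*_2\alpha(-4)^l{\bf 1}$, is exactly the product the paper expands; the term $-\alpha(-1){\bf 1}*_2\alpha(-1)^2\alpha(-4)^l{\bf 1}$ does indeed enter through (\ref{multiplication-one-one}) applied to the resulting $\alpha(-1)^2\alpha(-3)\alpha(-4)^l{\bf 1}$, as you predict. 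That single computation already closes the argument: since the right-hand factor $\alpha(-4)^l{\bf 1}$ contains no $\alpha(-1)$'s, the only contributions with $\alpha(-1)$-exponent $2$ are those keeping both creation operators $\alpha(-1)$ of the left factor intact, so everything else lands in $\mathcal{R}$ and the survivors are $\alpha(-1)^2$ times a single mode $\alpha(k)$ with $k\le -4$ (the $\alpha(4)$ contraction dies in the residue); no linear system is needed.

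The gap is in your first and primary computation. The product $\alpha(-1)^2{\bf 1}*_2\alpha(-1)^2\alpha(-4)^l{\bf 1}$ does not occur in the identity to be proved (the left factor in the statement is $\alpha(-1){\bf 1}$, of weight one), and, more seriously, it does not collapse onto your five monomials modulo $\mathcal{R}$. By (\ref{multiplication-one-cor-formula-level2-expllicit}) with $i=2$ and $v=\alpha(-1)^2\alpha(-4)^l{\bf 1}$, the $j=1$ and $j=2$ terms contribute $20\,\alpha(-1)^3\alpha(-2)\alpha(-4)^l{\bf 1}$, $50\,\alpha(-1)^3\alpha(-3)\alpha(-4)^l{\bf 1}$, and $25\,\alpha(-1)^2\alpha(-2)^2\alpha(-4)^l{\bf 1}$. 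Neither (\ref{recursion-level-2}) nor (\ref{reduce-equation-Heisenberg-level2}) lowers the exponent of $\alpha(-1)$ or of $\alpha(-2)$, and these terms are not in $\mathcal{R}$ (which requires $\alpha(-1)$-exponent at most $1$); eliminating them would force you to rerun the reduction machinery of Propositions \ref{reducing-one-prop}--\ref{generators-prop}, defeating the purpose. Your claim that this product differs from $\alpha(-1)^2\alpha(-4){\bf 1}*_2\alpha(-4)^l{\bf 1}$, modulo $\mathcal{R}$ and $\alpha(-1){\bf 1}*_2(\cdots)$, only by a multiple of $\alpha(-1)^2\alpha(-4)^{l+1}{\bf 1}$ is therefore unjustified as stated. (A small correction as well: $\alpha(-1)^2\alpha(-4)^2{\bf 1}$ is not a generator in the final list; it is reduced separately by Lemma \ref{reduce-one^2-four^2}, which is precisely why the vanishing of $\frac{l-1}{l+1}$ at $l=1$ is harmless.) Dropping the first computation and carrying out only the second, as the paper does, yields the lemma.
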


\begin{proof}
By the definition of multiplication in $A_2(V)$, i.e., Eqn.\ (\ref{*_n-definition}), we have 
\begin{eqnarray*}
\lefteqn{ \alpha(-1)^2 \alpha(-4) {\bf 1}*_2 \alpha(-4)^l{\bf 1}} \\
&=&  -\res_x  \left( \frac{1}{x^3}  -  \frac{3}{x^4} +  \frac{6}{x^5} \right) (1+x)^8 \sum_{i,j,k\in \mathbb{Z}}   \frac{1}{3!} (k+1)(k+2)(k+3) \nord  \alpha(i) \alpha(j) \alpha(k) \nord  \\
& & \quad x^{-i-j-k-6}  \alpha(-4)^l\mathbf{1}.
\end{eqnarray*} 
Observe that the only nonnegative mode $\alpha(n)$, for $n \geq 0$, acting on $\alpha(-4)^l{\bf 1}$ that does not annihilate $\alpha(-4)^l {\bf 1}$ is $\alpha(4)$, and we will see below that such terms do not appear once we take the residue or they are in $\mathcal{R}$.  In addition, observe that $k = -1, -2, -3$ gives zero,  and so the highest power of $\alpha(-1)$ that appears on the righthand side is 2.  Thus by these observations and Corollary \ref{termsR1}, we have
\begin{eqnarray*}
\lefteqn{ \alpha(-1)^2 \alpha(-4) {\bf 1}*_2 \alpha(-4)^l{\bf 1}} \\
&=&   -\res_x  \left( \frac{1}{x^3}  -  \frac{3}{x^4} +  \frac{6}{x^5} \right) (1+x)^8 \Bigl( \alpha(-1)^2 \sum_{k\leq -4}   \frac{1}{3!} (k+1)(k+2)(k+3) \alpha(k)  x^{-k-4} \\
& & \quad + \, 35 \alpha(-1)^2   \alpha(4)  x^{-8}  \Bigr)  \alpha(-4)^l\mathbf{1} + r
\end{eqnarray*}
for some $r \in \mathcal{R}$. Taking the residue, using Eqn.\ \eqref{recursion-level-2}, Eqn.\ \eqref{multiplication-one-one} with $v = \alpha(-1)^2 \alpha(-4)^l {\bf 1}$, Eqn.\ \eqref{reduce-equation-Heisenberg-level2}, and then Corollary \ref{termsR1} again, we have
\begin{eqnarray*}
\lefteqn{ \alpha(-1)^2 \alpha(-4) {\bf 1}*_2 \alpha(-4)^l{\bf 1}} \\
&=&   -\res_x  \left( \frac{1}{x^3}  -  \frac{3}{x^4} +  \frac{6}{x^5} \right) (1+x)^8 \Bigl( \alpha(-1)^2 \sum_{k\leq -4}  \binom{k+3}{3} \alpha(k)  x^{-k-4} \\
& & \quad + \,  35 \alpha(-1)^2   \alpha(4)  x^{-8}  \Bigr)  \alpha(-4)^l\mathbf{1} + r\\
&=&   \alpha(-1)^2 \Big(210 \alpha(-8)+900 \alpha(-7)+1450 \alpha(-6)+1040 \alpha(-5) +  280 \alpha(-4)\Big)\alpha(-4)^l{\bf 1}+r\\
&\sim_2&  \alpha(-1)^2  \Big(-210(6\alpha(-3)+15\alpha(-4)+10\alpha(-5)) + 900 (3\alpha(-3)+8\alpha(-4)+6\alpha(-5))\\
& & \quad - \, 1450 (\alpha(-3)+3\alpha(-4)+3\alpha(-5))+1040 \alpha(-5) + \, 280 \alpha(-4)\Big) \alpha(-4)^l{\bf 1} +r\\
&=&-10\alpha(-1)^2\alpha(-3)\alpha(-4)^l{\bf 1}-20\alpha(-1)^2\alpha(-4)^{l+1}{\bf 1}-10\alpha(-1)^2\alpha(-4)^l\alpha(-5){\bf 1}+r\\
&=&-\alpha(-1){\bf 1}*_2\alpha(-1)^2\alpha(-4)^l{\bf 1}+15\alpha(-1)^2\alpha(-4)^{l+1}{\bf 1}+6\alpha(-1)^2\alpha(-4)^l\alpha(-5){\bf 1}\\&&-20\alpha(-1)^2\alpha(-4)^{l+1}{\bf 1}-10\alpha(-1)^2\alpha(-4)^l\alpha(-5){\bf 1}+r\\
&\equiv_2&-\alpha(-1){\bf 1}*_2\alpha(-1)^2\alpha(-4)^l{\bf 1}-5\alpha(-1)^2\alpha(-4)^{l+1}{\bf 1}\\&&
-4\left(-\frac{1}{4(l+1)}\left( (4l+6)\alpha(-1)^2\alpha(-4)^{l+1}{\bf 1} +    2\alpha(-1)\alpha(-2)\alpha(-4)^{l+1}{\bf 1} \right)\right)+r\\
& = &-\alpha(-1){\bf 1} *_2 \alpha(-1)^2\alpha(-4)^l{\bf 1}-\frac{l-1}{l+1}\alpha(-1)^2\alpha(-4)^{l+1}{\bf 1}+r',\\
\end{eqnarray*} 
for some $r, r'\in \mathcal{R}$. Rearranging, the result follows. 
\end{proof}

We are now ready to prove the following:
\begin{prop}\label{reduce-no01}
Terms of the form $\alpha(-1)^2\alpha(-4)^l{\bf 1}+O_2(V)$ in $A_2(M_a(1))$ are generated by elements of the form $v+O_2(V)$ where $v$ is an element in the set 
\begin{equation}\label{set}
\{\alpha(-1){\bf 1}, \alpha(-1)^2{\bf 1}, \alpha(-1)\alpha(-4){\bf 1}, \alpha(-1)^2\alpha(-4){\bf 1}, \alpha(-1)\alpha(-4)^2{\bf 1}\}.
\end{equation} 
\end{prop}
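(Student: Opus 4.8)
The plan is to induct on $l$, leaning entirely on Lemmas \ref{reduce-one^2-four^2}, \ref{reduce-one^2_four^l}, and \ref{powersalpha1-lemma} together with Corollary \ref{termsR1}, which already absorb all the genuine mode computations. For $l=0$ and $l=1$ the element $\alpha(-1)^2\alpha(-4)^l{\bf 1}$ is literally one of the generators listed in (\ref{set}), so there is nothing to prove. For $l=2$ I would invoke Lemma \ref{reduce-one^2-four^2} and solve for $\alpha(-1)^2\alpha(-4)^2{\bf 1}$: it equals $-2\alpha(-1)^2{\bf 1}*_2\alpha(-1)\alpha(-4){\bf 1}-2\alpha(-1){\bf 1}*_2\alpha(-1)^2\alpha(-4){\bf 1}$ modulo an element $r\in\mathcal{R}$. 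Since $\alpha(-1){\bf 1}$, $\alpha(-1)^2{\bf 1}$, $\alpha(-1)\alpha(-4){\bf 1}$, and $\alpha(-1)^2\alpha(-4){\bf 1}$ all lie in (\ref{set}), and since $r$ is generated by $R\subset$ (\ref{set}) by Corollary \ref{termsR1}, and since $A_2(M_a(1))$ is an algebra (so products of elements generated by (\ref{set}) are again generated by (\ref{set})), this settles $l=2$.

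For the inductive step, suppose $l\geq 2$ and that $\alpha(-1)^2\alpha(-4)^{l'}{\bf 1}$ is generated by (\ref{set}) for all $l'\leq l$. I would apply Lemma \ref{reduce-one^2_four^l} with this value of $l$; the coefficient $\frac{l-1}{l+1}$ is nonzero precisely because $l\geq 2$, so one can solve for $\alpha(-1)^2\alpha(-4)^{l+1}{\bf 1}$ as a scalar multiple of $-\alpha(-1){\bf 1}*_2\alpha(-1)^2\alpha(-4)^l{\bf 1}-\alpha(-1)^2\alpha(-4){\bf 1}*_2\alpha(-4)^l{\bf 1}+r$ with $r\in\mathcal{R}$. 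In the first product, $\alpha(-1){\bf 1}\in$ (\ref{set}) and $\alpha(-1)^2\alpha(-4)^l{\bf 1}$ is generated by (\ref{set}) by the inductive hypothesis; in the second product, $\alpha(-1)^2\alpha(-4){\bf 1}\in$ (\ref{set}) and $\alpha(-4)^l{\bf 1}\equiv_2(-1)^l(\alpha(-1){\bf 1})^l$ by Lemma \ref{powersalpha1-lemma}, hence is generated by $\alpha(-1){\bf 1}\in$ (\ref{set}); and $r$ is generated by $R\subset$ (\ref{set}) by Corollary \ref{termsR1}. Therefore $\alpha(-1)^2\alpha(-4)^{l+1}{\bf 1}$ is generated by (\ref{set}), completing the induction.

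I do not expect any serious obstacle. The one delicate point is that Lemma \ref{reduce-one^2_four^l} degenerates at $l=1$ — its coefficient vanishes there — which is exactly why the separate argument through Lemma \ref{reduce-one^2-four^2} is needed to clear the $l=2$ base case before the recursion of Lemma \ref{reduce-one^2_four^l} can take over. All of the heavy lifting (the explicit mode manipulations, the uses of (\ref{reduce-equation-Heisenberg-level2}), (\ref{multiplication-one-one}), (\ref{O24}), and the normal-ordered residue computations) has already been done in the preceding lemmas, so the proof of this proposition is a short bookkeeping argument.
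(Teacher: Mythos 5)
Your proposal is correct and follows essentially the same route as the paper: the base case $l=2$ via Lemma \ref{reduce-one^2-four^2}, the inductive step via Lemma \ref{reduce-one^2_four^l} (whose coefficient $\frac{l-1}{l+1}$ is nonzero for $l\geq 2$), and Corollary \ref{termsR1} to absorb the $\mathcal{R}$ terms. Your added remarks — the trivial $l=0,1$ cases, the use of Lemma \ref{powersalpha1-lemma} to handle $\alpha(-4)^l{\bf 1}$ in the second product, and the observation that the recursion degenerates at $l=1$ — are accurate and only make explicit what the paper leaves implicit.
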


\begin{proof}
Lemma \ref{reduce-one^2-four^2} together with Corollary \ref{termsR1}  implies that $\alpha(-1)^2\alpha(-4)^2{\bf 1}$ can be expressed in terms of elements in the set given in the present proposition, which we denote by $S$ for the purposes of this proof. Lemma \ref{reduce-one^2_four^l}  together with  Corollary  \ref{termsR1} implies that if $\alpha(-1)^2\alpha(-4)^{l}{\bf 1}$, with $l>1$ is generated by elements in the set $S$, then so is $\alpha(-1)^2\alpha(-4)^{l+1}{\bf 1}$. The result follows by induction on $l$. 
\end{proof}

\begin{lem}\label{one^1two^2four}
For $l \in \mathbb{N}$, 
\[
\alpha(-1)^2\alpha(-2)\alpha(-4)^l{\bf 1}
\equiv_2-\frac{3}{10}\alpha(-1){\bf 1}*_2\alpha(-1)^2\alpha(-4)^l{\bf 1}+\frac{7l+3}{10(l+1)}\alpha(-1)^2\alpha(-4)^{l+1}{\bf 1}+  r
\]
for some $r \in \mathcal{R}$.
\end{lem}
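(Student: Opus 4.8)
The plan is to compute $\alpha(-2)^1{\bf 1} *_2 \bigl(\alpha(-1)^2\alpha(-4)^l{\bf 1}\bigr)$ using the explicit multiplication formula \eqref{multiplication-two-cor-formula-level2-explicit} (with $i=1$, so the correction term $f_{r-1}(v)$ vanishes), taking $v = \alpha(-1)^2\alpha(-4)^l{\bf 1}$. Expanding the sum over $j=0,\dots,4$ and the partitions $p_1 = j$ into one part, this multiplication produces a linear combination of terms $\alpha(-2-j)\,\alpha(-1)^2\alpha(-4)^l{\bf 1}$ for $j = 0,\dots,4$, i.e.\ $\alpha(-2)$, $\alpha(-3)$, $\alpha(-4)$, $\alpha(-5)$, $\alpha(-6)$ each acting on $\alpha(-1)^2\alpha(-4)^l{\bf 1}$, with explicit binomial coefficients $\binom{4}{2-j} - 3\binom{4}{3-j} + 6\binom{4}{4-j}$ and the factors $\binom{p_l+1}{1} = j+1$. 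The point is that every one of these resulting terms has $\alpha(-1)$-exponent equal to $2$, so we never leave the ambient family; the only genuinely new term beyond the target $\alpha(-1)^2\alpha(-2)\alpha(-4)^l{\bf 1}$ is $\alpha(-1)^2\alpha(-2)^2\alpha(-4)^l{\bf 1}$... actually $j=0$ gives $\alpha(-2)\alpha(-1)^2\alpha(-4)^l{\bf 1}$, which is the target itself, so the coefficient of the target on the right is the $j=0$ coefficient, and we solve for it.

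The key steps, in order: (i) apply \eqref{multiplication-two-cor-formula-level2-explicit} with $i=1$, $t=2$, $v=\alpha(-1)^2\alpha(-4)^l{\bf 1}$ to write $\alpha(-2){\bf 1}*_2 v$ as a combination of $\alpha(-2-j)v$, $j=0,\dots,4$; (ii) use the recursion \eqref{recursion-level-2} to rewrite $\alpha(-6)\,\alpha(-1)^2\alpha(-4)^l{\bf 1}$ in terms of $\alpha(-3),\alpha(-4),\alpha(-5)$ acting on $\alpha(-1)^2\alpha(-4)^l{\bf 1}$; (iii) use \eqref{multiplication-one-one}, namely $\alpha(-1){\bf 1}*_2 w = 10\alpha(-3)w + 15\alpha(-4)w + 6\alpha(-5)w$ with $w = \alpha(-1)^2\alpha(-4)^l{\bf 1}$, to trade the combination $\alpha(-3)w,\alpha(-4)w,\alpha(-5)w$ for $\alpha(-1){\bf 1}*_2\alpha(-1)^2\alpha(-4)^l{\bf 1}$ plus a cleanup; (iv) use \eqref{reduce-equation-Heisenberg-level2} (with $i_5=1$, $i_4=l$, $i_3=0$, $i_2=0$, $i_1=2$) to reduce the leftover $\alpha(-5)\,\alpha(-1)^2\alpha(-4)^l{\bf 1}$, which produces $\alpha(-1)^2\alpha(-4)^{l+1}{\bf 1}$ (with coefficient $\propto (6+2\cdot\!, \text{wt terms})/(4(l+1))$), the term $\alpha(-1)\alpha(-2)\alpha(-4)^{l+1}{\bf 1}$, and $v_{-2}{\bf 1}$-type lower terms; (v) observe, via Corollary \ref{termsR1}, that every term produced with $\alpha(-1)$-exponent at most $1$ — in particular the $\alpha(-1)\alpha(-2)\alpha(-4)^{l+1}{\bf 1}$ term and any $f_{\cdot}({\bf 1})$ remnants — lies in $\mathcal{R}$, so it may be absorbed into $r$; (vi) collect the coefficients of $\alpha(-1)^2\alpha(-2)\alpha(-4)^l{\bf 1}$ and of $\alpha(-1)^2\alpha(-4)^{l+1}{\bf 1}$ and of $\alpha(-1){\bf 1}*_2\alpha(-1)^2\alpha(-4)^l{\bf 1}$, and solve the resulting linear equation for $\alpha(-1)^2\alpha(-2)\alpha(-4)^l{\bf 1}$, checking that the coefficient of the target is nonzero (the $j=0$ binomial coefficient $\binom{4}{2} - 3\binom{4}{3} + 6\binom{4}{4} = 6 - 12 + 6 = 0$ — so in fact the target does \emph{not} appear from $j=0$; instead the target $\alpha(-1)^2\alpha(-2)\alpha(-4)^l{\bf 1}$ must be produced at the stage where $\alpha(-5)$ acting via \eqref{reduce-equation-Heisenberg-level2} or $\alpha(-2)\alpha(-1)^2\alpha(-4)^l{\bf 1}$ is rewritten, so one must track more carefully which reduction produces it).

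The cleaner route, and the one I would actually carry out, mirrors the proof of Lemma \ref{eqnC} and the computation of $\alpha(-1)^2{\bf 1}*_2\alpha(-4)^l{\bf 1}$ there: start instead from $\alpha(-2)^2{\bf 1}*_2(\alpha(-1)\alpha(-4)^l{\bf 1})$ or directly manipulate $\alpha(-1)^2\alpha(-2)\alpha(-4)^l{\bf 1} = \alpha(-2)\bigl(\alpha(-1)^2\alpha(-4)^l{\bf 1}\bigr)$ and apply \eqref{reduce1-prop-level2}-style reasoning, or — most directly — use \eqref{O24} applied to $v = \alpha(-1)^2\alpha(-4)^l{\bf 1}$ to express $\alpha(-1)\alpha(-2)\,v$ in terms of other quadratics, but since here we want $\alpha(-2)\,v$ with the extra $\alpha(-1)^2$ already inside $v$, the efficient identity is the one-variable multiplication formula: write $\alpha(-2){\bf 1}*_2\bigl(\alpha(-1)^2\alpha(-4)^l{\bf 1}\bigr)$, use $\alpha(-2){\bf 1} \equiv_2 -\alpha(-1){\bf 1}$ is \emph{not} an algebra identity for products, so instead combine \eqref{multiplication-two-cor-formula-level2-explicit} for $\alpha(-2){\bf 1}*_2 v$ with \eqref{multiplication-one-one} for $\alpha(-1){\bf 1}*_2 v$ to cancel all $\alpha(-3)v,\alpha(-4)v$ terms, isolating $\alpha(-2)v$ against $\alpha(-5)v$ and $\alpha(-6)v$, then kill $\alpha(-6)v$ by \eqref{recursion-level-2} and $\alpha(-5)v$ by \eqref{reduce-equation-Heisenberg-level2}. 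Collecting, one arrives at an equation of the shape (nonzero const)$\cdot\alpha(-1)^2\alpha(-2)\alpha(-4)^l{\bf 1} \equiv_2 c_1\,\alpha(-1){\bf 1}*_2\alpha(-1)^2\alpha(-4)^l{\bf 1} + c_2\,\alpha(-1)^2\alpha(-4)^{l+1}{\bf 1} + r$ with $r\in\mathcal{R}$, and dividing gives the stated coefficients $-\tfrac{3}{10}$ and $\tfrac{7l+3}{10(l+1)}$.

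The main obstacle will be bookkeeping: correctly expanding the binomial coefficients in \eqref{multiplication-two-cor-formula-level2-explicit} and \eqref{multiplication-one-one} for the five values $j=0,\dots,4$, correctly applying \eqref{reduce-equation-Heisenberg-level2} with the right weight $\mathrm{wt}(\alpha(-1)^2\alpha(-4)^l{\bf 1}) = 2 + 4l$ and the right exponent pattern $(i_5,i_4,i_3,i_2,i_1)=(1,l,0,0,2)$, and verifying at each step that no term with $\alpha(-1)$-exponent $\geq 3$ is generated (which is guaranteed by the structure of these formulas since $\alpha(-2){\bf 1}*_2 v$ and $\alpha(-1){\bf 1}*_2 v$ only prepend modes $\alpha(-m)$, $m\geq 2$, and \eqref{reduce-equation-Heisenberg-level2} only lowers or preserves the $\alpha(-1)$-exponent) and that every term with $\alpha(-1)$-exponent $\leq 1$ is absorbed into $r$ via Corollary \ref{termsR1}. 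The nonvanishing of the coefficient of the target term after all cancellations is the one genuine check that could in principle fail, but the pattern of earlier lemmas (\ref{eqnC}, \ref{reduce-one^2-four^2}, \ref{reduce-one^2_four^l}) strongly indicates it works out, with denominator $10(l+1)$ matching the $\tfrac{1}{10}$ from \eqref{multiplication-one-one} and the $\tfrac{1}{4(l+1)}$ from \eqref{reduce-equation-Heisenberg-level2}.
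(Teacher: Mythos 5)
There is a genuine gap: none of the routes you propose ever produces the left-hand side $\alpha(-1)^2\alpha(-2)\alpha(-4)^l{\bf 1}$ with a nonzero coefficient, so there is nothing to solve for. You correctly computed that the $j=0$ coefficient $\binom{4}{2}-3\binom{4}{3}+6\binom{4}{4}=0$ in \eqref{multiplication-two-cor-formula-level2-explicit}, i.e.\ $\alpha(-2)v$ simply does not occur in $\alpha(-2){\bf 1}*_2 v$; but the same is true of every other tool in your plan. Eqn.\ \eqref{multiplication-one-one} only produces $\alpha(-3)v,\alpha(-4)v,\alpha(-5)v$; the recursion \eqref{recursion-level-2} only produces $\alpha(-3),\alpha(-4),\alpha(-5)$; and \eqref{reduce-equation-Heisenberg-level2} creates an $\alpha(-2)$ only through the term $i_1\,\alpha(-5)^{i_5-1}\alpha(-4)^{i_4+1}\alpha(-3)^{i_3}\alpha(-2)^{i_2+1}\alpha(-1)^{i_1-1}v$, which trades away an $\alpha(-1)$ and therefore lands in $\mathcal{R}$ by Corollary \ref{termsR1}. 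So your ``cleaner route'' of combining $\alpha(-2){\bf 1}*_2 v$ with $\alpha(-1){\bf 1}*_2 v$ and then killing $\alpha(-5)v$ and $\alpha(-6)v$ does yield a valid congruence, but one in which the target monomial never appears: it is a relation among $\alpha(-1){\bf 1}*_2\alpha(-1)^2\alpha(-4)^l{\bf 1}$, $\alpha(-1)^2\alpha(-4)^{l+1}{\bf 1}$, $\alpha(-2){\bf 1}*_2 v$, and elements of $\mathcal{R}$, not a formula for $\alpha(-1)^2\alpha(-2)\alpha(-4)^l{\bf 1}$.

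The missing idea is the one you brushed past in a parenthetical and then discarded for the wrong reason: the circle-product relation \eqref{O24}, whose expansion $\big((A+B)(B+C)+A(C+D)\big)v$ genuinely contains the monomial $\alpha(-1)\alpha(-2)$, applied not to $v=\alpha(-1)^2\alpha(-4)^l{\bf 1}$ but to $v=\alpha(-1)\alpha(-4)^l{\bf 1}$. Writing the target as $\alpha(-1)\alpha(-2)\big(\alpha(-1)\alpha(-4)^l{\bf 1}\big)$, Eqn.\ \eqref{O24} gives $\alpha(-1)^2\alpha(-2)\alpha(-4)^l{\bf 1}\sim_2 -\big(3\alpha(-1)\alpha(-3)+3\alpha(-1)\alpha(-4)+\alpha(-1)\alpha(-5)\big)\alpha(-1)\alpha(-4)^l{\bf 1}+r$ with $r\in\mathcal{R}$, since every other monomial in the expansion has $\alpha(-1)$-exponent at most one. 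From there the computation is exactly the mopping-up you describe: trade $-3\alpha(-1)^2\alpha(-3)\alpha(-4)^l{\bf 1}$ for $-\frac{3}{10}\alpha(-1){\bf 1}*_2\alpha(-1)^2\alpha(-4)^l{\bf 1}$ plus $\alpha(-4)$ and $\alpha(-5)$ corrections via \eqref{multiplication-one-one}, reduce the surviving $\alpha(-1)^2\alpha(-4)^l\alpha(-5){\bf 1}$ via \eqref{reduce-equation-Heisenberg-level2} to get the $\frac{7l+3}{10(l+1)}\alpha(-1)^2\alpha(-4)^{l+1}{\bf 1}$ term, and absorb everything of $\alpha(-1)$-exponent at most one into $r$. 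That single substitution is what makes the target appear on the left with coefficient one; without it, or some other $O_2^\circ$-relation containing $\alpha(-1)\alpha(-2)$, the argument cannot close.
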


\begin{proof}
Applying Eqn.\  \eqref{O24} to $v = \alpha(-1) \alpha(-4)^l {\bf 1}$, we have that for some $r \in \mathcal{R}$
\begin{eqnarray*}
\lefteqn{\alpha(-1)^2\alpha(-2)\alpha(-4)^l{\bf 1} \ =  \alpha(-1) \alpha(-2) \big(\alpha(-1) \alpha(-4)^l {\bf 1} \big) }\\
&\sim _2&  - \big(   3\alpha(-1)\alpha(-3) + 3\alpha(-1)\alpha(-4) + \alpha(-1) \alpha(-5) \big) \alpha(-1) \alpha(-4)^l {\bf 1} + r\\
&=& -3 \alpha(-1)^2 \alpha(-3) \alpha(-4)^l {\bf 1} - 3 \alpha(-1)^2 \alpha(-4)^{l+1} {\bf 1} - \alpha(-1)^2 \alpha(-4)^l \alpha(-5) {\bf 1} + r.
\end{eqnarray*}
Applying \eqref{multiplication-one-one}  with $v = \alpha(-1)^2\alpha(-4)^l {\bf 1}$, simplifying, and applying \eqref{reduce-equation-Heisenberg-level2},  we have
\begin{eqnarray*}
\lefteqn{\alpha(-1)^2\alpha(-2)\alpha(-4)^l{\bf 1} }\\
&\sim_2& -\frac{3}{10} \alpha(-1) {\bf 1} *_2   \alpha(-1)^2  \alpha(-4)^l {\bf 1}  + \frac{3}{10} \big(  15 \alpha(-4) + 6 \alpha(-5) \big) \alpha(-1)^2 \alpha(-4)^l {\bf 1}\\
& & \quad  - \,  3 \alpha(-1)^2 \alpha(-4)^{l+1} {\bf 1} - \alpha(-1)^2 \alpha(-4)^l \alpha(-5) {\bf 1} + r\\
&=&  -\frac{3}{10} \alpha(-1) {\bf 1} *_2   \alpha(-1)^2  \alpha(-4)^l {\bf 1}  + \frac{3}{2} \alpha(-1)^2 \alpha(-4)^{l+1} {\bf 1} \\
& & \quad + \,  \frac{4}{5}  \alpha(-1)^2 \alpha(-4)^l \alpha(-5) {\bf 1} + r\\
&\equiv_2 &  -\frac{3}{10} \alpha(-1) {\bf 1} *_2   \alpha(-1)^2  \alpha(-4)^l {\bf 1}  + \frac{3}{2} \alpha(-1)^2 \alpha(-4)^{l+1} {\bf 1} \\
& & \quad - \,  \frac{1}{5(l+1)}  \Big( (2 + 4(l+1))   \alpha(-1)^2 \alpha(-4)^{l+1}  {\bf 1} + 2 \alpha(-1) \alpha(-2) \alpha(-4)^{l+1} {\bf 1}  \Bigr)  + r\\
&=&   -\frac{3}{10} \alpha(-1) {\bf 1} *_2   \alpha(-1)^2  \alpha(-4)^l {\bf 1}  + \frac{7l+3}{10(l+1)} \alpha(-1)^2 \alpha(-4)^{l+1} {\bf 1}  + r',
\end{eqnarray*}
for some $r' \in \mathcal{R}$, 
giving the result.
\end{proof}

Lemma \ref{one^1two^2four} together with Propositions \ref{reduce-no01-first} and  \ref{reduce-no01} and  Corollary \ref{termsR1} immediately imply the following theorem.

\begin{thm}\label{main-generators-thm}
The algebra $A_2(M_a(1))$ is generated by $v + O_2(M_a(1))$ for $v$ in  
\[\{ \alpha(-1) {\bf 1}, \, \alpha(-1)^2 {\bf 1}, \,  \alpha(-1) \alpha(-4) {\bf 1}, \, \alpha(-1)^2\alpha(-4){\bf 1}, \,  \alpha(-1)\alpha(-4)^2{\bf 1} \}.\]
\end{thm}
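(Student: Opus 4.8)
The plan is to obtain the statement as the culmination of the chain of generator reductions already carried out in Propositions~\ref{reducing-one-prop}--\ref{reduce-no01}, together with Corollary~\ref{termsR1} and Lemma~\ref{one^1two^2four}; no genuinely new computation should be needed, only a careful collation, so I would write the proof as a short assembly and devote my energy to verifying that each invoked reduction really lands inside the span of the five target elements.

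First I would recall that, by Proposition~\ref{reduce-no01-first}, $A_2(M_a(1))$ is generated by $v+O_2(M_a(1))$ for $v$ in the (still infinite) set consisting of the five listed elements together with the two families $\alpha(-1)^2\alpha(-4)^l{\bf 1}$ and $\alpha(-1)^2\alpha(-2)\alpha(-4)^l{\bf 1}$, $l\in\mathbb{N}$. Hence it suffices to show that both infinite families lie in the subalgebra $\langle S\rangle$ generated modulo $O_2(M_a(1))$ by $S=\{\alpha(-1){\bf 1},\ \alpha(-1)^2{\bf 1},\ \alpha(-1)\alpha(-4){\bf 1},\ \alpha(-1)^2\alpha(-4){\bf 1},\ \alpha(-1)\alpha(-4)^2{\bf 1}\}$. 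Since $R=\{\alpha(-1){\bf 1},\ \alpha(-1)^2{\bf 1},\ \alpha(-1)\alpha(-4){\bf 1},\ \alpha(-1)\alpha(-4)^2{\bf 1}\}\subset S$, Corollary~\ref{termsR1} gives that every element of $\mathcal{R}$ lies in $\langle S\rangle$.

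Next I would dispatch the first family directly: Proposition~\ref{reduce-no01} states precisely that each $\alpha(-1)^2\alpha(-4)^l{\bf 1}+O_2(M_a(1))$ lies in $\langle S\rangle$. For the second family, Lemma~\ref{one^1two^2four} expresses, for each $l\in\mathbb{N}$,
\[
\alpha(-1)^2\alpha(-2)\alpha(-4)^l{\bf 1}\equiv_2 -\tfrac{3}{10}\,\alpha(-1){\bf 1}*_2\alpha(-1)^2\alpha(-4)^l{\bf 1}+\tfrac{7l+3}{10(l+1)}\,\alpha(-1)^2\alpha(-4)^{l+1}{\bf 1}+r,
\]
with $r\in\mathcal{R}$. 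The first summand is a $*_2$-product of $\alpha(-1){\bf 1}\in S$ with $\alpha(-1)^2\alpha(-4)^l{\bf 1}\in\langle S\rangle$, so it lies in $\langle S\rangle$ because $\langle S\rangle$ is closed under $*_2$; the second summand lies in $\langle S\rangle$ again by Proposition~\ref{reduce-no01}; and $r\in\langle S\rangle$ by the previous paragraph. Therefore $\alpha(-1)^2\alpha(-2)\alpha(-4)^l{\bf 1}+O_2(M_a(1))\in\langle S\rangle$ for all $l$, and the theorem follows.

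I expect the only real obstacle to be bookkeeping rather than mathematics: all the substantive work — eliminating the $\alpha(-3)$-modes (Proposition~\ref{reduce-300}), bounding the exponents of $\alpha(-2)$ and $\alpha(-1)$ via \eqref{reduce-equation-Heisenberg-level2}, the $O_2$-relations of Lemma~\ref{first-ABC-lemma}, and the recursions of Lemma~\ref{helprel} — has already been carried out, and the delicate point that no reduction step ever raises the $\alpha(-1)$-exponent beyond what $S$ permits (except into the allowed element $\alpha(-1)^2{\bf 1}$) is exactly what Corollary~\ref{termsR1} and Proposition~\ref{reduce-no01} were engineered to record. Thus the proof of this theorem is a one-paragraph synthesis, and the care lies in citing the right earlier result at each of the three places above.
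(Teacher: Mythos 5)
Your proposal is correct and matches the paper's own proof, which likewise deduces the theorem directly from Proposition \ref{reduce-no01-first}, Proposition \ref{reduce-no01}, Lemma \ref{one^1two^2four}, and Corollary \ref{termsR1}. The assembly you describe — disposing of the family $\alpha(-1)^2\alpha(-4)^l{\bf 1}$ via Proposition \ref{reduce-no01} and the family $\alpha(-1)^2\alpha(-2)\alpha(-4)^l{\bf 1}$ via Lemma \ref{one^1two^2four} together with closure of $\langle S\rangle$ under $*_2$ and Corollary \ref{termsR1} — is exactly the intended argument.
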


\section{Relations for $A_2(M_a(1))$}\label{relations-section} 

In the previous section, we produced a generating set for $A_2(M_a(1))$. In this section we prove the relations in $A_2(M_a(1))$ satisfied by these generators  and then in the next section we prove that these are all the relations and give the structure of $A_2(M_a(1))$.  

To determine the relations in $A_2(M_a(1))$, we use two main tools: The level one Zhu algebra $A_1(M_a(1))$ and the zero mode action of the generators of $A_2(M_a(1))$ on the irreducible modules for $M_a(1)$, which are completely determined by $A_0(M_a(1))$.

{\bf Notation:} For convenience, we will use the following notation for the generators for $A_2(M_a(1))$ given in Theorem \ref{main-generators-thm} as elements in $A_n(M_a(1))$ for $n \in \mathbb{N}$
\begin{eqnarray}
x_n &=& \alpha(-1) \mathbf{1} + O_n(M_a(1)) \label{variable-notation-first} \\
y_n &=& \alpha(-1)^2 \mathbf{1} + O_n(M_a(1)) \\
\tilde y_n &=& \alpha(-1)\alpha(-4) \mathbf{1} + O_n(M_a(1)) \\
z_n  &=& \alpha(-1)^2 \alpha(-4) \mathbf{1} + O_n(M_a(1)) \\
\tilde z_n &=& \alpha(-1)\alpha(-4)^2  \mathbf{1} + O_n(M_a(1)) . \label{variable-notation-last}
\end{eqnarray}

We have the following corollary to  Lemma \ref{DLM-commutator-lemma} and Theorem \ref{main-generators-thm}:

\begin{cor}\label{first-commutator-cor} For $V = M_a(1)$, we have 

(i) $x_n = \alpha(-1){\bf 1}+O_n(V)$  and $y_n = \alpha(-1)^2{\bf 1}+O_n(V)$ are central in $A_n(V)$, for any $n \in \mathbb{N}$.

(ii) Let $R = \mathbb{C}[x,y]\langle \tilde y, z, \tilde z\rangle$ be the ring of finite words in the commuting variables $x, y$ and the non commuting variables $\tilde y, z, \tilde z$.    Then  $A_2(V) \cong R/I_2$ for some ideal $I_2$.

(iii) There exists a surjective algebra homomorphism $A_2(V) \cong R/I_2 \twoheadrightarrow A_1(V) \cong R/I_1$ and thus $I_2 \subset I_1$. 
\end{cor}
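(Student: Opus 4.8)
The plan is to read off all three parts of Corollary~\ref{first-commutator-cor} as essentially formal consequences of results already in hand, with the only genuine computational content being the verification that $x_n$ and $y_n$ are central. For part~(i), I would use Lemma~\ref{DLM-commutator-lemma}(iii), which tells us that for any $u,v \in V$,
\[
u *_n v - v *_n u \equiv_n \res_x Y(u,x)v (1+x)^{\mathrm{wt}\, u - 1}.
\]
Taking $u = \alpha(-1)\mathbf{1}$, which has weight $1$, the factor $(1+x)^{\mathrm{wt}\, u - 1} = (1+x)^0 = 1$, so the right-hand side is $\res_x Y(\alpha(-1)\mathbf{1},x)v = (\alpha(-1)\mathbf{1})_0 v = \alpha(0)v$. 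But $\alpha(0)$ acts as zero on all of $M_a(1) = M_a(1,0)$, since $\lambda = 0$ for the vacuum module. Hence $x_n *_n v = v *_n x_n$ for all $v$, i.e.\ $x_n$ is central in $A_n(V)$. For $y_n = \alpha(-1)^2\mathbf{1} + O_n(V)$, which has weight $2$, the bracket formula gives $\res_x Y(\alpha(-1)^2\mathbf{1},x)v(1+x)$; since $Y(\alpha(-1)^2\mathbf{1},x) = {}\nord\! \alpha(x)^2 \!\nord{}$ with $\alpha(x) = \sum_m \alpha(m)x^{-m-1}$, the residue picks out a finite sum of normally ordered products of Heisenberg modes at least one of which is $\alpha(m)$ with $m \geq 0$ acting first on $v \in M_a(1)$ — but again $\alpha(0)$ kills $M_a(1)$ and the positive modes are already accounted for; more cleanly, one can note that $y_n$ is a polynomial in $x_n$ up to $O_n$ in low levels, or simply invoke that $\alpha(-1)^2\mathbf{1}$ is, modulo $O_n(V)$, expressible via $L(0)$-type operators, or most directly: the residue expands into terms each containing an $\alpha(j)$, $j\ge 0$, on the far right, and the only such surviving term would involve $\alpha(0) = 0$ on $M_a(1)$. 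So $y_n$ is central too. (Alternatively, one notes $\omega_a = \tfrac12\alpha(-1)^2\mathbf{1} + a\alpha(-2)\mathbf{1}$ is central by Lemma~\ref{DLM-commutator-lemma}(ii), and $\alpha(-2)\mathbf{1} \approx -\alpha(-1)\mathbf{1}$ up to $O^L$ from~\eqref{reduce-alpha(-1)} with $i=1$ — wait, that is $\alpha(-1)\mathbf{1}\approx -\alpha(-2)\mathbf{1}$ — so $\alpha(-2)\mathbf{1}$ is central modulo the central element $x_n$, hence $\alpha(-1)^2\mathbf{1} = 2\omega_a - 2a\alpha(-2)\mathbf{1}$ is central.)

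For part~(ii), the statement is purely structural: by Theorem~\ref{main-generators-thm} the algebra $A_2(M_a(1))$ is generated as an algebra by the five elements $x_2, y_2, \tilde y_2, z_2, \tilde z_2$. Therefore the free algebra on these five generators — which, using part~(i) to know $x_2,y_2$ are central, is precisely $R = \mathbb{C}[x,y]\langle \tilde y, z, \tilde z\rangle$, the ring of words in the commuting pair $x,y$ and the free triple $\tilde y, z, \tilde z$ — surjects onto $A_2(M_a(1))$ via the evident map sending each variable to the correspondingly-named generator. The kernel is some two-sided ideal $I_2$, and by the first isomorphism theorem $A_2(V) \cong R/I_2$. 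One should say a word confirming the surjection is well-defined: this requires exactly that the images $x_2, y_2$ are central in $A_2(V)$ so that the relations defining $R$ (commutativity of $x,y$ with everything) are respected — which is part~(i) with $n=2$.

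For part~(iii), apply Lemma~\ref{DLM-commutator-lemma}(i): the map $v + O_2(V) \mapsto v + O_1(V)$ is a surjective algebra homomorphism $\pi: A_2(V) \twoheadrightarrow A_1(V)$. Since $\pi$ sends the generator $x_2$ to $x_1$, $y_2$ to $y_1$, and likewise $\tilde y_2 \mapsto \tilde y_1$, $z_2 \mapsto z_1$, $\tilde z_2 \mapsto \tilde z_1$ (immediate from the definitions \eqref{variable-notation-first}--\eqref{variable-notation-last}), composing the presentation map $R \twoheadrightarrow A_2(V)$ with $\pi$ gives a surjection $R \twoheadrightarrow A_1(V)$ agreeing with the analogous presentation of $A_1(V) \cong R/I_1$ (the latter being the $n=1$ instance of part~(ii), which holds since $A_1(M_a(1)) \cong \mathbb{C}[x] \oplus \mathbb{C}[x]$ from Theorem~\ref{A_1-theorem} is certainly a quotient of $R$ generated by the images of $x,y$ alone, a fortiori of $R$). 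Because both $R \twoheadrightarrow A_1(V)$ factorings of $R$ — the one through $R/I_1$ and the one through $R/I_2$ then $\pi$ — coincide as maps out of $R$, we get $I_2 = \ker(R \to A_2(V)) \subseteq \ker(R \to A_1(V)) = I_1$.

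I do not expect any of this to be hard; the only step with real content is the centrality computation in part~(i), and even that reduces instantly to the observation $\alpha(0) = 0$ on $M_a(1)$ together with Lemma~\ref{DLM-commutator-lemma}(iii). The main thing to be careful about is bookkeeping: making sure the presentation ring $R$ in part~(ii) is stated so that $x,y$ are genuinely central variables (not just commuting with each other), which is legitimate precisely because of part~(i), and then making sure part~(iii)'s containment $I_2 \subseteq I_1$ is argued at the level of the common ambient free-type ring $R$ rather than by chasing elements through the quotients. No delicate estimates or VOA structure beyond what is quoted are needed.
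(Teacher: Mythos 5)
Your proposal is correct and follows essentially the same route as the paper: centrality of $x_n$ via Lemma~\ref{DLM-commutator-lemma}(iii) and $\alpha(0)=0$ on $M_a(1)$, centrality of $y_n$ via the centrality of $\omega_a$ together with $\alpha(-1)\mathbf{1}\approx-\alpha(-2)\mathbf{1}$, part (ii) from Theorem~\ref{main-generators-thm}, and part (iii) from the surjection of Lemma~\ref{DLM-commutator-lemma}(i). One caution: your ``most direct'' argument for the centrality of $y_n$ is wrong as stated --- the residue $\res_x Y(\alpha(-1)^2\mathbf{1},x)v(1+x)$ produces terms $\alpha(-1-m)\alpha(m)v$ with $m\ge 1$ that do \emph{not} vanish on general $v\in M_a(1)$ (e.g.\ $\alpha(1)\alpha(-1)\mathbf{1}=\mathbf{1}$); what actually happens is that the sum assembles into $2(L(-1)+L(0))v\in O^L(V)\subset O_n(V)$, which is not the reason you gave. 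Your parenthetical alternative via $\omega_a$ is exactly the paper's argument and is what carries the proof, so the overall proposal stands.
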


\begin{proof} By Eqn.\ (\ref{reduce-alpha(-1)}) we have that  $\omega_a = \frac{1}{2} \alpha(-1)^2\mathbf{1}  + a \alpha(-2) \mathbf{1} \approx \frac{1}{2} \alpha(-1)^2 \mathbf{1} - a \alpha(-1) \mathbf{1}$, which is central in $A_n(V)$ by Lemma \ref{DLM-commutator-lemma} (ii).   

For $\alpha(-1){\bf 1}+O_n(V)$, observe that Eqn.\ (\ref{dlmcommutation}) for any $n\in\mathbb{N}$ implies that $\alpha(-1)\mathbf{1} *_n v - v *_n \alpha(-1) \mathbf{1}$ is equivalent mod $O_n(V)$ to $\res_x Y(u,x)v = \alpha(0)v = 0$.  Thus statement (i) holds.  

For statement (ii) we note that  from (i) and Theorem \ref{main-generators-thm}, 
\[ A_2(V) \cong \mathbb{C}[x_2, y_2] \langle \tilde y_2, z_2, \tilde z_2 \rangle/I_2\] for some two-sided ideal $I_2$.  Thus under the identification $x_2 \mapsto x$, $y_2 \mapsto y$, etc., $A_2(V) \cong R/I_2$.  

Statement (iii) follows from Lemma \ref{DLM-commutator-lemma} (i), and the surjection (\ref{surjection}).
\end{proof}

\begin{rem}\label{abuse-notation-remark}
{\em We will sometimes abuse the notation given in (\ref{variable-notation-first})--(\ref{variable-notation-last}) and use, for instance, the notation $x_n$ for both $\alpha(-1) \mathbf{1} + O_n(V)$ and just $\alpha(-1) \mathbf{1}$ when the context is clear.   }
\end{rem}

\subsection{The relations in $A_1(M_a(1))$  for the generating set of $A_2(M_a(1))$}

To make it easier to determine the relations satisfied by the generators of $A_2(M_a(1))$ given in Theorem \ref{main-generators-thm}, we first find the relations these generators satisfy in $A_1(M_a(1))$.  That is, we determine the relations that generate the ideal $I_1\subset R = \mathbb{C}[x,y]\langle \tilde y, z, \tilde z\rangle$ of Corollary \ref{first-commutator-cor}. 

\begin{prop}\label{use-level-one-prop} 
For $V = M_a(1)$, and $R = \mathbb{C}[x,y]\langle \tilde y, z, \tilde z\rangle$, we have
\[A_1(V) \cong R/I_1 \]
where $I_1$ is the two-sided ideal 
\begin{equation}\label{I_1}
I_1 = \big( (x^2  -y)(x^2 - y  + 2) , (x^2 -2y -  \tilde{y}),  (4x^3 -5xy - z), \\
 (3x^3 - 4xy + \tilde{z}) \big).
\end{equation} 
Thus $A_2(V) \cong R/I_2$ with $I_2 \subset I_1$, where $I_1$ is given by (\ref{I_1}). 
\end{prop}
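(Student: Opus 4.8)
The plan is to leverage the known structure of $A_1(M_a(1))$ from Theorem \ref{A_1-theorem} together with the generators $x_1, y_1, \tilde y_1, z_1, \tilde z_1$ of Corollary \ref{first-commutator-cor}, and simply express the "new" generators $\tilde y_1, z_1, \tilde z_1$ in terms of $x_1$ and $y_1$ inside $A_1(M_a(1))$. Since Theorem \ref{A_1-theorem} tells us $A_1(M_a(1)) \cong \mathbb{C}[x,y]/I$ with $I = \big((x^2-y)(x^2-y+2)\big)$ under $\alpha(-1)\mathbf{1} \leftrightarrow x$, $\alpha(-1)^2\mathbf{1} \leftrightarrow y$, the task reduces to computing, modulo $O_1(M_a(1))$, the classes of $\alpha(-1)\alpha(-4)\mathbf{1}$, $\alpha(-1)^2\alpha(-4)\mathbf{1}$, and $\alpha(-1)\alpha(-4)^2\mathbf{1}$ as polynomials in $\alpha(-1)\mathbf{1}$ and $\alpha(-1)^2\mathbf{1}$. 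The claimed answers are $\tilde y_1 = x^2 - 2y$, $z_1 = 4x^3 - 5xy$, and $\tilde z_1 = -(3x^3-4xy)$, so the plan is to verify these three identities in $A_1(M_a(1))$ and then argue that $I_1$ is exactly the ideal they generate together with the defining relation of $A_0$-lifted level-one relation.

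First I would carry out the level-one reductions. Note that all the recursion and reduction machinery of Section \ref{A_n-construction-section} specializes to $n=1$: in particular Eqn.\ \eqref{recursion-equation} (for $n=1$) rewrites $\alpha(-4)\mathbf{1}$, $\alpha(-5)\mathbf{1}$, etc.\ in terms of $\alpha(-2)\mathbf{1}, \alpha(-3)\mathbf{1}$ modulo $O_1^\circ$, and Eqn.\ \eqref{-2n-1-reduction} with $n=1$ handles $\alpha(-3)$-reductions modulo $O^L$. Concretely, using $\alpha(-3)\mathbf{1} \approx -\alpha(-2)\mathbf{1}$ and the $n=1$ analogue of Eqn.\ \eqref{reduce-equation-Heisenberg-level2}, I would reduce $\alpha(-1)\alpha(-4)\mathbf{1}$, $\alpha(-1)^2\alpha(-4)\mathbf{1}$, $\alpha(-1)\alpha(-4)^2\mathbf{1}$ down to linear combinations of $\alpha(-1)^i\alpha(-2)^j\mathbf{1}$ and then, using the $n=1$ multiplication formula (Eqn.\ \eqref{multiplication-cor-formula} with $n=1$, $t=1,2$) together with the known identifications in $A_1$, express everything in terms of $x$ and $y$. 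This is the computational core; it is entirely parallel to — but much shorter than — the level-two calculations of Section \ref{A_n-construction-section}, since for $n=1$ the algebra $A_1(M_a(1))$ is already known to be $2$-dimensional over the polynomial ring in the central element, so there is little room for error.

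Having the three expressions $\tilde y_1 = x^2-2y$, $z_1 = 4x^3-5xy$, $\tilde z_1 = -(3x^3-4xy)$, I would then assemble the isomorphism $A_1(V) \cong R/I_1$ as follows. There is a surjective ring homomorphism $R = \mathbb{C}[x,y]\langle \tilde y, z, \tilde z\rangle \twoheadrightarrow A_1(V)$ sending $x,y,\tilde y, z, \tilde z$ to $x_1, y_1, \tilde y_1, z_1, \tilde z_1$ (surjective because these generate $A_1(V)$ — indeed already $x_1, y_1$ do). The four displayed generators of $I_1$ clearly lie in the kernel: $(x^2-y)(x^2-y+2)$ by Theorem \ref{A_1-theorem}, and the other three by the computations just described. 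Conversely, $R/I_1 \cong \mathbb{C}[x,y]/\big((x^2-y)(x^2-y+2)\big)$ because modulo the last three generators every occurrence of $\tilde y, z, \tilde z$ can be eliminated in favor of polynomials in $x,y$ (the ideal generators are "solved" for $\tilde y, z, \tilde z$), so $R/I_1$ is a quotient of $\mathbb{C}[x,y]/\big((x^2-y)(x^2-y+2)\big) \cong A_1(V)$; combined with the surjection, this forces $R/I_1 \cong A_1(V)$ and hence the kernel is exactly $I_1$. Finally, the statement $A_2(V)\cong R/I_2$ with $I_2 \subset I_1$ is immediate from Corollary \ref{first-commutator-cor}(ii)--(iii): the surjection $A_2(V)\twoheadrightarrow A_1(V)$ of Lemma \ref{DLM-commutator-lemma}(i) sends generators to generators compatibly, so its kernel corresponds to $I_1/I_2$, giving $I_2\subseteq I_1$. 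The main obstacle is purely bookkeeping in the level-one reductions — getting the rational coefficients right when applying the $n=1$ versions of Eqns.\ \eqref{reduce-equation-Heisenberg-level2} and \eqref{multiplication-cor-formula} and the identifications from Theorem \ref{A_1-theorem} — but there is no conceptual difficulty, since the target algebra $A_1(M_a(1))$ is already fully known.
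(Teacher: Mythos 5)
Your proposal follows essentially the same route as the paper: it uses the $n=1$ recursion to rewrite $\alpha(-4)$ in terms of $\alpha(-2),\alpha(-3)$, the level-one reduction formulas and the identities from \cite{BVY-Heisenberg} to compute $\tilde y_1 \equiv_1 x_1^2-2y_1$, $z_1\equiv_1 4x_1^3-5x_1y_1$, $\tilde z_1\equiv_1 4x_1y_1-3x_1^3$, and then identifies $R/I_1\cong \mathbb{C}[x,y]/((x^2-y)(x^2-y+2))\cong A_1(V)$ by eliminating $\tilde y, z,\tilde z$, with $I_2\subset I_1$ coming from Corollary \ref{first-commutator-cor}(iii). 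The claimed polynomial expressions and the structure of the argument agree with the paper's proof.
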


\begin{proof}
From  Corollary \ref{first-commutator-cor} we have a surjective algebra homomorphism $\varphi: R \twoheadrightarrow  A_1(V)$.  We also have from \cite{BVY-Heisenberg}  that  $A_1(V) \cong \mathbb{C}[x, y]/((x^2- y)(x^2 - y + 2)) \cong \mathbb{C}[x] \oplus \mathbb{C}[x]$, where the isomorphism is given by $x \mapsto \alpha(-1) {\bf 1}$ and $y \mapsto \alpha(-1)^2 {\bf 1}$.  It follows that 
$((x^2- y)(x^2 - y + 2))$ is in $I_1$.  

By Eqn.\ (\ref{recursion-equation}) with $n = 1$ and $m = 4$, we have 
\begin{equation}\label{recursion-level-1}
\alpha(-4)v \sim_1 - \alpha(-2)v - 2 \alpha(-3)v,
\end{equation} 
which gives, using the abuse of notation as noted in Remark \ref{abuse-notation-remark},
\begin{eqnarray}\label{reduce-4-first}
\ \ \ \ \  \ \ \ \ \tilde y_1 = & \alpha(-4) \alpha(-1) {\bf 1} &\sim_1 - \alpha(-2) \alpha(-1) {\bf 1}  - 2 \alpha(-3) \alpha(-1) {\bf 1}\\
z_1 =& \alpha(-4) \alpha(-1)^2 {\bf 1}  &\sim_1 -\alpha(-2) \alpha(-1)^2 {\bf 1} - 2 \alpha(-3) \alpha(-1)^2 {\bf 1} \\
\tilde z_1 =& \alpha(-4)^2 \alpha(-1) {\bf 1} &\sim_1  \alpha(-4) \left( - \alpha(-2) \alpha(-1) {\bf 1} - 2 \alpha(-3) \alpha(-1) {\bf 1} \right) \label{reduce-4-last} \\
& &\sim_1 \alpha(-2)^2 \alpha(-1) {\bf 1} + 4 \alpha(-3) \alpha(-2) \alpha(-1) {\bf 1} + 4 \alpha(-3)^2 \alpha(-1)  {\bf 1} . \nonumber
\end{eqnarray} 

By Eqn.\  (\ref{reduce-equation-n=1}), or equivalently Eqn.\  (27) in \cite{BVY-Heisenberg},  applied with the appropriate multiplicities of $\alpha(-1), \alpha(-2)$, and $\alpha(-3)$, we have 
\begin{eqnarray}\label{reduce-3-first}
\alpha(-3) \alpha(-1) {\bf 1} &\equiv_1 & - \frac{1}{2}  \left( \alpha(-2)^2 {\bf 1} + 3 \alpha(-1) \alpha(-2) {\bf 1} \right)\\
\alpha(-3) \alpha(-1)^{2}  {\bf 1} &\equiv_1 & - \alpha(-1) \alpha(-2)^2  {\bf 1} - 2  \alpha(-1)^2  \alpha(-2) {\bf 1}  \\
\alpha(-3)\alpha(-2) \alpha(-1) {\bf 1}& \equiv_1 & - \frac{1}{4}  \left( \alpha(-2)^3 {\bf 1} + 5 \alpha(-1) \alpha(-2)^2 {\bf 1} \right) \\
\alpha(-3)^2 \alpha(-1)  {\bf 1}& \equiv_1 &  \frac{1}{2} \left( - \alpha(-2)^2  \alpha(-3)  {\bf 1} + 3 \alpha(-1) \alpha(-2)^2  {\bf 1} \right) \label{reduce-3-last} \\
&\equiv_1&  \frac{1}{2} \left(  \alpha(-2)^3 {\bf 1} + 3 \alpha(-1) \alpha(-2)^2 {\bf 1} \right), \nonumber
\end{eqnarray}
where Eqn.\ (\ref{reduce-equation-n=1}) has been used twice in the last equivalence.  Substituting Eqns.\ (\ref{reduce-3-first})--(\ref{reduce-3-last}) into Eqns.\  (\ref{reduce-4-first})--(\ref{reduce-4-last}), we have
\begin{eqnarray}\label{reduce-4-first-again}
 \tilde y_1 & \equiv_1&  \alpha(-2)^2 {\bf 1} + 2 \alpha(-1) \alpha(-2) {\bf 1} \\
z_1 & \equiv_1& 2\alpha(-1) \alpha(-2)^2  {\bf 1} + 3 \alpha(-1)^2  \alpha(-2) {\bf 1}  \\
\tilde z_1  &\equiv_1&  \alpha(-2)^3 {\bf 1} + 2 \alpha(-1) \alpha(-2)^2 {\bf 1} . \label{reduce-4-last-again}
\end{eqnarray}

By the fact that $L(-1) \alpha(-1)^i {\bf 1} = i \alpha(-1)^{i-1} \alpha(-2) {\bf 1}$, we have that 
\[\alpha(-1)^i {\bf 1} \approx - \alpha(-1)^{i-1} \alpha(-2) {\bf 1} , \]
so that
\begin{equation}\label{first-2-one}
\alpha(-1) \alpha(-2) {\bf 1}  \equiv_1  - \alpha(-1)^2 {\bf 1} . 
\end{equation}

Eqn.\ (37) in \cite{BVY-Heisenberg} (or equivalently using the definition of $\circ_1$, and Eqn.\ (\ref{Y+})  applied to $\alpha(-1)^2 {\bf 1} \circ_1 \alpha(-2)^2 {\bf 1}$ to obtain the relation in $O_1(V)$ given by (34) of \cite{BVY-Heisenberg},  then substituting using  Eqn.\  (\ref{reduce-equation-n=1}) into the relation (34) to obtain Eqn.\ (35) of \cite{BVY-Heisenberg}, then using  (\ref{multiplication-cor-formula})  to determine $\alpha(-1)^2 {\bf 1} *_1 \alpha(-2)^2 {\bf 1}$, and adding this to a multiple of Eqn.\ (35) of \cite{BVY-Heisenberg}), we have
\begin{equation}\label{almost-last-2-one}
 \alpha(-1) \alpha (-2)^2 {\bf 1}  \equiv_1  -2 \alpha(-1)^2 {\bf 1} *_2 \alpha(-2) {\bf 1} + \alpha(-2)^3 {\bf 1}  ,
 \end{equation}
 which is Eqn.\ (37) of \cite{BVY-Heisenberg} with $j = 2$.  
 By Eqn.\ (36) of \cite{BVY-Heisenberg}, or the equivalent derivation of this equation as described above, we also have 
\[\alpha(-1)^2 {\bf 1} *_1 \alpha(-2) {\bf 1} \equiv_1 -3 \alpha(-1)^2 \alpha(-2) {\bf 1} - \alpha(-2)^3 {\bf 1} - 5 \alpha(-1) \alpha(-2)^2 {\bf 1}\]
 which, when combined with Eqn.\ (\ref{almost-last-2-one}), gives
 \begin{eqnarray}
\lefteqn{ \alpha(-1)^2 \alpha(-2) {\bf 1} \   \equiv_1 \  -  \frac{1}{3} \alpha(-1)^2 {\bf 1} *_1 \alpha(-2) {\bf 1} - \frac{1}{3} \alpha(-2)^3 {\bf 1} - \frac{5}{3} \alpha(-1) \alpha(-2)^2 {\bf 1}} \label{closer-to-last}  \\
 &\equiv_1 & -  \frac{1}{3} \alpha(-1)^2 {\bf 1} *_1 \alpha(-2) {\bf 1} - \frac{1}{3} \alpha(-2)^3 {\bf 1} + \frac{10}{3} \alpha(-1)^2 {\bf 1} *_2 \alpha(-2) {\bf 1} \nonumber\\
 & & \quad  - \, \frac{5}{3} \alpha(-2)^3 {\bf 1}  \nonumber\\
 &\equiv_1 & 3 \alpha(-1)^2 {\bf 1} *_1 \alpha(-2) {\bf 1} - 2 \alpha(-2)^3 {\bf 1}  . \nonumber
 \end{eqnarray} 
 
By Eqn.\ (33) in \cite{BVY-Heisenberg}, or equivalently Eqn.\ (\ref{multiplication-cor-formula}) above with $i = t = 1$ and $v = \alpha(-1)^j{\bf 1}$ and using induction on $j$, we have for $j \in \mathbb{N}$
\begin{equation}\label{last-2-one}
\alpha(-2)^j {\bf 1} \equiv_1 (-1)^j ( \alpha(-1) {\bf 1})^j .
\end{equation} 
Substituting Eqn.\ (\ref{last-2-one}) into Eqns.\ (\ref{almost-last-2-one}) and (\ref{closer-to-last}), and using the notation $x_1 = \alpha(-1) {\bf 1} + O_1(V)$ and $y_1 = \alpha(-1)^2 {\bf 1} + O_1(V)$, we have that Eqns.\ (\ref{first-2-one})--(\ref{closer-to-last}) become
\begin{eqnarray*}
\alpha(-1) \alpha(-2) {\bf 1} &  \equiv_1&  - y_1,  \\
\alpha(-1) \alpha(-2)^2 {\bf 1} &  \equiv_1& 2 y_1  *_1 x_1 - x_1^3,\\
 \alpha(-1)^2 \alpha(-2) {\bf 1} & \equiv_1 & -3 y_1 *_1 x_1 + 2 x_1^3.
 \end{eqnarray*}
Substituting these along with Eqn.\ (\ref{last-2-one}) into  Eqns.\  (\ref{reduce-4-first-again})--(\ref{reduce-4-last-again}), we have
\begin{eqnarray}\label{relations-z1}
 \tilde y_1 \  \equiv_1& \!   x_1^2 -2y_1 \\
z_1  \ \equiv_1&  \!  4x_1y_1  - 2x_1^3 - 9 x_1y_1 + 6x_1^3 \! &=  \ 4x_1^3 -5 x_1y_1 \\
\tilde z_1 \  \equiv_1& \!  -x_1^3  + 4 x_1y_1 - 2x_1^3 \! &= \ 4x_1y_1 -3x_1^3. \label{relations-z3}
\end{eqnarray} 
This proves that $I_1 \subseteq \mathrm{ker} \, \varphi$.  To prove that $I_1 = \mathrm{ker} \, \varphi$, we note that
\[
R/I_1 
\cong \mathbb{C}[x,y]/((x^2 - y)(x^2 - y + 2)) \cong A_1(V) .\]

The last statement of the Proposition follows directly from Corollary \ref{first-commutator-cor} (iii). 
\end{proof}

\subsection{Zero mode action on irreducible modules of $M_a(1)$}\label{zero-modes-section}

Recall from (\ref{irreducibles}), that the irreducible modules for $V= M_a(1)$ are given by $M_a(1, \lambda) = M_a(1) \otimes_\mathbb{C} \mathbb{C}_\lambda$ where $\mathbb{C}_{\lambda}$ is the 
one-dimensional $\mathfrak{h}$-module such that $\alpha(0)$  acts as multiplication by $\lambda \in \mathbb{C}$.  That is, letting $v_\lambda \in \mathbb{C}_\lambda$ such that $\mathbb{C}_\lambda = \mathbb{C} v_\lambda$, then $\alpha(0) v_\lambda = \lambda v_\lambda$ and $\alpha(n) v_\lambda = 0$ if $n\geq 1$. Then recalling the functor $\Omega_n$ from (\ref{Omega}), we have that $\Omega_n(M_a(1,\lambda))$ is an $A_n(M_a(1))$-module under the zero mode action.  Since $\alpha(-1)^2 v_\lambda, \alpha(-2) v_\lambda  \in \Omega_2(M_a(1,\lambda))$, we can consider the $A_2(M_a(1))$ module generated by these vectors.  

We have the following lemma giving the zero modes acting on $\Omega_2(M_a(1,\lambda))$:

\begin{lem}\label{o-mode-lemma}
The zero modes corresponding to the action of the elements $x_n, y_n, \tilde y_n, z_n, \tilde z_n \in  A_n(M_a(1))$ for $n \in \mathbb{N}$ that are effective on $\Omega_2(M_a(1,\lambda))$ are given by
\begin{eqnarray}
o(x_n)  &=& o(\alpha(-1){\bf{1}})  \label{zero-mode-first}\\
& =&  \alpha(0)  \nonumber \\
o(y_n)  &=& o(\alpha(-1)^2{\bf{1}})  \\
& =&  \alpha(0)^2 + 2\alpha(-1) \alpha(1) + 2 \alpha(-2)\alpha(2)  \nonumber \\
o(\tilde y_n ) &=&  o(\alpha(-1)\alpha(-4){\bf{1}})  \\
 & =&   -\alpha(0)^2-4\alpha(-1)\alpha(1)-10\alpha(-2)\alpha(2)  \nonumber \\
o(z_n) &= & o(\alpha(-1)^2\alpha(-4){\bf{1}}) \\
&=&   -\alpha(0)^3-8\alpha(-2)\alpha(1)^2-10\alpha(-1)\alpha(0)\alpha(1)  - 22\alpha(-2)\alpha(0)\alpha(2) \nonumber\\
& & \quad - \, 10\alpha(-1)^2\alpha(2)  \nonumber\\
o(\tilde z_n) &=&  o(\alpha(-1)\alpha(-4)^2{\bf{1}})  \label{zero-mode-last}\\
& =&  \alpha(0)^3+16\alpha(-2)\alpha(1)^2+8\alpha(-1)\alpha(0)\alpha(1) +
20\alpha(-2)\alpha(0)\alpha(2) \nonumber . 
\end{eqnarray}

Thus acting on $v_\lambda, u= \alpha(-1) v_\lambda$, $v = \alpha(-1)^2 v_\lambda$ and $w = \alpha(-2) v_\lambda$ in $\Omega_2(M_a(1, \lambda))$, and dropping the subscript for this action, we have
\begin{align*}
x. v_\lambda & =  \lambda v_\lambda, & x. u &= \lambda u,  & x. v &= \lambda v,  & x.w &= \lambda w, \\
y. v_\lambda & = \lambda^2 v_\lambda,  & y. u &= (\lambda^2+2) u, & y. v &= (\lambda^2+4) v,  & y.w &= (\lambda^2+4) w, \\
\tilde y . v_\lambda &= -  \lambda^2 v_\lambda, & \tilde y . u  &= (-\lambda^2-4) u, & \tilde y . v  &=  (-\lambda^2-8) v, & \tilde y . w &= (-\lambda^2-20)w, \\
z. v_\lambda  &=   -\lambda^3 v_\lambda,  & z.u  &=  (-\lambda^3-10 \lambda)u, &  z.v  &=  (-\lambda^3-20\lambda)v -16w,  &  z. w &= -20v \\
& & & & & &  & \ \ \   + \, (-\lambda^3 -44\lambda)w\\
\tilde z . v_\lambda &=  \lambda^3 v_\lambda,  &  \tilde z . u &=  (\lambda^3 + 8 \lambda)u, &  \tilde z . v &=  (\lambda^3+16\lambda)v +32w, & \tilde z . w &= (\lambda^3+40\lambda)w .
\end{align*}
On the polynomials in the ideal $I_1\subset R =  \mathbb{C}[x,y]\langle \tilde y, z, \tilde z\rangle$ giving the structure of $A_1(M_a(1)) \cong R/I_1$, we have
\begin{align*}
(x^2 - y)(x^2 - y + 2).v &= 8v  & (x^2- y)(x^2 - y + 2).w &= 8w \\
(x^2 - 2y - \tilde y). v & = 0 & (x^2 - 2y - \tilde y).w &= 12w \\
(4x^3 - 5xy - z).v &= 16w & (4x^3 - 5xy - z).w &= 20v + 24\lambda w \\
(3x^3 - 4xy + \tilde z).v &= 32w & (3x^3 - 4xy + \tilde z).w &= 24\lambda w
\end{align*}
\end{lem}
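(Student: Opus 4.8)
The statement has two halves: first the explicit formulas \eqref{zero-mode-first}--\eqref{zero-mode-last} for the zero modes $o(x_n), o(y_n), o(\tilde y_n), o(z_n), o(\tilde z_n)$, expressed as operators on $\Omega_2(M_a(1,\lambda))$ built out of the modes $\alpha(k)$, and second the explicit action of these operators on the four vectors $v_\lambda, u = \alpha(-1)v_\lambda, v = \alpha(-1)^2 v_\lambda, w = \alpha(-2)v_\lambda$, together with the action on the four generators of $I_1$. I would attack the first half directly from the definition of the $A_n(V)$-action: by the construction of $\Omega_n$ recalled after \eqref{Omega}, the element $\alpha(-k_1)\cdots\alpha(-k_j)\mathbf{1} + O_n(V)$ acts by its zero mode $o(\alpha(-k_1)\cdots\alpha(-k_j)\mathbf{1}) = (\alpha(-k_1)\cdots\alpha(-k_j)\mathbf{1})_{\mathrm{wt}-1}$, and this mode is computed from the normal-ordered product of vertex operators $\nord \partial^{(k_1-1)}\alpha(x)\cdots\partial^{(k_j-1)}\alpha(x)\nord$ by extracting the appropriate coefficient. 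The key simplification is the word ``effective'': acting on $\Omega_2(M_a(1,\lambda))$, any mode $\alpha(m)$ with $m \geq 3$ annihilates everything (since $\alpha(m)$ for $m\geq 1$ kills $v_\lambda$ and lowers degree by $m$), and only terms with at most two raising operators survive on the degree-$\le 2$ part, so the infinite sum collapses to the displayed finite expression. So the plan is: write each generator's zero mode as a bidirectionally-ordered sum $\sum \alpha(-a)\alpha(b)$ (and analogues with three factors for $z_n,\tilde z_n$), then discard all summands that are identically zero on $\Omega_2(M_a(1,\lambda))$, leaving the stated formula. The coefficients $2, 4, 10, 8, 22, \dots$ are just multinomial/binomial bookkeeping from the normal-ordered expansion and the weight shift.

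\textbf{Second half.} Once the operator formulas are in hand, the action on $v_\lambda, u, v, w$ is a mechanical application of the Heisenberg relations $[\alpha(m),\alpha(n)] = m\delta_{m+n,0}$, together with $\alpha(0)v_\lambda = \lambda v_\lambda$, $\alpha(m)v_\lambda = 0$ for $m\ge 1$. For instance $\alpha(-1)\alpha(1)$ acts on $u = \alpha(-1)v_\lambda$ as $\alpha(-1)\alpha(1)\alpha(-1)v_\lambda = \alpha(-1)v_\lambda = u$, contributing the ``$+2$'' in $y.u = (\lambda^2+2)u$; similarly $\alpha(-2)\alpha(2)$ acts on $w = \alpha(-2)v_\lambda$ by $2$, giving the ``$+20$'' type terms in $\tilde y.w$ and $z.w$. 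The only slightly nontrivial phenomenon is that $z$ and $\tilde z$ mix $v$ and $w$: e.g. $\alpha(-2)\alpha(1)^2$ applied to $v = \alpha(-1)^2 v_\lambda$ produces a multiple of $w$, which is where the off-diagonal entries ($z.v = \cdots - 16w$, $z.w = 20v + \cdots$, $\tilde z.v = \cdots + 32 w$) come from. I would organize this as a short table of the action of each elementary operator $\alpha(-a)\alpha(b)$ (and $\alpha(-a)\alpha(b)\alpha(c)$) on each of the four basis vectors, then assemble.

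\textbf{The $I_1$ computations.} For the last block — the action of the four generators of $I_1$ on $v$ and $w$ — the cleanest route is not to recompute from scratch but to substitute: $(x^2 - y)(x^2-y+2).v$ means apply the polynomial in the already-determined operators $o(x_n), o(y_n)$, so $(x^2-y).v = (\lambda^2 - (\lambda^2+4))v = -4v$, hence $(x^2-y)(x^2-y+2).v = (-4)(-4+2)v = 8v$, and similarly for $w$. The relation $x^2 - 2y - \tilde y$ evaluates on $v$ to $\lambda^2 - 2(\lambda^2+4) - (-\lambda^2-8) = 0$ and on $w$ to $\lambda^2 - 2(\lambda^2+4) - (-\lambda^2 - 20) = 12$, consistent with the claim. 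The $z$ and $\tilde z$ relations need the off-diagonal action computed above but are otherwise the same substitution. This also serves as an internal consistency check: the nonzero values $8v, 8w, 12w, 16w, 20v+24\lambda w, 32w, 24\lambda w$ confirm that $I_2 \subsetneq I_1$ (these polynomials act nontrivially at level two even though they vanish at level one), which is exactly the content that will be exploited in Sections 7--8.

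\textbf{Main obstacle.} The conceptual content is light; the real work is bookkeeping accuracy in the normal-ordered expansions that produce the operator formulas \eqref{zero-mode-first}--\eqref{zero-mode-last} — in particular getting the coefficients in $o(z_n)$ and $o(\tilde z_n)$ right, since these involve cubic normal-ordered products $\nord\alpha(x)^2\partial^{(3)}\alpha(x)\nord$ and one must carefully track which pairings of the three factors survive on a degree-$\le 2$ module and with what multiplicity. I expect this step to be the main source of potential sign or coefficient errors, and it is the step I would do most carefully, cross-checking via the $I_1$ relations (which must evaluate consistently with the known level-one structure, e.g. $z_1 \equiv_1 4x_1^3 - 5x_1 y_1$ from Proposition~\ref{use-level-one-prop} forces $o(z).v_\lambda = -\lambda^3$, matching). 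Everything downstream of the operator formulas is routine commutator algebra.
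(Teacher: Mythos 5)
Your proposal matches the paper's proof: the paper likewise computes each zero mode as $o(v)=\res_x x^{\mathrm{wt}\,v-1}Y(v,x)$ from the normal-ordered expansion, discards the modes $\alpha(k)$ with $k>2$ as acting by zero on $\Omega_2(M_a(1,\lambda))$, and then evaluates the resulting finite operators on $v_\lambda, u, v, w$ by direct Heisenberg commutator bookkeeping, with the $I_1$ evaluations obtained by substitution exactly as you describe. Your spot checks (e.g.\ $y.u=(\lambda^2+2)u$, $(x^2-2y-\tilde y).w=12w$, and the level-one consistency check $4\lambda^3-5\lambda\cdot\lambda^2=-\lambda^3$) are all correct, so this is the same argument.
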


\begin{proof} 
These are straightforward calculations as follows: $o(v) = v_{\mathrm{wt} \ v - 1} = \res_x x^{\mathrm{wt} \, v - 1} Y(v,x)$ and so
\begin{eqnarray*}
o(\alpha(-1){\bf{1}})&=& \res_x(\sum_{i\in \mathbb{Z}}\alpha(i)x^{-i-1}) \ = \  \alpha(0),\\
o(\alpha(-1)^2{\bf{1}}) &=& \res_x x \Big(\sum_{i_1,i_2\in \mathbb{Z}}{}_{\circ}^{\circ}\alpha(i_1)\alpha(i_2){}_{\circ}^{\circ}x^{-i_1-i_2-2} \Big) \\
& = & \alpha(0)^2+2\alpha(-1)\alpha(1)+2\alpha(-2)\alpha(2)+ o(h_1), \\
o(\alpha(-1)\alpha(-4){\bf{1}})&=& \res_x x^4 \Big(\sum_{i_1,i_2\in \mathbb{Z}} \frac{(-i_2 - 1)(-i_2-2)(-i_2-3)}{3!} {}_{\circ}^{\circ}\alpha(i_1)\alpha(i_2){}_{\circ}^{\circ}x^{-i_1-i_2-5} \Big) \\
&=& -\alpha(0)^2-4\alpha(-1)\alpha(1)-10\alpha(-2)\alpha(2) + o(h_2) , \\
o(\alpha(-1)^2\alpha(-4){\bf{1}}) &=&  \res_x x^5 \Big(\sum_{i_1,i_2\in \mathbb{Z}} \frac{(-i_3- 1)(-i_3-2)(-i_3-3)}{3!} {}_{\circ}^{\circ}\alpha(i_1)\alpha(i_2) \alpha(i_3) {}_{\circ}^{\circ}x^{-i_1-i_2-6} \Big)\\
&=& -\alpha(0)^3-8\alpha(-2)\alpha(1)^2-10\alpha(-1)\alpha(0)\alpha(1)-22\alpha(-2)\alpha(0)\alpha(2)\\&&-10\alpha(-1)^2\alpha(2) + o(h_3),\\
o(\alpha(-1)\alpha(-4)^2{\bf{1}}) &=& \res_x x^8 \Big(\sum_{i_1,i_2, i_3 \in \mathbb{Z}} \frac{(i_2 + 1)(i_2+2)(i_2+3)(i_3+ 1)(i_3+ 2)(i_3+ 3)}{3!^2} \\
& & {}_{\circ}^{\circ}\alpha(i_1)\alpha(i_2)\alpha(i_3)  {}_{\circ}^{\circ}x^{-i_1-i_2- i_3 -9} \Big) \\
&=& \alpha(0)^3+16\alpha(-2)\alpha(1)^2+8\alpha(-1)\alpha(0)\alpha(1)+20\alpha(-2)\alpha(0)\alpha(2)+ o(h_4) , 
\end{eqnarray*}
where the operators $o(h_j)$, for $j = 1, \dots, 4$, denote modes involving $\alpha(k)$ for $k>2$ and thus act as zero on $\Omega_2(M_a(1, \lambda)$, giving Eqns. (\ref{zero-mode-first})--(\ref{zero-mode-last}). 

The remainder of the results are straightforward calculations following from the definition of $M_a(1, \lambda)$ and $M_a(1)$.  
\end{proof} 

Motivated by the zero mode action, we make the following change of variables 
\begin{eqnarray}\label{change-var1-n}
Y_n &=& \frac{1}{12} (x_n^2 - 2y_n - \tilde y_n),\\
Z_n &=&  \frac{1}{32} ( x_n^3 + 2x_n\tilde y_n + \tilde z_n) \\
&=&  \frac{1}{32} \left( (3x_n^3 - 4x_ny_n + \tilde z_n) - 2x_n(x_n^2 - 2y_n - \tilde y_n) \right), \nonumber  \\
\ \ \  W_n &=& -\frac{1}{40}  \left(2z_n + \tilde z_n + 2x_ny_n - 2x_n \tilde y_n - 3x_n^3 \right) \label{change-var3-n} \\
 &=&  -  \frac{1}{40} \left( 2x_n(x_n^2 - 2y_n - \tilde y_n)-  2(4x_n^3 - 5x_ny_n - z_n) + (3x_n^3 - 4x_ny_n + \tilde z_n)\right) \nonumber,
\end{eqnarray}
so that $R_n = \mathbb{C}[x_n,y_n]\langle \tilde y_n, z_n, \tilde z_n \rangle = \mathbb{C}[x_n,y_n]\langle Y_n, Z_n, W_n \rangle$.  In particular, $Y_2$ is homogeneous of degree 2, and $Z_2$ and $W_2$ are both homogeneous of degree 3 in the $\alpha(-j)$'s for $j \in \mathbb{Z}_+$.

With these change of variables we note the following Corollary to Proposition \ref{use-level-one-prop} and Lemma \ref{o-mode-lemma}.

\begin{cor}\label{level-one-cor}
For $V = M_a(1)$ and $R  = \mathbb{C}[x,y]\langle Y, Z, W \rangle$ we have $A_1(V) \cong R/I_1$ where $I_1$ is the two-sided ideal 
\begin{equation}
I_1 = \big( (x^2 - y)(x^2 - y + 2), Y, Z, W \big),
\end{equation}
$A_2(V) \cong R/I_2$ for $I_2 \subset I_1$, and the zero mode actions of $Y, Z$, and $W$ on $v = \alpha(-1)^2 v_\lambda$ and $w = \alpha(-2)v_\lambda \in \Omega_2(M_a(1, \lambda))$ are given by
\begin{equation}
Y.v = W.v = Z.w = 0, \quad Y.w = Z.v = w, \quad \mbox{and} \quad  
W.w = v.
\end{equation}
\end{cor}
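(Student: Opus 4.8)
The final statement to prove is Corollary \ref{level-one-cor}, which repackages Proposition \ref{use-level-one-prop} and Lemma \ref{o-mode-lemma} after the change of variables \eqref{change-var1-n}--\eqref{change-var3-n}. The plan is essentially to verify that this repackaging is consistent: the ideal $I_1$ written in the new generators $Y, Z, W$ is the same as the ideal written in the old generators $\tilde y, z, \tilde z$, and the zero mode actions of $Y, Z, W$ follow from those of $\tilde y, z, \tilde z$ by linearity.

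First I would observe that the ring identity $R = \mathbb{C}[x,y]\langle \tilde y, z, \tilde z\rangle = \mathbb{C}[x,y]\langle Y, Z, W\rangle$ holds because the change of variables \eqref{change-var1-n}--\eqref{change-var3-n} is triangular and invertible over $\mathbb{C}[x,y]$: solving for $\tilde y$ gives $\tilde y = x^2 - 2y - 12Y$, solving for $\tilde z$ gives $\tilde z = 32Z - x^3 - 2x\tilde y = 32Z - x^3 - 2x(x^2-2y-12Y)$, and solving for $z$ gives $z = -20W - \frac{1}{2}(\tilde z + 2xy - 2x\tilde y - 3x^3)$, so each old noncommuting generator is an $\mathbb{C}[x,y]$-linear combination of the new ones and vice versa. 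Next I would show the two-sided ideals coincide: from Proposition \ref{use-level-one-prop} we have $I_1 = ((x^2-y)(x^2-y+2),\ x^2 - 2y - \tilde y,\ 4x^3 - 5xy - z,\ 3x^3 - 4xy + \tilde z)$. Using the second parenthetical expressions in \eqref{change-var1-n}--\eqref{change-var3-n} one sees directly that $12Y$, $32Z$, and $-40W$ each lie in the span (over $\mathbb{C}[x,y]$, using the first parenthetical for $Y$ and the displayed second expressions for $Z$ and $W$) of the three linear generators $x^2 - 2y - \tilde y$, $4x^3 - 5xy - z$, $3x^3 - 4xy + \tilde z$, so $(Y,Z,W) \subseteq (x^2-2y-\tilde y,\ 4x^3-5xy-z,\ 3x^3-4xy+\tilde z)$; conversely, inverting the $3\times 3$ linear system expresses each of $x^2 - 2y - \tilde y$, $4x^3 - 5xy - z$, $3x^3 - 4xy + \tilde z$ as a $\mathbb{C}[x,y]$-combination of $Y, Z, W$, giving the reverse inclusion. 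Hence $I_1 = ((x^2-y)(x^2-y+2),\ Y,\ Z,\ W)$ as claimed, and $A_1(V) \cong R/I_1$ and $A_2(V) \cong R/I_2$ with $I_2 \subset I_1$ come verbatim from Proposition \ref{use-level-one-prop} and Corollary \ref{first-commutator-cor}(iii).

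For the zero mode actions, I would simply apply the $\mathbb{C}$-linearity of the zero mode map $o(\,\cdot\,)$ (and of the resulting module action) to the definitions \eqref{change-var1-n}--\eqref{change-var3-n}, using the values of $x.v, y.v, \tilde y.v, z.v, \tilde z.v$ and $x.w, y.w, \tilde y.w, z.w, \tilde z.w$ tabulated in Lemma \ref{o-mode-lemma}. For instance $Y.v = \frac{1}{12}(x^2.v - 2y.v - \tilde y.v) = \frac{1}{12}(\lambda^2 v - 2(\lambda^2+4)v - (-\lambda^2-8)v) = \frac{1}{12}(\lambda^2 - 2\lambda^2 - 8 + \lambda^2 + 8)v = 0$, and $Y.w = \frac{1}{12}(\lambda^2 w - 2(\lambda^2+4)w - (-\lambda^2-20)w) = \frac{1}{12}(12)w = w$; the analogous one-line computations for $Z.v, Z.w, W.v, W.w$ use the displayed $\tilde z$-forms of $Z$ and $W$ in \eqref{change-var1-n} and \eqref{change-var3-n} together with the $z$- and $\tilde z$-actions on $v$ and $w$, which involve the off-diagonal terms $-16w$, $-20v$, $32w$ that produce $Z.v = w$, $Z.w = 0$, $W.v = 0$, $W.w = v$. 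These are genuinely routine; the only care needed is to keep track of which expression (first or second) for each new variable is most convenient — the $Y$-definition directly, and for $Z, W$ it is cleanest to use $Z = \frac{1}{32}(x^3 + 2x\tilde y + \tilde z)$ and $W = -\frac{1}{40}(2z + \tilde z + 2xy - 2x\tilde y - 3x^3)$.

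There is no real obstacle here — the content has already been done in Proposition \ref{use-level-one-prop} and Lemma \ref{o-mode-lemma}, and this corollary is bookkeeping. The one place to be slightly careful is the ideal equality: because $R$ is noncommutative in $\tilde y, z, \tilde z$ (equivalently in $Y, Z, W$), I should make sure the change of variables stays within the commutative subring $\mathbb{C}[x,y]$ for all coefficients, so that "$\mathbb{C}[x,y]$-linear combination" genuinely makes sense on both the left and the right and the two-sided ideals transform correctly; since \eqref{change-var1-n}--\eqref{change-var3-n} only ever multiply the noncommuting generators by polynomials in the central elements $x, y$, this is automatic. So the proof will consist of: (1) note $R$ is unchanged under the invertible triangular substitution; (2) rewrite the four generators of $I_1$ and check the ideal is unchanged; (3) read off $A_1 \cong R/I_1$, $A_2 \cong R/I_2$, $I_2 \subset I_1$ from the earlier results; (4) compute the six scalar/matrix actions $Y.v, Y.w, Z.v, Z.w, W.v, W.w$ by linearity from Lemma \ref{o-mode-lemma}.
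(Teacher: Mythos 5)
Your proposal is correct and takes the same route the paper intends: the paper states this corollary without a written proof, presenting it as immediate from Proposition \ref{use-level-one-prop} and Lemma \ref{o-mode-lemma} via the change of variables (\ref{change-var1-n})--(\ref{change-var3-n}), and your argument simply fills in that bookkeeping (the triangular invertible substitution over the central subring $\mathbb{C}[x,y]$ identifying the two presentations of $I_1$, plus the linearity computations for the zero mode actions, all of which check out numerically).
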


\subsection{Relations satisfied by the elements of the generating set of $A_2(M_a(1))$ up to lower order terms}

We next determine the highest degree terms of the polynomial relations satisfied by the generators of $A_2(M_a(1))$ in $\mathbb{C}[x,y]\langle Y, Z, W \rangle$.  We do this by determining some elements 
 in $O_2(M_a(1))$ up to ``lower degree terms" meaning up to terms in $F_r(\mathbf{1})$ for some $r \in \mathbb{N}$, where $F_r(v)$ for $v \in M_a(1)$ is defined as in the beginning of Section 4.   Then in the next section, we will prove the entire polynomial relations.

\begin{lem}\label{lower-order-lemY}  
For $V = M_a(1)$, and for $x, y, Y, Z, W \in V + O_2(V)$ as defined in Eqns.\ (\ref{variable-notation-first})-(\ref{variable-notation-last}) and (\ref{change-var1-n})-(\ref{change-var3-n}) with $n = 2$ assumed here, we have 
\begin{eqnarray}
(x^2 - y) Y, \ Y^2  & \in & O_2(V) + F_2(\mathbf{1}) ,  \label{YZW1} \\
(x^2 - y)Z, \ (x^2 - y)W, \ ZY, \ YW & \in & O_2(V) + F_3(\mathbf{1}), \label{YZW2} \\
(x^2 - y)^3, \ Z^2, \ W^2,  ZW &\in & O_2(V) + F_4(\mathbf{1}). \label{YZW3} 
\end{eqnarray}
\end{lem}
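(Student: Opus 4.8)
The plan is to exploit the homogeneity of the relevant generators and the known relations in $O_2(V)$ from Lemma \ref{first-ABC-lemma}. Recall that $Y_2$ is homogeneous of degree $2$ and $Z_2, W_2$ are homogeneous of degree $3$ in the modes $\alpha(-j)$, while $x_2^2 - y_2$ has no degree-$0$ or degree-$1$ part and its lowest-degree part is degree $2$ (indeed $x^2-y = \alpha(-1)^2\mathbf 1 *_2 \alpha(-1)^2\mathbf 1 - \alpha(-1)^2\mathbf 1$ up to lower terms, and more usefully $x^2 - y \approx$ a degree-$2$ expression). So each of the products in \eqref{YZW1}--\eqref{YZW3} has a well-defined ``top-degree part'': degree $4$ for the entries of \eqref{YZW1}, degree $5$ for \eqref{YZW2}, and degree $6$ for \eqref{YZW3}. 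The claim is exactly that these top-degree parts lie in $O_2^\circ(V)$ (modulo the filtered pieces $F_r(\mathbf 1)$), and $O_2^\circ(V)$ is spanned by the $u\circ_2 v$; by homogeneity considerations (the $\circ_2$ operation, modulo $O^L(V)$ and modulo lower-filtration terms, roughly preserves top degree), it suffices to show each top-degree part is a $\C$-linear combination of the top-degree parts of the specific elements of $O_2^\circ(V)$ produced in Lemma \ref{first-ABC-lemma}, namely \eqref{O21}--\eqref{O24} evaluated at $v$ a product of $\alpha(-k)$'s of the appropriate degree.

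First I would translate the change of variables \eqref{change-var1-n}--\eqref{change-var3-n} into explicit degree-homogeneous representatives: compute the top-degree part of $Y_2$ as a quadratic expression in $\alpha(-1),\dots,\alpha(-5)$ acting on $\mathbf 1$ (using $\tilde y_2 = \alpha(-1)\alpha(-4)\mathbf 1$, $y_2 = \alpha(-1)^2\mathbf 1$, and $x_2^2 = \alpha(-1)\mathbf 1 *_2 \alpha(-1)\mathbf 1 = $ a degree-$2$ term via \eqref{multiplication-one-one}), and similarly cubic expressions for $Z_2, W_2$ and for $x_2^2 - y_2$. The key tool from Lemma \ref{Y+-lemma} and Eqn.\ \eqref{start-of-O2-lemma} is that $\alpha(-m)\mathbf 1$ behaves, modulo $O_2(V)$, like the operators $C = \alpha(-3)+\alpha(-4)$ and $D = \alpha(-4)+\alpha(-5)$ from \eqref{ABC-notation} when multiplied; so I expect the top-degree parts of $Y_2$, $x_2^2-y_2$, etc., to be rewritable in the $A, B, C, D$ shorthand, at which point Lemma \ref{first-ABC-lemma} directly supplies the needed membership: e.g.\ $Y^2$ at top degree should reduce to a combination of $C(C+D)$, $(C+D)^2$, $D(C+D)$ applied to a suitable degree-$2$ vector, all of which are in $O_2^\circ(V)$ by \eqref{O21}--\eqref{O22}. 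The appearance of $(B+C)^2 + 2B(C+D)$ in \eqref{O23} and $(A+B)(B+C) + A(C+D)$ in \eqref{O24} is what I'd match against the $(x^2-y)Z$, $(x^2-y)W$, $ZW$, $W^2$, $Z^2$, and $(x^2-y)^3$ top-degree parts.

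The bookkeeping of which combination of \eqref{O21}--\eqref{O24} produces which product is most efficiently organized by a single generating-function / residue computation: for each monomial $u = \alpha(-1)^{i_1}\alpha(-2)^{i_2}\alpha(-3)^{i_3}\alpha(-4)^{i_4}\mathbf 1$ of total weight matching the degree budget, compute $u \circ_2 v$ via \eqref{start-of-O2-lemma} and Lemma \ref{Y+-lemma}, retaining only the top-degree part (dropping singular-mode contributions, which vanish by \eqref{Y-}--\eqref{Y-2}, and dropping $(1+x)^{-1}$-expansion tails that land in $F_r(\mathbf 1)$). Since this is precisely the computation done in the proof of Lemma \ref{first-ABC-lemma} for a handful of $(i_1,i_2,i_3,i_4)$, and the Appendix is advertised as containing the remaining cases, I would structure the proof as: (i) record the homogeneous representatives of $x^2-y$, $Y$, $Z$, $W$; (ii) expand each of the nine products and collect the top-degree part; (iii) invoke the (appendix) computations of $u\circ_2 v$ for the relevant $u$ to exhibit each top-degree part as an explicit $\C$-combination of elements of $O_2^\circ(V)$; (iv) conclude that the product lies in $O_2(V) + F_r(\mathbf 1)$ with the stated $r$.

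\textbf{Expected main obstacle.} The conceptual content is light, but the danger is in step (ii)--(iii): keeping track of degree strata correctly when $x^2 - y$ itself is not homogeneous, and making sure that the ``lower-order'' discards are genuinely in $F_r(\mathbf 1)$ for the claimed $r$ rather than $F_{r+1}(\mathbf 1)$. In particular one must be careful that multiplying a degree-$2$ relation by the degree-$2$ element $x^2-y$ (or cubing it) does not secretly shift the filtration level of the error terms, since $*_2$-multiplication mixes degrees downward; I would isolate this by proving a small bookkeeping claim that for homogeneous $a$ of degree $d$ and any $b$, $a *_2 b$ and the ordinary product $ab$ agree modulo $F_{d + \mathrm{wt}\,b - 1}(\mathbf 1)$-type corrections, then apply it uniformly. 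The only other nuisance is verifying that no singular (nonnegative-mode) contributions survive in the degree-$6$ case $(x^2-y)^3$, $Z^2$, etc.; this follows from \eqref{Y-} and \eqref{Y-2} exactly as in Lemma \ref{first-ABC-lemma}, but with more terms to check.
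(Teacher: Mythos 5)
Your plan coincides with the paper's actual proof: the paper first computes the top-degree parts of left $*_2$-multiplication by $x^2-y$, $Y$, $Z$, $W$ in the $A,B,C,D$ shorthand with controlled $F_r(\mathbf{1})$ errors (Lemmas \ref{appendixlem1}, \ref{Ymult}, \ref{Zmult}, \ref{Wmult}), then kills each resulting top-degree product using the relations of Lemma \ref{first-ABC-lemma}, their algebraic consequences in Corollary \ref{circlecor}, and the extra $\circ_2$-computation $C^3v\sim_2 f_{r+1}(\mathbf{1})$ of Lemma \ref{C^3} --- all of which your steps (i)--(iv) and your residue computation for the relevant $u\circ_2 v$ would reproduce. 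Your identified obstacle (filtration bookkeeping when multiplying non-homogeneous elements) is precisely what the $F_r$ versus $F_{r+1}$ error terms in those appendix lemmas resolve, so the proposal is correct and essentially the paper's argument.
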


\begin{proof}
We first prove line \eqref{YZW1}.  To show $(x^2 - y) Y\in O_2(V) +F_2(\bf{1}),$ we use the associativity of $*_2$, the fact that ${\bf 1}$ is a multiplicative identity with respect to $*_2$, apply Lemmas  \ref{Ymult} and  \ref{appendixlem1}, and then use \eqref{O??}, which gives
\begin{eqnarray*}
(x^2-y)Y &=& (x^2-y)  *_2 (Y * _2 {\bf 1} )  \ \sim_2 \   (x^2 - y) *_2 \big( \frac{1}{2} B(C+D){\bf 1} + f_0({\bf 1}) \big) \\
&\sim_2&  (-6A(B+C)-6B(C+D)-2B(A+B))\frac{1}{2}B(C+D){\bf 1}+f_2(\bf {1})\\
&\sim_2& (-3AB(B+C)(C+D)-B^2(A+B)(C+D)){\bf 1}+f_2({\bf 1}),
\end{eqnarray*}
for some $f_2({\bf 1})\in F_2({\bf 1})$ since $Y \in F_2({\bf 1})$.  By  Eqn.\ \eqref{coreq1}, we have $-3AB(B+C)(C+D){\bf 1} \sim_20$ and by Eqn.\ \eqref{coreq6}, we have $B^2(A+B)(C+D){\bf 1}   \sim_2 0$, proving $(x^2-y)Y\in O_2(V) + F_2(\mathbf{1}).$

The fact that $Y^2 \in O_2(V) + F_2({\bf 1})$, follows from  Lemma \ref{Ymult} and Eqn.\ \eqref{O??} which give   
\[Y^2 =  Y *_2 (Y *_2 {\bf 1}) \  \sim_2  \ Y *_2 \big( \frac{1}{2} B(C + D) {\bf 1} + f_0({\bf 1}) \big)  \sim_2 \frac{1}{4} B^2 (C + D)^2 {\bf 1} + f_2({\bf 1}) \sim_2  f_2({\bf 1}), \]
for some $f_2({\bf 1}) \in F_2( {\bf 1})$.

We next prove line \eqref{YZW2}. 
To prove $(x^2-y)Z \in O_2(V) + F_3(\mathbf{1})$, we apply Lemma \ref{Zmult}, then Lemma \ref{appendixlem1}, and then Eqn.\ (\ref{O21}), giving
\begin{eqnarray*}
 (x^2-y)Z &=& (x^2-y) *_2 ( Z *_2 {\bf 1} ) \ \sim_2 \  (x^2-y) *_2 \Big( \frac{1}{2} BC^2 {\bf 1} + f_1({\bf 1}) \Big) \\
&\sim_2 & \big(-6A(B+C)-6B(C+D)-2B(A+B) \big)\frac{1}{2}  BC^2{\bf {1}}+f_3(\mathbf{1})\\
&\sim_2 &\frac{1}{2}\big(-6ABC^2(B+C)-2B^2C^2(A+B) \big){{\bf 1}}+f_3(\mathbf{1}),
\end{eqnarray*}
for some  $f_1({\bf 1}) \in F_1({\bf 1})$ and $f_3(\mathbf{1})\in F_3(\mathbf{1})$. 
By Lemma \ref{C^3}, we have
\begin{equation*}
-6ABC^2(B+C){\bf{1}}\sim_2-6AC^2(B+C)^2{\bf{1}}+  f_3(\mathbf{1}) 
\end{equation*}
for some $f_3(\mathbf{1})\in F_3(\mathbf{1})$, since $AB {\bf 1}, AC {\bf 1} \in F_2({\bf 1})$, and similiarly
\begin{equation*}
-2B^2C^2(A+B){\bf{1}} \sim_2   -2B(B+C)C^2 (A+B){\bf 1} +  g_3(\mathbf{1})
\end{equation*}
for some $g_3(\mathbf{1})\in F_3(\mathbf{1})$.   Therefore, 
\begin{eqnarray*}
(x^2-y)Z \sim_2 \frac{1}{2}  \big(-6AC^2(B+C)^2-2B(B+C)C^2(A+B) \big) {\bf 1} + g_3({\bf 1}),\end{eqnarray*} 
for some $g_3({\bf 1})\in F_3({\bf 1}).$ Then $(x^2-y) Z \sim_2 g_3({\bf 1})$ by  Eqns.\ \eqref{coreq3} and \eqref{coreq5}.

Applying  Lemma \ref{Wmult}, and then  \ref{appendixlem1}, expanding, and then using Eqns.\  \eqref{coreq2}, \eqref{O??}, and \eqref{coreq6}, we have 
\begin{eqnarray*}
(x^2-y)W &=& (x^2-y) *_2 ( W *_2 {\bf 1})  \ \equiv_2 \  (x^2 - y) *_2 \Big( \frac{1}{2} A^2(C+D) {\bf 1} + f_1({\bf 1})  \Big)\\
&\sim_2& \big(-6A(B+C)-6B (C+D)-2B(A+B)\big)\frac{1}{2}A^2(C+D) {\bf 1} +f_3({\bf 1})\\
&=& -3A^3(B+C)(C+D)-3A^2B(C+D)^2-A^2B(A+B)(C+D){\bf 1 }+f_3({\bf 1})\\
& \sim_2  & 0 +  f_3({\bf 1}),
\end{eqnarray*}
 for some $f_1({\bf 1}) \in F_1({\bf 1})$ and $f_3(\mathbf{1})\in F_3(\mathbf{1})$. 

That $ZY \in  O_2(V) + F_3(\mathbf{1})$ follows from Lemmas \ref{Ymult}  and \ref{Zmult},  and Eqn.\ \eqref{O24} by
\begin{equation*}
ZY = Z *_2 (Y *_2 {\bf 1}) \sim_2  2BC^2(\frac{1}{2} B(C+D)) {\bf 1} + f_{3}( {\bf 1}) \sim_2 0 + f_{3}({\bf 1}),
\end{equation*}
for some $f_{3}({\bf 1})  \in F_{3}({\bf 1})$.

Similarly, that $YW \in O_2(V)+ F_3(\mathbf{1})$ follows from Lemmas \ref{Ymult} and \ref{Wmult}, and Eqn.\ \eqref{O??}, by
\begin{equation*}
YW = Y *_2 (W *_2 {\bf 1}) \equiv_2 \frac{1}{2} B(C+D) \frac{1}{2} A^2(C+D){\bf 1} + f_{3}( {\bf 1}) \sim_2 0 + f_{3}({\bf 1}),
\end{equation*}
for some $f_{3}({\bf 1})  \in F_{3}({\bf 1})$.

We next prove \eqref{YZW3}.  That $W^2\in O_2(V) + F_4(\mathbf{1})$ follows from Lemma \ref{Wmult} and \eqref{O??}, by
\begin{equation*}
W^2 = W *_2 ( W *_2 {\bf 1}) \equiv_2 \frac{1}{4} A^4 (C+D)^2 {\bf 1} + f_4 ({\bf 1}) \sim_2 0 + f_4({\bf 1}) 
\end{equation*}
for some $f_4({\bf 1}) \in F_4 ({\bf 1})$. 

We have by Lemma \ref{Zmult},  and then using Lemma \ref{C^3}, that 
\begin{eqnarray*}
Z^2  = Z *_2(Z *_2 {\bf 1})  \sim_2 \big(\frac{1}{2}BC^2\big)^2{\bf 1}+f_4({\bf 1}) \sim_2 g_{4} ({\bf 1}) + f_4({\bf 1})
\end{eqnarray*} 
for some $f_4({\bf 1}), g_{4}({\bf 1}) \in F_4({\bf 1})$, giving $Z^2 \in O_2(V) + F_4({\bf 1})$.

The fact that  $ZW \in  O_2(V) + F_4 (\mathbf{1})$, follows from Lemmas \ref{Wmult} and \ref{Zmult}, and  Eqn.\ \eqref{O21} by 
\begin{eqnarray*}
Z *_2 W *_2 {\bf 1} \sim_2 (\frac{1}{2} BC^2)\frac{1}{2} A^2(C+D) {\bf 1} + f_4({\bf 1}) \sim_2 f_4({\bf 1})
\end{eqnarray*}
for some $f_4({\bf 1}) \in F_4({\bf 1})$.

Finally we have left to show that $(x^2 - y)^3\in O_2(V) + F_4(\mathbf{1})$.  By Lemma \ref{appendixlem1}, we have that 
\begin{eqnarray*}
\lefteqn{(x^2-y)^3  }\\
&=& (x^2-y) *_2 (x^2 - y) *_2 (x^2 - y) {\bf 1 }  \\
& \sim_2& -8  (3A(B+C)+ 3B(C+D)+ B(A+B))^3{\bf 1}+f_4({\bf 1}) \\
&=&  -8 ( 27 A^3(B+C)^3 + 81 A^2(B+C)^2 B(C+D) + 81A(B+C)B^2(C+D)^2 \\
& & \quad + \,  27 B^3(C+D)^3 + 27 A^2(B+C)^2 B(A+ B) +  54A(B+C) B(C+D) B(A+B)    \\
& & \quad + \,   27 B^2(C+D)^2 B(A+B)  + 9 A(B+C) B^2 (A+B)^2 + 9 B(C+D) B^2(A+B)^2 \\
& & \quad + \,  B^3 (A+B)^3) {\bf 1} +  f_4({\bf 1})\\
& \sim_2 & -8B^3(A+B)^3 {\bf 1} +  f_4({\bf 1}) 
\end{eqnarray*}
for some $f_4({\bf 1}) \in F_4 ({\bf 1})$, where the last line follows by showing each of the terms, except the last term,  equivalent to 0 as follows:

$A^3(B+C)^3 {\bf 1} \sim_2 - 2A^3B(B+C)(C+D) {\bf 1}$ by (\ref{O23}) and this is $\sim_2 0$ by (\ref{coreq1}).

$A^2(B+C)^2 B(C+D) {\bf 1} \sim_2 0 $ by (\ref{coreq2}).

$A(B+C)B^2(C+D)^2 {\bf 1} \sim_2 0 $  by (\ref{O??}).

$B^3(C+D)^3 {\bf 1}  \sim_2 0 $  by (\ref{O??}). 

$A^2(B+C)^2 B(A+ B ) {\bf 1} \sim_2 0 $  by (\ref{coreq8}).

$A(B+C) B(C+D) B(A+B)  {\bf 1}\sim_2 0 $   by (\ref{coreq8}).

$B^2(C+D)^2 B(A+B){\bf 1} \sim_2 0  $  by (\ref{O??}).

$A(B+C) B^2 (A+B)^2 {\bf 1} \sim_2 0$  by (\ref{coreq8}). 

$B(C+D) B^2(A+B)^2 {\bf 1}  \sim _2  0$ by (\ref{coreq9}).\\
And  then for the last term, we use Eqn.\ (\ref{O24}), and then  Eqns.\ (\ref{O21}) and (\ref{O??}), and then finally Lemma \ref{C^3} to obtain
\begin{eqnarray*}
B^3 (A+B)^3  {\bf 1} &=& (B(A+B))^3 {\bf 1} \ \sim_2 \   \big(- C (A+B) - A(C+D) \big)^3 {\bf 1}\\
& \sim_2 & -C^3(A+B)^3{\bf 1}  \ \sim_2 \ g_4({\bf 1}),
\end{eqnarray*}
for some $g_4 ({\bf 1}) \in F_4({\bf 1})$, 
giving the result.  
\end{proof}

\subsection{Relations satisfied by the generating set of $A_2(M_a(1))$}

We prove that certain elements are in $I_2$ for $A_2(M_1(1)) \cong \mathbb{C}[x,y]\langle Y, Z, W \rangle/I_2$ in several stages. In general, we do this by using Lemma \ref{lower-order-lemY} in conjunction with the following observation to determine the relations in $A_2(M_a(1))$ satisfied by the generators $x,y, Y, Z, W$:

\begin{cor} Let $V = M_a(1)$ and let $u,v \in V$ be such that $u \in F_r({\bf 1})$ and $v \in F_s({\bf 1})$, with $r,s \in \mathbb{N}$.  By the definition of $*_2$, the product $u *_2 v + O_2(V) \in A_2(V) \cong \mathbb{C}[x,y] \langle Y, Z, W \rangle/I_2$  is in $F_{r+s}({\bf 1}) + O_2(V)$, thus modulo $O_2(V)$ is equivalent to a linear combination of words  in $x, y, Y, Z, W$ that is of total degree less than or equal to $r+s$ where   $x \in F_1({\bf 1}) + O_2(V)$ has degree one, $y, Y \in F_2({\bf 1}) + O_2(V)$ each have degree 2, and $Z, W \in F_3({\bf 1}) + O_2(V)$, each have degree 3.  
\end{cor}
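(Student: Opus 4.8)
The statement to prove is the final Corollary: for $u \in F_r(\mathbf{1})$ and $v \in F_s(\mathbf{1})$, the product $u *_2 v + O_2(V)$ lies in $F_{r+s}(\mathbf{1}) + O_2(V)$ and hence is expressible modulo $O_2(V)$ as a linear combination of words in $x, y, Y, Z, W$ of total degree at most $r+s$, with the indicated degree assignments.

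\medskip

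The plan is to argue in two halves. First I would establish the containment $u *_2 v \in F_{r+s}(\mathbf{1})$ directly from the definition \eqref{*_n-definition} of $*_2$. Writing $u = \alpha(-k_1)\cdots\alpha(-k_m)\mathbf{1}$ with $m \le r$ and $v = \alpha(-l_1)\cdots\alpha(-l_p)\mathbf{1}$ with $p \le s$, the product $u *_2 v$ is a finite $\mathbb{C}$-linear combination of residues of the form $\res_x x^N (1+x)^{\mathrm{wt}\, u + 2} Y(u,x) v$. Since $M_a(1)$ is a Heisenberg module, each mode $\alpha(j)$ appearing in $Y(u,x)$ either commutes with everything (if $j<0$) or, if $j \ge 0$, contracts against one of the $\alpha(-l_i)$ in $v$ and thereby annihilates two modes. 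So every monomial occurring in $Y(u,x)v$ has at most $m + p$ factors $\alpha(-\cdot)$, and indeed the coefficient extracted by $\res_x$ is a sum of vectors $\alpha(q_1)\cdots\alpha(q_t)\mathbf{1}$ with $t \le m+p \le r+s$; the nonnegative-mode contractions only decrease the count. Hence $u *_2 v \in F_{r+s}(\mathbf{1})$, and a fortiori $u *_2 v + O_2(V) \in F_{r+s}(\mathbf{1}) + O_2(V)$.

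\medskip

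The second half is to show that any element of $F_{r+s}(\mathbf{1}) + O_2(V)$ is a linear combination, modulo $O_2(V)$, of words in $x, y, Y, Z, W$ of total degree $\le r+s$ under the grading $\deg x = 1$, $\deg y = \deg Y = 2$, $\deg Z = \deg W = 3$. Here I would invoke the reduction machinery already developed in Section 5: Theorem \ref{main-generators-thm} tells us $A_2(M_a(1))$ is generated by $x_2, y_2, \tilde y_2, z_2, \tilde z_2$, equivalently (after the change of variables \eqref{change-var1-n}--\eqref{change-var3-n}) by $x, y, Y, Z, W$. The crucial point is that every reduction step used to prove this—Corollary \ref{recursion-corollary-n}, Corollary \ref{reducing-general-generators-cor-heisenberg}, and the chain of Propositions \ref{reducing-one-prop}--\ref{reduce-no01} and Lemmas \ref{helprel}--\ref{one^1two^2four}—is \emph{degree non-increasing}: each rewriting of a vector $\alpha(-k_1)\cdots\alpha(-k_m)\mathbf{1}$ modulo $O_2(V)$ produces a linear combination of vectors with at most $m$ factors, together with terms in $F_{m-1}(\mathbf{1})$ (this is exactly the role of the $f_{\ast}(\mathbf{1})$ and $g_{\ast}(\mathbf{1})$ remainder terms and the explicit lowering of total degree recorded throughout those statements). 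Tracking the number-of-modes count through the reductions, a vector in $F_{r+s}(\mathbf{1})$ reduces modulo $O_2(V)$ to a polynomial expression in the generators in which each generator of number-of-modes weight $d$ contributes $d$ to the total degree; since $x$ comes from a 1-mode vector, $y$ and $Y$ from 2-mode vectors, and $Z, W$ from 3-mode vectors, the resulting word has total degree $\le r+s$. Finally, because $*_2$ restricted to these generators matches their identification as elements of $A_2(V)$ (the zero-mode / multiplication compatibility $o(u *_2 v) = o(u)o(v)$ and the fact that $\mathbf{1} + O_2(V)$ is the identity), the product $u *_2 v + O_2(V)$ is the corresponding product of words, of total degree $\le r + s$.

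\medskip

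The main obstacle I anticipate is making the ``degree non-increasing'' claim fully precise and uniform: the reductions in Section 5 are stated piecemeal, each tracking its own remainder term, so the clean statement needs a single bookkeeping device—namely the filtration of $M_a(1)$ by $F_r(\mathbf{1})$ and the induced filtration on $A_2(V)$—together with the observation that $O_2(V)$ is compatible with this filtration in the sense that $\circ_2$ and $(L(-1)+L(0))$ both preserve the property ``total number of modes $\le r$'' up to lower filtration pieces. Once one observes that the generators $x, y, Y, Z, W$ sit in filtration degrees $1, 2, 2, 3, 3$ respectively and that the whole generation argument of Section 5 takes place inside this filtered setting, the corollary follows; the remaining work is purely the degree arithmetic $r + s$, which I would present as a short induction on $r+s$ using the multiplication formulas \eqref{multiplication-cor-formula-level2} and \eqref{reduce-equation-Heisenberg-level2} rather than re-deriving anything.
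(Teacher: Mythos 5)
Your proposal is correct and follows exactly the route the paper intends: the paper states this corollary without proof, justifying the first claim by the phrase ``by the definition of $*_2$'' (your normal-ordering/contraction count for the Heisenberg modes is the right way to make that precise), and leaving the second claim to the fact that every reduction in Section 5 --- the recursions, the $f_*({\bf 1})$ and $g_*({\bf 1})$ remainders, and the homogeneous change of variables to $Y, Z, W$ --- is compatible with the filtration by number of modes. Your two-part write-up, including the induction on $r+s$ for the degree bookkeeping, correctly supplies the details the paper leaves implicit.
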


In other words, we can use the degree of polynomials in  $x, y, Y, Z, W$ in conjunction with the filtration $F_r({\bf 1})$ of $M_a(1)$ and the higher order terms of the polynomial relations given in Lemma \ref{lower-order-lemY} to completely determine the lower order terms in the relations, as we now proceed to do.

\begin{prop}\label{first-relations-prop} 
The following are elements in $I_2$ for $A_2(M_a(1))\cong \mathbb{C}[x,y]\langle Y, Z, W \rangle/I_2$
\begin{equation}
(x^2 - y + 4) Y, \quad \mathrm{and} \quad Y^2 - Y.
\end{equation}
\end{prop}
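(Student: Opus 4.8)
The strategy is to combine the ``higher order'' information from Lemma \ref{lower-order-lemY} with the zero-mode action on the irreducible modules (Lemma \ref{o-mode-lemma}, Corollary \ref{level-one-cor}) and the constraint $I_2 \subset I_1$ from Corollary \ref{first-commutator-cor}(iii). The key principle is the one articulated in the corollary just before the proposition: a relation in $I_2$ expressed as a polynomial in $x, y, Y, Z, W$ has its top-degree part controlled by Lemma \ref{lower-order-lemY}, and the lower-degree ``tail'' is forced by two facts — it must lie in $I_1$ (so it is a combination of $(x^2-y)(x^2-y+2)$, $Y$, $Z$, $W$ modulo $I_1$), and it must annihilate the relevant cyclic $A_2(V)$-modules $\Omega_2(M_a(1,\lambda))$ obtained from the irreducible $M_a(1)$-modules.

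\textbf{Step 1: the relation $(x^2-y+4)Y$.} From line \eqref{YZW1} of Lemma \ref{lower-order-lemY}, $(x^2-y)Y \in O_2(V) + F_2(\mathbf 1)$, i.e. $(x^2-y)Y$ is, modulo $O_2(V)$, a polynomial in $x, y, Y$ of total degree $\le 2$ other than the leading term — so $(x^2-y)Y \equiv_2 aY + by + cx^2 + dx + e$ for scalars $a,\dots,e$ (no $x^2 Y$ or higher since that would have degree $>2$ after accounting for what the $F_2$ bound allows; in fact the only degree-$2$ words available are $y, Y, x^2$). Since $(x^2-y)Y \in I_1$, reducing modulo $I_1$ (where $Y = 0$ and $y = x^2$ on one component, $y = x^2+2$ on the other) pins down $by + cx^2 + dx + e$ to be a multiple of $(x^2-y)(x^2-y+2)$, but that element has degree $4$, forcing $b = c = d = e = 0$; hence $(x^2-y)Y \equiv_2 aY$, i.e. $(x^2 - y - a)Y \in I_2$. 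To find $a$, apply the zero-mode action: using Corollary \ref{level-one-cor}, on $w = \alpha(-2)v_\lambda \in \Omega_2(M_a(1,\lambda))$ we have $Y.w = w$ and $x.w = \lambda w$, $y.w = (\lambda^2+4)w$, so $(x^2 - y - a).(Y.w) = (\lambda^2 - \lambda^2 - 4 - a)w = (-4-a)w$; for this to vanish we need $a = -4$, giving $(x^2 - y + 4)Y \in I_2$. (One should double-check consistency on $v = \alpha(-1)^2 v_\lambda$, where $Y.v = 0$, so the relation holds trivially.)

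\textbf{Step 2: the relation $Y^2 - Y$.} From line \eqref{YZW1}, $Y^2 \in O_2(V) + F_2(\mathbf 1)$, so $Y^2 \equiv_2 a'Y + b'y + c'x^2 + d'x + e'$. Again $Y^2 \in I_1$ and reduction mod $I_1$ kills the $y, x^2, x, 1$ part as in Step 1 (degree reasons), so $Y^2 \equiv_2 a'Y$, i.e. $Y^2 - a'Y \in I_2$. Evaluate on $w$: $Y^2.w = Y.w = w$ while $a'Y.w = a'w$, forcing $a' = 1$, so $Y^2 - Y \in I_2$. Then check on $v$: $Y.v = 0$, so $Y^2.v = 0 = 1\cdot Y.v$, consistent.

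\textbf{Main obstacle.} The genuine content is verifying that the degree/filtration bookkeeping in Steps 1--2 is tight enough to eliminate every lower-order term \emph{before} appealing to the module action — i.e. that combining ``lies in $I_1$'' with ``top term known from Lemma \ref{lower-order-lemY}'' already forces the tail to be a scalar multiple of $Y$, so that a single scalar is all that remains to be determined. The subtlety is that $F_2(\mathbf 1) + O_2(V)$ a priori also contains the degree-$1$ term $x$ and the constant $1$, and one must argue these cannot appear: this follows because their coefficients, being forced to combine into an element of $I_1$ of degree $\le 2$, must vanish since the generator $(x^2-y)(x^2-y+2)$ of the relevant part of $I_1$ has degree $4$ and $Y, Z, W$ are excluded by degree or by the structure of $Y^2, (x^2-y)Y$ as pure-$F_2$ expressions. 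Once this rigidity is in hand, the module evaluation is a one-line computation using Corollary \ref{level-one-cor}, and the only real care needed is to confirm the answer is consistent on both the $v$ and the $w$ generators of $\Omega_2(M_a(1,\lambda))$ and for all $\lambda$.
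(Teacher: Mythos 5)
Your proposal is correct and follows essentially the same route as the paper: use Lemma \ref{lower-order-lemY} to bound the tail of $(x^2-y)Y$ and $Y^2$ inside $F_2(\mathbf 1)$, observe that membership in $I_1$ together with the degree bound forces the tail to be a scalar multiple of $Y$ (since the other generators of $I_1$ have degree at least $3$), and then pin down the scalar by acting with zero modes on $w=\alpha(-2)v_\lambda$. Your extra justification of why no $x$-, $y$-, or constant terms can survive is exactly the rigidity the paper invokes implicitly when it asserts $f_2(\mathbf 1)\in I_1\cap F_2(\mathbf 1)$ must equal $cY$.
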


\begin{proof}
From Lemma \ref{lower-order-lemY} we have $(x^2 - y)Y \in O_2(V) + F_2(\mathbf{1})$, and the fact that $I_2 \subset I_1$, but $(x^2 - y)Y \in I_1$ already since $Y \in I_1$,  implies that there exists $f_2(\mathbf{1}) \in I_1 \cap F_2(\mathbf{1})$ of the form $f_2(\mathbf{1}) = c Y$ for some constant $c$ such that 
\begin{equation*}
(x^2 - y) Y  \equiv_2  f_2(\mathbf{1} ) \ = \ cY.
\end{equation*}
Acting on $w = \alpha(-2)v_\lambda$, we have by Lemma \ref{o-mode-lemma} and Corollary \ref{level-one-cor} that for all $\lambda \in \mathbb{C}$
\[-4w \ = \  (\lambda^2 - (\lambda^2 + 4))w  \ =  \ (x^2 - y).w  \ = \  (x^2 - y)Y.w  = cY.w  \  = \  cw ,\]
implying $-4 = c$, and giving $(x^2 - y) Y  \equiv_2  -4Y$,
i.e. $(x^2 - y + 4)Y \equiv_2 0$.

Similarly $Y^2 \in O_2(V) + F_2(\mathbf{1})$, and also $Y^2 \in I_1$,  implies that there exists a constant $c$ such that 
\begin{equation*}
Y^2 \equiv_2  cY .
\end{equation*} 
Acting on $w = \alpha(-2) v_\lambda$, we have 
\begin{equation*}
w = Y.w = Y^2.w  = c Y.w   =  c w,
\end{equation*} 
implying $c = 1$, i.e. $Y^2 - Y \equiv_2 0$.  
\end{proof}

Now we proceed to determine the commutator relations involving $Y$.  First we prove a general statement regarding commutation relations in $A_2(M_a(1))$.  

\begin{lem}\label{reduce-commutators-lemma}
For $i,j \in \mathbb{Z}_+$, let $u + O_2(V), v + O_2(V) \in A_2(V) = V/ O_2(V)$ be of homogeneous degree $i$ and $j$, respectively in the the $\alpha(-k)$ for $k \in \mathbb{Z}_+$. Then modulo $O_2(V)$
\[ u *_2 v - v *_2 u  \in F_{i+ j - 2}(\bf{1}) .\]
\end{lem}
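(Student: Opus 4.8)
The plan is to use the commutator formula from Lemma~\ref{DLM-commutator-lemma}(iii), namely
\[
u *_2 v - v *_2 u \equiv_2 \res_x Y(u,x) v\, (1+x)^{\mathrm{wt}\, u - 1},
\]
together with a degree count on the right-hand side. Writing $u$ as a homogeneous vector of degree $i$ in the modes $\alpha(-k)$, $k \in \mathbb{Z}_+$, means $u$ is a linear combination of monomials $\alpha(-k_1)\cdots\alpha(-k_i)\mathbf{1}$, and similarly $v$ is a linear combination of monomials of length exactly $j$. The residue $\res_x Y(u,x)v\,(1+x)^{\mathrm{wt}\, u-1}$ is a finite sum of terms, each of which is a product of modes acting on $v$: for a homogeneous $u$ of degree $i$, the operator $Y(u,x)$ is (up to normal ordering and scalars) a sum of products of $i$ modes $\alpha(m_1)\cdots\alpha(m_i)$, and applying this to a length-$j$ monomial in $v$ produces, after using the Heisenberg commutation relations to bring everything to normal-ordered (or ``annihilation on the right'') form, a linear combination of monomials whose length is at most $i+j$, and whose parity matches $i+j$.

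The key point is that the $x^{-1}$ coefficient (the residue) is extracted with a genuine nonzero total power of $x$ forced by the term $(1+x)^{\mathrm{wt}\, u - 1}$ and the structure of $Y(u,x)$, which guarantees that at least one contraction (one application of the bracket $[\alpha(m),\alpha(-k)] = m\,\delta_{m-k,0}$) must occur; each contraction lowers the monomial length by $2$. Concretely, I would argue that every monomial appearing in $\res_x Y(u,x)v\,(1+x)^{\mathrm{wt}\, u - 1}$ has length at most $i+j-2$: the residue picks out the coefficient of $x^0$ in $x\cdot Y(u,x)v\,(1+x)^{\mathrm{wt}\, u -1}$; since $Y(u,x)v \in V((x))$ and the lowest power of $x$ occurring in $Y(u,x)v$ when no contraction happens is $x^{-\mathrm{wt}\, u}$ (from the most singular normal-ordered term), to reach total $x$-degree $-1$ one needs either the positive powers supplied by $(1+x)^{\mathrm{wt}\, u-1}$ or lower-order (i.e.\ shorter, hence contracted) terms of $Y(u,x)v$; a short bookkeeping argument — essentially the same one used to prove Eqn.~\eqref{L-reduction-special-case} and Corollary~\ref{recursion-corollary-n} — shows the uncontracted top-length-$(i+j)$ term contributes $\res_x$ of something with no $x^{-1}$ coefficient, so only contracted terms survive. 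Hence modulo $O_2(V)$ the commutator lies in $F_{i+j-2}(\mathbf{1})$, which is the claim. I expect the main obstacle to be the careful verification that the single uncontracted (top-degree) term contributes zero to the residue, i.e.\ that $\res_x(1+x)^{\mathrm{wt}\, u - 1}\, {}^{\circ}_{\circ}Y(u,x){}^{\circ}_{\circ}v$ (normal-ordered, no contraction) vanishes — this follows because that normal-ordered piece, applied to $v$ and then expanded, is a power series in $x$ whose only negative powers come from the intrinsic $x^{-\mathrm{wt}\, u}$ behavior of $Y$, and the weight grading forces the $x^{-1}$-coefficient of the uncontracted part to be a total weight-$0$ operator which here is a multiple of $\alpha(0)$ composed with shorter monomials, hence of length $\le i+j-2$ anyway after one more use of $\alpha(0)$ acting trivially on $\mathbf{1}$-generated vectors. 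Assembling these observations gives the lemma.
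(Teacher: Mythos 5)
Your route---via the commutator formula \eqref{dlmcommutation} of Lemma \ref{DLM-commutator-lemma}(iii)---is genuinely different from the paper's and is viable. The paper instead writes $u*_2v - v*_2u = u*_2(v*_2\mathbf{1}) - v*_2(u*_2\mathbf{1})$, notes that left multiplication by a homogeneous length-$i$ (resp.\ length-$j$) element is a sum of $i$-fold (resp.\ $j$-fold) products of Heisenberg modes, and observes that the purely-creation parts of the two operator products commute and therefore cancel in the difference; every surviving term contains at least one singular mode, which either kills the term or contracts, lowering the length by two. Your version reduces to monomials (so that $u$ is weight-homogeneous, as \eqref{dlmcommutation} requires) and uses that $u*_2v - v*_2u \equiv_2 \sum_{k\ge 0}\binom{\mathrm{wt}\,u - 1}{k}\,u_k v$ involves only the non-negative modes of $u$; this is the more standard ``associated graded'' argument and has the advantage of generalizing verbatim to arbitrary level $n$, since the right-hand side of \eqref{dlmcommutation} does not depend on $n$.

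However, your justification of the one step you correctly identify as the crux is wrong as written, even though its conclusion is true. You assert that the uncontracted (purely creation) part of $Y(u,x)v$ has lowest $x$-power $x^{-\mathrm{wt}\,u}$, and that its $x^{-1}$-coefficient is a weight-zero operator, ``a multiple of $\alpha(0)$ composed with shorter monomials.'' Neither claim is correct. For $u = \alpha(-k_1)\cdots\alpha(-k_i)\mathbf{1}$, the mode $\alpha(n)$ enters $\frac{1}{(k_l-1)!}\partial_x^{k_l-1}\alpha(x)$ with coefficient $\binom{-n-1}{k_l-1}x^{-n-k_l}$, which vanishes for $-k_l < n < 0$; hence the creation modes that actually appear have $n \le -k_l$ and contribute $x$-degree $-n-k_l \ge 0$. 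The purely-creation part of $Y(u,x)$ therefore lies in $(\mathrm{End}\,V)[[x]]$ (this is exactly the content of $Y^+(\alpha(-m)\mathbf{1},x)\in(\mathrm{End}\,V)[[x]]$ in Lemma \ref{Y+-lemma}, with \eqref{Y-} handling the singular parts), it has \emph{no} negative powers of $x$ whatsoever, and a fortiori no weight-zero or $\alpha(0)$ contribution; all negative powers of $Y(u,x)v$ come from the annihilation modes. The correct argument is simply that $\res_x$ of a series in non-negative powers of $x$ times the polynomial $(1+x)^{\mathrm{wt}\,u-1}$ is zero, so every monomial with nonzero coefficient in $u_kv$ for $k\ge 0$ contains some $\alpha(n)$ with $n\ge 0$; that mode annihilates or contracts, dropping the length by two, and the lemma follows. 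With this repair your proof is complete.
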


\begin{proof} Since ${\bf 1}$ is a multiplicative identity with respect to $*_2$, we have $u*_2v-v*_2u=u*_2v*_2{\bf 1}-v*_2u*_2{\bf 1}$. The formulas for multiplication by $u$ and $v$ imply that $u *_2 w$ and $v *_2 w$ are linear combinations of $i$ products, respectively $j$ products, of modes acting on any $w \in A_n(V)$, and any regular modes commute with each other. Therefore, $u*_2v-v*_2u=u*_2v*_2{\bf 1}-v*_2u*_2{\bf 1}$ consists only of linear combinations of $i+j$ modes acting on the vacuum, where at least one mode is singular. A singular mode acting on any regular modes either decreases the number of modes by two or is zero.  The result follows.
\end{proof}

\begin{prop}\label{first-commutators-prop}
 We have the following commutation relations in $A_2(V)$:
\begin{equation}
YZ-ZY \equiv_2 Z , \qquad YW - WY \equiv_2 -  W .
\end{equation}
\end{prop}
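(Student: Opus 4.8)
The strategy is to exploit the commutator formula of Lemma \ref{DLM-commutator-lemma}(iii), Eqn.\ (\ref{dlmcommutation}), which computes $u *_2 v - v *_2 u$ modulo $O_2(V)$ as $\res_x Y(u,x)v(1+x)^{\mathrm{wt}\,u - 1}$, together with the degree-drop given by Lemma \ref{reduce-commutators-lemma} and the ``upper triangular'' structure of the filtration argument already used in the proof of Proposition \ref{first-relations-prop}. Concretely, $Y$ is homogeneous of degree $2$ and $Z, W$ are homogeneous of degree $3$, so by Lemma \ref{reduce-commutators-lemma} the commutators $YZ - ZY$ and $YW - WY$ lie in $F_{3}(\mathbf{1})$ modulo $O_2(V)$; moreover they lie in $I_2 \subset I_1$ since $Y, Z, W \in I_1$. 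The first step is therefore to argue, exactly as in Proposition \ref{first-relations-prop}, that an element of $I_1 \cap (F_3(\mathbf{1}) + O_2(V))$ which has degree at most $3$ as a polynomial in $x, y, Y, Z, W$ must be a linear combination $a Z + b W + c x Y + \dots$ of the degree-$\leq 3$ generators of $I_1$; in fact, since $Y$ is degree $2$ and multiplying by $x$ (degree $1$) or adding degree-$\leq 1$ multiples of $Y$ are the only ways to reach degree $3$ inside $I_1$ from $\{Y, Z, W\}$, one gets $YZ - ZY \equiv_2 aZ + bW + (\text{linear in } x)\cdot Y$ and similarly for $YW - WY$, with the precise set of allowed terms determined by homogeneity/parity considerations in the $\alpha(-k)$'s.

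The second step is to pin down the constants by acting on the two-dimensional $A_2(M_a(1))$-module spanned by $v = \alpha(-1)^2 v_\lambda$ and $w = \alpha(-2) v_\lambda$ inside $\Omega_2(M_a(1,\lambda))$. By Corollary \ref{level-one-cor} the zero-mode actions are $Y.v = W.v = Z.w = 0$, $Y.w = Z.v = w$, $W.w = v$, for every $\lambda \in \mathbb{C}$. Computing $(YZ - ZY)$ on $v$: $YZ.v = Y.w = w$ and $ZY.v = Z.0 = 0$, so $(YZ-ZY).v = w$; and on $w$: $YZ.w = Y.0 = 0$, $ZY.w = Z.w = 0$, so $(YZ-ZY).w = 0$. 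This matches exactly the action of $Z$ (namely $Z.v = w$, $Z.w = 0$), so the candidate relation $YZ - ZY \equiv_2 Z$ is consistent; one must also check that the competing terms $W$, $xY$, $x^3$, $xy$, etc.\ are \emph{forced} to have coefficient zero, which follows because their actions on the pair $(v,w)$ (computable from Lemma \ref{o-mode-lemma}) are linearly independent from that of $Z$ on this module (e.g.\ $W.v = 0$ but $W.w = v \neq 0$, and $xY$ acts on $v$ as $0$ while sending $w \mapsto \lambda w$, so evaluating at several values of $\lambda$ separates $x$-dependent terms from the $\lambda$-independent action of $Z$). An identical computation gives $(YW - WY).v = -v$ and $(YW-WY).w = 0$, matching $-W$, and again the remaining candidate terms are ruled out by their distinct actions on $(v,w)$ across varying $\lambda$.

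The third step is to upgrade ``consistent with the module action'' to ``equal in $A_2(V)$''. This is where the filtration/degree argument does the real work: having shown $YZ - ZY - Z \in I_2 \cap F_3(\mathbf{1})$ annihilates the module $\langle v, w\rangle$ for all $\lambda$, and having already listed all possible degree-$\leq 3$ representatives of such an element in $I_1$, we conclude that the only linear combination of those representatives acting as zero on all the modules $\Omega_2(M_a(1,\lambda))$ is the zero combination — using that the relations of $I_1$ themselves act by the matrices recorded at the end of Lemma \ref{o-mode-lemma}, which are nonzero, so a nonzero combination of them cannot annihilate every module. The main obstacle is the bookkeeping in this third step: one has to be careful that the $F_3(\mathbf{1})$-representatives of $I_1$-elements are genuinely spanned by $Z$, $W$, $xY$ (and possibly $x(x^2-y)$-type and $x^3, x y$ terms coming from the degree-$3$ part of the ideal $I_1$ generated by $(x^2-y)(x^2-y+2)$ and $Y, Z, W$), and that the module calculations separate all of these. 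Once that linear-algebra separation is in hand — which amounts to evaluating a handful of the zero-mode formulas of Lemma \ref{o-mode-lemma} at, say, two values of $\lambda$ — the two commutation relations follow immediately. I do not expect to need any new vertex-algebra input beyond Eqn.\ (\ref{dlmcommutation}), Lemma \ref{reduce-commutators-lemma}, Lemma \ref{o-mode-lemma}, and Corollary \ref{level-one-cor}.
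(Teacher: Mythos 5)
Your proposal follows essentially the same route as the paper's proof: Lemma \ref{reduce-commutators-lemma} places the commutators in $F_3(\mathbf{1})$ modulo $O_2(V)$, membership in $I_1$ restricts them to the ansatz $(c_1x+c_2)Y + c_3Z + c_4W$, and the zero-mode action on $v = \alpha(-1)^2v_\lambda$ and $w = \alpha(-2)v_\lambda$ for varying $\lambda$ pins down the constants. One slip: you assert $(YW-WY).v = -v$ and $(YW-WY).w = 0$, but the correct values are $(YW-WY).v = Y.0 - W.0 = 0$ and $(YW-WY).w = Y.v - W.w = -v$; since $-W.v = 0$ and $-W.w = -v$, it is the corrected values (not the ones you wrote) that match $-W$. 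Also, your third-step worry about $x^3$, $xy$, and $x(x^2-y)$-type terms is moot: none of these lie in $I_1$ (the only generator of $I_1$ not among $Y,Z,W$ is $(x^2-y)(x^2-y+2)$, which has degree $4$), so the ansatz from your first step already excludes them. This exclusion is actually essential, not just convenient, because a polynomial such as $x^2-y+4$ annihilates $\langle v,w\rangle$ for every $\lambda$ and therefore could not be separated from $0$ by the module action alone.
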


\begin{proof}
By Lemma \ref{reduce-commutators-lemma}, the fact that $Y$ and $Z$ have degree 2 and 3 respectively and commute in $A_1(M_a(1))$, the fact that $I_2 \subset I_1$ as given in Corollary \ref{level-one-cor},  and the fact that $YZ - ZY \in I_1$, we have
\[ YZ-ZY \equiv_2   (c_1x+c_2)Y + c_3 Z   +c_4W\] 
for some $c_1, \cdots, c_4\in \mathbb{C}$.

Acting on $v = \alpha(-1)^2v_\lambda \in M(1, \lambda)$ by the zero modes given by Lemma \ref{o-mode-lemma} and Corollary \ref{level-one-cor},  we have that for all $\lambda \in \mathbb{C}$ 
\begin{eqnarray*}
w \ = \ Y.w  - Z.0 \ = \ (YZ - ZY).w \ =  \  c_3w,
\end{eqnarray*}
which implies $c_3 = 1$. Then acting on $w = \alpha(-2) v_\lambda$, we have
\[0 \ = \ Y.  0 - Z.  w \   = \  (YZ- ZY).w \  = \ (c_1 x +  c_2).w + c_4 \ = \ (c_1\lambda+c_2)w+c_4v ,\]
which implies that  $c_1=c_2 = c_4 = 0$, or equivalently $YZ-ZY \equiv_2  Z $.

For $YW-WY$, similarly to the proof above for the commutator $YZ - ZY$,  by Lemma \ref{reduce-commutators-lemma}, the fact that $Y$ and $W$ have degree 2 and 3 respectively, and the fact that $I_2 \subset I_1$ as given in Corollary \ref{level-one-cor}, there are constants $c_1, \cdots, c_4$ such that 
\[YW - WY \equiv_2  (c_1x+c_2)Y + c_3 Z   +c_4W.\]

Acting on $v = \alpha(-1)^2v_\lambda \in M(1, \lambda)$ by the zero modes given by Lemma \ref{o-mode-lemma} and Corollary \ref{level-one-cor},  we have that for all $\lambda \in \mathbb{C}$ 
\[ 0 \ =  \ Y.0   - W.0  \ = \ (YW -WY). v \ = \ c_3w \]
which implies $c_3 = 0$. Then acting on $w = \alpha(-2) v_\lambda$, we have
\[ -v \ = \  Y.v - W.w  \ = \  (YW - WY).w  \ =  \  (c_1 x + c_2).w + c_4v\ =\ (c_1\lambda +c_2) w+  c_4 v,\]
which implies that  $c_1 = c_2 = 0$ and $c_4= -1$, or equivalently $YW-WY \equiv_2  - W$.
\end{proof}

Next we prove the remaining relations in the next three propositions:

\begin{prop}\label{second-relations-prop}
The following are elements in $I_2$ for $A_2(M_a(1))\cong \mathbb{C}[x,y]\langle Y, Z, W \rangle/I_2$
\begin{equation}
(x^2 - y + 4) Z,  \  (x^2 - y + 4) W, \ ZY, \ \mbox{and} \  YW .
\end{equation}
\end{prop}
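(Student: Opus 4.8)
The plan is to mirror the strategy already used in Propositions \ref{first-relations-prop} and \ref{first-commutators-prop}: combine the ``higher order terms'' information from Lemma \ref{lower-order-lemY} with the constraint $I_2 \subset I_1$ from Corollary \ref{level-one-cor}, and then pin down the remaining constants by letting the zero modes from Lemma \ref{o-mode-lemma} and Corollary \ref{level-one-cor} act on the vectors $v = \alpha(-1)^2 v_\lambda$ and $w = \alpha(-2) v_\lambda$ in $\Omega_2(M_a(1,\lambda))$.

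First I would treat $(x^2-y)Z$. By Lemma \ref{lower-order-lemY} we have $(x^2-y)Z \in O_2(V) + F_3(\mathbf{1})$, and since $(x^2-y)Z \in I_1$ already (because $Z \in I_1$), the difference must be an element of $I_1 \cap F_3(\mathbf{1})$, which by Corollary \ref{level-one-cor} is spanned modulo $O_2(V)$ by $xY$, $Z$, and $W$ (the degree $\le 3$ generators of $I_1$ that lie in $F_3(\mathbf 1)$, noting $Y$ itself has degree $2$ so $xY$ has degree $3$); thus $(x^2-y)Z \equiv_2 (c_1 x + c_2)Y + c_3 Z + c_4 W$ for constants $c_i$. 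Acting on $v$ gives, using $Z.v = w$, $Y.v = W.v = 0$ and $(x^2-y).v = 0$: $0 = (x^2-y)Z.v = (c_1\lambda + c_2)w + c_4 v$, forcing $c_1 = c_2 = c_4 = 0$. Acting on $w$, using $Z.w = 0$ and $(x^2-y).w = -4w$, $Y.w = w$: $0 = (x^2-y)Z.w = c_3 Z.w = 0$, which is automatically satisfied, so to fix $c_3$ I instead act with $(x^2-y)Z$ on $v$ again after observing $Z.v=w$: more carefully, $(x^2-y)Z.v = (x^2-y).w = -4w$ on one side, and $(c_3 Z).v = c_3 w$ on the other, so $c_3 = -4$, giving $(x^2-y+4)Z \equiv_2 0$. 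The argument for $(x^2-y)W$ is entirely parallel: write $(x^2-y)W \equiv_2 (c_1 x + c_2)Y + c_3 Z + c_4 W$, act on $v$ (using $W.v = 0$) to kill $c_1,c_2,c_4$ — wait, I must be careful here — and act on $w$ (using $W.w = v$, $(x^2-y).v = 0$) to get $(x^2-y)W.w = (x^2-y).v = 0$ while $(c_3 Z + c_4 W).w = c_4 v$, forcing $c_4 = 0$; then evaluating $(x^2-y)W.v$ via $W.v=0$ gives $0$, so I need the $w$-action supplemented with a second relation, namely acting $(x^2-y)W$ on $v$ is $0$ but on $w$ must reproduce $-4W.w = -4v$, hence $c_4 = -4$ after the relabeling — I will sort out the bookkeeping of which constant survives in the write-up, but the mechanism is exactly as in Proposition \ref{first-relations-prop}.

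For $ZY$ and $YW$: by Lemma \ref{lower-order-lemY}, $ZY, YW \in O_2(V) + F_3(\mathbf{1})$ and both lie in $I_1$, so each is $\equiv_2 (c_1 x + c_2) Y + c_3 Z + c_4 W$. Acting on $w$ with $ZY$: $Z(Y.w) = Z.w = 0$, so $(c_1\lambda+c_2)w + c_3 \cdot 0 + c_4 v = 0$ wait — I should act on both $v$ and $w$ to get enough equations. On $v$: $Z(Y.v) = Z.0 = 0$, giving $(c_1\lambda+c_2)\cdot(\text{image of }Y\text{ on }v) $ — since $Y.v = 0$ this term's contribution is just from the $(c_1x+c_2)Y$ piece acting as $(c_1x+c_2)Y.v = 0$, plus $c_3 Z.v = c_3 w$, plus $c_4 W.v = 0$; so $c_3 = 0$. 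On $w$: $Z(Y.w) = Z.w = 0$ equals $(c_1\lambda + c_2)Y.w + c_4 W.w = (c_1\lambda+c_2)w + c_4 v$, forcing $c_1 = c_2 = c_4 = 0$, hence $ZY \equiv_2 0$. For $YW$: on $v$, $Y(W.v) = Y.0 = 0 = c_3 Z.v + \ldots = c_3 w$, so $c_3 = 0$; on $w$, $Y(W.w) = Y.v = 0 = (c_1\lambda+c_2)w + c_4 v$, so $c_1 = c_2 = c_4 = 0$, giving $YW \equiv_2 0$. The main obstacle is purely organizational rather than mathematical: Lemma \ref{lower-order-lemY} only tells us the answer lives in $O_2(V)+F_3(\mathbf 1)$, so I must be scrupulous that $I_1 \cap F_3(\mathbf 1)$ is genuinely spanned (mod $O_2(V)$) by $\{xY, Z, W\}$ — this needs the explicit description of $I_1$ in Corollary \ref{level-one-cor} together with a degree count — and that the two test vectors $v, w$ supply enough independent linear conditions on the four constants in each case; a quick check shows $v$ and $w$ together always do, because $\{v, w\}$ is a basis of a two-dimensional space on which $Y, Z, W$ act by matrices that are already known explicitly from Corollary \ref{level-one-cor}.
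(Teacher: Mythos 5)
Your overall strategy is exactly the paper's: use Lemma \ref{lower-order-lemY} to place each product in $O_2(V) + F_3(\mathbf{1})$, use $I_2 \subset I_1$ to write the correction term in the form $(c_1x+c_2)Y + c_3Z + c_4W$, and determine the constants by acting on $v = \alpha(-1)^2 v_\lambda$ and $w = \alpha(-2)v_\lambda$. The $ZY$ and $YW$ cases are carried out correctly. However, there is a concrete arithmetic error running through your treatment of $(x^2-y)Z$ and $(x^2-y)W$: you repeatedly assert $(x^2-y).v = 0$. From Lemma \ref{o-mode-lemma} one has $x.v = \lambda v$ and $y.v = (\lambda^2+4)v$, so $(x^2-y)$ acts as multiplication by $-4$ on \emph{both} $v$ and $w$; it is not zero on either. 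This is what scrambles your bookkeeping of which test vector determines which constants, and it leads you to the (briefly stated, then retracted) false conclusion $c_4=0$ in the $W$ case.

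With the correct value $(x^2-y) = -4\cdot\mathrm{id}$ on $\mathrm{span}\{v,w\}$, the bookkeeping you deferred resolves as in the paper: for $(x^2-y)Z$, acting on $v$ gives $(x^2-y).(Z.v) = (x^2-y).w = -4w$ on one side and $c_3 w$ on the other, so $c_3 = -4$, while acting on $w$ gives $(x^2-y).(Z.w) = 0 = (c_1\lambda+c_2)w + c_4 v$, killing $c_1, c_2, c_4$; for $(x^2-y)W$, acting on $v$ gives $0 = c_3 w$ so $c_3 = 0$, while acting on $w$ gives $(x^2-y).(W.w) = (x^2-y).v = -4v = (c_1\lambda+c_2)w + c_4 v$, so $c_1 = c_2 = 0$ and $c_4 = -4$. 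A small additional point: your parenthetical describes $I_1 \cap F_3(\mathbf{1})$ as spanned by $xY, Z, W$, but the constant multiple of $Y$ must also be allowed (as your own ansatz $(c_1x+c_2)Y+c_3Z+c_4W$ correctly does), since $Y \in F_2(\mathbf{1}) \subset F_3(\mathbf{1})$.
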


\begin{proof}  
Since $I_2 \subset I_1$, and by Proposition \ref{first-relations-prop}, we have $(x^2 - y+ 4)Y, Y^2 - Y  \in I_2$, it follows that any element in $f_3(\mathbf{1}) \in F_3(\mathbf{1})$ that is also in $I_1$ is equivalent modulo $O_2(V)$ to 
\begin{equation}\label{general-f_4}
 f_3(\mathbf{1}) \equiv_2  (c_1x + c_2  )Y + c_3Z + c_4W ,
 \end{equation}
 for some constants $c_1, \dots, c_4\in \mathbb{C}$.  

We note that acting on $v = \alpha(-1)^2v_\lambda \in M(1, \lambda)$ by the zero modes given by Lemma \ref{o-mode-lemma} and Corollary \ref{level-one-cor},  we have that for all $\lambda \in \mathbb{C}$ 
\begin{eqnarray}\label{f_4.v}
f_3(\mathbf{1}).v  &= & (c_1x + c_2  )Y.v + c_3Z.v + c_4W.v  \  = \ c_3 w,
\end{eqnarray}
and acting on $w = \alpha(-2) v_\lambda$, we have
\begin{eqnarray}\label{f_4.w}
f_3(\mathbf{1}).w  &=&  (c_1x + c_2  )Y.w + c_3Z.w + c_4W.w \ = \  (c_1x + c_2  )w + c_4 v  \\
&=&   (c_1 \lambda + c_2  )w + c_4 v  \nonumber
\end{eqnarray}

Since by Lemma \ref{lower-order-lemY} we have $(x^2 - y)Z \in O_2(V) + F_3(\mathbf{1})$, and that $(x^2 - y)Z \in I_1$, we have that there exists $f_3(\mathbf{1}) \in F_3(\mathbf{1})$ of the form (\ref{general-f_4}), such that $(x^2 - y)Z \equiv_2 f_3(\mathbf{1})$.  Thus 
acting on $v = \alpha(-1)^2v_\lambda \in M(1, \lambda)$ by the zero modes given by Lemma \ref{o-mode-lemma} and Corollary \ref{level-one-cor},  and using Eqns.\ (\ref{f_4.v}) and (\ref{f_4.w}), we have that for all $\lambda \in \mathbb{C}$ 
\[
-4w \ = \  (\lambda^2 - (\lambda^2  + 4))w  \   =  \ (x^2 - y).w    \ = \    (x^2 - y)Z.v \ = \  f_3(\mathbf{1}).v \ = \  c_3w ,\]
implying $c_3 = -4$.   Then acting on $w = \alpha(-2) v_\lambda$, we have
\[
0 \ = \ (x^2 - y).0  \ =   \    (  x^2 - y)Z.w  \ = \  f_3(\mathbf{1}).w \ = \ (c_1 \lambda + c_2  )w + c_4 v,\]
implying $c_1 = c_2  = c_4 = 0$, i.e. $(x^2 - y)Z \equiv_2 -4Z$.

Similarly, $(x^2 - y)W \in O_2(V) + F_3(\mathbf{1})$ and $(x^2 - y)W \in I_1$ implies there exists $f_3(\mathbf{1}) \in F_3(\mathbf{1})$ of the form (\ref{general-f_4}), such that $(x^2 - y)W \equiv_2 f_3(\mathbf{1})$.  Thus acting on $v = \alpha(-1)^2v_\lambda \in M(1, \lambda)$,  we have that for all $\lambda \in \mathbb{C}$ 
\[0 \ = \ (x^2 - y).0    \ = \    (x^2 - y)W.v \ = \ f_3(\mathbf{1}).v \  = \  c_3w ,\]
implying $c_3 = 0$.   Then acting on $w = \alpha(-2) v_\lambda$, we have
\[ -4v  \ =\   (\lambda^2 - (\lambda^2 + 4))v  \  =   \ (x^2 - y).v  \ =   \    (  x^2 - y)W.w   \ = \  f_3(\mathbf{1}).w \ = \  (c_1 \lambda + c_2  )w + c_4 v ,\]
implying $c_1 = c_2 = 0$ and $c_4 = -4$ i.e. $(x^2 - y)W \equiv_2 -4W$.

To prove $ZY \in I_2$, we have that $ZY \in O_2(V) + F_3(\mathbf{1})$ and $ZY \in I_1$ implies there exists $f_3(\mathbf{1}) \in F_3(\mathbf{1})$ of the form (\ref{general-f_4}), such that $ZY \equiv_2 f_3(\mathbf{1})$.   Acting on $v = \alpha(-1)^2v_\lambda \in M(1, \lambda)$,  we have that for all $\lambda \in \mathbb{C}$ 
\[ 0  \ = \   Z.0    \ = \    ZY.v \ = \ f_3(\mathbf{1}).v \ = \  c_3 w ,\]
implying $c_3 = 0$.   Then acting on $w = \alpha(-2) v_\lambda$, we have
\begin{eqnarray*}
0 &=& Z.w  \  =  ZY.w  \ =   \ f_3(\mathbf{1}).w  \ = \ (c_1 \lambda + c_2  )w + c_4 v ,
\end{eqnarray*}
implying $c_1 = c_2 =   c_4 =  0$,  i.e. $ZY \equiv_2 0$. 

And finally, the fact that $YW.v = Y.0 = 0$ and $YW.w = Y.v = 0$ similarly gives $YW \equiv_2 0$.  
\end{proof}

\begin{prop}\label{last-commutators-prop}
We have the following commutation relation in $A_2(V)$:
\begin{equation} 
ZW-WZ \equiv_2  - \frac{1}{8}(x^2 - y)(x^2 - y + 2)  + 2 Y  . 
 \end{equation}
\end{prop}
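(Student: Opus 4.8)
The plan is to follow the same template used for the previous relations in Propositions~\ref{first-relations-prop}, \ref{second-relations-prop}, and the commutators in Proposition~\ref{first-commutators-prop}: first pin down the top-degree terms of $ZW - WZ$ modulo $O_2(V)$ using the filtration $F_r(\mathbf{1})$, then identify the remaining lower-degree freedom by intersecting with $I_1$, and finally fix all undetermined constants by evaluating zero modes on the modules $\Omega_2(M_a(1,\lambda))$. First I would invoke Lemma~\ref{lower-order-lemY}, line~\eqref{YZW3}, which gives $ZW, WZ \in O_2(V) + F_4(\mathbf{1})$, hence $ZW - WZ \in O_2(V) + F_4(\mathbf{1})$. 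Since $I_2 \subset I_1$ and $ZW - WZ \in I_1$ (as $Z, W \in I_1$), the element $ZW - WZ + O_2(V)$ must be a linear combination of words of total degree $\leq 4$ in the generators lying in $I_1$; but degree-$4$ words in $I_1$ modulo $O_2(V)$ are strongly constrained by the relations already proved (Propositions~\ref{first-relations-prop} and \ref{second-relations-prop} kill $(x^2-y+4)Y$, $Y^2-Y$, $(x^2-y+4)Z$, $(x^2-y+4)W$, $ZY$, $YW$). I would argue that the only degree-$\leq 4$ elements of $I_1$ not already forced to a canonical form are spanned by $(x^2-y)(x^2-y+2)$, $Y$, and possibly $xY$, $x^2 Y$, $yY$, together with the degree-$3$ and degree-$4$ combinations involving $Z, W$ that survive — so one writes
\[
ZW - WZ \equiv_2 c_0 (x^2-y)(x^2-y+2) + (\text{lower-order } Y,Z,W \text{ terms}),
\]
with the lower-order terms being a polynomial of the shape $(c_1 x^2 + c_2 x + c_3 y + c_4) Y + (c_5 x + c_6) Z + (c_7 x + c_8) W + c_9 (x^2-y)$ or similar; the precise list of candidate monomials is read off from the degree bookkeeping.

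Next I would evaluate both sides on the four test vectors $v_\lambda$, $u = \alpha(-1) v_\lambda$, $v = \alpha(-1)^2 v_\lambda$, $w = \alpha(-2) v_\lambda$ in $\Omega_2(M_a(1,\lambda))$, using the zero-mode table of Lemma~\ref{o-mode-lemma} and the simplified action $Y.v = W.v = Z.w = 0$, $Y.w = Z.v = w$, $W.w = v$ from Corollary~\ref{level-one-cor}. On $v$: $ZW.v = Z.0 = 0$ and $WZ.v = W.w = v$, so $(ZW-WZ).v = -v$. On $w$: $ZW.w = Z.v = w$ and $WZ.w = W.0 = 0$, so $(ZW-WZ).w = w$. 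On $v_\lambda$ and $u$ one computes directly from Lemma~\ref{o-mode-lemma} that $Z$ and $W$ act as scalars, so their commutator is $0$ there. Meanwhile $(x^2-y)(x^2-y+2)$ acts as $0$ on $v_\lambda, u$ and as $8$ on $v$ and $w$; and $Y$ acts as $0$ on $v_\lambda, u, v$ and as $1$ on $w$. So $-\frac18(x^2-y)(x^2-y+2) + 2Y$ acts as $0$ on $v_\lambda, u$, as $-1$ on $v$, and as $-1 + 2 = 1$ on $w$ — matching $ZW - WZ$ on all four vectors. The cross-checks on $v_\lambda$ and $u$ (where everything is scalar multiplication by a function of $\lambda$) are what force all the coefficients $c_i$ of the would-be lower-order $x$-dependent terms to vanish, and the values on $v, w$ pin down $c_0 = -\tfrac18$ and the coefficient of $Y$ to be $2$; one must verify the system of linear equations in the $c_i$ has this as its unique solution, which is routine once the candidate monomial list is fixed.

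The main obstacle I anticipate is the first step: correctly enumerating which degree-$\leq 4$ elements of $I_1$ remain as genuine degrees of freedom after imposing all previously established relations, i.e.\ showing that the ansatz for $ZW - WZ$ really does have only finitely many undetermined scalars and that these scalars are separated by the module evaluations. In particular one needs to be careful that terms like $x(x^2-y)Z$, $x\,ZY$, $y\,Y$, $x^2 Y$, etc., either reduce (via Propositions~\ref{first-relations-prop}, \ref{second-relations-prop} and the centrality of $x,y$ from Corollary~\ref{first-commutator-cor}) to the short canonical list, or act detectably on $\{v_\lambda, u, v, w\}$. A secondary subtlety is that the four chosen test vectors span only part of $\Omega_2(M_a(1,\lambda))$, so one should confirm that the relation $ZW-WZ + \tfrac18(x^2-y)(x^2-y+2) - 2Y$, once shown to annihilate these four vectors for all $\lambda$ and to lie in $O_2(V) + F_4(\mathbf{1}) \cap I_1$ with the constrained form above, is genuinely forced to be zero in $A_2(V)$ — this is exactly the logic already used in the preceding propositions, so I would cite that pattern rather than re-derive it. Everything else (the explicit residue computations behind $ZW$ and $WZ$) is subsumed into Lemma~\ref{lower-order-lemY} and the appendix lemmas, so no fresh hard calculation is needed.
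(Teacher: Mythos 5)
Your proposal matches the paper's proof in all essentials: bound the degree of $ZW-WZ$ modulo $O_2(V)$, write the general ansatz of elements of $I_1$ of degree at most $4$ consistent with the relations already established in Propositions \ref{first-relations-prop}, \ref{first-commutators-prop}, and \ref{second-relations-prop}, and then fix the constants by the zero-mode action on $v=\alpha(-1)^2 v_\lambda$ and $w=\alpha(-2)v_\lambda$. The only cosmetic difference is that the paper obtains the degree bound from the general commutator estimate of Lemma \ref{reduce-commutators-lemma} rather than from line \eqref{YZW3} of Lemma \ref{lower-order-lemY} (which as stated covers $ZW$ but not $WZ$, so the commutator lemma is the cleaner citation), and the extra evaluations on $v_\lambda$ and $u=\alpha(-1)v_\lambda$ are harmless but not needed.
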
 

\begin{proof}
By Lemma \ref{reduce-commutators-lemma}, the fact that $Z$ and $W$ each have degree 3,  $I_2 \subset I_1$ as given in Proposition \ref{use-level-one-prop}, and the relations in $I_2$ already determined  by Propositions \ref{first-relations-prop}, \ref{first-commutators-prop}, and \ref{second-relations-prop},  we have that there exist constants $c_1, \dots, c_8$ such that 
\begin{eqnarray*}
ZW-WZ &\equiv_2& c_1 (x^2 - y)(x^2 - y + 2) + (c_2 y  + c_3 x  + c_4) Y  +  ( c_5 x + c_6) Z  +( c_{7} x + c_{8})  W .
 \end{eqnarray*}

Acting on $v = \alpha(-1)^2v_\lambda \in M(1, \lambda)$ by the zero modes given by Lemma \ref{o-mode-lemma} and Corollary \ref{level-one-cor},  we have that for all $\lambda \in \mathbb{C}$ 
\[
-v \ =\  Z.0 - W.w = (ZW-WZ).v \ =  \ 8c_1v   +  ( c_5 x  + c_6).w \ 
= \ 8c_1v   +  ( c_5\lambda + c_6)w ,\]
which implies $c_5 = c_6 = 0$ and $8c_1 = -1$.  Thus
\[
ZW-WZ  \ \equiv_2 \ - \frac{1}{8} (x^2 - y)(x^2 - y + 2) + (c_2 y  + c_3 x  + c_4) Y 
+  ( c_{7} x  + c_{8} x )  W . \]
Then acting on $w = \alpha(-2) v_\lambda$, we have
\begin{eqnarray*}
w &=& Z.v - W.0 = (ZW-WZ).w \ = \   -w + (c_2 y  + c_3 x  + c_4).w  +  ( c_{7} x + c_{8}).v\\
&=& (c_2 \lambda^2 + c_3\lambda + (-1+4c_2 + c_4) ) w  +  ( c_{7} \lambda + c_{8} ) v
\end{eqnarray*}
which implies $c_2 = c_3 = c_7 = c_8 = 0$ and $-1+c_4 = 1$, giving the result. 
 \end{proof}

Finally, we prove what will be the last set of relations in $R = \mathbb{C}[x,y]\langle Y, Z , W \rangle$ giving the structure of $A_2(M_a(1)) \cong R/I_2$ as we will show in Section \ref{structure-section}. 

\begin{prop}\label{last-relations-prop}
The following are elements in $I_2$ for $A_2(M_a(1))\cong \mathbb{C}[x,y]\langle Y, Z, W \rangle/I_2$
\[(x^2 - y)(x^2 - y + 2)(x^2 - y + 4), \ Z^2, \ W^2, \ \mbox{and} \ ZW - Y.\]
\end{prop}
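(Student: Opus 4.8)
The plan is to follow exactly the template established in Propositions \ref{first-relations-prop}, \ref{second-relations-prop}, and \ref{last-commutators-prop}: each of the four elements lies in $O_2(V) + F_4(\mathbf{1})$ by Lemma \ref{lower-order-lemY}, and each already lies in the ideal $I_1$ (since $(x^2-y)(x^2-y+2)$, $Z$, $W$, and $Z$ are all in $I_1$, and $I_2 \subset I_1$), so each is $\equiv_2$ a specific ``generic'' element of $F_4(\mathbf{1}) \cap I_1$ with finitely many undetermined constants. The key preliminary step is to pin down the general form of an element of $F_4(\mathbf{1}) \cap I_1$ modulo $O_2(V)$, using the relations in $I_2$ already proved in Propositions \ref{first-relations-prop}, \ref{first-commutators-prop}, \ref{second-relations-prop}, and \ref{last-commutators-prop}. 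Since $Y^2 \equiv_2 Y$, $(x^2-y+4)Y \equiv_2 0$, $ZY \equiv_2 0$, $YW \equiv_2 0$, $(x^2-y+4)Z\equiv_2 0$, $(x^2-y+4)W\equiv_2 0$, and the $YZ$, $WY$, $ZW-WZ$ relations, any degree-$\leq 4$ word in $x,y,Y,Z,W$ that is also in $I_1$ reduces modulo $O_2(V)$ to a linear combination of a short list: $(x^2-y)(x^2-y+2)$ times a degree-$\leq 2$ polynomial in $x$ (i.e.\ $(x^2-y)(x^2-y+2)\cdot(c_1 x^2 + c_2 x + c_3)$ — though $(x^2-y)(x^2-y+2)\cdot y$ and higher reduce too), a degree-$\leq 2$ polynomial in $x,y$ times $Y$, $x$ times $Z$ and $x$ times $W$ (with constants), and possibly $YZ$ (which by Proposition \ref{first-commutators-prop} equals $ZY + Z \equiv_2 Z$) — so really the list is spanned by $(x^2-y)(x^2-y+2)$, $x(x^2-y)(x^2-y+2)$, $Y$, $xY$, $yY$, $Z$, $xZ$, $W$, $xW$, $x^2(x^2-y)(x^2-y+2)$, etc., finitely many terms.

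Having fixed the general form, the plan is: for each of the four target elements, write $(\text{element}) \equiv_2 (\text{generic } f_4(\mathbf{1}))$, then evaluate the zero-mode action on the two test vectors $v = \alpha(-1)^2 v_\lambda$ and $w = \alpha(-2)v_\lambda$ in $\Omega_2(M_a(1,\lambda))$, using Lemma \ref{o-mode-lemma} and Corollary \ref{level-one-cor} (recall $Y.v = W.v = Z.w = 0$, $Y.w = Z.v = w$, $W.w = v$, and $(x^2-y).v = 0$, $(x^2-y).w = -4w$). Matching coefficients in $\lambda$ as polynomial identities forces all the undetermined constants. For $(x^2-y)(x^2-y+2)(x^2-y+4)$: it acts as $0$ on $v$ and as $(-4)(-2)(-4)w = -32w$ on... wait, $(x^2-y).w = -4w$ so $(x^2-y)(x^2-y+2).w = (-4)(-4+2)w = 8w$ and then $(x^2-y)(x^2-y+2)(x^2-y+4).w = 8\cdot(-4+4)w = 0$; so this element acts as zero on both $v$ and $w$, forcing all constants in its generic form to vanish except we must also check it doesn't pick up a $Y$ or $Z$ term — but those act nonzero on $v$ or $w$, so they're zero, giving $(x^2-y)(x^2-y+2)(x^2-y+4)\equiv_2 0$. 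For $Z^2$: $Z^2.v = Z.0 = 0$ wait $Z.v = w$ so $Z^2.v = Z.w = 0$; $Z^2.w = Z.0 = 0$; so $Z^2$ acts as zero on both, forcing $Z^2 \equiv_2 0$ by the same argument (any surviving $(x^2-y)(x^2-y+2)\cdot p(x)$, $Y$, $Z$, $W$ term would act nonzero). For $W^2$: $W^2.v = W.0 = 0$, $W^2.w = W.v = 0$, so $W^2 \equiv_2 0$. For $ZW$: $ZW.v = Z.0 = 0$, $ZW.w = Z.v = w = Y.w$, and $ZW.v = 0 = Y.v$, so $ZW$ and $Y$ have the same action on both test vectors; subtracting, $ZW - Y$ acts as zero on $v$ and $w$, hence $ZW - Y \equiv_2 0$.

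The main obstacle — the only nontrivial point — is justifying that the action on just these two test vectors $v, w$ is enough to conclude an element of $F_4(\mathbf{1}) \cap I_1$ is actually zero modulo $O_2(V)$. This is exactly the logic already used in the earlier propositions, and it rests on: (i) Lemma \ref{lower-order-lemY} guaranteeing the element lies in $O_2(V) + F_4(\mathbf{1})$; (ii) the generic element of $F_4(\mathbf{1})\cap I_1$ being, modulo $O_2(V)$, a linear combination from the short explicit list above (which uses all previously proved relations to eliminate products like $Y^2$, $ZY$, $YW$, $yZ$, $yW$, $x^2 Z$ via $(x^2-y+4)Z\equiv_2 0$, etc., so that in the end only $x$-polynomial multiples of a single copy of $(x^2-y)(x^2-y+2)$, low-degree multiples of $Y$, and $x$-multiples of $Z,W$ remain, and these are linearly independent as functions of $\lambda$ on $\mathrm{span}\{v,w\}$); and (iii) the zero-mode map being an algebra homomorphism $A_2(V) \to \mathrm{End}(\Omega_2(M_a(1,\lambda)))$ so that relations proved to act as zero on the relevant cyclic submodule for all $\lambda$ must already hold in $A_2(V)$, because the earlier propositions have shown the coefficient-extraction is exact on the finite list. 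I would state this reduction as the opening paragraph of the proof (mirroring the opening of Proposition \ref{second-relations-prop}), then dispatch the four cases in four short paragraphs of $\lambda$-bookkeeping; no genuinely new idea is required beyond what Propositions \ref{first-relations-prop}--\ref{last-commutators-prop} already deploy.
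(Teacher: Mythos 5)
Your strategy for $Z^2$, $W^2$, and $ZW-Y$ is exactly the paper's: each lies in $O_2(V)+F_4(\mathbf{1})$ by Lemma \ref{lower-order-lemY} and also in $I_1$, hence is congruent modulo $O_2(V)$ to the generic element $c_1(x^2-y)(x^2-y+2)+(c_2y+c_3x+c_4)Y+(c_5x+c_6)Z+(c_7x+c_8)W$ of $F_4(\mathbf{1})\cap I_1$, and the eight constants are killed (or, for $ZW$, forced to produce $Y$) by evaluating the zero modes on $v=\alpha(-1)^2v_\lambda$ and $w=\alpha(-2)v_\lambda$ for all $\lambda$. For the polynomial relation $(x^2-y)(x^2-y+2)(x^2-y+4)$ you take a genuinely different, and in fact slightly cleaner, route: you work with the full product, which already lies in $I_1$ (being a left multiple of the generator $(x^2-y)(x^2-y+2)$) and lies in $O_2(V)+F_4(\mathbf{1})$ because it differs from the $(x^2-y)^3$ of Lemma \ref{lower-order-lemY} by $6(x^2-y)^2+8(x^2-y)\in F_4(\mathbf{1})$ --- a one-line observation you should make explicit, since the lemma as stated only covers $(x^2-y)^3$. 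The paper instead starts from $(x^2-y)^3$, which is \emph{not} in $I_1$, so its correction term is not of the generic form and an extra undetermined constant must be carried in a leading factor $(x^2-y+c_1)(x^2-y)(x^2-y+2)$; your version avoids that wrinkle entirely and reaches the same conclusion. One slip to correct: you recall $(x^2-y).v=0$, but $y.v=(\lambda^2+4)v$ gives $(x^2-y).v=-4v$ (you have confused $v$ with the lowest-weight vector $v_\lambda$). This does not change the conclusion that the product annihilates $v$, but it is essential for the coefficient extraction: the value $(x^2-y)(x^2-y+2).v=8v$ recorded in Lemma \ref{o-mode-lemma} is precisely what determines $c_1=0$, since the action on $w$ alone only yields $8c_1+c_4=0$ and cannot separate $c_1$ from $c_4$.
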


\begin{proof} $I_2 \subset I_1$ as given in Proposition \ref{use-level-one-prop}, and by the relations in $I_2$ already determined  by Propositions \ref{first-relations-prop}, \ref{first-commutators-prop},  \ref{second-relations-prop}, and \ref{last-commutators-prop},  any element in $f_4(\mathbf{1}) \in F_4(\mathbf{1})$ that is also in $I_1$ is equivalent modulo $O_2(V)$ to 
\begin{eqnarray}\label{general-f_5}
 f_4(\mathbf{1}) & \equiv_2&  c_1 (x^2 - y)(x^2 - y + 2)  + (c_2y + c_3 x + c_4  )Y +    (c_5x + c_6)Z  \\
 & & \quad + \,   ( c_7x + c_8)W ,  \nonumber
 \end{eqnarray}
 for some constants $c_1,\dots, c_8 \in \mathbb{C}$.

We note that acting on $v = \alpha(-1)^2v_\lambda \in M(1, \lambda)$ by the zero modes given by Lemma \ref{o-mode-lemma} and Corollary \ref{level-one-cor},  we have that for all $\lambda \in \mathbb{C}$ 
\begin{equation}\label{f_5.v}
f_4(\mathbf{1}).v  \ = \ 8c_1 v +  (c_5x + c_6).w \ = \ 8 c_1 v + (c_5 \lambda + c_6) w , 
\end{equation}
and acting on $w = \alpha(-2) v_\lambda$, we have
\begin{eqnarray}\label{f_5.w}
f_5(\mathbf{1}).w  &=& 8c_1 w + (c_2y + c_3 x + c_4  ).w  + ( c_7x + c_8).v  \\
 &=&  8c_1w + (c_2 (\lambda^2 + 4)  + c_3 \lambda + c_4  )w +  (c_7 \lambda + c_8)v  \nonumber.
\end{eqnarray}

Since by Lemma \ref{lower-order-lemY} we have $Z^2 \in O_2(V) + F_4(\mathbf{1})$, but also $Z^2 \in I_1$, there exists $f_4(\mathbf{1}) \in F_4({\bf 1})$ of the form (\ref{general-f_5}), such that $Z^2 \equiv_2 f_4(\mathbf{1})$.  Thus acting on $v = \alpha(-1)^2v_\lambda \in M(1, \lambda)$ by the zero modes given by Lemma \ref{o-mode-lemma} and Corollary \ref{level-one-cor},  and using Eqns.\ (\ref{f_5.v}) and (\ref{f_5.w}), we have that for all $\lambda \in \mathbb{C}$ 
\[0  \ = \ Z.w  \  =  \  Z^2.v \ = \ f_4(\mathbf{1}).v \ =  \ 8 c_1 v + (c_5 \lambda + c_6) w ,\]
implying $c_1 = c_ 5   = c_6= 0$.  Then acting on  $w = \alpha(-2) v_\lambda$, we have
\[0 = Z.0  \  =  \ Z^2.w \ = \ f_4(\mathbf{1}).w \ = \   (c_2 (\lambda^2 + 4)  + c_3 \lambda + c_4  )w +  (c_7 \lambda + c_8)v ,\]
implying $c_2 = c_3 = c_4 = c_7 = c_8 = 0$.  Thus we have $Z^2 = 0$.  

The proof for $W^2 = 0$ is completely analogous from the fact that $W^2.v = W^2.w = 0$.

For the proof of $ZW - Y = 0$, we note that since by Lemma \ref{lower-order-lemY} we have $ZW \in O_2(V) + F_4(\mathbf{1})$, but also $ZW \in I_1$, there exists $f_4(\mathbf{1}) \in F_4({\bf 1})$ of the form (\ref{general-f_5}), such that $ZW \equiv_2 f_4(\mathbf{1})$.  Thus acting on $v = \alpha(-1)^2v_\lambda \in M(1, \lambda)$ by the zero modes given by Lemma \ref{o-mode-lemma} and Corollary \ref{level-one-cor},  and using Eqns.\ (\ref{f_5.v}) and (\ref{f_5.w}), we have that for all $\lambda \in \mathbb{C}$ 
\[0  \ = \ Z.0  \  =  \  ZW.v \ = \ f_4(\mathbf{1}).v \ = \  8 c_1 v + (c_5 \lambda + c_6) w,\]
implying $c_1 = c_5 = c_ 6  = 0$.  Then acting on  $w = \alpha(-2) v_\lambda$, we have
\[
w = Z.v  \  =  \ ZW.w \ = \ f_4(\mathbf{1}).w \ = \  (c_2 (\lambda^2 + 4)  + c_3 \lambda + c_4  )w +  (c_7 \lambda + c_8)v, \]
implying $c_2 = c_3 = c_7 = c_8  = 0$ and $c_4 = 1$.  Thus we have $ZW = Y$.

Finally, since by Lemma \ref{lower-order-lemY} we have $(x^2 - y)^3 \in O_2(V) + F_4(\mathbf{1})$, there exists $f_4(\mathbf{1}) \in F_4(\mathbf{1})$ such that $(x^2 - y)^3 \equiv_2 f_4(\mathbf{1})$ and $(x^2 - y)^3 + f_4(\mathbf{1}) \in I_2 \subset I_1$.   But since $(x^2 - y)^3 \notin I_1$ this time $f_4(\mathbf{1}) \notin I_1$ and thus will not be of the form  (\ref{general-f_5}).   Rather since $(x^2 - y)^3 + f_4(\mathbf{1}) \in I_2 \subset I_1$, and we already have $(x^2 - y + 4)Y, Y^2 - Y  \in I_2$,  we have that there exist $c_1, \dots, c_8 \in \mathbb{C}$ such that 
\begin{eqnarray}\label{cubed-relation}
0 &\equiv_2& (x^2 - y)^3 + f_4(\mathbf{1}) \\
&\equiv_2& (x^2-y + c_1)(x^2 - y)(x^2 - y + 2) 
+ (c_2 y + c_3x + c_4 )Y  \nonumber \\
& & \quad + \,  (c_5x + c_6)Z  + ( c_7x + c_8)W . \nonumber
\end{eqnarray}

Acting on $v = \alpha(-1)^2v_\lambda \in M(1, \lambda)$ by both sides of Eqn.\ (\ref{cubed-relation}), we have that for all $\lambda \in \mathbb{C}$ 
\begin{eqnarray*}
0 &=& (\lambda^2 - (\lambda^2 + 4) + c_1)(\lambda^2 - (\lambda^2 + 4)) (\lambda^2 - (\lambda^2 + 4) + 2)v + ( c_5\lambda + c_6)w\\
&=& (-4 + c_1 )(-4) (-2)v + ( c_5\lambda + c_6)w
\end{eqnarray*}
implying $c_5 = c_ 6 = 0$ and $c_1 = 4$.  Then acting on  $w = \alpha(-2) v_\lambda$, we have
\begin{eqnarray*}
0 &=& (\lambda^2 - (\lambda^2 + 4) + 4)(\lambda^2 - (\lambda^2 + 4)) (\lambda^2 - (\lambda^2 + 4) + 2)w + (c_2(\lambda^2 + 4)  + c_3\lambda + c_4 )w \\
& & \quad + \, (c_7 \lambda + c_8)v  \\
&=& (c_2(\lambda^2 + 4)  + c_3\lambda + c_4 )w + (c_7 \lambda + c_8)v,  
\end{eqnarray*}
implying $c_2 = c_3 = c_4 = c_7 = c_8  = 0$.  Thus we have
\[0 \equiv_2 (x^2 - y)^3 + f_4(\mathbf{1})  \equiv_2 (x^2 - y)(x^2 - y + 2)(x^2 - y + 4).\]
\end{proof}

\section{The structure of $A_2(M_a(1))$}\label{structure-section}

In this section, we give the main theorem of the paper which states that the relations determined by Propositions \ref{first-relations-prop}, \ref{first-commutators-prop}, \ref{second-relations-prop}, \ref{last-commutators-prop},  and \ref{last-relations-prop} give all the relations in $R = \mathbb{C}[x,y]\langle Y, Z, W \rangle$ and thus generate $I_2$ such that $A_2(M_a(1)) \cong R/I_2$.  Then we give some simplified realizations of  the algebra $A_2(M_a(1))$.

\begin{thm}\label{A_2-theorem}
  Let   $\mathbb{C}[x, y]\langle Y, Z, W \rangle$ denote the algebra generated over $\mathbb{C}$ by the two commuting variables $x$ and $y$, and three non-commuting variables $Y, Z$, and $W$.  Let $I_2$ be the ideal generated by the polynomials
\begin{eqnarray}\label{final-relations-first} 
& (x^2-y)(x^2-y+2)(x^2-y+4), \ (x^2 - y + 4)Y, \ (x^2 - y + 4)Z, \ (x^2 - y + 4)W, &\\
  &Y^2 - Y,  \ Z^2, \ W^2,  \ ZY, \ YW, \  ZW - Y, & \label{final-relations-middle}\\
 &  YZ- ZY - Z, \ YW - WY + W, \ ZW-WZ +  \frac{1}{8}(x^2 - y)(x^2 - y + 2)  - 2 Y  .& \label{final-relations-last}
   \end{eqnarray}

Then we have the following isomorphism of algebras  
\begin{equation}\label{final-iso}
A_2(M_a(1)) \cong \mathbb{C}[x, y]\langle Y, Z, W \rangle/I_2 \\
\end{equation}
under the identification 
\begin{equation}\label{identification1}
\alpha(-1){\bf 1} + O_2(M_a(1)) \longleftrightarrow x + I_2, \quad \alpha(-1)^2{\bf 1} + O_2(M_a(1)) \longleftrightarrow y + I_2,
\end{equation}
\begin{equation}
\alpha(-1)\alpha(-4){\bf 1} + O_2(M_a(1)) \longleftrightarrow \tilde y + I_2, \quad \alpha(-1)^2\alpha(-4){\bf 1} + O_2(M_a(1)) \longleftrightarrow z + I_2,
\end{equation}
\begin{equation}
\alpha(-1)\alpha(-4)^2{\bf 1} + O_2(M_a(1)) \longleftrightarrow \tilde z + I_2. \label{identification3}
\end{equation}
and the change of variables 
\begin{equation}
Y = \frac{1}{12} (x^2 - 2y - \tilde y), \  Z =  \frac{1}{32} ( x^3 + 2x\tilde y + \tilde z), \ W = -\frac{1}{40}  \left(2z + \tilde z + 2xy - 2x \tilde y - 3x^3\right).
\end{equation}

Furthermore
\begin{eqnarray}
A_2(M_a(1)) & \cong& \mathbb{C}[x]\oplus \mathbb{C}[x]\oplus (\mathbb{C}[x]\otimes M_2(\mathbb{C}))  \label{second-characterization-level-two}  \\
& \cong & A_1(M_a(1))\oplus (\mathbb{C}[x]\otimes M_2(\mathbb{C})),
\label{third-characterization-level-two} 
\end{eqnarray} 
where $M_2(\mathbb{C})$ denotes the algebra of $2\times 2$ complex matrices. 
\end{thm}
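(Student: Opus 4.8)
The plan is to establish the isomorphism $A_2(M_a(1)) \cong R/I_2$ where $R = \mathbb{C}[x,y]\langle Y,Z,W\rangle$, via the surjection $\varphi: R \twoheadrightarrow A_2(M_a(1))$ coming from Theorem \ref{main-generators-thm} and Corollary \ref{first-commutator-cor}. All the relations in \eqref{final-relations-first}--\eqref{final-relations-last} lie in $\ker\varphi = I_2$ by Propositions \ref{first-relations-prop}, \ref{first-commutators-prop}, \ref{second-relations-prop}, \ref{last-commutators-prop}, and \ref{last-relations-prop}, so it remains to show that these are \emph{all} the relations, i.e.\ that the induced surjection $R/I_2 \twoheadrightarrow A_2(M_a(1))$ is injective. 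The strategy is a dimension count: first I would show that $R/I_2$, as a vector space (in each graded or filtered piece), is no larger than $A_2(M_a(1))$; combined with surjectivity this forces equality.

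First I would use the listed relations to find a spanning set for $R/I_2$. The relations $Y^2 = Y$, $ZY = 0$, $YW = 0$, $W^2 = 0$, $Z^2 = 0$, $ZW = Y$, together with the commutators $YZ - ZY = Z$, $YW - WY = -W$, $ZW - WZ = -\frac18(x^2-y)(x^2-y+2) + 2Y$, let me reduce any word in $Y,Z,W$ (with $\mathbb{C}[x,y]$ coefficients, $x,y$ central) to a normal form. Concretely: $WZ = ZW + \frac18(x^2-y)(x^2-y+2) - 2Y = Y + \frac18(x^2-y)(x^2-y+2) - 2Y = \frac18(x^2-y)(x^2-y+2) - Y$; $YZ = Z + ZY = Z$; $WY = YW + W = W$; so every monomial collapses to one of $1, Y, Z, W$ times an element of $\mathbb{C}[x,y]$. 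Then $(x^2-y+4)Y = (x^2-y+4)Z = (x^2-y+4)W = 0$ means the $Y,Z,W$-parts are modules over $\mathbb{C}[x,y]/(x^2-y+4) \cong \mathbb{C}[x]$, while the $\mathbb{C}[x,y]\cdot 1$ part is a module over $\mathbb{C}[x,y]/\big((x^2-y)(x^2-y+2)(x^2-y+4)\big)$. One checks $\mathbb{C}[x,y]/\big((x^2-y)(x^2-y+2)(x^2-y+4)\big) \cong \mathbb{C}[x]^{\oplus 3}$ via $y \mapsto x^2, x^2+2, x^2+4$. This exhibits a surjection of $\mathbb{C}[x]$-modules $\mathbb{C}[x]^{\oplus 3} \oplus (\mathbb{C}[x]\cdot Y \oplus \mathbb{C}[x]\cdot Z \oplus \mathbb{C}[x]\cdot W) \twoheadrightarrow R/I_2$, and a check on $Y,Z,W$ and the commutators shows the span $\mathbb{C}[x]\{Y,Z,W\}$ closes up into a $2\times 2$ matrix algebra over $\mathbb{C}[x]$ (with the summand $\mathbb{C}[x]$ coming from the $(x^2-y+4)$-component of the $1$-part). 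So $R/I_2$ is spanned by $\mathbb{C}[x]\oplus\mathbb{C}[x]\oplus(\mathbb{C}[x]\otimes M_2(\mathbb{C}))$.

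Next I would show $A_2(M_a(1))$ is \emph{at least} this big, so the spanning set is a basis and all three surjections are isomorphisms. For the $A_1(M_a(1)) = \mathbb{C}[x]\oplus\mathbb{C}[x]$ part this is immediate from the surjection $A_2 \twoheadrightarrow A_1$ of Lemma \ref{DLM-commutator-lemma}(i) and Theorem \ref{A_1-theorem}. For the extra $\mathbb{C}[x]\otimes M_2(\mathbb{C})$ part, I would invoke the representation theory: by Corollary \ref{mainthm-first-cor} (using that this summand is trivial as an $A_1(M_a(1))$-module), the indecomposable length-two modules $L_2(U)$ built from $2$-dimensional $A_2$-modules $U = M_a(1,\lambda)_{\text{(degree 2 piece)}} \oplus M_a(1,\lambda)_{\text{(...)}}$ — realized concretely on $\mathrm{span}\{v = \alpha(-1)^2 v_\lambda, w = \alpha(-2)v_\lambda\} \subset \Omega_2(M_a(1,\lambda))$ — give a family of $2$-dimensional representations of $A_2(M_a(1))$, parametrized by $\lambda \in \mathbb{C}$, on which $Y,Z,W$ act by the nonzero matrices computed in Corollary \ref{level-one-cor} and $x$ acts by $\lambda$. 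Since these give a surjection of the $M_2(\mathbb{C})$-valued part onto $\prod_\lambda M_2(\mathbb{C})$ separating points of $\mathbb{C}[x]$, the $\mathbb{C}[x]\otimes M_2(\mathbb{C})$ summand injects. Combining: $\dim$ (graded pieces of) $A_2(M_a(1)) \geq \dim$ (graded pieces of) $R/I_2 \geq \dim$ (graded pieces of) $A_2(M_a(1))$, forcing \eqref{final-iso}, and the explicit block decomposition gives \eqref{second-characterization-level-two} and \eqref{third-characterization-level-two}.

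The main obstacle is the lower bound on $\dim A_2(M_a(1))$: showing that the relations in $I_2$ we have proved are genuinely the \emph{only} relations, rather than just a set containing enough relations. This is where one must use an independent handle on $A_2(M_a(1))$ — either the module-theoretic argument above (producing enough inequivalent representations to separate the claimed basis), or a direct argument that the normal-form spanning set maps to linearly independent elements of $V/O_2(V)$ using the filtration $F_r(\mathbf{1})$ and the fact that $x^k, x^kY, x^kZ, x^kW$ have controlled leading terms. I would favor the representation-theoretic route since Corollaries \ref{mainthm-first-cor} and \ref{mainthm-cor} make the relevant modules available essentially for free, and the zero-mode computations in Lemma \ref{o-mode-lemma} and Corollary \ref{level-one-cor} already exhibit the needed $2\times 2$ action explicitly; one just has to check that as $\lambda$ varies these representations, together with the two one-dimensional families $x \mapsto \lambda$ on the $A_1$-part, are jointly faithful on $R/I_2$.
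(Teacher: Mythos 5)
Your proposal is correct and follows essentially the same route as the paper: surjectivity onto $A_2(M_a(1))$ from the generator and relation propositions, identification of the quotient $\mathbb{C}[x,y]\langle Y,Z,W\rangle/I_2$ with $\mathbb{C}[x]\oplus\mathbb{C}[x]\oplus(\mathbb{C}[x]\otimes M_2(\mathbb{C}))$ via the normal form, the Chinese Remainder decomposition of $(x^2-y)(x^2-y+2)(x^2-y+4)$, and the matrix-unit map on $Y,Z,W$, and injectivity from the zero-mode representations on $\Omega_2(M_a(1,\lambda))$. If anything, your explicit observation that one needs joint faithfulness as $\lambda$ varies (to rule out ideals of the form $(p(x))\otimes M_2(\mathbb{C})$ for nonzero $p$) is slightly more careful than the paper's one-line appeal to the fact that $(x^2-y)(x^2-y+2)$, $(x^2-y+4)$, $Y$, $Y-1$, $Z$, $W$ are not in $I_2$.
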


\begin{proof} From Proposition \ref{use-level-one-prop}  and Corollary  \ref{level-one-cor}, we have that $A_2(V) \cong \mathbb{C}[x,y] \langle Y, Z, W \rangle/I_2$ for some ideal $I_2$.   From Propositions Propositions \ref{first-relations-prop}, \ref{first-commutators-prop}, \ref{second-relations-prop},  \ref{last-commutators-prop}, and \ref{last-relations-prop},  we have that $I_2$ contains the polynomials (\ref{final-relations-first})--(\ref{final-relations-last}).  Noting as well that since the zero mode actions of $x, y, Y, Z, W$ given in Lemma \ref{o-mode-lemma} (see also Corollary \ref{level-one-cor}) imply that $(x^2 - y) 
(x^2-y+ 2), (x^2 - y + 4), Y, Y-1, Z, W \notin I_2$, we have that $I_2$ is equal to the ideal generated by these polynomials (\ref{final-relations-first})--(\ref{final-relations-last}), giving (\ref{final-iso}).

To prove the isomorphisms (\ref{second-characterization-level-two}) and (\ref{third-characterization-level-two}), we note that the factors of the polynomial $p(x,y) = (x^2-y)(x^2-y+ 2)(x^2-y+ 4)$ are relatively prime and thus 
\[R/p(x,y) \cong R/(x^2-y) \oplus R/(x^2 - y + 2) \oplus R/(x^2 - y + 4).\]  
Then by the Correspondence Theorem (aka the Fourth Isomorphism Theorem) from ring theory, and the fact that the other polynomials in (\ref{final-relations-first}) are in $I_2$, we have that letting $I(Y,Z, W)$ be the ideal generated by the polynomial relations in (\ref{final-relations-middle}) and (\ref{final-relations-last}) 
\begin{eqnarray*}
A_2(M_a(1)) &\cong&  R/I_2 \\
&\cong& R/((x^2 - y), Y, Z, W)  \oplus R/((x^2 - y + 2), Y, Z, W)  \\
& & \quad \oplus R/((x^2 - y + 4),  I(Y, Z, W)) \\
&\cong & \mathbb{C}[x,y]/(x^2 - y) \oplus  \mathbb{C}[x,y]/(x^2 - y + 2)   \oplus  R/((x^2 - y + 4), I(Y,Z,W))\nonumber  \\
&\cong & A_1(M_a(1))  \oplus  R/((x^2 - y + 4), I(Y,Z,W) ) .\nonumber  
\end{eqnarray*}

It remains to show that $R/((x^2 - y + 4), I(Y,Z,W)) \cong \mathbb{C}[x] \otimes M_2( \mathbb{C})$, which follows from 
\begin{eqnarray*}
\lefteqn{R/((x^2 - y + 4), I(Y,Z,W)) }\\
&\cong& \mathbb{C}[x,y] \langle Y, Z, W \rangle /((x^2 - y + 4), Y^2 - Y,  Z^2, \ W^2,  ZY, YW,  ZW - Y,  \\
& & \quad YZ- ZY - Z, YW - WY + W,  ZW-WZ +  \frac{1}{8}(x^2 - y)(x^2 - y + 2)  - 2 Y )\\
 &\cong& \mathbb{C}[x] \langle Y, Z, W \rangle /( Y^2 - Y,  Z^2, \ W^2,  ZY, YW,  ZW - Y,   YZ- ZY - Z,  \\
& & \quad  \hspace{2.5in}  YW - WY + W,  ZW-WZ + 1  - 2 Y )\\
&\cong& \mathbb{C}[x] \otimes \mathbb{C}  \langle Y, Z, W \rangle /( Y^2 - Y,  Z^2, \ W^2,  ZY, YW,  ZW - Y,   YZ - Z,  \\
& & \quad \hspace{3.5in} WY - W,  WZ - 1  + Y )\\
&\cong& \mathbb{C}[x] \otimes M_2( \mathbb{C}),  
\end{eqnarray*}
where the last isomorphism follows from the fact that the surjective algebra homomorphism given by
\begin{eqnarray*}
\varphi :  \mathbb{C} \langle Y, Z, W \rangle &\longrightarrow & M_2(\mathbb{C}) \\
Y & \mapsto & \left[ \begin{array}{cc}
0 & 0\\
0 & 1
\end{array} \right]\\
Z & \mapsto &  \left[\begin{array}{cc}
0 & 0\\
1 & 0
\end{array} \right] \\
W & \mapsto &  \left[\begin{array}{cc}
0 & 1\\
0 & 0
\end{array} \right] 
\end{eqnarray*} 
has 
\[\mathrm{ker} \, \varphi = (Y^2 - Y, \ Z^2, \ W^2, ZY, \ YW, \ ZW - Y, \ YZ - Z, \ WY - W, \  WZ - 1 + Y).\]
\end{proof}

\section{Conjecture for $A_n(M_a(1))$ for $n>2$}

We have the following conjecture from \cite{AB-general-n} for the general structure of the higher level Zhu algebras $A_n(M_a((1))$, which has been proven for $n = 1$ in  \cite{BVY-Heisenberg} and for $n= 2$ in this paper.  

\begin{conj}
\[ A_n(M_a(1)) \cong A_{n-1}(M_a(1)) \oplus \left( \mathbb{C}[x] \otimes M_{p(n)}(\mathbb{C})\right) \]
where $p(n)$ denotes the number of  unordered partitions of $n$ into nonnegative integers, and $M_{p(n)}(\mathbb{C})$ denotes the algebra of $p(n) \times p(n)$ matrices.  
\end{conj}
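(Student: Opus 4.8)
The plan is to prove the conjecture by induction on $n$, with base cases $n=0,1,2$ supplied by Theorems \ref{A_0-theorem}, \ref{A_1-theorem}, and \ref{A_2-theorem}. Assuming the result through level $n-1$, so that $A_{n-1}(M_a(1)) \cong \bigoplus_{k=0}^{n-1} \mathbb{C}[x]\otimes M_{p(k)}(\mathbb{C})$ with $p(0)=1$, the goal is to show that the Dong--Li--Mason surjection $\pi\colon A_n(M_a(1)) \twoheadrightarrow A_{n-1}(M_a(1))$ of Lemma \ref{DLM-commutator-lemma}(i) splits as algebras with a two-sided ideal complement isomorphic to $\mathbb{C}[x]\otimes M_{p(n)}(\mathbb{C})$, so that Corollary \ref{mainthm-first-cor} applies. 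The representation-theoretic backbone is that the irreducible $M_a(1)$-modules $M_a(1,\lambda)$ are parametrized by $\lambda\in\mathbb{C}$ and each has degree-$n$ component $M_a(1,\lambda)_n$ of dimension exactly $p(n)$, spanned by the vectors $\alpha(-\mu_1)\cdots\alpha(-\mu_\ell)v_\lambda$ indexed by the partitions $\mu=(\mu_1,\dots,\mu_\ell)$ of $n$; by Theorem \ref{mainthm} these top components realize precisely the irreducible $A_n$-modules that do not factor through $A_{n-1}$, giving a $\mathbb{C}$-parametrized family of $p(n)$-dimensional irreducibles, which is the source of the $\mathbb{C}[x]\otimes M_{p(n)}(\mathbb{C})$ summand.

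First I would produce a finite generating set generalizing Theorem \ref{main-generators-thm} by running the reduction machinery of \cite{AB-general-n} (Corollary \ref{reducing-general-generators-cor-heisenberg}) together with partition-indexed analogues of the iterated reductions of Propositions \ref{reducing-one-prop}--\ref{reduce-no01}. The target is, for each ordered pair of partitions $\mu,\nu$ of $n$, a matrix-unit candidate $E^{(n)}_{\mu\nu}\in A_n(M_a(1))$, homogeneous in the modes $\alpha(-k)$, whose zero mode on $\Omega_n(M_a(1,\lambda))$ carries the $\nu$-basis vector of $M_a(1,\lambda)_n$ to the $\mu$-basis vector and annihilates every vector of strictly lower degree; for $n=2$ these are the elements $Y,Z,W$ of Section \ref{relations-section}. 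The relations among these generators would be pinned down exactly as in the $n=2$ case: the inclusion $I_n\subset I_{n-1}$ (Corollary \ref{first-commutator-cor}(iii)) forces the relations to refine those of the already-known algebra $A_{n-1}$; the filtration $F_r(\mathbf{1})$ together with a level-$n$ analogue of Lemma \ref{lower-order-lemY} fixes the top-degree terms of each relation from the internal structure of $M_a(1)$ alone; and the zero-mode action, computed as in Lemma \ref{o-mode-lemma}, fixes the remaining lower-order coefficients by testing against the family $\{M_a(1,\lambda)\}_{\lambda\in\mathbb{C}}$.

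The central element throughout is $x^2-y$, whose zero mode acts on $M_a(1,\lambda)_k$ as multiplication by $-2k$, as one already reads off from Lemma \ref{o-mode-lemma} for $n=2$. Consequently the polynomial
\[ \prod_{k=0}^{n}\bigl(x^2-y+2k\bigr) \]
annihilates all of $\Omega_n(M_a(1,\lambda))$ and should lie in $I_n$, while each factor is the ``level selector'' cutting out a single degree. I would then establish the matrix-unit relations $E^{(n)}_{\mu\nu}*_n E^{(n)}_{\rho\sigma}\equiv_n \delta_{\nu\rho}E^{(n)}_{\mu\sigma}$ together with $(x^2-y+2n)E^{(n)}_{\mu\nu}\equiv_n 0$ by the same degree-plus-zero-mode argument. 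With these in hand, the Chinese Remainder Theorem applied to the relatively prime factors of $\prod_{k=0}^{n}(x^2-y+2k)$ and the Correspondence Theorem --- precisely the endgame of the proof of Theorem \ref{A_2-theorem} --- peel off the summands for degrees $0,\dots,n-1$ (reassembling $A_{n-1}$ by the inductive hypothesis) and leave the level-$n$ block, which maps onto $\mathbb{C}[x]\otimes M_{p(n)}(\mathbb{C})$ via the surjection sending $E^{(n)}_{\mu\nu}$ to the matrix unit indexed by $(\mu,\nu)$; the established relations are exactly the kernel of this surjection.

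The \textbf{main obstacle} is uniformity in $n$. Both the explicit construction of the $p(n)^2$ matrix units $E^{(n)}_{\mu\nu}$ and the verification that the claimed finite set generates $A_n(M_a(1))$ require a partition-indexed, closed-form generalization of computations that were already lengthy at level two, namely the multi-stage generator reductions beginning in Section \ref{A_n-construction-section} and the higher-order-term bookkeeping of Lemma \ref{lower-order-lemY} and the Appendix. In particular, proving the level-$n$ analogue of Lemma \ref{lower-order-lemY} --- that each product of two matrix-unit candidates is congruent modulo $O_n(V)+F_{2n-1}(\mathbf{1})$ to the predicted single matrix unit --- is where the combinatorics of the Heisenberg modes must be controlled, and I expect this to be the crux. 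A subsidiary difficulty is confirming that the splitting of $\pi$ is witnessed by a genuine two-sided ideal, so that Corollary \ref{mainthm-first-cor} and the module count of $p(n)$-dimensional new irreducibles are available, rather than merely a vector-space complement; here the faithfulness and $\mathbb{C}$-parametrization of the family $\{M_a(1,\lambda)\}_\lambda$ over $M_a(1)$, and hence over $A_n(M_a(1))$, should supply the needed semisimplicity of the new block.
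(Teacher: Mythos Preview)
The statement you are addressing is a \emph{conjecture} in the paper, not a theorem: Section 8 records it as an open problem, noting only that the cases $n=1$ and $n=2$ have been established (in \cite{BVY-Heisenberg} and in the present paper, respectively). There is therefore no proof in the paper to compare your proposal against.

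What you have written is not a proof but a research program, and you recognize this yourself in identifying the ``main obstacle'' of uniformity in $n$. Your strategy is a reasonable extrapolation of the $n=2$ method: produce partition-indexed matrix-unit candidates, use the filtration $F_r(\mathbf{1})$ to fix top-degree terms of relations, and use zero modes on the family $\{M_a(1,\lambda)\}$ to fix lower-order terms. But the heart of the $n=2$ argument---the reduction to a finite generating set (Propositions \ref{reducing-one-prop} through \ref{reduce-no01}, Theorem \ref{main-generators-thm}) and the determination of higher-order terms (Lemma \ref{lower-order-lemY} and the Appendix)---was carried out by explicit, case-by-case computation with no evident closed-form pattern. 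Your proposal does not supply the missing structural argument that would replace those computations for general $n$; asserting the existence of elements $E^{(n)}_{\mu\nu}$ with the desired zero-mode action and the desired relations modulo $O_n(V)+F_{2n-1}(\mathbf{1})$ is precisely the content of the conjecture, not a step toward proving it. In short, your plan correctly identifies what would need to be true, but provides no mechanism for establishing it beyond the cases already handled.
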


\appendix

\section{Calculations  needed to determine the higher order terms of the relations satisfied by the generators of $A_2(M_a(1))$}

Here we collect several necessary computations that are too long to include in the body of the paper, and are only used for the proof of Lemma \ref{lower-order-lemY} giving the highest order terms in the relations for the generators $x, y, Y, Z, W$ for $A_2(M_a(1))$.   

We first prove that several expressions are in $O_2(M_a(1))$, mainly using Eqn.\ \eqref{reduce-equation-Heisenberg-level2} in Corollary \ref{level-two-corollary}.  These expressions given in Corollary \ref{circlecor} below are helpful for the subsequent lemmas  proved in this appendix and needed for the proof of Lemma \ref{lower-order-lemY}.  Here again we are using the notation as in (\ref{ABC-notation}):
\begin{equation*}
A=\alpha(-1)+\alpha(-2),\hspace{.1in} B=\alpha(-2)+\alpha(-3),\hspace{.1in} C=\alpha(-3)+\alpha(-4), \hspace{.1in}D=\alpha(-4)+\alpha(-5).
\end{equation*}

\begin{cor}\label{circlecor}
For any  $v\in M_a(1)$,
\begin{eqnarray}\label{coreq1}
B(B+C)(C+D)v &\sim_2 & 0,\\
\label{coreq2}
A(B+C)(C+D)v &\sim_2 & 0,\\
\label{coreq3}
C(B+C)^2v &\sim_2 & 0, \\
\label{coreq5}
C(A+B)(B+C)v &\sim_2 & 0,\\
\label{coreq6}
B(A + B)(C + D)v  &\sim_2 &  0,\\
\label{coreq7}
AB(C+D)v &\sim_2 & 0,\\
\label{coreq8}
AB(A+B)(B+C)v &\sim_2 & 0,\\
\label{coreq9} 
B^2(C+D)v &\sim_2 & 0 .
\end{eqnarray}
\end{cor}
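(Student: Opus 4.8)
\textbf{Proof proposal for Corollary \ref{circlecor}.}

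The plan is to derive each of the eight relations from the single master reduction formula \eqref{reduce-equation-Heisenberg-level2} (with appropriate choices of the exponents $i_1,\dots,i_5$ and of $v$), supplemented by the simpler consequences already recorded in Lemma \ref{first-ABC-lemma}, namely Eqns.\ \eqref{O21}--\eqref{O24}, and the basic reductions \eqref{45=44}--\eqref{3^24} of Lemma \ref{helprel}. The key observation is that each product $B(B+C)(C+D)v$, $A(B+C)(C+D)v$, etc.\ is, after expanding the shorthand \eqref{ABC-notation}, a linear combination of terms $\alpha(-a)\alpha(-b)\alpha(-c)v$ with $1\le a,b,c\le 5$; using \eqref{reduce-equation-Heisenberg-level2} one rewrites every term containing an $\alpha(-5)$ in terms of $\alpha(-4)$-heavy expressions, and the claim in each case is that everything then cancels modulo $O_2^\circ(V)$. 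So the strategy is purely computational: expand, apply \eqref{reduce-equation-Heisenberg-level2} to eliminate $\alpha(-5)$'s, and collect.

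Concretely, I would proceed as follows. First, observe that \eqref{coreq1} is exactly Eqn.\ \eqref{O??} of Lemma \ref{first-ABC-lemma}, since $B+C = \alpha(-2)+2\alpha(-3)+\alpha(-4)$ is not what appears there --- rather one should note that $(C+D)^2 v \sim_2 0$ by \eqref{O??}, and $C(C+D)v\sim_2 0$, $D(C+D)v\sim_2 0$ by \eqref{O21}, \eqref{O22}; then $B(B+C)(C+D)v = (B+C)^2(C+D)v - \text{(stuff already $\sim_2 0$)}$ combined with rewriting $(B+C)(C+D)$ appropriately, reduces to these. More efficiently: \eqref{coreq1}, \eqref{coreq2}, \eqref{coreq3}, \eqref{coreq5}, \eqref{coreq6}, \eqref{coreq9} should each be obtained by taking one of \eqref{O21}, \eqref{O??}, \eqref{O22}, \eqref{O23}, \eqref{O24} applied to a suitable $v$ and then using \eqref{reduce-equation-Heisenberg-level2} together with the recursion \eqref{recursion-level-2} to massage the result; for instance \eqref{coreq9} ($B^2(C+D)v \sim_2 0$) follows by applying \eqref{O23} to $(C+D)v$ and noting the right-hand side terms $(B+C)^2(C+D)v$ and $B(C+D)^2 v$ are each $\sim_2 0$ by \eqref{O??} (and its consequences), while the leftover $C(C+D)C v$-type terms vanish by \eqref{O21}. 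For \eqref{coreq7} ($AB(C+D)v\sim_2 0$) and \eqref{coreq8} ($AB(A+B)(B+C)v\sim_2 0$), I would apply \eqref{O24} (which gives $A(B+2C+D)v + B(B+C)v \sim_2 0$, i.e.\ $AB(C+D)$-type and $A(B+C)$-type and $B(B+C)$-type terms are linked) to an appropriately chosen $v$, then peel off the already-established relations \eqref{coreq1}--\eqref{coreq3} one at a time.

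The order I would carry these out: establish \eqref{coreq1}--\eqref{coreq3} and \eqref{coreq5} first (these follow most directly from \eqref{O21}--\eqref{O23} plus \eqref{reduce-equation-Heisenberg-level2}), then \eqref{coreq9} and \eqref{coreq6} (using \eqref{O23}, \eqref{O??} and the earlier ones), then \eqref{coreq7} (from \eqref{O24} applied to $(C+D)v$, absorbing \eqref{coreq1}, \eqref{coreq2}, \eqref{coreq9}), and finally \eqref{coreq8} (from \eqref{O24} applied to $(A+B)(B+C)v$ or $(B+C)v$, absorbing \eqref{coreq7} and the earlier relations). The main obstacle I anticipate is purely bookkeeping: after expanding the $A,B,C,D$ shorthand each identity is a sum of six to twenty monomials $\alpha(-a)\alpha(-b)\alpha(-c)v$, and \eqref{reduce-equation-Heisenberg-level2} introduces $\ell$-dependent rational coefficients (here with $i_4 = 0$ or $1$), so the cancellations must be tracked carefully --- there is no conceptual difficulty, only the risk of arithmetic slips, which is why the inductive/sequential structure (each relation built on the previously proved ones, so that one never has to cancel more than a handful of terms at a time) is essential to keep the computation manageable.
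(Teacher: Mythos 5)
There is a genuine gap, and it is the distinction the paper sets up in Section 4 between $\sim_n$ (congruence modulo $O_n^\circ(V)$) and $\equiv_n$ (congruence modulo the full $O_n(V)=O^L(V)+O_n^\circ(V)$). The corollary asserts the stronger relation $\sim_2 0$, i.e.\ membership in $O_2^\circ(V)$, but your master tool, Eqn.\ \eqref{reduce-equation-Heisenberg-level2}, and likewise Eqns.\ \eqref{45=44}--\eqref{3^24} of Lemma \ref{helprel}, are only $\equiv_2$ statements: they are derived using $(L(-1)+L(0))v$ elements. Since Remark \ref{first-remark} explicitly notes that $O^L(V)\not\subset O_n^\circ(V)$ for the Heisenberg algebra, any chain of reductions that passes through \eqref{reduce-equation-Heisenberg-level2} can only conclude $\equiv_2 0$, which is strictly weaker than what is claimed. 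Your primary strategy --- expand into monomials $\alpha(-a)\alpha(-b)\alpha(-c)v$ and eliminate the $\alpha(-5)$'s via \eqref{reduce-equation-Heisenberg-level2} --- therefore cannot prove the statement as written. The paper's proof never expands the shorthand and never touches \eqref{reduce-equation-Heisenberg-level2}: it works entirely with the five relations of Lemma \ref{first-ABC-lemma}, all of which lie in $O_2^\circ(V)$, used as substitution rules valid for arbitrary $v$, namely $C(C+D)w\sim_2 0$, $D(C+D)w\sim_2 0$, $(C+D)^2w\sim_2 0$, $(B+C)^2w\sim_2 -2B(C+D)w$, and $(A+B)(B+C)w\sim_2 -A(C+D)w$. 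Each of the eight identities is then a two- or three-step chain; e.g.\ $B(B+C)(C+D)v\sim_2(B+C)^2(C+D)v\sim_2 -2B(C+D)^2v\sim_2 0$.

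Your secondary, "more efficient" sketch is close to the paper's argument for \eqref{coreq1}, \eqref{coreq3}, and \eqref{coreq9}, but the specific substitutions you name for the remaining cases do not line up. For \eqref{coreq7}, applying \eqref{O24} to $(C+D)v$ gives $(A+B)(B+C)(C+D)v\sim_2 -A(C+D)^2v\sim_2 0$, which merely reproduces \eqref{coreq2}; to reach $AB(C+D)v\sim_2 0$ you must first replace $B$ by $B+C$ using \eqref{O21} (i.e.\ $AB(C+D)v\sim_2 A(B+C)(C+D)v$) and then quote \eqref{coreq2}. For \eqref{coreq8}, applying \eqref{O24} to $(A+B)(B+C)v$ produces $((A+B)(B+C))^2v$, which is not the target; the correct move is to apply \eqref{O24} to $ABv$, obtaining $AB(A+B)(B+C)v\sim_2 -A^2B(C+D)v$, then use \eqref{O21} and \eqref{coreq2} applied to $Av$. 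These are fixable once you commit to working only with the $O_2^\circ$-relations, but as proposed the computation both targets the wrong equivalence and leaves the hardest two cases unverified.
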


\begin{proof} 
Let $v\in M_a(1)$. We first first prove Eqn.\  \eqref{coreq1}. By \eqref{O21}, $C(C+D)v\sim_20$, and thus 
\begin{eqnarray*}
B(B+C)(C+D)v\sim_2(B+C)^2(C+D)v.
\end{eqnarray*}
Applying \eqref{O23}, we have $(B+C)^2(C+D)v\sim_2-2B(C+D)^2\sim_20$, where the last equivalence comes from \eqref{O??}, and this proves (\ref{coreq1}).

We next prove \eqref{coreq2}. By \eqref{coreq1}, we have
\begin{eqnarray*}
A(B+C)(C+D)v\sim_2(A+B)(B+C)(C+D)v.
\end{eqnarray*}
Applying \eqref{O24}, we have
$(A+B)(B+C)(C+D)v\sim_2-A(C+D)^2v\sim_2 0,$ where this last equivalence comes from applying \eqref{O??}, and this proves (\ref{coreq2}).

By \eqref{O23}, we have
\begin{eqnarray*}
C(B+C)^2v\sim_2-2CB(C+D)v\sim_2 0,
\end{eqnarray*}
where the last equivalence comes from \eqref{O21}, which proves \eqref{coreq3}.

By \eqref{O24} and then \eqref{O21},  we  have 
\[C(A+B)(B+C)v\sim_2-CA(C+D)v\sim_20,\]
proving \eqref{coreq5}.

By \eqref{O21}, then \eqref{O24}, and finally \eqref{O??}, we have
\begin{eqnarray*}
B(A + B)(C + D)v \sim_2 (B+C)(A+B)(C+D)v \sim_2 -A(C+D)^2v \sim_2 0,
\end{eqnarray*}
proving \eqref{coreq6}.

By  \eqref{O21}, then \eqref{coreq2},  we have that 
\begin{equation*}AB(C+D)v\sim_2A(B+C)(C+D)v \sim_2 0 ,
\end{equation*}
proving  \eqref{coreq7}.

By \eqref{O24}, then \eqref{O21}, and finally \eqref{coreq2}, we have 
\begin{eqnarray*}
AB(A+B)(B+C)v \sim_2 - A^2(C+D)Bv \sim_2 -A^2(B+C)(C+D)v \sim_2   0 , 
\end{eqnarray*}
proving \eqref{coreq8}.

And finally,  by \eqref{O21}, then  \eqref{O23}, and then \eqref{O??}, we have 
\begin{equation*}B^2(C+D)v \sim_2 (B+C)^2(C+D)v \sim_2 - 2B (C+D)^2v \sim_2 0,
\end{equation*}
 proving \eqref{coreq9}.  
\end{proof}

\begin{lem}\label{appendixlem1}
Given $v\in F_{r}({\bf 1})   \subset M_a(1)$,  
\[(x^2- y)*_2v    \sim_2    -2 (3A(B+C)+ 3B(C+D)+ B(A+B))v+ f_{r}({\bf 1}), \] 
for some $f_{r}({\bf 1})\in F_{r}({\bf 1})$.
\end{lem}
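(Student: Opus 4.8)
The statement asserts that multiplication by $x^2 - y = \alpha(-1)^2\mathbf{1} - \alpha(-1)^2\mathbf{1}$... no, $x^2 - y$ is the image of $(\alpha(-1)\mathbf{1})^{*_2 2} - \alpha(-1)^2\mathbf{1}$; but since $x$ is central and $x^2$ means $x *_2 x = \alpha(-1)\mathbf{1} *_2 \alpha(-1)\mathbf{1}$, the element $(x^2-y)*_2 v$ is computed by first evaluating the $*_2$-product $\alpha(-1)\mathbf{1}*_2\alpha(-1)\mathbf{1}$ and subtracting $\alpha(-1)^2\mathbf{1}$, then multiplying on the left by $v$ via $*_2$. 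The cleanest route, which I would take, is to bypass the square entirely: write $x^2 - y$ as a \emph{single} explicit element of $M_a(1)$ modulo $O_2(V)$ of the form $\sum c_m \alpha(-m)^2\mathbf{1}$ plus lower-order corrections, and then apply the general left-multiplication formula. Concretely, from Eqn.~\eqref{multiplication-one-one} we have $\alpha(-1)\mathbf{1}*_2\alpha(-1)\mathbf{1} = 10\alpha(-3)\alpha(-1)\mathbf{1} + 15\alpha(-4)\alpha(-1)\mathbf{1} + 6\alpha(-5)\alpha(-1)\mathbf{1}$, but it is more convenient to instead use the representative of $x^2-y$ coming directly from the $\circ_2$ relation: Lemma~\ref{first-ABC-lemma}, Eqn.~\eqref{O24}, shows that $\alpha(-1)^2\mathbf{1}\circ_2 v$ is (a multiple of) $\big((A+B)(B+C)+A(C+D)\big)v \in O_2(V)$, and the definition of $\circ_2$ together with the $*_2$--$\circ_2$ comparison (cf.\ \cite{DLM}) lets one read off that $(\alpha(-1)\mathbf{1}*_2\alpha(-1)\mathbf{1} - \alpha(-1)^2\mathbf{1}) + O_2(V)$ equals, up to an overall nonzero scalar and lower-order terms, the element $-2\big(3A(B+C)+3B(C+D)+B(A+B)\big)\mathbf{1}$ already isolated here.

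So the first step is to establish the identity $x^2 - y \equiv_2 -2\big(3A(B+C)+3B(C+D)+B(A+B)\big)\mathbf{1} + f_0(\mathbf{1})$ for some $f_0(\mathbf{1})\in F_0(\mathbf{1}) = \mathbb{C}\mathbf{1}$; this is a finite computation using Eqn.~\eqref{multiplication-one-one} to expand $\alpha(-1)\mathbf{1}*_2\alpha(-1)\mathbf{1}$, then using the recursion \eqref{recursion-level-2} and the reduction \eqref{reduce-equation-Heisenberg-level2} to rewrite $\alpha(-3)\alpha(-1)\mathbf{1}$, $\alpha(-4)\alpha(-1)\mathbf{1}$, $\alpha(-5)\alpha(-1)\mathbf{1}$ in terms of $\alpha(-i)\alpha(-j)\mathbf{1}$ with $i,j\le 4$, and matching against the expansion of $-2(3A(B+C)+3B(C+D)+B(A+B))\mathbf{1}$. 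Alternatively, and I think more robustly, one proves it by comparing the actions of both sides: both $x^2-y$ and the claimed representative are elements of $A_2(V)$, and one checks they induce the same zero mode on $\Omega_2(M_a(1,\lambda))$ using Lemma~\ref{o-mode-lemma}; but since we are claiming an equality in $M_a(1)$ modulo $O_2(V)$ and not merely in $A_2(V)$, I would do the direct computation with \eqref{multiplication-one-one}, \eqref{recursion-level-2}, \eqref{reduce-equation-Heisenberg-level2}.

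The second step is the general multiplication estimate. Given $v \in F_r(\mathbf{1})$, write $u := -2\big(3A(B+C)+3B(C+D)+B(A+B)\big)\mathbf{1}$, a degree-two element, so $x^2 - y \equiv_2 u + c\mathbf{1}$ for a constant $c$. Then $(x^2-y)*_2 v \equiv_2 (u+c\mathbf{1})*_2 v = u *_2 v + c v$, and $cv \in F_r(\mathbf{1})$. It remains to show $u *_2 v \equiv_2 u v + f(\mathbf{1})$ with $f(\mathbf{1})\in F_r(\mathbf{1})$ — i.e.\ that replacing the $*_2$-product by the ordinary product of modes costs only terms in $F_r(\mathbf{1})$. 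This is exactly the mechanism of Lemma~\ref{reduce-commutators-lemma} and the Corollary following Lemma~\ref{lower-order-lemY}: $u$ is a linear combination of length-two products of negative modes $\alpha(-i)\alpha(-j)$ with $i,j\ge 1$, so $u*_2 v = u*_2 v*_2\mathbf{1}$ is, by the definition \eqref{*_n-definition} of $*_2$, a linear combination of products of modes acting on $\mathbf{1}$ in which the two "new" modes coming from $u$ are either both negative (giving precisely $\alpha(-i')\alpha(-j')$ times $v$, matching $uv$ up to the same $F_r(\mathbf{1})$ terms produced by normal-ordering/reduction since $v\in F_r(\mathbf{1})$) or at least one is nonnegative (a nonnegative mode acting on the $\le r+2$ modes of $uv$ lowers the count by two or kills it, landing in $F_r(\mathbf{1})$). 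Keeping careful track of the filtration degree — $v$ has at most $r$ modes, $uv$ has at most $r+2$, any nonnegative mode brings it back to $\le r$ — gives $u*_2 v - uv \in F_r(\mathbf{1})$, and combining with the $cv$ term yields the claim with $f_r(\mathbf{1}) = uv - u*_2 v + cv\cdot(\text{wait, }cv\in F_r) \in F_r(\mathbf{1})$.

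The main obstacle is Step 1: pinning down that $x^2 - y$ has the specific representative $-2\big(3A(B+C)+3B(C+D)+B(A+B)\big)\mathbf{1}$ modulo $F_0(\mathbf{1})$, since this is where all the structural constants (the $3$, $3$, $1$ and the overall $-2$) are fixed, and getting it wrong would break every downstream application in Lemma~\ref{lower-order-lemY}. I would double-check it against the independent route via Eqn.~\eqref{O24}: that equation says $\big((A+B)(B+C)+A(C+D)\big)v\in O_2(V)$, equivalently $\alpha(-1)^2\mathbf{1}\circ_2 v \sim_2 0$ is built from this combination, and since $u *_2 \mathbf{1} = u$ while $\alpha(-1)\mathbf{1}*_2\alpha(-1)\mathbf{1} - \alpha(-1)^2\mathbf{1}$ differs from $\alpha(-1)^2\mathbf{1}\circ_2(\cdot)$-type expressions in a controlled way dictated by \eqref{define-circ} and \eqref{*_n-definition}, the coefficient bookkeeping can be cross-validated. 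Once Step~1 is secure, Step~2 is the routine filtration argument already used repeatedly in the paper, so the lemma follows.
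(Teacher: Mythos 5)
Your Step 2 is the crux, and it fails. The transfer principle you invoke --- that for a degree-two element $u=\sum c_{ij}\,\alpha(-i)\alpha(-j)\mathbf{1}$ one has $u*_2 v \equiv_2 uv + f_r(\mathbf{1})$ with $uv$ the naive product of negative modes --- is false, and the ``mechanism'' you cite does not support it. The $*_2$-product does not act by simply inserting the negative modes of $u$ in front of $v$: by Eqn.\ \eqref{multiplication-one-cor-formula-level2-expllicit} (see also the remark after Eqn.\ \eqref{reduce1-prop-level2}), the coefficient of the ``diagonal'' term $\alpha(-1)^{i}v$ in $\alpha(-1)^{i}\mathbf{1}*_2 v$ is $\binom{i+2}{2}-3\binom{i+2}{3}+6\binom{i+2}{4}$, which vanishes for $i=1,2$; the product is instead a nontrivially weighted combination of $\alpha(-i')\alpha(-j')v$ with redistributed indices $i'+j'$ up to $i+j+4$. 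Concretely, your claimed mechanism applied to $u=\alpha(-1)\mathbf{1}$ and $v=\alpha(-1)\mathbf{1}\in F_1(\mathbf{1})$ would give $x^2-y=\alpha(-1)\mathbf{1}*_2\alpha(-1)\mathbf{1}-\alpha(-1)^2\mathbf{1}\in F_1(\mathbf{1})+O_2(V)$, i.e.\ $x^2-y$ would be an affine polynomial in $x$ already in $A_1(V)$; this contradicts Theorem \ref{A_1-theorem}, under whose isomorphism $A_1(M_a(1))\cong\mathbb{C}[x]\oplus\mathbb{C}[x]$ the element $x^2-y$ maps to $(0,-2)$ while $ax+b$ maps to the diagonal. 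Lemma \ref{reduce-commutators-lemma} is not the relevant mechanism either: there the leading (all-modes-negative) contributions cancel between $u*_2v$ and $v*_2u$ because negative modes commute, which is a genuinely different phenomenon. Finally, your Step 1 is just the instance $v=\mathbf{1}$, $r=0$ of the lemma being proved; without a valid transfer principle it cannot yield the statement for general $v$, and the zero-mode cross-check on $\Omega_2(M_a(1,\lambda))$ you mention cannot establish membership in $O_2(V)+F_r(\mathbf{1})$, as you yourself note.

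What the lemma actually requires --- and what the paper does --- is an operator-level computation valid for \emph{all} $v$ simultaneously: one uses the fact that $x*_2 v=(10\alpha(-3)+15\alpha(-4)+6\alpha(-5))v$ exactly (the error $g_1(v)$ vanishes for $i=t=1$), hence $x^2*_2 v=(10\alpha(-3)+15\alpha(-4)+6\alpha(-5))^2v$, computes $y*_2v$ from Eqn.\ \eqref{multiplication-one-cor-formula-level2-expllicit} with its explicit redistributed coefficients $20,25,21,6$ modulo $F_r(\mathbf{1})$, and then simplifies the difference of these two quadratic expressions in the modes using the relations \eqref{O21}, \eqref{O??}, \eqref{O23}, \eqref{O24}, which are themselves identities holding for every $v$. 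The structural constants $3,3,1$ and the factor $-2$ in the statement emerge from this simplification; they cannot be obtained by evaluating at $v=\mathbf{1}$ and transferring.
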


\begin{proof}
Writing Eqn.\ (\ref{multiplication-one-one}) in terms of $C$ and $D$, using the associativity of $*_2$, and applying Eqns.\ (\ref{O21}) and (\ref{O??}), we have
\begin{eqnarray}\label{x^2-CD}
x^2*_2v &=& \big(10\alpha(-3) +15\alpha(-4) + 6\alpha(-5) \big)^2v  \\
&=& \big(\alpha(-3)^2 + 6 \alpha(-3) C + 12 \alpha(-3) (C+D) + 9C^2 + 36 C(C+D) \nonumber \\
& & \quad  +\,  (C+D)^2 \big)v \nonumber \\
&\sim_2 &  \big(\alpha(-3)^2 + 6 \alpha(-3) C + 12 \alpha(-3) (C+D) + 9C^2 \big)v .\nonumber 
\end{eqnarray}


Using Eqn.\ (\ref{multiplication-one-cor-formula-level2-expllicit}) giving the formula for left multiplication by  $y = \alpha(-1)^2 {\bf 1}$, we have that for any $v \in F_r({\bf 1})$,
\begin{eqnarray*}
y*_2v &=&  \sum_{j =0}^{4}  \sum_{\stackrel{p_1, p_2 \in \mathbb{N}}{p_1 + p_2  =  j} }   \left( \binom{4}{ 2 - j}  - 3  \binom{4}{3 - j} + 6  \binom{4 }{4 - j}   \right) \alpha(-p_1-1)  \alpha(-p_2-1)   \, v   \\
& & \quad + \,  f_{r}({\bf 1})  \\
&=& 20 \alpha(-1) \alpha(-2)v + 25 \big(\alpha(-2)^2  +  2 \alpha(-1) \alpha(-3) \big)v  + 21 \big(2 \alpha(-1) \alpha(-4) \\
& & \quad + \,  2 \alpha (-2) \alpha(-3) \big)v  +   6 \big(\alpha(-3)^2 + 2 \alpha(-1) \alpha(-5) + 2 \alpha(-2) \alpha(-4) \big)v  +  f_{r}({\bf 1}) \\
&=& \big( 20AB + 30 AC + 5B^2 + 12 AD -18BC 
 -12 \alpha(-2)D +18 \alpha(-3) C  \\
 & & \quad  + \,  \alpha(-3)^2 \big)v+   f_{r}({\bf 1}) 
\end{eqnarray*}
for some $f_{r}({\bf 1})\in F_{r}(\bf{1})$.
Combining the expressions for left multiplication by $x$ and $y$,  rearranging terms, using \eqref{O24}, rearranging, using \eqref{O23}, and rearranging again, we have
\begin{eqnarray*}
\lefteqn{(x^2-y)*_2v}\\
&  \sim_2 &   \big(\alpha(-3)^2 + 6 \alpha(-3) C + 12 \alpha(-3) (C+D) + 9C^2 -20AB - 30AC \\
& & \quad -\,  5B^2 - 12 AD  + 18 BC + 12 \alpha(-2) D - 18 \alpha(-3) C - \alpha(-3)^2  \big)v  - f_{r} ({\bf 1}) \\
&=&(-8A(B+C)-12A(B+2C+D)+2AC-5B^2+18BC+12BD\\
& & \quad  + \, 9C^2)v  -  f_{r}({\bf 1}) \\
&  \sim_2 & (-8A(B+C)+12B(B+C)+2AC-5B^2+18BC+12BD+9C^2)v- f_{r}({\bf 1}) \\
&=&(-8A(B+C)+12B^2+12BC+2AC+9(B+C)^2-14B^2+12BD)v-   f_{r}({\bf 1}) \\
&\sim_2&(-8A(B+C)+12B(C+D)+2AC-18B(C+D)-2B^2)v-  f_{r}({\bf 1}) \\
&=&(-6A(B+C)-6B(C+D)-2B(A+B))v-  f_{r}({\bf 1}),
\end{eqnarray*}
giving the result, where $f_{r}({\bf 1})$ is relabeled as  $-f_{r}({\bf 1})$.  
\end{proof}

\begin{lem}\label{x^2tildey}
Given $v\in F_{r}(\bf {1})$, 
\[(x^2+\tilde y)*_2v \sim_2  (16A(C+D)+4C(A+B)-42B(C+D)-12C(B+C))v+f_{r}({\bf 1}),\] 
where $f_{r}({\bf 1})\in F_{r}({\bf 1})$.
\end{lem}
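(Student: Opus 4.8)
\textbf{Proof proposal for Lemma \ref{x^2tildey}.}

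The plan is to compute $(x^2 + \tilde y) *_2 v$ by combining the known formula for $x^2 *_2 v$ obtained in the course of proving Lemma \ref{appendixlem1} with a new computation of $\tilde y *_2 v$, where $\tilde y = \alpha(-1)\alpha(-4){\bf 1}$. First I would recall from Eqn.\ \eqref{x^2-CD} in the proof of Lemma \ref{appendixlem1} that, modulo $O_2(V)$,
\[
x^2 *_2 v \sim_2 \bigl( \alpha(-3)^2 + 6\alpha(-3)C + 12\alpha(-3)(C+D) + 9C^2 \bigr) v .
\]
Then I would compute $\tilde y *_2 v$ directly from the multiplication formula \eqref{multiplication-formula-infinity-level2} (or more precisely its corollary-level specialization for $t = 4$, $i_1 = i_4 = 1$, all other $i_s = 0$, together with the $g$-term control giving the $f_r({\bf 1})$ error), keeping in mind the permutation/reordering conventions and that any term with a nonnegative mode $\alpha(q)$, $q \ge 0$, acting on $v \in F_r({\bf 1})$ either drops two modes or vanishes, hence lands in $F_r({\bf 1})$. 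This produces $\tilde y *_2 v$ as an explicit $\mathbb{C}$-linear combination of quadratic monomials $\alpha(-p_1)\alpha(-p_2)v$ with $p_1, p_2 \in \{1,2,3,4,5\}$, plus some $f_r({\bf 1})$.

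Next I would add the two expressions, rewrite everything in the $A,B,C,D$ shorthand of \eqref{ABC-notation}, and then systematically apply the relations in $O_2(V)$ that are already established — namely Eqns.\ \eqref{O21}, \eqref{O??}, \eqref{O22}, \eqref{O23}, \eqref{O24} from Lemma \ref{first-ABC-lemma} — to eliminate the terms involving $(C+D)$ squared, $C(C+D)$, etc., exactly in the same spirit as the reduction carried out at the end of the proof of Lemma \ref{appendixlem1}. The target form $16A(C+D) + 4C(A+B) - 42B(C+D) - 12C(B+C)$ tells me which combinations should survive, so I would work backwards from that expression to guide the bookkeeping: after collecting coefficients of each monomial $\alpha(-i)\alpha(-j)$, I check that the difference between my collected expression and the claimed one is a linear combination of the left-hand sides of \eqref{O21}--\eqref{O24} applied to $v$, hence lies in $O_2(V) + F_r({\bf 1})$.

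The main obstacle I anticipate is purely computational: correctly evaluating $\tilde y *_2 v = \alpha(-1)\alpha(-4){\bf 1} *_2 v$ with all binomial coefficients from \eqref{multiplication-formula-infinity-level2} (the $\binom{r+2}{j+m}$ and $\binom{m+2}{2}$ factors combined with the $P$-polynomial $\prod \binom{k+s-1}{s-1}$ factors for $s = 1$ and $s = 4$), and then not losing track of the $f_r({\bf 1})$ error terms during the $A,B,C,D$ rewriting and the repeated substitution of the $O_2(V)$-relations. I expect no conceptual difficulty — every tool needed is already in Corollary \ref{level-two-corollary} and Lemma \ref{first-ABC-lemma} — but care is required because several monomials (e.g.\ those involving $\alpha(-5)$) must be tracked through the $C,D$ substitutions where $D = \alpha(-4)+\alpha(-5)$ mixes them. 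The reductions \eqref{coreq1}--\eqref{coreq9} of Corollary \ref{circlecor}, together with \eqref{O??}, should handle the elimination of all cubic-looking cancellations that arise, leaving precisely the four surviving terms in the statement.
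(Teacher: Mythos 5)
Your proposal follows essentially the same route as the paper: compute $\tilde y *_2 v$ from the multiplication formula \eqref{multiplication-formula-infinity-level2} with $t=4$, $i_1=i_4=1$, absorb the $g$-terms into $F_r({\bf 1})$, add the expression \eqref{x^2-CD} for $x^2 *_2 v$, and reduce using the relations established earlier. The one point you gloss over is that the raw multiplication formula produces modes as deep as $\alpha(-8)$ (not only $\alpha(-p)$ with $p\le 5$), so the recursion \eqref{recursion-level-2} must first be applied to those terms before the $A,B,C,D$ rewriting; after that, only \eqref{O21} is actually needed for the final simplification.
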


\begin{proof}
By the multiplication formula  given by Eqn.\ (\ref{multiplication-formula-infinity-level2}), with $t = 4$, $i_1 = i_4  = 1 $, $i_2=i_3 = 0$, $p_1 = p_2 = p_3 = 1$, $p_4 = 2$, and $r = 5$, we have for all $v\in V$,
\begin{eqnarray*}
\lefteqn{\tilde y *_2v =   \alpha(-4) \alpha(-1) {\bf 1} *_2 v} \\
&=& \sum_{m=0}^2 \sum_{j=-m}^2 (-1)^m\binom{m+2}{2}\binom{7}{j+m}\displaystyle\sum_{\substack{k_1,k_{2}\in \mathbb{N}\\
k_1+k_{2}=2-j}} \! \! {}_{\circ}^{\circ}P_{i_1,\dots, i_{4}}(k_1, k_{2}) {}_{\circ}^{\circ} v +g_{i_1,\dots, i_{4}}(v) ,
\end{eqnarray*}
where by definition (\ref{define-P}),  in this case $P_{i_1,\dots, i_{4}}(k_1, k_{2}) = \binom{k_2+3}{3} \alpha(-k_1-1)\alpha(-k_2-4)$, and  $g_{i_1,\dots, i_{4}}(v)$ is a linear combination of elements of the form  ${}_{\circ}^{\circ} \alpha(q_1) \alpha(q_{2}){}_{\circ}^{\circ}v$ for $q_1, q_{2} \in \mathbb{Z}$ with at least one of the  $q$'s nonnegative.   Note that this then means that when acting on $v \in F_r( {\bf 1})$, we will have $g_{i_1,\dots, i_{4}}(v) \in F_r({\bf 1})$.   Thus we have that for some $f_r({\bf 1}) \in F_r ({\bf 1})$, and organizing by partitions of $j$
\begin{eqnarray*}
\lefteqn{\tilde y *_2v }\\
&=& \sum_{m=0}^2 \sum_{j=-m}^2 (-1)^m\binom{m+2}{2}\binom{7}{j+m}\displaystyle\sum_{\substack{k_1,k_{2}\in \mathbb{N}\\
k_1+k_{2}=2-j}} \! \! {}_{\circ}^{\circ} \binom{k_2+3}{3} \alpha(-k_1-1)\alpha(-k_2-4) {}_{\circ}^{\circ} v +f_{r}({\bf 1}) ,\\
&=& \Big(210 \alpha(-1)\alpha(-8) + 120\alpha(-2)\alpha(-7) + 60 \alpha(-3)\alpha(-6) + 30\alpha(-4)\alpha(-5)\\
& & \quad + \, 780\alpha(-1)\alpha(-7)+390\alpha(-2)\alpha(-6) +156\alpha(-3)\alpha(-5) + 39\alpha(-4)^2 \\
& & \quad + \, 1060\alpha(-1)\alpha(-6)+424\alpha(-2)\alpha(-5)+106\alpha(-3)\alpha(-4) + 616\alpha(-1)\alpha(-5) \\
& & \quad + \, 154\alpha(-2)\alpha(-4)+126\alpha(-1)\alpha(-4)\Big)v+f_r(\bf{1}).
\end{eqnarray*}

Applying the recursion (\ref{recursion-level-2}) from Corollary \ref{level-two-corollary},  and simplifying, we have that  for all $v\in V$, 
\begin{eqnarray}\label{multiply-tildey}
\lefteqn{\tilde y*_2v} \\
&\sim_2 & \Big(-210 \alpha(-1) \big(6\alpha(-3) + 15 \alpha(-4) + 10 \alpha(-5) \big) + 120\alpha(-2) \big( 3 \alpha(-3) \nonumber \\
& & \quad + \,  8 \alpha(-4) +  6 \alpha(-5) \big) - 60 \alpha(-3)\big( \alpha(-3) + 3 \alpha(-4) + 3 \alpha(-5)  \big)  \nonumber \\
& & \quad + \, 30\alpha(-4)\alpha(-5) + 780\alpha(-1) \big(3 \alpha(-3) + 8 \alpha(-4) + 6 \alpha(-5)   \big) \nonumber \\
& & \quad - \,  390\alpha(-2)\big( \alpha(-3) + 3 \alpha(-4) +  3 \alpha(-5)  \big)   +156\alpha(-3)\alpha(-5) \nonumber \\
& & \quad  +  \,39\alpha(-4)^2  -  1060\alpha(-1)\big( \alpha(-3) + 3 \alpha(-4)  +  3 \alpha(-5)    \big) \nonumber \\
& & \quad + \, 424\alpha(-2)\alpha(-5)  + 106\alpha(-3)\alpha(-4) + 616\alpha(-1)\alpha(-5) \nonumber
 \\
& & \quad   + \,  154\alpha(-2)\alpha(-4) + 126\alpha(-1)\alpha(-4)\Big)v+f_r({\bf 1}) \nonumber \\
&=&  \Big( 20AC - 50 B C + 16AD  - 42 BD  - \alpha(-3)^2  + 9C^2  - 18 \alpha(-3) C   \nonumber \\
& & \quad  + \,   30CD  -  12 \alpha(-3) D \Big)v + f_{r}({\bf 1})  , \nonumber
\end{eqnarray}
for some $f_{r}({\bf 1})\in F_r({\bf 1}).$ 
Then using the formula for multiplication by $x^2$  given by Eqn.\ (\ref{x^2-CD}),  simplifying, rearranging, using Eqn.\ (\ref{O21}), and rearranging again, we have for all $v\in V$,
\begin{eqnarray*}
\lefteqn{(x^2+\tilde y) *_2 v } \\
&\sim_2 & \big(\alpha(-3)^2 + 6 \alpha(-3) C + 12 \alpha(-3) (C+D) + 9C^2 + 20AC - 50 B C + 16AD  - 42 BD  \\
& & \quad   - \,  \alpha(-3)^2  + 9C^2  - 18 \alpha(-3) C   +  30CD  -    12 \alpha(-3) D \Big)v + f_{r}(\bf{1}) \\
&=&(16A(C+D) + 4AC -42B(C+D)-8BC+30C(C+D)-12C^2)v+f_{r}(\bf{1})\\
&\sim_2&(16A(C+D) + 4AC -42B(C+D)-8BC -12C^2)v+f_{r}(\bf{1})\\
&=&(16A(C+D)+4C(A+B)-42B(C+D)-12C(B+C))v+f_{r}(\bf{1}),
\end{eqnarray*}
giving the result. 
\end{proof}

\begin{lem}\label{C^3}
Given $v\in F_{r}(\bf {1})$, we have $C^3v\sim_2 f_{r+1}({\bf 1})$, where $f_{r+1}({\bf 1})\in F_{r+1}({\bf 1})$.  
\end{lem}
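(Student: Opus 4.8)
\textbf{Proof proposal for Lemma \ref{C^3}.}

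The plan is to express $C^3$ in terms of the relations already established in Corollary \ref{circlecor} and Lemma \ref{first-ABC-lemma}, modulo lower-order corrections. The key observation is that $C = \alpha(-3)+\alpha(-4)$ is a degree-one operator (in the $\alpha(-k)$'s), so acting on $v \in F_r(\mathbf{1})$ produces a vector in $F_{r+3}(\mathbf{1})$, but I only need to control things up to $F_{r+1}(\mathbf{1})$; this gives considerable slack. First I would write $C^3 v = C \cdot (C^2 v)$ and use Eqn.\ \eqref{O21} in the form $C(C+D)v \sim_2 0$ together with Eqn.\ \eqref{O??}, $(C+D)^2 v \sim_2 0$, and Eqn.\ \eqref{O22}, $D(C+D)v \sim_2 0$. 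From $C(C+D)v\sim_2 0$ we get $C^2 v \sim_2 -CD\,v$, and then $C^3 v \sim_2 -C^2 D\, v$. Iterating, $C^2 D v = (CD)(Cv)$; but I need to be careful since these operators do not literally commute as operators on $M_a(1)$ — they commute only up to lower-order terms coming from the Heisenberg bracket, which is exactly the $F_{r+1}(\mathbf{1})$ slack I am allowed.

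The cleanest route is probably: since $Cv, Dv$ differ only by a shift and $C,D$ are sums of single modes, any reordering of a product of these operators costs a commutator term $[\alpha(-i),\alpha(-j)]=0$ — wait, negative modes commute exactly, so in fact $C$ and $D$ \emph{do} commute as operators, and the only lower-order terms arise when I eventually need to invoke relations like \eqref{O21}, \eqref{O??} which hold modulo $O_2(V)$ only after the operators act, not as operator identities. So more precisely: I would argue that $C^3 v \sim_2 0$ outright (no correction needed) unless some step forces me into $F_{r+1}(\mathbf{1})$. Let me reconsider — applying \eqref{O21} with the vector $C v \in F_{r+1}(\mathbf{1})$ (since $Cv$ may not be in $F_r$, it is in $F_{r+1}$ as $C$ raises mode-count by at most... no, $C$ is degree one so $Cv \in F_{r+1}(\mathbf{1})$) gives $C(C+D)(Cv) \sim_2 0$, hence $C^2(Cv) \sim_2 -CD(Cv)$, i.e.\ $C^3 v \sim_2 -C^2 D v$. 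Repeating with $Dv$ in place of $v$: $C^3(Dv)$... this recursion on its own just shuffles powers of $D$ in. The trick must be to combine with \eqref{O??}: from $C^2 v \sim_2 -CDv$ and also $C^2 v + 2CD v + D^2 v = (C+D)^2 v \sim_2 0$, we get $-CDv + 2CDv + D^2 v \sim_2 0$, so $CD v \sim_2 -D^2 v$, and then $C^3 v \sim_2 -C^2 D v \sim_2 -(C^2 v) D \sim_2 (CDv)D \sim_2 -(D^2 v)D$...

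Continuing this way one expresses $C^3 v$ in terms of $D^3$-type expressions times the original vector, and then a symmetric argument (or direct use of \eqref{O22}, $D(C+D)v \sim_2 0$, which gives $D^2 v \sim_2 -CD v$, looping back) should collapse everything. The honest expectation is that the whole calculation reduces $C^3 v$ to $0$ modulo $O_2(V)$ plus a genuinely lower-order term $f_{r+1}(\mathbf{1})$ arising from the one place where I must commute a regular mode past a singular mode (as in Lemma \ref{reduce-commutators-lemma}) — specifically when I pass from operator expressions to the relations of Corollary \ref{circlecor} which were derived via $\circ_2$ and hence only hold after accounting for such commutators. \textbf{The main obstacle} is bookkeeping: tracking exactly which reordering or which application of \eqref{O21}--\eqref{O22} introduces the $F_{r+1}(\mathbf{1})$ correction, and confirming it never introduces an $F_{r+2}(\mathbf{1})$ or higher uncontrolled term. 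I expect the final write-up to be three or four lines of $\sim_2$ manipulations: $C^3 v \sim_2 -C^2 D v \sim_2 CD^2 v \sim_2 -D^3 v$, then observe $-D^3 v \sim_2 C^3 v$ by the symmetric relations, forcing $2C^3 v \sim_2 f_{r+1}(\mathbf{1})$, hence $C^3 v \sim_2 f_{r+1}(\mathbf{1})$.
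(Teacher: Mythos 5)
There is a genuine gap, and it is not the ``bookkeeping'' you flag at the end --- it is that your chain of manipulations cannot close. The relations you invoke, Eqns.\ \eqref{O21}, \eqref{O??}, \eqref{O22}, are all quadratic in $C$ and $D$: applied to the vectors $Cv$ and $Dv$ they generate, among cubics, only the span of $C^2(C+D)w$, $CD(C+D)w$, $D^2(C+D)w$. This identifies $C^3v \sim_2 -C^2Dv \sim_2 CD^2v \sim_2 -D^3v$, exactly as you compute, but it never kills the remaining one-dimensional space spanned by $C^3v$ itself. Your final step --- ``observe $-D^3v \sim_2 C^3v$ by the symmetric relations, forcing $2C^3v \sim_2 f_{r+1}(\mathbf{1})$'' --- is where the argument breaks: $-D^3v \sim_2 C^3v$ is literally the same equation as $C^3v \sim_2 -D^3v$, not an independent relation, so no factor of $2$ ever appears and nothing forces $C^3v$ into $F_{r+1}(\mathbf{1})$. (You are also slightly off about where an $F_{r+1}$ correction would enter: the relations of Lemma \ref{first-ABC-lemma} and Corollary \ref{circlecor} hold exactly modulo $O_2^\circ(V)$ with no error term, so repeatedly applying them introduces no lower-order corrections at all --- which is further evidence that they alone cannot produce the statement, since the statement has a nontrivial $f_{r+1}(\mathbf{1})$.)

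What is missing is a genuinely new \emph{cubic} element of $O_2(V)$, and that is what the paper's proof supplies: it computes $\alpha(-3)^3\mathbf{1}\circ_2 v \in O_2^\circ(V)$ directly from the definition \eqref{define-circ}. Splitting $Y(\alpha(-3)\mathbf{1},x)$ into singular and regular parts via Lemma \ref{Y+-lemma}, the terms involving a singular part contribute only elements of $F_{r+1}(\mathbf{1})$ (a positive mode eats one mode of $v$ while at most two negative modes are attached), while the purely regular part, after taking the residue and discarding everything that dies by \eqref{O21} and \eqref{O??}, equals $27C^3v$. Hence $27C^3v + f_{r+1}(\mathbf{1}) \sim_2 0$, which is the lemma. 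If you want to salvage your approach, you must perform some such circle-product computation with a weight-$9$ (or comparable) vector; the quadratic relations alone are structurally insufficient.
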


\begin{proof}
By the definition of the circle product Eqn.\ (\ref{define-circ}) and Lemma \ref{Y+-lemma},
\begin{eqnarray*}
\lefteqn{\alpha(-3)^3{\bf 1}\circ_2 v \ = \   \res_x \frac{(1+x)^{11}}{x^6} Y(\alpha(-3)^3 {\bf 1}, x)v }\\
&=& \res_x \frac{(1+x)^{2}}{x^6} \nord (  (1+x)^3 Y^-(\alpha(-3){\bf 1} , x) + (1+ x)^3 Y^+(\alpha(-3) {\bf 1} , x) )^3 \nord v\\
&\sim_2& \res_x \frac{(1+x)^{2}}{x^6} \nord \Big(  (1+x)^3 Y^-(\alpha(-3) {\bf 1} , x) + \alpha(-3)+\frac{3Cx}{1+x}+\frac{6(C+D)x^2}{(1+x)^2} \Big)^3 \nord v \\
&=& \res_x \frac{1}{x^6} \Big( p_1(x) Y^-(\alpha(-3) {\bf 1}, x)  + p_2(x) Y^-(\alpha(-3) {\bf 1}, x)^2  +  p_3(x) Y^-(\alpha(-3) {\bf 1}, x)^3  \\
& & \quad + \, (1+x)^2 \Big(\alpha(-3)+\frac{3Cx}{1+x} +  \frac{6(C+D)x^2}{(1+x)^2}\Big)^3 \Big)v,\\
\end{eqnarray*}
where $p_1, p_2, p_3$ are polynomials in $x$ of degree 5, 8, and 11, respectively, with the coefficients of $p_1$ involving $\alpha(-m_1)\alpha(-m_2)$, coefficients of $p_2$ involving $\alpha(-m)$, and coefficients of $p_3$ constants.   Therefore 
\[
\alpha(-3)^3{\bf 1}\circ_2 v
\sim_2 \res_x \frac{(1+x)^2}{x^6}  \Big(\alpha(-3)+\frac{3Cx}{1+x} +  \frac{6(C+D)x^2}{(1+x)^2}\Big)^3 v + f_{r+1}({\bf 1}), \]
for some $f_{r+1} ({\bf 1}) \in F_{r+1} ( {\bf 1})$,  where the $f_{r+1}$ term comes from the terms involving as many as two negative modes (from $p_1$) and one positive mode from the singular parts of $Y$ acting on $v \in F_r({\bf 1})$, subtracting a negative mode and adding up to 2 negative modes.  Then by Eqns.\ (\ref{O21}) and (\ref{O??}), we have
\[ \alpha(-3)^3{\bf 1}\circ_2 v \sim_2 27C^3v +  f_{r+1}({\bf 1}).\]
Therefore for any $v\in F_r({\bf 1})$, we have $C^3v \sim_2  f_{r+1}({\bf 1})$ for some $f_{r+1}({\bf 1})\in F_{r+1}({\bf1})$.
\end{proof}

As with Lemma \ref{x^2tildey}, and now using Lemma \ref{C^3}, we proceed to prove the additional relations up to terms in $F_k({\bf 1})$ for some $k \in \mathbb{N}$, for expressions involving $x, y, \tilde y, z, \tilde z$ multiplying a $v \in F_r({\bf 1})$ in $A_2(M_a(1))$ as follows:

\begin{lem}\label{zxtildey}
Given $v \in F_{r}({\bf 1})$, 
\begin{multline*}
(\tilde z+x \tilde y)*_2v    \sim_2    \big(-4\alpha(-3)C \big((A+B)-3(B+C) \big) - 4C^2(3A-10B) \\
 \quad + \, \alpha(-3)(C+D)(-16A+42B) \big)v+  f_{r+1} ({\bf 1}),
\end{multline*}
where $f_{r+1}({\bf 1})\in F_{r+1}({\bf 1})$. 
\end{lem}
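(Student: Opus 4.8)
The plan is to mimic the structure of the proofs of Lemmas \ref{x^2tildey} and \ref{C^3}: express the left multiplication $\tilde z *_2 v = \alpha(-4)^2\alpha(-1)\mathbf{1} *_2 v$ and $x\tilde y *_2 v$ explicitly using the general multiplication formula, reduce high modes via the recursion \eqref{recursion-level-2}, and then repeatedly apply the relations of Lemma \ref{first-ABC-lemma} and Corollary \ref{circlecor} together with Lemma \ref{C^3} to collapse the answer modulo $O_2^\circ(V)$ and modulo $F_{r+1}(\mathbf{1})$. First I would compute $\tilde z *_2 v$ using Eqn.\ \eqref{multiplication-formula-infinity-level2} with $t = 4$, $i_1 = 1$, $i_4 = 2$ (so $p_4 = 3$, $r = 9$), noting that $P_{i_1,\dots,i_4}(k_1,k_2,k_3) = \binom{k_2+3}{3}\binom{k_3+3}{3}\alpha(-k_1-1)\alpha(-k_2-4)\alpha(-k_3-4)$, and that the $g$-term acting on $v \in F_r(\mathbf{1})$ lands in $F_r(\mathbf{1})$. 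This produces a finite linear combination of terms $\alpha(-m_1)\alpha(-m_2)\alpha(-m_3)v$ with $m_1 + m_2 + m_3 = 9$ and $m_1 \geq 1$, $m_2, m_3 \geq 4$, which after applying the recursion \eqref{recursion-level-2} to the high modes becomes a combination of products of modes among $\alpha(-1),\dots,\alpha(-5)$ acting on $v$.

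Next I would compute $x\tilde y *_2 v = \alpha(-1)\mathbf{1} *_2 (\tilde y *_2 v)$, using the already-derived formula \eqref{multiply-tildey} for $\tilde y *_2 v$ from the proof of Lemma \ref{x^2tildey} together with \eqref{multiplication-one-one} for left multiplication by $\alpha(-1)\mathbf{1}$ (being careful that this introduces a product of three modes, hence a possible $F_{r+1}(\mathbf{1})$ error when a regular mode meets $v$); alternatively one could apply the $t=4$, $i_1=i_4=1$ multiplication formula directly to $\alpha(-4)\alpha(-1)\mathbf{1}$ and then multiply by $\alpha(-1)\mathbf{1}$ on the left, whichever bookkeeping is cleaner. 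Adding the two expressions and rewriting everything in the $A,B,C,D$ shorthand of \eqref{ABC-notation}, I would then systematically eliminate terms: any occurrence of $C(C+D)$, $(C+D)^2$, $D(C+D)$ vanishes by \eqref{O21}, \eqref{O??}, \eqref{O22}; terms with $B(B+C)(C+D)$, $A(B+C)(C+D)$, $C(B+C)^2$, $C(A+B)(B+C)$, $B(A+B)(C+D)$, $AB(C+D)$, $B^2(C+D)$ vanish by Corollary \ref{circlecor}; and any surviving $C^3$ term vanishes modulo $F_{r+1}(\mathbf{1})$ by Lemma \ref{C^3}. Collecting what remains should give precisely the claimed combination $-4\alpha(-3)C\big((A+B) - 3(B+C)\big) - 4C^2(3A - 10B) + \alpha(-3)(C+D)(-16A + 42B)$.

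The main obstacle is purely computational bookkeeping: the raw expansions of $\tilde z *_2 v$ and $x\tilde y *_2 v$ each involve on the order of twenty terms $\alpha(-m_1)\alpha(-m_2)\alpha(-m_3)v$ with coefficients produced by binomial sums, and after applying the recursion the number of monomials in $\alpha(-1),\dots,\alpha(-5)$ balloons further; keeping track of which reductions from Lemma \ref{first-ABC-lemma} and Corollary \ref{circlecor} to apply, in which order, so that the answer collapses to a clean degree-three expression in $A,B,C,D,\alpha(-3)$ is delicate. A secondary subtlety is correctly tracking the $F_{r+1}(\mathbf{1})$ (rather than $F_r(\mathbf{1})$) error term: it arises both from the $g$-terms in the multiplication formula and, crucially, from the single extra negative mode produced when a singular mode in $Y^-(\alpha(-1)\mathbf{1},x)$ (implicit in left multiplication by $\alpha(-1)\mathbf{1}$) hits a mode of $v$, exactly as in the proof of Lemma \ref{C^3}; one must verify no term of filtration degree $r+2$ or higher with a nonnegative mode survives after taking residues. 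Once the algebra is organized around the $A,B,C,D$ relations this is routine but lengthy, which is why (as the paper indicates) these computations are relegated to the appendix.
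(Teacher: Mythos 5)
Your plan is sound and rests on the same ingredients the paper uses (the $*_2$ multiplication formulas, the recursion \eqref{recursion-level-2}, Lemma \ref{first-ABC-lemma}, Corollary \ref{circlecor}, Lemma \ref{C^3}, and careful tracking of the $F_{r+1}(\mathbf{1})$ error from singular modes), but the paper organizes the computation differently in a way that matters in practice. Rather than expanding $\tilde z *_2 v$ via the combinatorial formula \eqref{multiplication-formula-infinity-level2} and separately assembling $x *_2 (\tilde y *_2 v)$ from \eqref{multiply-tildey}, the paper writes both summands as residues against $Y^+(\alpha(-1)\mathbf{1},x)\,Y^+(\alpha(-4)\mathbf{1},x)$ using Lemma \ref{Y+-lemma}, factors out this common piece, and observes that the remaining third factor $(1+x)^4Y^+(\alpha(-4)\mathbf{1},x) + \alpha(-3)+3C+6(C+D)$ collapses (the $\pm\alpha(-3)$ cancel) to $4C + (C+D)\tfrac{6+16x}{1+x}$; the relations \eqref{O21}, \eqref{O??} and Lemma \ref{C^3} are then applied at the level of the rational-function coefficients \emph{before} the residue is taken, which keeps the term count small. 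Your fully-expanded route would reach the same answer but at the cost of far more monomials and without the cancellation of the $\alpha(-3)$ terms being visible up front; since you do not execute the arithmetic, the final collapse to the stated expression remains asserted rather than verified. One small slip: the monomials $\alpha(-m_1)\alpha(-m_2)\alpha(-m_3)$ produced by \eqref{multiplication-formula-infinity-level2} for $\tilde z *_2 v$ have $m_1+m_2+m_3 = 9+(2-j)$ ranging from $9$ to $13$ (modes as deep as $\alpha(-8)$ appear), not $m_1+m_2+m_3=9$; this does not affect the method, only the bookkeeping you would face.
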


\begin{proof} 
From Eqn.\ \eqref{multiplication-one-one} and Lemma \ref{Y+-lemma}, in terms of the $A, B, C, D$ notation,  we have
\begin{eqnarray*} x *_2 v & = & \big(10\alpha(-3) + 15 \alpha(-4) + 6\alpha(-5) \big)v  = \big(\alpha(-3) + 3C + 6(C+ D) \big) v ,\\
Y^+(\alpha(-1){\mathbf{1}},x)v &\sim_2& \Big( \alpha(-1)+\alpha(-2)x+\alpha(-3)\frac{x^2}{1+x}+C\frac{x^3}{(1+x)^2} +  (C+D) \frac{x^4}{(1+x)^3} \Big)v,\\
Y^+(\alpha(-4){\mathbf{1}},x)v &\sim_2& \Big(-\alpha(-3) \frac{1}{(1+x)^4}+C  \frac{(1-3x)}{(1+x)^5} +  (C+D) \frac{4x-6x^2}{(1+x)^6}\Big) v.
\end{eqnarray*}																								
Thus by associativity of $*_2$, the definition of $*_2$, collecting terms involving some positive modes acting on $v$ into $F_{r+1}({\bf 1})$, and simplifying, we have that for all $v \in F_r({\bf 1})$
\begin{eqnarray*}
\lefteqn{(\tilde z + x\tilde y) v = (\tilde z+x*_2 \tilde y)*_2v = \tilde z*_2v+ x *_2  ( \tilde y*_2v )} \\
&\sim_2& \res_x\Big(\frac{1}{x^3}-\frac{3}{x^4}+\frac{6}{x^5}\Big)(1+x)^{11}Y(\alpha(-1)\alpha(-4)^2{\bf 1}, x)v\\
& & \quad +\, \big(\alpha(-3) + 3C + 6(C+ D) \big) \res_x\Big(\frac{1}{x^3}-\frac{3}{x^4}+\frac{6}{x^5}\Big)(1+x)^7Y(\alpha(-1)\alpha(-4){\bf 1}, x)v\\
&=& \res_x\Big(\frac{1}{x^3}-\frac{3}{x^4}+\frac{6}{x^5}\Big) \Big( (1+x)^{11} Y^+(\alpha(-1) {\bf 1}, x) \big( Y^+(\alpha(-4) {\bf 1} , x ) \big)^2 \\
& & \quad + \,  (1+ x)^7  \big(\alpha(-3) + 3C + 6(C+ D) \big) Y^+(\alpha(-1) {\bf 1}, x ) Y^+ (\alpha(-4) {\bf 1} , x) \Big) v +   f_{r+1}( {\bf 1})  \\
&= &\res_x\Big(\frac{1}{x^3}-\frac{3}{x^4}+\frac{6}{x^5}\Big) (1+x)^{2}  \Big( \alpha(-1)+\alpha(-2)x+\alpha(-3)\frac{x^2}{1+x}+C\frac{x^3}{(1+x)^2} \\
& & \quad + \,   (C+D) \frac{x^4}{(1+x)^3} \Big)   \Big(-\alpha(-3) +C  \frac{1-3x}{1+x} +  (C+D) \frac{4x-6x^2}{(1+x)^2}\Big) \\
& & \quad \cdot   \Big(   C(1-3x)   +  (C+D) \frac{4x- 6x^2}{1+x}  +3C(1+x) + 6(C+D) (1+x)   \Big)  v + f_{r+1}( {\bf 1}).
\end{eqnarray*}
Then using Eqns.\ (\ref{O21}), (\ref{O??}), and Lemma \ref{C^3}, and then simplifying, we have 
\begin{eqnarray*}
\lefteqn{(\tilde z + x\tilde y) v }\\
&\sim_2 &\res_x\Big(\frac{1}{x^3}-\frac{3}{x^4}+\frac{6}{x^5}\Big) (1+x)^{2}  \Big( \alpha(-1)+\alpha(-2)x+\alpha(-3)\frac{x^2}{1+x}+C\frac{x^3}{(1+x)^2}  \Big)  \\
& & \quad \cdot  \Big(-\alpha(-3) +C  \frac{1-3x}{1+x}\Big)    \Big(   4C  +  (C+D) \frac{6+16x}{1+x}     \Big)  v + f_{r+1}( {\bf 1}) \\
&\sim_2& \res_x\Big(\frac{1}{x^3}-\frac{3}{x^4}+\frac{6}{x^5}\Big) (1+x)^{2} 
\Big( -4 \alpha(-1) \alpha(-3) C - \alpha(-1) \alpha(-3) (C+ D)  \frac{6+16x}{1+x}  \\
& & \quad +\, 4 \alpha(-1) C^2 \frac{1-3x}{1+x} - 4\alpha(-2) \alpha(-3) Cx  - \alpha(-2) \alpha(-3) (C+D)  \frac{x(6+16x)}{1+x}   \\
& & \quad  +\,  4 \alpha(-2) C^2  \frac{x(1-3x)}{1+x} - 4\alpha(-3)^2C \frac{x^2}{1+x} - \alpha(-3)^2 (C+D) \frac{x^2(6+16x)}{(1+x)^2} \\
& & \quad + \, 4\alpha(-3) C^2 \frac{x^2(1-3x)}{(1+x)^2} - 4\alpha(-3) C^2 \frac{x^3} {(1+x)^2} \Big) v +  g_{r+1}( {\bf 1}) \\
&=& \Big( -4 \alpha(-3) (A+B)C  + \alpha(-3) (-16 A + 42 B) (C+D)  +  12 \alpha(-3) (B+C)C  \\
& & \quad    - \, 4( 3A -10B)  C^2     \Big) v +  g_{r+1}( {\bf 1}) 
\end{eqnarray*}
for some $f_{r+1}({\bf 1}), g_{r+1}({\bf 1})  \in F_{r+1}({\bf 1})$, which gives the result. 
\end{proof}

\begin{lem}\label{ztildexcube}
Given $v\in F_{r}({\bf 1})$, 
\begin{multline*}
(\tilde z-x^3)*_2v    \equiv_2    C^2 \big(-24(A+B)+88(B+C) \big) v +\alpha(-3) \big(84B(C+D)\\
+ 24C(B+C) -8C(A+B)-32A(C+D) \big)v + f_{r+1}({\bf 1}) ,
\end{multline*}
where $f_{r+1}({\bf 1})\in F_{r+1}({\bf 1})$.  
\end{lem}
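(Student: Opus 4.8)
\textbf{Proof proposal for Lemma \ref{ztildexcube}.}

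The plan is to compute $(\tilde z - x^3)*_2 v$ by splitting it as $\tilde z *_2 v - x^2 *_2(x *_2 v)$ (using associativity of $*_2$ and the fact that $\mathbf 1$ is a multiplicative identity), and to reuse the calculation of $\tilde z *_2 v$ already carried out in the proof of Lemma \ref{zxtildey}. Indeed, in Lemma \ref{zxtildey} we obtained an explicit expansion for $\tilde z *_2 v + x*_2(\tilde y *_2 v)$ up to $F_{r+1}(\mathbf 1)$; subtracting the contribution $x *_2(\tilde y *_2 v)$, which is governed by the multiplication formula \eqref{multiply-tildey} for $\tilde y *_2 v$ together with the formula \eqref{multiplication-one-one} for left multiplication by $x = \alpha(-1)\mathbf 1$, isolates $\tilde z *_2 v$ in the same $A, B, C, D$ notation. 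Alternatively, and perhaps more cleanly, I would compute $\tilde z *_2 v$ directly from the definition of $*_2$, Eqn.\ \eqref{*_n-definition}, using $Y(\alpha(-1)\alpha(-4)^2\mathbf 1, x) = {}_{\circ}^{\circ} Y(\alpha(-1)\mathbf 1, x) Y(\alpha(-4)\mathbf 1, x)^2 {}_{\circ}^{\circ}$ together with the normally-ordered regular-part expansions from Lemma \ref{Y+-lemma}; the singular parts contribute only terms in $F_{r+1}(\mathbf 1)$ by the same residue/$Y^-$-vanishing argument (Eqns.\ \eqref{Y-} and \eqref{Y-2}) used throughout the appendix, since at most two negative modes and one positive mode survive when a singular mode acts on $v \in F_r(\mathbf 1)$.

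Then I would compute $x^3 *_2 v = x^2 *_2(x *_2 v)$ using \eqref{multiplication-one-one} twice: first $x *_2 v = (\alpha(-3) + 3C + 6(C+D))v$ (in $A,B,C,D$ notation, as in the proof of Lemma \ref{zxtildey}), and then apply \eqref{multiplication-one-one} again to $(\alpha(-3) + 3C + 6(C+D))v$. The key simplification tools are Corollary \ref{circlecor}, especially Eqns.\ \eqref{O21}, \eqref{O??}, and Lemma \ref{C^3}, which kill, modulo $O_2(V)$ and modulo $F_{r+1}(\mathbf 1)$, all terms involving $C(C+D)$, $(C+D)^2$, and $C^3$; this is exactly what reduces a raw expression quadratic and cubic in $\alpha(-3), \alpha(-4), \alpha(-5)$ to the stated combination involving only $C^2(A+B)$, $C^2(B+C)$, $\alpha(-3)B(C+D)$, $\alpha(-3)C(B+C)$, $\alpha(-3)C(A+B)$, and $\alpha(-3)A(C+D)$. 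After forming the difference $\tilde z*_2 v - x^3 *_2 v$, I would collect terms and repeatedly use \eqref{O21}, \eqref{O??}, and Lemma \ref{C^3} to put everything in the claimed normal form, relabeling the accumulated $F_{r+1}(\mathbf 1)$ errors as a single $f_{r+1}(\mathbf 1)$.

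The main obstacle is purely computational bookkeeping: the vertex operator $Y(\alpha(-1)\alpha(-4)^2\mathbf 1, x)$ after the normal-ordering produces a product of three power series, one of which (from $Y(\alpha(-4)\mathbf 1,x)$, squared) involves the third derivative structure $\binom{k+3}{3}$, so the residue extraction against $(1+x)^{-i-1}(x^{-3} - 3x^{-4} + 6x^{-5})$ generates many terms; keeping track of which monomials $\alpha(-m_1)\alpha(-m_2)\alpha(-m_3)$ appear with which coefficients, and then applying the recursion \eqref{recursion-level-2} to reduce all modes $\alpha(-m)$ with $m \ge 6$ back to $\alpha(-3), \alpha(-4), \alpha(-5)$, is error-prone. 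The safeguard is that the analogous computation has already been executed in Lemma \ref{zxtildey}, so I would cross-check by verifying that $(\tilde z - x^3)*_2 v = (\tilde z + x\tilde y)*_2 v - (x^3 + x\tilde y)*_2 v$ is consistent with that lemma together with the (to-be-established, but routine by the same method) formula for $(x^3 + x\tilde y)*_2 v$; agreement of the two routes gives confidence in the final coefficients $-24, 88, 84, 24, -8, -32$.
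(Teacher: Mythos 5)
Your overall strategy is sound and the cross-check you propose at the end is in fact a complete, clean proof, but your main route differs from the paper's in one substantive way and contains one inaccurate claim worth flagging. The paper does \emph{not} compute $\tilde z *_2 v$ and $x^3 *_2 v$ separately: it first invokes Lemma \ref{powersalpha1-lemma} to replace $x^3$ by $x *_2 \alpha(-4)^2{\bf 1}$, so that $\tilde z *_2 v$ and $x^3 *_2 v$ share the common factor $\big((1+x)^5 Y^+(\alpha(-4){\bf 1},x)\big)^2$ and the difference collapses to a single residue of $\big((1+x)Y^+(\alpha(-1){\bf 1},x) - (\alpha(-3)+3C+6(C+D))\big)\big((1+x)^5Y^+(\alpha(-4){\bf 1},x)\big)^2 v$. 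The payoff is that the dangerous terms cancel \emph{inside} the residue: e.g.\ the $\alpha(-3)^3$ coefficient becomes $(x^2-1)(1+x)^2$, whose residue against $x^{-3}-3x^{-4}+6x^{-5}$ is $0$. In your route these cancellations happen only at the very end, and your assertion that Eqns.\ \eqref{O21}, \eqref{O??} and Lemma \ref{C^3} "reduce a raw expression quadratic and cubic in $\alpha(-3),\alpha(-4),\alpha(-5)$ to the stated combination" is not accurate as stated: after killing $C(C+D)$, $(C+D)^2$ and $C^3$, the expansion $(\alpha(-3)+3C+6(C+D))^3v$ still contains $\alpha(-3)^3v$, $9\alpha(-3)^2Cv$, $18\alpha(-3)^2(C+D)v$ and $27\alpha(-3)C^2v$, none of which lie in $F_{r+1}({\bf 1})$ or match the target form; they must cancel against identical terms in $\tilde z *_2 v$, and your writeup should say so explicitly rather than attributing the reduction to those three tools. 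By contrast, your proposed consistency check $(\tilde z - x^3)*_2 v = (\tilde z + x\tilde y)*_2 v - x*_2\big((x^2+\tilde y)*_2 v\big)$ is genuinely the cleanest path given what is already proved: applying $\alpha(-3)+3C+6(C+D)$ to the output of Lemma \ref{x^2tildey} and discarding the $C(C+D)$ and $(C+D)^2$ products via \eqref{O21} and \eqref{O??} leaves only six surviving terms, and subtracting from Lemma \ref{zxtildey} reproduces exactly the coefficients $-8$, $24$, $-32$, $84$ on the $\alpha(-3)$ terms and $-24(A+B)+88(B+C)$ on $C^2$ (after absorbing a $C^3$ discrepancy into $F_{r+1}({\bf 1})$ by Lemma \ref{C^3}). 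So the route you relegate to a "safeguard" is the one I would promote to the main argument; the separate-computation route works but needs the explicit end-stage cancellation of the cubic-in-$\alpha(-3)$ terms spelled out.
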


\begin{proof}
By Lemma \ref{powersalpha1-lemma}, we have that $x^3  \equiv_2  x *_2  \alpha(-4)^2{\bf 1}$.  Using this, associativity of $*_2$,  Eqn.\   \eqref{multiplication-one-one}, Lemma \ref{Y+-lemma}, collecting terms involving some positive modes in the singular part of $Y$ acting on $v$ into $F_{r+1}({\bf 1})$, and using Eqns. (\ref{O21}) and (\ref{O??}), and Lemma \ref{C^3}, we have
\begin{eqnarray*}
\lefteqn{ (\tilde z-x^3)*_2v  \equiv_2  \alpha(-1) \alpha(-4)^2 {\bf 1}  *_2 v -  x *_2 (\alpha(-4)^2 {\bf 1} *_2 v)    } \\
&=&  \res_x\Big(\frac{1}{x^3}-\frac{3}{x^4}+\frac{6}{x^5}\Big)(1+x)^{11}Y(\alpha(-1)\alpha(-4)^2{\bf 1}, x)v\\
& & \quad - \, \big(\alpha(-3) + 3C + 6(C+ D) \big) \res_x\Big(\frac{1}{x^3}-\frac{3}{x^4}+\frac{6}{x^5}\Big)(1+x)^{10}Y(\alpha(-4)^2{\bf 1}, x)v\\
&=& \res_x \Big(\frac{1}{x^3}-\frac{3}{x^4}+\frac{6}{x^5}\Big) \Big( (1+x) Y^+(\alpha(-1) {\bf 1}, x)  - \big(\alpha(-3) + 3C + 6(C+ D) \big) \Big)\\
& & \quad \cdot  \Big( (1+x)^5Y^+(\alpha(-4){\bf 1}, x) \Big)^2  v + f_{r+1}({\bf 1}) \\
&\sim_2 & \res_x \Big(\frac{1}{x^3}-\frac{3}{x^4}+\frac{6}{x^5}\Big)  \Big( \alpha(-1) (1+x)  + \alpha(-2) x(1+x) + \alpha(-3) x^2 + C\frac{x^3}{1+x} \\
& & \quad + \, (C+D) \frac{x^4}{(1+x)^2}  - \alpha(-3) - 3C - 6(C+ D)  \Big) \Big( - \alpha(-3) (1+x) \\
& & \quad  + \,  C(1-3x) + (C+D) \frac{4x-6x^2}{1+x} \Big)^2  v + f_{r+1}({\bf 1}) \\
&\sim_2 & \res_x \Big(\frac{1}{x^3}-\frac{3}{x^4}+\frac{6}{x^5}\Big)  \Big( \alpha(-1) (1+x)  + \alpha(-2) x(1+x) + \alpha(-3) (x^2 - 1)  \\
& & \quad  +\,  C\frac{x^3}{1+x} + (C+D) \frac{x^4}{(1+x)^2}  - 3C - 6(C+ D)  \Big) \Big(  \alpha(-3)^2  (1+x)^2 \\
& & \quad - \,  2\alpha(-3) C(1+x)(1-3x) - 2\alpha(-3) (C+D) (4x - 6x^2) + C^2(1-3x)^2  \Big)  v \\
& & \quad + \,  f_{r+1}({\bf 1}) \\
&\sim_2 & \res_x \Big(\frac{1}{x^3}-\frac{3}{x^4}+\frac{6}{x^5}\Big)  \Big( \alpha(-1)\alpha(-3)^2 (1+x)^3 - 2 \alpha(-1) \alpha(-3)C (1+x)^2 (1-3x) \\
& & \quad - \,  2 \alpha(-1) \alpha(-3) (C+D) (4x-6x^2)(1+x) +   \alpha(-1) C^2 (1-3x)^2 (1+x) \\
& & \quad + \, \alpha(-2)\alpha(-3)^2 x(1+x)^3 - 2 \alpha(-2) \alpha(-3)Cx (1+x)^2 (1-3x) \\
& & \quad - \,  2 \alpha(-2) \alpha(-3) (C+D)x (4x-6x^2)(1+x) +   \alpha(-2) C^2 x(1-3x)^2 (1+x) \\
& & \quad + \, \alpha(-3)^3 (x^2 - 1) (1+ x)^2 - 2 \alpha(-3)^2 C (x^2 - 1) (1+x) (1-3x)  \\
& & \quad - \, 2\alpha(-3)^2 (C+D) (x^2 - 1) (4x - 6x^2) + \alpha(-3) C^2 (x^2 - 1) (1- 3x)^2 
 \\																		
 & & \quad + \, \alpha(-3)^2 C x^3 (1+x) - 2 \alpha(-3) C^2 x^3(1-3x) + \alpha(-3)^2 (C+D) x^4  \\
 & & \quad - \, 3\alpha(-3)^2 C (1+x)^2 + 6 \alpha(-3) C^2 (1+x)(1-3x) - 6 \alpha(-3)^2(C+D)  (1+x)^2  \Big)  v \\
& & \quad + \, g_{r+1}({\bf 1})  \\
&=& \Big(  - 8 \alpha(-1) \alpha(-3)C - 32 \alpha(-1) \alpha(-3) (C+D) -24   \alpha(-1) C^2  + 8 \alpha(-2) \alpha(-3)C \\
& & \quad + \, 52  \alpha(-2) \alpha(-3) (C+D) + 40  \alpha(-2) C^2  + 16  \alpha(-3)^2 C  + 84 \alpha(-3)^2 (C+D)  \\
& & \quad + \,  64 \alpha(-3) C^2  + 3 \alpha(-3)^2 C  + 42 \alpha(-3) C^2 +  6 \alpha(-3)^2 (C+D)   -  3\alpha(-3)^2 C  \\
& & \quad - \, 18 \alpha(-3) C^2 - 6 \alpha(-3)^2(C+D)  \Big)  v +  g_{r+1}({\bf 1}) \\
&=&  \alpha(-3) \big( -8C(A+B)  - 32A(C+D) + 24C(B+C)  + 84B(C+D) \big)v \\
& & \quad + \, C^2 \big(-24(A+B) + 88(B+ C) \big)v + g_{r+1} ({\bf 1}) \\
\end{eqnarray*}
for some $f_{r+1}({\bf 1}), g_{r+1} ({\bf 1}) \in F_{r+1}({\bf 1})$, which gives the result. 
\end{proof}

\begin{lem}\label{zxy} 
 For $v \in F_r({\bf 1})$
\begin{eqnarray*}
(z+x y)*_2v
&\sim_2 & \alpha(-1) \big(8(A+B)C +32A(C+D) -24(B+C)C -84B(C+D)\big)v\\
& &\quad + \, \big(-8A^2C-42A^2(C+D) +24(A+B)BC +48AC^2\big)v +  f_{r+1}({\bf 1}),
\end{eqnarray*} 
for some  $f_{r+1}({\bf{1}})\in F_{r+1}({\bf 1}).$  
\end{lem}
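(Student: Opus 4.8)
\textbf{Proof proposal for Lemma \ref{zxy}.}

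The plan is to mimic the structure of the proofs of Lemmas \ref{zxtildey} and \ref{ztildexcube}, computing $(z + xy) *_2 v$ directly from the definition of $*_2$ and the explicit regular parts of the vertex operators given in Lemma \ref{Y+-lemma}. First I would use the associativity of $*_2$ together with the identity $xy \equiv_2 x *_2 (\alpha(-1)^2 {\bf 1} *_2 v)$ to rewrite $(z + xy)*_2 v = z *_2 v + x *_2(\alpha(-1)^2{\bf 1} *_2 v)$, and then express $z *_2 v = \alpha(-1)^2\alpha(-4){\bf 1} *_2 v$ and $\alpha(-1)^2{\bf 1} *_2 v$ via $\res_x (\tfrac1{x^3} - \tfrac3{x^4} + \tfrac6{x^5})(1+x)^{\mathrm{wt}+2} Y(\,\cdot\,,x)v$. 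Here $\mathrm{wt}\,\alpha(-1)^2\alpha(-4){\bf 1} = 6$ and $\mathrm{wt}\,\alpha(-1)^2{\bf 1} = 2$, and using the normal-ordered factorization of $Y(\alpha(-1)^i\alpha(-2)^j\alpha(-3)^k\alpha(-4)^l{\bf 1},x)$ into products of $Y^\pm$ as in Eqn.\ \eqref{start-of-O2-lemma}, I would substitute $x *_2 = \alpha(-1) + 3C + 6(C+D)$ from \eqref{multiplication-one-one} (in $A,B,C,D$ notation) for the left multiplication by $x$.

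The key steps, in order: (1) discard all contributions in which a positive (singular) mode from $Y^-(\alpha(-m){\bf 1},x)$ survives — these collect into an element of $F_{r+1}({\bf 1})$ exactly as in the earlier lemmas, since a singular mode acting on $v \in F_r({\bf 1})$ either annihilates it or lowers the mode count by two (and the extra $\alpha(-1)^2$ factor contributes at most one more mode, giving $F_{r+1}({\bf 1})$); (2) expand the resulting rational-function-valued residue, a product of the regular parts $Y^+(\alpha(-1){\bf 1},x)$ and $Y^+(\alpha(-4){\bf 1},x)$ from Lemma \ref{Y+-lemma} together with the $\alpha(-1)^2$ prefactor and the polynomial $(\tfrac1{x^3}-\tfrac3{x^4}+\tfrac6{x^5})(1+x)^N$; (3) extract the residue to get a polynomial expression in the modes $\alpha(-m)$, $m \geq 1$, acting on $v$, with an extra $\alpha(-1)$ (or $\alpha(-1)^2$) factor out front; (4) apply the $O_2$-relations \eqref{O21}, \eqref{O??}, Corollary \ref{circlecor}, and Lemma \ref{C^3} to kill the terms $C(C+D)v$, $(C+D)^2 v$, $C^3 v$, etc. (noting that since there is an $\alpha(-1)$ prefactor, applying Lemma \ref{C^3} to reduce $C^3$ still lands in $F_{r+1}({\bf 1})$); and (5) rewrite the surviving terms in the $A,B,C,D$ shorthand and simplify to the stated form. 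I would double-check the computation against the zero-mode action of Lemma \ref{o-mode-lemma} applied to $v_\lambda$, $\alpha(-1)v_\lambda$, $\alpha(-1)^2v_\lambda$, $\alpha(-2)v_\lambda$, which is independent confirmation that the final coefficients are right.

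The main obstacle will be the sheer bookkeeping in step (3): after multiplying out $Y^+(\alpha(-1){\bf 1},x)\,\big(\alpha(-3) + 3C + 6(C+D)\big)\,Y^+(\alpha(-4){\bf 1},x)$ against the polynomial $(\tfrac1{x^3}-\tfrac3{x^4}+\tfrac6{x^5})(1+x)^N$ and against the residue extraction, there are many rational-function terms in $x$, and one must carefully track which powers of $(1+x)$ in the denominators survive the $\res_x$ and which get absorbed into $F_{r+1}({\bf 1})$ via \eqref{O21}--\eqref{O??}. This is the same difficulty that appears in Lemmas \ref{zxtildey} and \ref{ztildexcube}, so the technique is already in place; the risk is purely arithmetic error in combining the dozen or so monomials $\alpha(-1)\alpha(-m_1)\alpha(-m_2) v$ and $\alpha(-m_1)\alpha(-m_2) v$ that appear before simplification. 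I expect no conceptual difficulty beyond what has already been handled, and the cross-check against the zero-mode formulas will catch any slips.
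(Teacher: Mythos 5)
Your proposal follows essentially the same route as the paper: compute $(z+xy)*_2v$ directly from the residue formula for $*_2$, use Lemma \ref{Y+-lemma} to replace each vertex operator by its regular part modulo contributions that collect into $F_{r+1}({\bf 1})$, and then clean up with Eqns.\ \eqref{O21}, \eqref{O??}, Corollary \ref{circlecor} and Lemma \ref{C^3}. Two corrections are worth making before you start the bookkeeping. First, the left-multiplication formula you quote should read $x*_2v=(10\alpha(-3)+15\alpha(-4)+6\alpha(-5))v=(\alpha(-3)+3C+6(C+D))v$; you wrote $\alpha(-1)$ where $\alpha(-3)$ belongs, and carrying that slip through would corrupt every coefficient in the final answer. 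Second, the paper groups the second summand as $y*_2(x*_2v)$ rather than your $x*_2(y*_2v)$ --- both are legitimate since $x$ and $y$ are central by Corollary \ref{first-commutator-cor} --- but the paper's grouping is worth adopting: both summands then carry the common left factor coming from $Y^+(\alpha(-1)^2{\bf 1},x)$, and the $-\alpha(-3)$ appearing in $(1+x)^4Y^+(\alpha(-4){\bf 1},x)\sim_2-\alpha(-3)+C\tfrac{1-3x}{1+x}+(C+D)\tfrac{4x-6x^2}{(1+x)^2}$ cancels against the $\alpha(-3)$ in $x*_2$, leaving the single compact factor $C\tfrac{4}{1+x}+(C+D)\tfrac{6+16x}{(1+x)^2}$ and substantially less arithmetic than treating $z*_2v$ and $x*_2(y*_2v)$ separately.
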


\begin{proof}
By commutativity of $y = 2 \omega$, associativity of $*_2$, the definition of $*_2$,  Lemma \ref{Y+-lemma}, collecting terms involving some positive modes in the singular part of $Y$ acting on $v$ into $F_{r+1}({\bf 1})$, and using Eqn.\ \eqref{multiplication-one-one}, Eqn.\ (\ref{O21}), Eqn.\ (\ref{O??}), Lemma \ref{C^3}, Eqn.\ (\ref{coreq9}), and Eqn.\ (\ref{coreq7}), we have that for $v \in F_r({\bf 1})$, 
\begin{eqnarray*}
\lefteqn{(z+xy) *_2 v }\\
&=& (z + x *_2 y) *_2 v = z *_2 v + y *_2 (x *_2 v ) = \alpha(-1)^2 \alpha(-4) {\bf 1} *_2 v + \alpha(-1)^2 {\bf 1}  *_2 ( x  *_2 v)  \\
&=& \res_x \Big(\frac{1}{x^3}-\frac{3}{x^4}+\frac{6}{x^5}\Big)(1+x)^{8} Y^+(\alpha(-1)^2 {\bf 1} , x)Y^+(\alpha(-4) {\bf 1} , x)v  \\
& & \quad + \,    \res_x \Big(\frac{1}{x^3}-\frac{3}{x^4}+\frac{6}{x^5}\Big)(1+x)^{4} Y^+(\alpha(-1)^2 {\bf 1} , x)  \big( \alpha(-3) + 3C  +  6(C+D) \big) v \\
& & \quad + \, f_{r+1}({\bf 1})  \\
&=& \res_x \Big(\frac{1}{x^3}-\frac{3}{x^4}+\frac{6}{x^5}\Big) \Big( \alpha(-1)(1+x)^2  + \alpha(-2)x(1+
x)^2  + \alpha(-3)x^2(1+x)  + C x^3 \\
& & \quad + \,  ( C + D) \frac{x^4} {1+x} \Big)^2 \Big(  C \frac{4}{(1+x)} 
+ (C+D) \frac{(6 + 16x)}{(1+x)^2} \Big) v +  f_{r+1}({\bf 1})\\
&\sim_2& \res_x \Big(\frac{1}{x^3}-\frac{3}{x^4}+\frac{6}{x^5}\Big) \Big( \alpha(-1)(1+x)^2  + \alpha(-2)x(1+
x)^2  + \alpha(-3)x^2(1+x)  \\
& & \quad  +\,  C x^3 \Big)^2 \Big(  C \frac{4}{(1+x)}  + (C+D) \frac{(6 + 16x)}{(1+x)^2} \Big) v +  f_{r+1}({\bf 1})\\
&=&  \res_x \Big(\frac{1}{x^3}-\frac{3}{x^4}+\frac{6}{x^5}\Big) \Big( \alpha(-1)(1+x)  + Ax(1+x) + B x^2(1+x)  \\
& & \quad + \,  C x^3 \Big)^2 \Big(  C \frac{4}{(1+x)} 
+ (C+D) \frac{(6 + 16x)}{(1+x)^2} \Big) v +  f_{r+1}({\bf 1})\\
&\sim_2& \res_x \Big(\frac{1}{x^3}-\frac{3}{x^4}+\frac{6}{x^5}\Big) \Big( \alpha(-1)^2 (1+x)^2  + 2 \alpha(-1)A  x  (1+x)^2 \\
& & \quad +\, 2 \alpha(-1) B x^2 (1+x)^2 + 2\alpha(-1) C x^3 (1+x)  +  A^2 x^2 (1+x)^2  +  2AB  x^3 (1+x)^2 \\
& & \quad  + \, 2A C x^4 (1+x) +  B^2 x^4 (1+x)^2 + 2 B C x^5 (1+ x) \Big)  \Big(  C \frac{4}{(1+x)} 
\\
& & \quad  + \, (C+D) \frac{(6+ 16x)}{(1+x)^2}  \Big) v + g_{r+1}({\bf 1})\\
&\sim_2& \res_x \Big(\frac{1}{x^3}-\frac{3}{x^4}+\frac{6}{x^5}\Big) \Big( 4\alpha(-1)^2C (1+x) 
+ \alpha(-1)^2(C+D)  (6+ 16x)  \\
& & \quad   +  \, 8 \alpha(-1) AC x  (1+x) + 2 \alpha(-1) A (C+D) x  (6+ 16x)  +  8 \alpha(-1) B C x^2 (1+x) \\
& & \quad  +  \, 2 \alpha(-1) B (C+D) x^2 (6+ 16x) + 8\alpha(-1) C^2 x^3   + 4A^2 C x^2 (1+ x) \\
& & \quad  + \,  A^2 (C+D) x^2(6+ 16 x) + 8 ABCx^3(1+x) + AB(C+D) x^3(6 + 16x)  \\
& & \quad + \,  8AC^2 x^4 + 4B^2C x^4(1+x)  + 8BC^2x^5  \Big)  v + g_{r+1}({\bf 1})\\
&=& \alpha(-1) \Big( 8 (A+ B)C + 32  A (C+D)  -24 ( B+ C)  C  -  84 B (C+D)  \Big) v \\
& & \quad  + \,  \Big(  -8 A^2 C - 42A^2 (C+D)  + 24 (A+ B) BC +  78 AB(C+D)  +  48AC^2    \Big)  v + g_{r+1}({\bf 1}) \\
&\sim_2 &  \alpha(-1) \Big( 8 (A+ B)C + 32  A (C+D)  -24 ( B+ C)  C  -  84 B (C+D)  \Big) v \\
& & \quad  + \,  \Big(  -8 A^2 C - 42A^2 (C+D)  + 24 (A+ B) BC  +  48AC^2    \Big)  v + g_{r+1}({\bf 1}) ,
\end{eqnarray*}
for some $f_{r+1}({\bf 1}), g_{r+1} ({\bf 1}) \in F_{r+1}({\bf 1})$, which gives the result. 
\end{proof}

Recall the variables $Y, Z$, and $W$  at level $n=2$ as defined in Section \ref{zero-modes-section} given by  
\begin{eqnarray*}
Y &=& \frac{1}{12} (x^2 - 2y- \tilde y)\\
Z &=& \frac{1}{32} ( x^3 + 2x\tilde y + \tilde z)  \\
W &=& -\frac{1}{40}  \left(2z+ \tilde z+ 2xy-2x \tilde y-3x^3\right). 
\end{eqnarray*}

We now determine formulas for these variables necessary to determine the highest degree relations for these generators in $A_2(V) = \mathbb{C}[x,y]\langle Y, Z, W \rangle/I_2$ as given in Lemma \ref{lower-order-lemY}. 

\begin{lem}\label{Ymult}
For $v \in F_r({\bf 1})$, we have 
\[Y*_2v \sim_2 \frac{1}{2}B(C+D)v + f_{r}({\bf 1})\]
for $f_{r}({\bf 1}) \in F_{r}( {\bf 1})$.  
\end{lem}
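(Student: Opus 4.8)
\textbf{Proof proposal for Lemma \ref{Ymult}.} Recall that $Y = \frac{1}{12}(x^2 - 2y - \tilde y)$, where $x = \alpha(-1)\mathbf{1}$, $y = \alpha(-1)^2\mathbf{1}$, and $\tilde y = \alpha(-1)\alpha(-4)\mathbf{1}$ as elements of $A_2(M_a(1))$. Since $\mathbf{1}$ is a multiplicative identity for $*_2$, we have $Y *_2 v = Y *_2 (\mathbf{1} *_2 v)$, so it suffices to compute $(x^2 - 2y - \tilde y) *_2 v$ modulo $O_2(V)$ and modulo terms in $F_r(\mathbf{1})$. The plan is to assemble this from the three multiplication formulas already established in the appendix: the formula for $x^2 *_2 v$ (which is Eqn.\ \eqref{x^2-CD}, obtained from \eqref{multiplication-one-one} together with \eqref{O21} and \eqref{O??}), the formula for $y *_2 v$ computed inside the proof of Lemma \ref{appendixlem1} via \eqref{multiplication-one-cor-formula-level2-expllicit}, and the formula for $\tilde y *_2 v$ given by \eqref{multiply-tildey} in the proof of Lemma \ref{x^2tildey}.

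First I would recall Lemma \ref{appendixlem1}, which already packages $x^2 *_2 v - y *_2 v$: it states $(x^2 - y) *_2 v \sim_2 -2\bigl(3A(B+C) + 3B(C+D) + B(A+B)\bigr)v + f_r(\mathbf{1})$ for some $f_r(\mathbf{1}) \in F_r(\mathbf{1})$. So $Y *_2 v = \frac{1}{12}\bigl((x^2 - y) *_2 v - y *_2 v - \tilde y *_2 v\bigr)$, and I would just need to combine the displayed expression from Lemma \ref{appendixlem1} with $-y *_2 v$ (from the proof of Lemma \ref{appendixlem1}, $y *_2 v \sim_2 (20AB + 30AC + 5B^2 + 12AD - 18BC - 12\alpha(-2)D + 18\alpha(-3)C + \alpha(-3)^2)v + f_r(\mathbf{1})$) and $-\tilde y *_2 v$ (from \eqref{multiply-tildey}, $\tilde y *_2 v \sim_2 (20AC - 50BC + 16AD - 42BD - \alpha(-3)^2 + 9C^2 - 18\alpha(-3)C + 30CD - 12\alpha(-3)D)v + f_r(\mathbf{1})$). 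Summing these three contributions, all terms of $F_r(\mathbf{1})$-type get absorbed into a single $f_r(\mathbf{1})$, and what remains is a quadratic polynomial in the modes that I would simplify using the relations of Corollary \ref{circlecor} and Lemma \ref{first-ABC-lemma}: in particular \eqref{O21} ($C(C+D)v \sim_2 0$), \eqref{O??} ($(C+D)^2 v \sim_2 0$), \eqref{coreq7} ($AB(C+D)v \sim_2 0$), \eqref{coreq9} ($B^2(C+D)v \sim_2 0$), and \eqref{coreq1}, \eqref{coreq2}, etc., to reduce the whole expression down to a scalar multiple of $B(C+D)v$.

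The main obstacle is purely bookkeeping: there are many quadratic monomials in $\alpha(-1),\dots,\alpha(-5)$ appearing across the three formulas, and one must carefully rewrite everything consistently in the $A,B,C,D$ notation of \eqref{ABC-notation}, track which combinations vanish modulo $O_2^\circ(V)$, and verify the coefficient on $B(C+D)v$ comes out to exactly $\frac{1}{2} \cdot 12 = 6$ before dividing by $12$. Since $Y \in F_2(\mathbf{1})$ and $v \in F_r(\mathbf{1})$, the error term indeed lies in $F_r(\mathbf{1})$ (the multiplication by a degree-$2$ element times $v$ lands in $F_{r+2}(\mathbf{1})$, but the singular-mode contributions that survive drop the degree by $2$, exactly as in the proofs of Lemmas \ref{x^2tildey}--\ref{zxy}). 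No genuinely new idea is needed beyond combining the existing appendix computations and invoking the reduction identities of Corollary \ref{circlecor}; the content of the lemma is a clean consequence once those are in place.
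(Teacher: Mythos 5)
Your proposal is correct and takes essentially the same approach as the paper: the paper's proof writes $Y = \frac{1}{12}\bigl(2(x^2-y)-(x^2+\tilde y)\bigr)$ so that it can cite the already-simplified Lemmas \ref{appendixlem1} and \ref{x^2tildey} directly, whereas your algebraically equivalent split $(x^2-y)-y-\tilde y$ forces you back to the rawer intermediate formulas for $y*_2 v$ and $\tilde y *_2 v$, but the subsequent reduction via \eqref{O23}, \eqref{O24} and Corollary \ref{circlecor} is the same. Your accounting of why the error term lands in $F_r(\mathbf{1})$ also matches the paper's reasoning.
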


\begin{proof}
Applying Lemmas \ref{appendixlem1} and \ref{x^2tildey}, we have that for $v \in F_r({\bf 1})$,  
\begin{eqnarray*}
Y*_2v &=& \frac{1}{12} (x^2 - 2y- \tilde y)*_2 v =\frac{1}{12}(2(x^2-y)-(x^2+\tilde y))*_2 v\\
&=& \frac{1}{12}\Big(-12A(B+C)-12B(C+D)-4B(A+B)-16A(C+D)\\
&&\quad - \, 4C(A+B)+42B(C+D)+12C(B+C)\Big)v +  f_{r}(\bf{1}) \\
&=&\frac{1}{12}\Big(30B(C+D)-4(A+B)(B+C)-16A(C+D)\\
&&-12A(B+C)+12C(B+C)\Big)v+ f_{r}(\bf{1}),
\end{eqnarray*} 
for some $f_{r}({\bf 1})\in F_{r}({\bf 1}).$  Applying Eqn.\   \eqref{O24}, we have that $-4(A+B)(B+C)v \sim_2 4A(C+D)v$, so that
\[Y*_2v\sim_2\frac{1}{12}(30B(C+D)-12A(B+2C+D)+12C(B+C))v+  f_{r}({\bf{1}}).\]
Next, we apply \eqref{O24}, to obtain $-12A(B+2C+D)v  \sim_2 12B(B+C)$ so that 
\begin{eqnarray*}
Y*_2v &\sim_2& \frac{1}{12}(30B(C+D)+12B(B+C)+12C(B+C))  + f_{r} ({\bf 1})\\
&=& \frac{1}{12}(30B(C+D)+12(B+C)^2)v+ f_{r}(\bf{1}).
\end{eqnarray*}
 Finally, by Eqn.\ \eqref{O23}, $12(B+C)^2v \sim_2  -24B(C+D)v$, so that 
\[Y*_2v\sim_2\frac{1}{2}B(C+D)v+f_{r}(\bf{1}).\]
\end{proof}

\begin{lem}\label{Wmult}
For $v \in F_r ({\bf 1})$, we have 
\[W *_2v \equiv_2 \frac{1}{2}A^2(C+D)v+ f_{r+1}(\bf{1}),\]
 for some $f_{r+1}({\bf 1})\in F_{r+1}({\bf 1})$.
\end{lem}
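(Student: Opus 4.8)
\textbf{Proof proposal for Lemma \ref{Wmult}.}

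The plan is to proceed exactly as in the proof of Lemma \ref{Ymult}, writing $W$ in terms of the quantities whose products with an arbitrary $v \in F_r({\bf 1})$ we have already computed in the preceding lemmas of this appendix. By definition,
\[
W = -\frac{1}{40}\left(2z + \tilde z + 2xy - 2x\tilde y - 3x^3\right)
= -\frac{1}{40}\Big( (2z + 2xy) + (\tilde z - x^3) - 2(\tilde z + x\tilde y) + 2(\tilde z - x^3)\Big),
\]
or some similar regrouping; the point is to express $W *_2 v$ as a linear combination of $(z+xy)*_2 v$ (Lemma \ref{zxy}), $(\tilde z - x^3)*_2 v$ (Lemma \ref{ztildexcube}), and $(\tilde z + x\tilde y)*_2 v$ (Lemma \ref{zxtildey}), all of which have been computed modulo $O_2(V)$ up to terms in $F_{r+1}({\bf 1})$. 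So first I would pin down the exact integer combination of $z, \tilde z, x\tilde y, x^3, xy$ that reconstitutes $-40W$, double-checking against the definitions (\ref{variable-notation-first})--(\ref{variable-notation-last}) and (\ref{change-var3-n}).

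Next I would substitute the three formulas from Lemmas \ref{zxy}, \ref{ztildexcube}, and \ref{zxtildey} and collect terms. This produces a polynomial in $A, B, C, D$ (with some coefficients of the form $\alpha(-3)$) acting on $v$, modulo an $F_{r+1}({\bf 1})$ error. The remaining work is purely algebraic simplification using the relations in $O_2(V)$ already established: Eqns.\ (\ref{O21}), (\ref{O??}), (\ref{O22}), (\ref{O23}), (\ref{O24}), Corollary \ref{circlecor} (Eqns.\ (\ref{coreq1})--(\ref{coreq9})), and Lemma \ref{C^3}. The target is to reduce everything to $\tfrac{1}{2}A^2(C+D)v$ plus an $F_{r+1}({\bf 1})$ term. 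I expect the terms involving $\alpha(-3)C\cdot(\text{linear})$ and $C^2\cdot(\text{linear})$ — which are the obstructions to the expression being purely a product of the ``shifted'' variables $A,B,C,D$ — to cancel among the three contributions, just as the analogous cancellations occurred in Lemmas \ref{zxy}--\ref{ztildexcube}; the surviving pieces should combine via (\ref{O23})/(\ref{O24})/(\ref{O??}) into $\tfrac12 A^2(C+D)v$.

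The main obstacle is bookkeeping: the three input formulas each carry six or seven distinct monomials in $A, B, C, D$ (several of them cubic), so the raw combination before simplification has on the order of twenty terms, and one must apply the $O_2(V)$-relations in a careful order — typically first killing $C$-heavy terms with (\ref{O21}) and Lemma \ref{C^3}, then $(C+D)^2$ and $(B+C)^2$ terms with (\ref{O??}) and (\ref{O23}), then rewriting the mixed $A(B+C)$, $A(B+2C+D)$ pieces via (\ref{O24}) — to avoid going in circles. There is no conceptual difficulty beyond what is already present in Lemma \ref{Ymult}; the risk is purely arithmetic, and I would cross-check the final coefficient $\tfrac12$ by comparing with the degree and with the zero-mode action computed in Lemma \ref{o-mode-lemma} (Corollary \ref{level-one-cor} gives $W.v = 0$, $W.w = v$ on $\Omega_2(M_a(1,\lambda))$, which is consistent with $W *_2 {\bf 1} \equiv_2 \tfrac12 A^2(C+D){\bf 1} + (\text{lower})$).
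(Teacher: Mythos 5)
Your proposal is correct and follows essentially the same route as the paper: the paper also writes $-40W = 2(z+xy) + (\tilde z - x^3) - 2x*_2(x^2+\tilde y)$, substitutes Lemmas \ref{zxy}, \ref{ztildexcube}, and \ref{x^2tildey} (with left multiplication by $x$ from Eqn.\ (\ref{multiplication-one-one})), and then grinds the result down to $\tfrac12 A^2(C+D)v$ using (\ref{O21}), (\ref{O??}), (\ref{coreq7}), (\ref{coreq9}), (\ref{coreq3}), and Lemma \ref{C^3}. Your regrouping $-40W = 2(z+xy) + 3(\tilde z - x^3) - 2(\tilde z + x\tilde y)$ is algebraically identical to the paper's (so substituting Lemma \ref{zxtildey} in place of $x*_2(x^2+\tilde y)$ is a cosmetic variant), and the remaining work is exactly the bookkeeping you describe.
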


\begin{proof}
We  substitute in  Eqn.\ (\ref{multiplication-one-one}), Lemmas \ref{x^2tildey},   \ref{ztildexcube}, and  \ref{zxy}, then  expanding and using \eqref{O21}, \eqref{O??}, then simplifying and using $\alpha(-1)-\alpha(-3)=A-B$,  then applying Eqns.\ \eqref{coreq7} and \eqref{coreq9}, Lemma \ref{C^3},  and finally, Eqn.\ \eqref{coreq3}, we have 
\begin{eqnarray*}
W *_2v &=& -\frac{1}{40}(2z+\tilde z+2xy - 2 x \tilde y -3x^3)*_2v\\
&= &-\frac{1}{40}(2(z+xy)+(\tilde z-x^3)-2x  *_2 (x^2+\tilde y))*_2v\\
&\equiv_2& -\frac{1}{40} \Big(2\big( \alpha(-1) \big(8(A+B)C +32A(C+D) -24(B+C)C-84B(C+D)\big)\\
& & \quad  - \, 8A^2C-42A^2(C+D) +24BC(A+B)+48AC^2  \big)+ C^2 \big(-24(A+B)\\
& & \quad + \, 88(B+C) \big)   + \alpha(-3) \big(84B(C+D) + 24C(B+C) -8C(A+B) \\
& & \quad  - \,  32A(C+D) \big) - 2\big( \alpha(-3) + 3C + 6(C+D) \big) \big( 16A(C+D)+4C(A+B)\\
& & \quad - \, 42B(C+D)-12C(B+C)\big) \Big)v +   f_{r+1}({\bf 1}) \\
&\sim_2 & -\frac{1}{40} \Big(16 (A-B) (A+B)C+ 64 (A-B) A(C+D) -48 (A-B) (B+C)C \\
& & \quad - \, 168 (A-B) B(C+D) - 16 A^2C- 84A^2(C+D) +48BC(A+B)  \\
& & \quad +  \,  96AC^2 - 48C^2(A+B)+  160 C^2 (B+C)    \Big)v +  f_{r+1}({\bf 1}) \\
&\sim_2 & -\frac{1}{40} \Big( 80 B^2C    - 20 A^2(C+D) +  160 C^2 (B+C)    \Big)v +  f_{r+1}({\bf 1}) \\
&\sim_2 & -\frac{1}{40} \Big(  - 20 A^2(C+D) +  80 C( B+C)^2  \Big)v +  g_{r+1}({\bf 1}) \\
&\sim_2 & -\frac{1}{40} \Big(  - 20 A^2(C+D) \Big)v +  g_{r+1}({\bf 1}) ,
\end{eqnarray*} 
for some $f_{r+1}({\bf 1}), g_{r+1} ({\bf 1}) \in F_{r+1}({\bf 1})$, which gives the result. 
\end{proof}

\begin{lem}\label{Zmult}
Given $v\in F_r({\bf 1})$, 
\[ Z *_2v \sim_2   \frac{1}{2} BC^2v +  f_{r+1} ({\bf1}),\] 
for some $f_{r+1}({\bf1})\in F_{r+1}({\bf 1})$. 
\end{lem}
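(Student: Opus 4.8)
\textbf{Proposal for the proof of Lemma~\ref{Zmult}.}
The plan is to mirror the structure of the proofs of Lemmas~\ref{Ymult} and~\ref{Wmult}, writing $Z = \frac{1}{32}(x^3 + 2x\tilde y + \tilde z)$ and rewriting the right-hand side so that it is built out of the combinations for which we have already computed $*_2 v$ formulas up to lower-order terms, namely $x *_2 v$ (Eqn.~\eqref{multiplication-one-one}), $(\tilde z + x\tilde y) *_2 v$ (Lemma~\ref{zxtildey}), and $(\tilde z - x^3) *_2 v$ (Lemma~\ref{ztildexcube}). First I would express $32 Z$ as a linear combination of these building blocks: $x^3 + 2x\tilde y + \tilde z = 2(\tilde z + x\tilde y) - (\tilde z - x^3)$, so that $Z *_2 v = \frac{1}{32}\bigl(2(\tilde z + x\tilde y) *_2 v - (\tilde z - x^3) *_2 v\bigr)$, using associativity of $*_2$ and the fact that $\mathbf 1$ is the identity to split $x\tilde y = x *_2 \tilde y$, $x^3 = x *_2 \alpha(-4)^2\mathbf 1$ (via Lemma~\ref{powersalpha1-lemma}), etc., exactly as in the cited lemmas.

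Next I would substitute the explicit expressions from Lemmas~\ref{zxtildey} and~\ref{ztildexcube}. Each of these gives, modulo $O_2^\circ(V)$ and modulo $F_{r+1}(\mathbf 1)$, a polynomial in $\alpha(-3)$, $A$, $B$, $C$, $D$ acting on $v$. Plugging in and collecting terms, $Z *_2 v$ becomes a fixed $\mathbb{C}$-linear combination of monomials of the shapes $\alpha(-3)(\cdots)C$, $\alpha(-3)(\cdots)(C+D)$, $C^2(\cdots)$, where the inner factors are among $A+B$, $B+C$, $A$, $B$. The bulk of the work is then a reduction of this combination using the relations collected in Corollary~\ref{circlecor} together with Eqns.~\eqref{O21}, \eqref{O??}, \eqref{O23}, \eqref{O24}, and Lemma~\ref{C^3}. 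Concretely: any term containing $C(C+D)$ drops by \eqref{O21}; any term containing $(C+D)^2$ drops by \eqref{O??}; terms of the form $A(B+C)(C+D)$, $B(B+C)(C+D)$, $C(B+C)^2$, $C(A+B)(B+C)$, $AB(C+D)$, $B^2(C+D)$ drop by the corresponding equations of Corollary~\ref{circlecor}; and any residual $C^3$ term is absorbed into $F_{r+1}(\mathbf 1)$ by Lemma~\ref{C^3}. After these substitutions and cancellations, everything should collapse to $\frac{1}{2}BC^2 v$ plus a term in $F_{r+1}(\mathbf 1)$, which is the claim.

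The main obstacle I anticipate is purely bookkeeping: after substituting the (already fairly long) formulas of Lemmas~\ref{zxtildey} and~\ref{ztildexcube} one obtains a moderately large linear combination of cubic monomials in $A,B,C,D,\alpha(-3)$, and one must be careful that the $F_{r+1}(\mathbf 1)$ error terms really are harmless — in particular, each time a factor such as $AB$, $AC$, or $C^3$ appears against a $v\in F_r(\mathbf 1)$, the product lands in $F_{r+1}(\mathbf 1)$ and may legitimately be discarded, but one must check that no genuinely cubic contribution to the leading symbol is being dropped. A useful internal consistency check, exactly paralleling how $Y$ and $W$ were handled, is that the final answer must be homogeneous of degree $3$ in the $\alpha(-k)$'s and symmetric in form with the expressions $\frac12 B(C+D)$ (for $Y$) and $\frac12 A^2(C+D)$ (for $W$); the appearance of $\frac12 BC^2$ fits this pattern. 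Once the reduction is completed, the lemma follows immediately, and together with Lemmas~\ref{Ymult}, \ref{Wmult} it supplies all the leading-symbol data needed for the proof of Lemma~\ref{lower-order-lemY}.
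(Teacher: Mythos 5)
Your decomposition is correct and the argument goes through, but it is a genuinely different route from the paper's. The paper writes $32Z = x*_2(x^2+\tilde y) + (x\tilde y + \tilde z)$, so it combines Lemma \ref{x^2tildey} with the explicit left-multiplication formula \eqref{multiplication-one-one} for $x$ and Lemma \ref{zxtildey}, and then has to grind through cancellations using \eqref{O21}, \eqref{O??}, and Lemma \ref{C^3} before arriving at $\frac{1}{32}(16BC^2 - 36C^3)v$. Your decomposition $32Z = 2(\tilde z + x\tilde y) - (\tilde z - x^3)$ instead feeds Lemmas \ref{zxtildey} and \ref{ztildexcube} directly into each other, and if you carry out the bookkeeping you will find it is cleaner than you anticipate: the coefficients of $\alpha(-3)C(A+B)$, $\alpha(-3)C(B+C)$, $\alpha(-3)A(C+D)$, $\alpha(-3)B(C+D)$, and $AC^2$ all cancel identically, leaving exactly $\frac{1}{32}(16BC^2 - 88C^3)v$, so the only reduction needed is Lemma \ref{C^3} to absorb the $C^3$ term into $F_{r+1}(\mathbf 1)$ — none of the relations of Corollary \ref{circlecor} are actually invoked. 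The one caveat is that Lemma \ref{ztildexcube} is stated with $\equiv_2$ rather than $\sim_2$ (its proof passes through Lemma \ref{powersalpha1-lemma}, which uses elements of $O^L(V)$), so your argument proves $Z *_2 v \equiv_2 \frac{1}{2}BC^2 v + f_{r+1}(\mathbf 1)$, a formally weaker statement than the $\sim_2$ claimed in the lemma; since the only downstream use, in Lemma \ref{lower-order-lemY}, is a membership statement modulo the full $O_2(V)$, this weakening is harmless, but you should either flag it or restate the lemma with $\equiv_2$.
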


\begin{proof}
Applying Lemmas \ref{x^2tildey} and \ref{zxtildey}, along with \eqref{multiplication-one-one}, then applying Eqns.\ \eqref{O21} and \eqref{O??}, and simplifying,  and applying Lemma \ref{C^3}, we have
\begin{eqnarray*}
Z *_2 v &=& \frac{1}{32}(x^3+x\tilde y+x\tilde y+\tilde z)*_2v=\frac{1}{32}(x(x^2+\tilde y)+x\tilde y+\tilde z)v\\
& \sim_2  &\frac{1}{32}\Big( \big(\alpha(-3)+3C+6(C+D) \big) \big(16A(C+D)+4C(A+B)-42B(C+D)\\
& &\quad - \, 12C(B+C)  \big)  -4\alpha(-3)C((A+B)-3(B+C)) -4C^2(3A-10B)  \\
& & \quad + \, \alpha(-3)(C+D)(-16A+42B)\Big)v+  f_{r+1}({\bf 1}) \\
&\sim_2 & \frac{1}{32}\Big(16\alpha(-3)A(C+D)+  4\alpha(-3) C(A+B) + 12 C^2(A+B) \\
& & \quad - \,  42\alpha(-3) B(C+D) -  \big(\alpha(-3)+3C\big) 12C(B+C) \\
& & \quad - \,  4\alpha(-3)C((A+B)-3(B+C)) -4C^2(3A-10B)  \\
& & \quad + \, \alpha(-3)(C+D)(-16A+42B)\Big)v + f_{r+1}({\bf 1}) \\
&=& \frac{1}{32}\Big(    - 36C^3  + 16BC^2  \Big)v+ f_{r+1}({\bf 1}) \\
&\sim_2& \frac{1}{2}  BC^2 v+ g_{r+1}({\bf 1}) \\
\end{eqnarray*} 
for some  $f_{r+1}({\bf 1}), g_{r+1} ({\bf 1}) \in F_{r+1}({\bf 1})$.
\end{proof}

\end{document}